\newtheorem{thm}{Theorem}[section]
\newtheorem{lemma}[thm]{Lemma}
\newtheorem{cor}[thm]{Corollary}
\newtheorem{fact}[thm]{Fact}
\newtheorem{rmk}[thm]{Remark}
\newtheorem{question}[thm]{Question}
\newtheorem{convention}[thm]{Convention}
\theoremstyle{definition}
\newtheorem{df}[thm]{Definition}
\newcommand{\Los}{{\L}o{\'s}}
\newcommand{\BC}{\mathbb{C}}
\newcommand{\BE}{\mathbb{E}}
\newcommand{\BF}{\mathbb{F}}
\newcommand{\BK}{\mathbb{K}}
\newcommand{\BM}{\mathbb{M}}
\newcommand{\BN}{\mathbb{N}}
\newcommand{\BQ}{\mathbb{Q}}
\newcommand{\FM}{\mathfrak{M}}
\newcommand{\cu}[1]{\mathcal{#1}}
\newcommand{\bo}[1]{\boldsymbol{#1}}
\newcommand{\tp}{\operatorname{tp}}
\newcommand{\Th}{\operatorname{Th}}
\newcommand{\Sat}{\operatorname{Sat}}
\newcommand{\lcf}{\operatorname{lcf}}
\newcommand{\lca}{\operatorname{lca}}
\newcommand{\miin}{\operatorname{min}}
\newcommand{\snm}{\operatorname{stb\neg min}}
\newcommand{\Av}{\operatorname{Av}}
\newcommand{\stb}{\operatorname{stb}}
\def \lek{\trianglelefteq}
\def \eqk{\trianglelefteq\trianglerighteq}
\def \lk{\vartriangleleft}
\def \l{\llbracket}
\def \rr{\rrbracket}
\renewcommand{\hat}{\widehat}
\def\indsym#1#2{%
  \setbox0=\hbox{$\m@th#1x$}%
  \kern\wd0%
  \hbox to 0pt{\hss$\m@th#1\mid$\hbox to 0pt{$\m@th#1^{#2}$}\hss}%
  \lower.9\ht0\hbox to 0pt{\hss$\m@th#1\smile$\hss}%
  \kern\wd0}
\newcommand{\ind}[1][]{\mathop{\mathpalette\indsym{#1}}}
\def\nindsym#1#2{%
  \setbox0=\hbox{$\m@th#1x$}%
  \kern\wd0%
  \hbox to 0pt{\hss$\m@th#1\not$\kern1.4\wd0\hss}
  \hbox to 0pt{\hss$\m@th#1\mid$\hbox to 0pt{$\m@th#1^{\,#2}$}\hss}%
  \lower.9\ht0\hbox to 0pt{\hss$\m@th#1\smile$\hss}%
  \kern\wd0}
\def\dotminussym#1#2{%
  \setbox0=\hbox{$\m@th#1-$}%
  \kern.5\wd0%
  \hbox to 0pt{\hss\hbox{$\m@th#1-$}\hss}%
  \raise.6\ht0\hbox to 0pt{\hss$\m@th#1.$\hss}%
  \kern.5\wd0}
\newcommand{\dotminus}{\mathbin{\mathpalette\dotminussym{}}}
\def\dotlesym#1#2{%
  \setbox0=\hbox{$\m@th#1-$}%
  \kern.5\wd0%
  \hbox to 0pt{\hss\hbox{$\m@th#1\le$}\hss}%
  \raise 1\ht0\hbox to 0pt{\hss$\m@th#1.$\hss}%
  \kern.5\wd0}
\newcommand{\dotle}{\mathbin{\mathpalette\dotlesym{}}}
\begin{document}

\title{Using Ultrapowers to Compare Continuous Structures}

\author{H. Jerome Keisler}

\address{University of Wisconsin-Madison, Department of Mathematics, Madison,  WI 53706-1388}
\email{keisler@math.wisc.edu}

\date{\today}

\begin{abstract}
In 1967 the author introduced a pre-ordering of all first order complete theories where T is lower than U if it is
easier for an ultrapower of a model of $T$ than an ultrapower of a model of $U$ to be saturated.
In a long series of recent papers, Malliaris and Shelah showed that this pre-ordering is very rich and gives a useful way of classifying simple theories.
In this paper we investigate the analogous pre-ordering in continuous model theory.
\end{abstract}

\maketitle

\setcounter{tocdepth}{2}

\tableofcontents

\section{Introduction}

The results in this paper bring together two research projects that began in the 1960's,  the so-called ``Keisler order'' $\lek$, and continuous model theory.
Both of these projects were mostly dormant for four decades, and then suddenly
blossomed into active and highly successful programs during the last decade.

In the paper [Ke67], the relation $\lek$ was introduced and proposed as a way of classifying  first order theories\footnote{In this paper, ``theory'' always means ``complete theory''.}
 with possibly different countable vocabularies.
Informally, $T\lek U$ if  is it is easier for an ultrapower of a model of $T$ than an ultrapower of a model of $U$ to be saturated.  Formally,  $T\lek U$
means that for every cardinal $\lambda$, every  regular ultrafilter $\cu D$ over a set of cardinality $\lambda$, and every model $\cu M$ of $T$ and $\cu N$ of $U$, if the ultrapower $\cu N_{\cu D}$ is $\lambda^+$-saturated
then so is the ultrapower $\cu M_{\cu D}$.  $T$ and $U$ are $\lek$-equivalent if $T\lek U$ and $U\lek T$.  The relation $\lek$ is obviously transitive.
It is shown in [Ke67] that $T\lek T$, so $\lek$ is a pre-ordering  on the set of all first order theories.
In a remarkable series of recent papers,
Malliaris and Shelah clarified the behavior of $\lek$ on first order theories.  They showed that $\lek$ gives a
useful classification of simple theories, that is so rich that it reveals $2^{\aleph_0}$ kinds of simple theories with countable vocabularies.

The modern treatment of continuous model theory deals with metric structures.
A surprisingly large part of first order model theory can be generalized to the continuous case.
Many notions and results in first order model theory have analogues for metric structures, including ultrapowers and $\kappa$-saturated structures.
In recent years, the model theory of metric structures has had exciting mathematical applications in analysis.

In this paper we study the analogue of the relation $\lek$ on the class of metric theories (complete theories of metric structures).
Some care is needed in our terminology, because we will be comparing metric theories with possibly different vocabularies.

Metric structures are defined as follows.
By  a vocabulary $V$ we mean a countable set of constant, function,  predicate symbols.   A continuous formula in  $V$
is built from atomic formulas using continuous functions on the real unit interval $[0,1]$ as connectives, and the quantifiers $\inf$ and $\sup$.
A \emph{real-valued structure with vocabulary} $V$ (briefly, a \emph{$V$-structure}) is like a first order structure except that the formulas take truth values in $[0,1]$, with $0$ representing truth.
A \emph{metric signature} $L$ over  $V$ specifies a distinguished binary predicate symbol $d\in V$, called the distance predicate of $L$, and a uniform continuity bound for each function and predicate symbol in $V$.
An \emph{$L$-metric structure} is a $V$-structure  $\cu M$ in which the distance predicate of $L$ is a complete metric and all the function and predicate symbols of $V$ respect the
bounds of uniform continuity.
First order structures with equality are just $L$-metric structures where the distance predicate of $L$ is the discrete metric and all atomic formulas have truth values in $\{0,1\}$.
We  define a \emph{metric structure} to be a real-valued structure $\cu M$ that is an $L$-metric structure for some metric signature $L$.

We next define what we mean by a metric theory.
By a $V$-\emph{sentence} we mean a continuous sentence in the vocabulary $V$.
Given an $L$-metric structure $\cu M$ where $L$ is a metric signature over a vocabulary $V$, the \emph{theory of} $\cu M$ is the set $\Th(\cu M)$ of all $V$-sentences
that have truth value $0$  in $\cu M$.
We say that $T$ is a \emph{metric theory} if $T=\Th(\cu M)$ for some metric structure $\cu M$.  If $\cu M$ is a metric structure and $T=\Th(\cu M)$, we call $\cu M$ a \emph{metric model} of $T$.

We say that a regular ultrafilter $\cu D$ over a set $I$ of cardinality $\lambda$ \emph{saturates} a metric structure $\cu M$ if the ultrapower $\cu M_\cu D$ is $\lambda^+$-saturated.
For two metric theories $T, U$ with possibly different  vocabularies, we write $T\lek U$ if  for every metric model $\cu M$ of $T$ and $\cu N$ of $U$, every regular ultrafilter
that saturates $\cu N$ saturates $\cu M$.  We will prove the following results.

 \begin{itemize}
\item[A.] Theorem \ref{t-equiv-triangle}: For any metric theory $T$ (with a countable vocabulary), we have $T\lek T$.  This means that for any two metric models $\cu M, \cu N$ of $T$,
$\cu M$ and $\cu N$ are saturated by the same regular ultrafilters.
It follows that $\lek$ is a pre-ordering (reflexive and transitive), and that
 $T\lek U$ if and only if there exist metric models $\cu M$ of $T$ and $\cu N$ of $U$ such that every ultrafilter that saturates $\cu N$ saturates $\cu M$.
\end{itemize}

 Note that  from the set of sentences $\Th(\cu M)$ one can recover the vocabulary $V$ but not the metric signature $L$.
In fact, it is possible for a metric theory $T$ in the sense of this paper to have many metric models with different  distance predicates in $V$.
By Result A, one can and should view $\lek$ as a relation between metric theories.

\begin{itemize}
\item[B.] Theorem \ref{t-stable-vs-unstable}: For metric theories $T$ and $U$, if $T$ is stable and $U$ is unstable then $T\lk U$.
\item[C.] Theorem \ref{t-stable-notmin}:  There are just two $\lek$-equivalence classes of stable metric theories, the $\lek$-minimal theories and the others.
\item[D.]  Theorem \ref{t-SOP2}: If $T$ satisfies the metric analogue of the property SOP$_2$ then $T$ is $\lek$-maximal among all metric theories.
\item[E.] Theorem \ref{t-rg-minimal}: The first order theory $T_{rg}$ of the random graph is $\lek$-minimal among all unstable metric theories.
\item[F.]  Let $T^R_{rg}$ be the randomization of $T_{rg}$.  (A model of $T^R_{rg}$ is built from a model of $T_{rg}$
by replacing elements by random elements).  Theorem  \ref{t-min-rgR}:
$T^R_{rg}$ is $\lek$-minimal among all metric theories with  TP$_2$, the tree property of the second kind.  Moreover,
$T_{rg}\lk T^R_{rg}$, and $T^R_{rg}$ is $\lek$-equivalent to the first order theory called $T^*_{feq}$.
\end{itemize}

These results are summarized in Figure 1.   The increasing direction in the pre-ordering is  toward the upper right.
The lower left quadrant of Figure 1 has the stable theories, the right half has the $\lek$-maximal theories, and the top half has
the theories with the independence property (IP).  Every unstable theory has either IP or SOP$_2$.
It is open whether or not $T^R_{rg}$ is $\lek$-maximal, but Figure 1 is drawn as if it is not.
The results A--C were announced in the author's plenary lecture at the March, 2020 meeting of the Association for Symbolic Logic, which was cancelled by the covid-19 pandemic.

\begin{figure}
\begin{center}
\setlength{\unitlength}{1mm}
\begin{picture}(100,100)(0,10)
\put(10,10){\line(1,0){80}}
\put(10,10){\line(0,1){80}}
\put(10,90){\line(1,0){80}}
\put(90,10){\line(0,1){80}}
\put(10,50){\line(1,0){40}}
\dottedline[.]{2}(52,50)(88,50)
\put(50,10){\line(0,1){80}}
\dottedline[.]{2}(52,88)(88,52)
\put(10,90){\line(1,-1){40}}
\put(10,50){\line(1,-1){40}}
\put(60,65){\makebox(0,0){SOP$_2$}}
\put(75,75){\makebox(0,0){?}}
\put(20,26){\makebox(0,0){min}}
\put(20,20){\makebox(0,0){stable}}

\put(35,35){\makebox(0,0){stable}}
\put(35,40){\makebox(0,0){$\neg$min}}
\put(35,84){\makebox(0,0){TP$_2$}}
\put(35,80){\makebox(0,0){unstable}}
\put(25,66){\makebox(0,0){$\neg$TP$_2$}}
\put(25,62){\makebox(0,0){unstable}}
\put(30,56){\makebox(0,0){$2^{\aleph_0}$ classes}}
\put(14,54){\makebox(0,0){$T_{rg}$}}
\put(10,50){\oval(17,17)[tr]}
\put(30,70){\oval(12,12)[tr]}
\put(24,76){\line(1,0){6}}
\put(36,64){\line(0,1){6}}

\put(32,72){\makebox(0,0){$T^R_{rg}$}}
\put(70,30){\makebox(0,0){SOP$_2$}}
\put(30,95){\makebox(0,0){$\neg$max}}
\put(70,95){\makebox(0,0){max}}
\put(100,70){\makebox(0,0){IP}}
\put(100,30){\makebox(0,0){$\neg$IP}}
\end{picture}
\end{center}
\caption{The $\lek$ order on metric theories}
\end{figure}
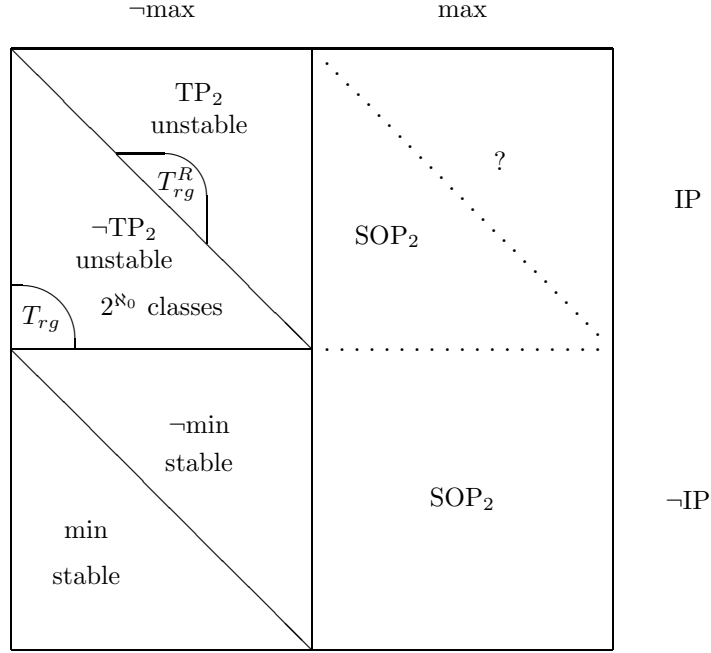

Historically, continuous model theory for real-valued structures was introduced by C. C. Chang and the author in 1966 in the monograph [CK66].  Four decades later, the 2008 paper [BBHU08] introduced the modern treatment of continuous model theory for metric structures.
In [CK66] the real-valued structures had an equality predicate.  In the current treatment, the real-valued structures do not have equality, and are the continuous analogue of
first order structures without equality.  Metric structures are the continuous analogue  of first order structures with equality, with the  distance predicate playing the
role of equality.

The ultraproduct construction for first order structures was introduced by \Los \ [Lo55].  Frayne, Morel and Scott [FMS62] used ultraproducts modulo regular ultrafilters.
The first order analogue of the result A appeared in Keisler [Ke67], and the first order analogues of B and C appeared in Shelah [Sh78].
For the next three decades, there was no further progress on the relation $\lek$ on first order theories until a breakthrough by Malliaris in [Ma09].
In that paper, Malliaris  introduced notions, such as the distribution of a set of formulas in an ultraproduct, that were key to
subsequent work on the $\lek$-order.  The main result in [Ma09]
showed that an ultrapower of a first order structure modulo a regular $\cu D$ is $\lambda^+$-saturated if, for every first order formula $\varphi$, the ultrapower is $\lambda^+$-saturated for $\varphi$-types.

The first order analogues of the results D, E, and F appeared in the following papers.
D: Malliaris and Shelah [MS16a]; E: Malliaris [Ma12]; and F: Malliaris and Shelah [MS13].
Those results show that the relation $\lek$ on first order theories also looks like Figure 1 (with $T^*_{feq}$ instead of $T^R_{rg}$).  The proofs of the first order results required the construction of ultrafilters with properties like being good.
Kunen's method of constructing ultrafilters via independent families of sets played a key role.
Good ultrafilters were introduced in [Ke64] and constructed assuming the generalized continuum hypothesis (GCH). In 1972, Kunen [Ku72]
used independent families of sets to construct good ultrafilters without the GCH.

The work of Malliaris and Shelah involved two parallel lines of research: the construction of ultrafilters with special set-theoretic properties,
and the interplay between the $\lek$ relation and model-theoretic properties of first order theories.

The results in this paper are obtained by extending the second of the above lines of research to metric theories.
Along the way we obtain other results
about metric structures that may be of interest in their own right.  Stability  and indiscernible sequences in metric theories
have been studied extensively in many papers (see Section \ref{s-stable} below).
However, as by-products of his proof of the results B and C above for first order theories, Shelah in [Sh78] proved many additional results about indiscernible sequences and the finite cover property,
which are generalized to the continuous case here.
Metric theories with the property TP$_2$  and the independence property have been studied in the paper [BY13].  The notion of an SOP$_2$ metric theory here may be new.

In the continuous model theory literature, a metric signature is usually held fixed and the ultraproduct is introduced for families of metric structures with the
same metric signature.
To prove the results in this paper we will need a more general  ultraproduct construction, for families of real-valued structures with the same vocabulary.
That leads us to consider the $\lek$ relation between theories of arbitrary real-valued structures, rather than just between metric theories.

It turns out that the pre-ordering $\lek$ is essentially the same in that case.
For a real-valued structure $\cu M$ with vocabulary $V$, $\Th(\cu M)$ is again defined as the set of all $V$-sentences with truth value $0$ in $\cu M$,
and we call $\Th(\cu M)$ a continuous theory with vocabulary $V$.  We have
$$ \{\mbox{first order theories}\}\subsetneqq\{\mbox{metric theories}\}\subsetneqq \{\mbox{continuous theories}\}.$$
We define the relation $T\lek U$ for continuous theories in the same way as for metric theories.
Theorem 3.6 below is a stronger version of result A above, showing that $T\lek T$ for every continuous theory $T$, so any two  real-valued models $\cu M, \cu N$  of $T$
(metric or not) are saturated by the same regular ultrafilters.  Theorem 5.1 below will show that every continuous theory is $\lek$-equivalent to a metric theory.  We may therefore
identify the $\lek$-equivalence classes of continuous theories with the $\lek$-equivalence classes of metric theories in a way that preserves the $\lek$ relation.

We thank Isaac Goldbring, James Hanson, and Maryanthe Malliaris for valuable discussions related to this work.

\section{Preliminaries on continuous model theory}

We refer to [BBHU08] and [BU09] for background material on metric and premetric structures.  See the Introduction above for the notion of a vocabulary $V$ and a metric signature $L$ over $V$.

\begin{convention}
Throughout this paper,  $V$ will denote a countable vocabulary.
To insure that the collection of all vocabularies is a set rather than a proper class, we also
require that the elements of a vocabulary are hereditarily finite sets.
\end{convention}

By a \emph{real-valued structure}  $\cu M$  with vocabulary $V$ and universe set $M$ (briefly, a $V$-structure)
we mean an object that has a non-empty universe set $M$, and interpretations:
\begin{itemize}
\item $c^{\cu M}\in M$ for each constant symbol $c\in V$,
\item $F^{\cu M}\colon M^n\to M$ for each $n$-ary function symbol $F\in V$,
\item $P^{\cu M}\colon M^n\to[0,1]$ for each $n$-ary predicate symbol $P\in V$.
\end{itemize}

We say that $\cu M$ is a real-valued structure if $\cu M$ is a $V$-structure for some $V$.

Let $L$ be a metric signature over $V$ with  distance predicate $d\in V$.  An \emph{$L$-premetric structure}  is a $V$-structure $\cu M$  such that
the interpretation $d^\cu M$  is a pseudo-metric on $M$, and for each predicate or function symbol $S\in V$,
the interpretation $S^{\cu M}$ respects the modulus of uniform continuity given by $L$.
An \emph{$L$-metric structure} is an $L$-premetric structure such that the interpretation $d^\cu M$ is a complete metric on $M$.
We say that $\cu M$ is a \emph{premetric structure} if there exists a metric signature $L$ such that $\cu M$ is an $L$-premetric structure.
 Similarly for metric structures.\footnote{In [BBHU08], premetric structures are called ``prestructures'', and metric structures are called ``structures''.
 In [BU09], vocabularies are called ``non-metric signatures''.  In  [Ke21], real-valued structures are called ``general structures''.}

The notion of an atomic formula is the same as in first order logic.
We assume familiarity with the notions of a continuous formula, a continuous sentence, and the truth value $\varphi^{\cu M}(\vec a)$ of a continuous formula $\varphi$
in a metric structure $\cu M$ at a tuple $\vec a$ in $M$.
By a $V$-formula we mean a continuous formula built from symbols in $V$.
The notion of truth value of a $V$-formula is exactly the same for arbitrary real-valued structures as for metric structures.
We write $\varphi(\vec v)$ to indicate that all the free variables of $\varphi$ are in $\vec v$, and use similar notation for sets of formulas.
A set  $\Phi(\vec v)$ of $V$-formulas  is said to be \emph{satisfiable in} $\cu M$ if there exist a $|\vec v|$-tuple $\vec a$ in M
such that $\varphi^{\cu M}(\vec a)=0$ for all $\varphi(\vec v)\in\Phi(\vec v)$.
A mapping $h\colon \cu M\to\cu N$ is an \emph{elementary embedding} if for every tuple $\vec a$ in $M$ and every  $V$-formula $\varphi(\vec v)$ we have
$\varphi^\cu M(\vec a)=\varphi^\cu N(h(\vec a)).$
The  elementary equivalence relation $\cu M\equiv\cu N$ and the elementary extension relation $\cu M\prec\cu N$
are defined as in [BBHU08], and are applied to arbitrary $V$-structures as well as to metric structures.

When a vocabulary $V$ is clear from the context, $V$-formulas will simply be called formulas.

Given a $V$-structure $\cu M$  and a (possibly uncountable) set $A\subseteq M$, we let $V_A=V\cup\{c_a\colon a\in A\}$ be the set formed by adding to $V$ a new constant
symbol $c_a$ for each element $a\in A$. $\cu M_A$ denotes the structure obtained from $\cu M$ by interpreting
 $c_a$ by $a$ for each $a\in A$.  The constant symbols $c_a, a\in A$, are called \emph{parameters from $A$}, or \emph{parameters from $\cu M$}.
By a formula  with parameters from $A$
we mean a formula obtained from a formula without parameters by replacing some of the free variables by parameters in $A$.
We say that a set $\Gamma$ of formulas with parameters from $A$ is \emph{satisfiable in} $\cu M$ if $\Gamma$ is  satisfiable in $\cu M_A$,
and \emph{finitely satisfiable in} $\cu M$ if every finite subset of $\Gamma$ is satisfiable in $\cu M_A$.

Recall from the Introduction that for a $V$-structure $\cu M$, the theory of $\cu M$ is  the set $\Th(\cu M)$ of all $V$-sentences $\varphi$
such that $\varphi^{\cu M}=0$.   Thus $Th(\cu M)=Th(\cu N)$ if and only if $\cu M\equiv\cu N$.
Also,  $T$ is a \emph{continuous theory} if $T=\Th(\cu M)$ for some real-valued structure $\cu M$,
and $T$ is a \emph{metric theory} if $T=\Th(\cu M)$ for some metric structure $\cu M$.

\begin{rmk}  \label{r-express}
 For each metric signature $L$ with distance predicate $d$, there is a set of $V$-sentences that expresses the fact that $d$ is a pseudo-metric and each function and predicate has
the required modulus of uniform continuity.
\end{rmk}

\begin{fact}  \label{f-premetric-metric}   \footnote{When we state a fact without giving a reference, the result can be found in [BBHU08].}
For every $L$-premetric structure $\cu M$, there is an $L$-metric structure $\cu N$,  the \emph{completion} of $\cu M$, such that $\cu N\equiv\cu M$, so $\Th(\cu M)$ is an $L$-metric theory.
\end{fact}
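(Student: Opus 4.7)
The approach has two stages: first quotient out the distance-zero equivalence to obtain a genuine metric structure, then complete that metric space and extend the interpretations by uniform continuity.

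For Stage 1, I would define $a \sim b$ on $M$ by $d^\cu M(a,b) = 0$. Symmetry and the triangle inequality for the pseudo-metric $d^\cu M$ make $\sim$ an equivalence relation. The uniform continuity bounds in $L$ force each symbol to descend to $M/{\sim}$: if $a_i \sim b_i$ for all $i$, then the modulus of uniform continuity for $F$ and $P$ gives $d^\cu M(F^\cu M(\vec a), F^\cu M(\vec b))=0$ and $|P^\cu M(\vec a)-P^\cu M(\vec b)|=0$, so $F^\cu M(\vec a)\sim F^\cu M(\vec b)$ and $P$ is well-defined on classes. The resulting $V$-structure $\cu M/{\sim}$ is an $L$-premetric structure whose distance predicate is a genuine metric, and an easy induction on $V$-formulas shows $\varphi^{\cu M/\sim}([\vec a])=\varphi^\cu M(\vec a)$, so $\cu M/{\sim}\equiv\cu M$.

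For Stage 2, I would let $N$ be the metric completion of $M/{\sim}$. Each function and predicate symbol of $V$ is uniformly continuous on $M/{\sim}$, hence extends uniquely to a uniformly continuous map on $N$ with the same modulus; constants are interpreted as their images under the inclusion $M/{\sim}\hookrightarrow N$. This produces an $L$-metric structure $\cu N$. To obtain $\cu N\equiv\cu M/{\sim}$, prove by induction on $\varphi(\vec v)$ that $\varphi^\cu N$ is the unique uniformly continuous extension of $\varphi^{\cu M/\sim}$ from $(M/{\sim})^{|\vec v|}$ to $N^{|\vec v|}$. Atomic cases and the cases for continuous connectives and term composition are immediate from the construction. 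In the $\inf$ and $\sup$ step, the induction hypothesis supplies a modulus of uniform continuity for $\varphi$, and density of $M/{\sim}$ in $N$ then gives
\[
\inf_{n\in N}\varphi^\cu N(n,\vec b)=\inf_{m\in M/\sim}\varphi^{\cu M/\sim}(m,\vec b)
\]
for $\vec b$ in $M/{\sim}$, and similarly for $\sup$. Applied to sentences, this yields $\cu N\equiv\cu M/{\sim}\equiv\cu M$, so $\Th(\cu M)=\Th(\cu N)$ is an $L$-metric theory.

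The main obstacle, and the engine for both stages, is the uniform-continuity lemma for arbitrary $V$-formulas: every $\varphi(\vec v)$ has a modulus of uniform continuity built from the moduli declared by $L$ and the (automatic) uniform continuity of each connective on the compact cube $[0,1]^k$. Once this lemma is proved by induction on formula complexity, well-definedness of the extended interpretations in Stage 2 and the preservation of quantifier values under completion follow simultaneously, completing the construction of $\cu N$.
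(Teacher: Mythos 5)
Your proof is correct and takes essentially the same route as the source the paper cites: the paper gives no proof of this Fact, pointing instead to [BBHU08], and the argument there is exactly your two-stage construction — quotient by the pseudometric's null relation (the paper's notion of reduction, cf.\ Fact~\ref{f-reduced-premetric}), then complete the resulting metric space and extend all interpretations by uniform continuity, with the engine being the uniform continuity of every $\varphi^{\cu M}$ in a premetric structure (the paper's Fact~\ref{f-truth-value}).
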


\begin{cor}  \label{c-metric-theory-char}
A $V$-theory $T$ is an $L$-metric theory if and only if $T$ contains the set of sentences mentioned in Remark \ref{r-express}.
\end{cor}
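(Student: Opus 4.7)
The plan is to verify both implications by a direct appeal to the preceding Remark and Fact, with no serious technical work required. Let $\Sigma_L$ denote the set of $V$-sentences provided by Remark \ref{r-express}, so that $\Sigma_L$ asserts exactly that the interpretation of $d$ is a pseudo-metric and that every function and predicate symbol of $V$ respects its prescribed modulus of uniform continuity.

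For the forward direction, suppose $T = \Th(\cu M)$ for some $L$-metric structure $\cu M$. Since $\cu M$ is $L$-metric, $d^\cu M$ is in particular a pseudo-metric (indeed a complete metric) on $M$, and every symbol of $V$ respects its modulus from $L$. Each sentence in $\Sigma_L$ therefore has truth value $0$ in $\cu M$, which means $\Sigma_L \subseteq \Th(\cu M) = T$.

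For the converse, assume $T$ is a $V$-theory with $\Sigma_L \subseteq T$. By hypothesis $T$ is a continuous theory, so fix a real-valued $V$-structure $\cu M$ with $T = \Th(\cu M)$. The inclusion $\Sigma_L \subseteq T$ says precisely that $d^\cu M$ is a pseudo-metric and that each symbol of $V$ respects the modulus prescribed by $L$, so $\cu M$ is an $L$-premetric structure. By Fact \ref{f-premetric-metric}, the completion $\cu N$ of $\cu M$ is an $L$-metric structure with $\cu N \equiv \cu M$, hence $\Th(\cu N) = \Th(\cu M) = T$, so $T$ is an $L$-metric theory.

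There is essentially no obstacle here: the corollary is a packaging of Remark \ref{r-express} with Fact \ref{f-premetric-metric}. The only point worth flagging is that $\Sigma_L$ only enforces the \emph{pseudo}-metric and uniform continuity conditions, not completeness of $d$; this mild gap is exactly what is closed by passing to the metric completion via Fact \ref{f-premetric-metric}, which preserves elementary equivalence and therefore the theory.
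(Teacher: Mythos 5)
Your proposal is correct and matches what the paper intends: the paper states Corollary \ref{c-metric-theory-char} without proof, expecting the reader to combine Remark \ref{r-express} (which supplies the axiom set $\Sigma_L$) with Fact \ref{f-premetric-metric} (completion of a premetric structure), and that is exactly what you do, including the observation that $\Sigma_L$ captures only the premetric conditions and that completeness is recovered by passing to the elementarily equivalent completion.
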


We say that $\cu M$ is a \emph{model of} a set of sentences $U$, in symbols $\cu M\models U$, if $\varphi^{\cu M}=0$ for all
$\varphi\in U$.  Thus $\cu M$ is a model of $Th(\cu M)$.  If $T=\Th(\cu M)$ and $\cu M\models U$, we also write $T\models U$.
We write $\cu M\models \varphi$, or $\cu M\models\varphi(\vec a)=0$, if $\varphi^\cu M(\vec a)=0$. We write $\cu M\models\varphi(\vec a)\le r$ if $\varphi^\cu M(\vec a)\le r$.

\begin{cor}  \label{c-model-premetric}  If $T$ is an $L$-metric theory, then every model of $T$  is an $L$-premetric structure.
\end{cor}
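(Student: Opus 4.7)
The plan is to derive this as a direct consequence of Corollary \ref{c-metric-theory-char} together with Remark \ref{r-express}. The key point is that the two conditions defining an $L$-premetric structure, namely that $d^{\cu N}$ is a pseudo-metric and that every function and predicate symbol of $V$ respects the modulus of uniform continuity specified by $L$, are exactly the properties encoded by the set of $V$-sentences $\Sigma_L$ mentioned in Remark \ref{r-express}.

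First I would fix an arbitrary real-valued $V$-structure $\cu N$ with $\cu N\models T$. By Corollary \ref{c-metric-theory-char}, since $T$ is an $L$-metric theory we have $\Sigma_L\subseteq T$. Therefore $\varphi^{\cu N}=0$ for every $\varphi\in\Sigma_L$. By the defining property of $\Sigma_L$ asserted in Remark \ref{r-express}, this is precisely the statement that $d^{\cu N}$ is a pseudo-metric on $N$ and each function and predicate symbol $S\in V$ respects the modulus of uniform continuity prescribed by $L$. By definition this makes $\cu N$ an $L$-premetric structure, which is what we need.

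There is no real obstacle here; the work is entirely in Remark \ref{r-express}, which guarantees that uniform continuity with respect to a fixed modulus, and the pseudo-metric axioms for $d$, can each be encoded by $V$-sentences (using the $\sup$ quantifier to turn the $\forall \varepsilon>0\, \exists \delta>0$ style statements into countably many continuous sentences indexed by rational tolerances). Once Remark \ref{r-express} is granted, the corollary is immediate: any model of $T$ must in particular satisfy $\Sigma_L$, hence is $L$-premetric. The only thing worth emphasizing in the write-up is that $\cu N$ is not required to be metric (complete) or even premetric a priori, since $\cu N$ is just assumed to be a real-valued $V$-structure; it is precisely the content of $\Sigma_L\subseteq \Th(\cu N)$ that upgrades it to an $L$-premetric structure.
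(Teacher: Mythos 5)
Your proof is correct and is essentially the argument the paper has in mind; the paper's one-line proof ``By Remark \ref{r-express}'' is exactly the observation you spell out, that $T$ contains the set of sentences $\Sigma_L$ from Remark \ref{r-express}, so any model of $T$ satisfies them and is thus $L$-premetric. Routing through Corollary \ref{c-metric-theory-char} to get $\Sigma_L\subseteq T$ is a harmless elaboration of the same step.
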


\begin{proof}  By Remark \ref{r-express}.
\end{proof}

\begin{fact}  \label{f-truth-value}
Suppose $\cu M$ is an $L$-premetric structure and $L$ has distance predicate $d$.   For any continuous formula $\varphi(\vec v)$ in the vocabulary of $\cu M$,
$\varphi^{\cu M}$ is a  mapping from $M^{|\vec v|}$ into $[0,1]$ that is uniformly continuous with respect to $(M,d^{\cu M})$.
\end{fact}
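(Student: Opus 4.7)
The plan is to prove Fact \ref{f-truth-value} by induction on the complexity of the formula $\varphi(\vec v)$, in fact by first proving an auxiliary claim about terms and then handling atomic formulas, connectives, and quantifiers in turn. Throughout, ``uniformly continuous with respect to $(M,d^{\cu M})$'' means with respect to the product pseudo-metric $d^{\cu M}_{\max}(\vec a,\vec b)=\max_i d^{\cu M}(a_i,b_i)$ on $M^{|\vec v|}$, and the inductive hypothesis I would carry is slightly stronger: $\varphi^{\cu M}$ admits a \emph{modulus} of uniform continuity, i.e.\ a function $\Delta_\varphi\colon(0,1]\to(0,1]$ such that $d^{\cu M}_{\max}(\vec a,\vec b)\le\Delta_\varphi(\epsilon)$ implies $|\varphi^{\cu M}(\vec a)-\varphi^{\cu M}(\vec b)|\le\epsilon$.

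First I would handle terms by induction on term complexity. Variables are trivially uniformly continuous (identity), constant symbols give constant mappings, and for a term $F(t_1,\dots,t_n)$ the inductive hypothesis gives moduli for the $t_i^{\cu M}$ with values in $(M,d^{\cu M})$, while the premetric signature supplies a modulus of uniform continuity for $F^{\cu M}$ with respect to $d^{\cu M}$ on both the domain and codomain. Composing these yields a modulus for $t^{\cu M}$. The base case of the formula induction, atomic formulas $P(t_1,\dots,t_n)$ (including $d(t_1,t_2)$ itself), then follows by composing the uniformly continuous term interpretations with $P^{\cu M}$, whose modulus with respect to $d^{\cu M}$ on the input side and the standard metric on $[0,1]$ on the output side is guaranteed by the metric signature $L$.

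For the connective step, if $\varphi=u(\varphi_1,\dots,\varphi_n)$ with $u\colon[0,1]^n\to[0,1]$ continuous, I use that $u$ is automatically uniformly continuous on the compact cube $[0,1]^n$, so it has a modulus $\Delta_u$. Given $\epsilon$, picking $\delta$ so that $\max_i\Delta_{\varphi_i}(\Delta_u(\epsilon))$ works gives the required modulus for $\varphi^{\cu M}$. For the quantifier step, suppose $\varphi(\vec v)=\sup_x\psi(x,\vec v)$ (the $\inf$ case is dual). The inductive hypothesis gives a modulus $\Delta_\psi$ for $\psi^{\cu M}$ on $M\times M^{|\vec v|}$. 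For $\vec a,\vec b\in M^{|\vec v|}$ with $d^{\cu M}_{\max}(\vec a,\vec b)\le\Delta_\psi(\epsilon)$, the pair $(x,\vec a),(x,\vec b)$ has max-distance $\le\Delta_\psi(\epsilon)$ for every fixed $x\in M$, so $|\psi^{\cu M}(x,\vec a)-\psi^{\cu M}(x,\vec b)|\le\epsilon$. Taking $\sup_x$ of both sides preserves the inequality $|\varphi^{\cu M}(\vec a)-\varphi^{\cu M}(\vec b)|\le\epsilon$, so $\Delta_\psi$ restricted to the $\vec v$-coordinates also serves as a modulus for $\varphi$.

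The only mildly tricky point is ensuring that the quantifier step does not require anything beyond the modulus inherited from $\psi$; in particular, $\sup_x$ over the (possibly unbounded, not locally compact) universe $M$ is harmless because the uniform modulus $\Delta_\psi$ is independent of the choice of $x$. I don't anticipate any real obstacle, since each step uses only the definitions of $L$-premetric structure (which supplies moduli for the symbols of $V$) and the elementary fact that continuous functions on the compact cube $[0,1]^n$ are uniformly continuous. The induction yields a modulus of uniform continuity for $\varphi^{\cu M}$ depending only on $\varphi$ and $L$ (not on the particular premetric structure), which is the content of the fact.
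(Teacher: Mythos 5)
The paper states this as a ``Fact'' and, per its footnote convention, defers the proof to [BBHU08]; it gives no argument of its own. Your proof is correct and is essentially the standard inductive argument found there: moduli of uniform continuity propagate through term composition, atomic formulas, continuous connectives (which are automatically uniformly continuous on the compact cube $[0,1]^n$), and $\sup/\inf$ quantifiers via the observation that $|\sup_x f(x)-\sup_x g(x)|\le\sup_x|f(x)-g(x)|$, so the modulus in the remaining variables is inherited unchanged.
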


\begin{df}  \label{d-sat}
We say that a $V$-structure $\cu M$ is $\kappa$-saturated if for every set $A\subseteq M$ of cardinality $|A|<\kappa$, every set of  $V$-formulas
with parameters from $A$ that is finitely satisfiable in $\cu M$ is satisfiable in $\cu M$.
\end{df}

By a \emph{strict continuous formula} we mean a formula built from atomic formulas in finitely many
steps using only the quantifiers $\sup,\inf$, and the connectives $0,1,\min,\max, \cdot /2, \dotminus$ (where  $x\dotminus y=\max(x-y,0)$).

\begin{rmk} \label{r-dotminus}
\noindent\begin{itemize}
\item $|x-y|=\max(x\dotminus y,y\dotminus x).$
\item $x\dotminus y=0$ if and only if $x\le y$.
\item $\cu M\models\theta\le r$ if and only if $\cu M\models\theta\dotminus r$.
\item We sometimes use the alternate notation $\theta\dotle\psi$ for the formula $\theta\dotminus \psi$.
\end{itemize}
\end{rmk}

\begin{rmk}  \label{r-strict}
 The set of strict $V$-formulas is countable.
\end{rmk}

\begin{fact}  \label{f-strict-approx}  (See Theorem 6.3 in [BBHU08]).
For each $V$-formula $\varphi(\vec x)$ and $\varepsilon>0$ there is a strict $V$-formula $\theta(\vec x)$ such that
$|\varphi^\cu M(\vec x)-\theta^\cu M(\vec x)|<\varepsilon$ for every $\vec x$ in every $V$-structure $\cu M$.
\end{fact}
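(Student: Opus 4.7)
The plan is a structural induction on $\varphi$, with the non-trivial ingredient being a uniform density statement about the allowed connectives on $[0,1]^k$.

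\emph{Approximation lemma (key step).} For every $k\ge 1$, let $\cu S_k$ denote the set of functions $[0,1]^k\to[0,1]$ built from the $k$ coordinate projections and the constants $0,1$ by iterated application of $\min,\max,\cdot/2,\dotminus$. I claim $\cu S_k$ is uniformly dense in $C([0,1]^k,[0,1])$. First, $\cu S_k$ contains truncated addition, since $\min(x+y,1)=1\dotminus((1\dotminus x)\dotminus y)$, and hence contains the average $(x+y)/2=\min(x/2+y/2,1)$ (halving first makes the truncation vacuous). Iterating halving and these truncated sums, $\cu S_k$ contains all dyadic rational constants in $[0,1]$ and all dyadic affine combinations of the projections clipped to $[0,1]$. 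Since $\cu S_k$ is also a sublattice of $C([0,1]^k,\BR)$ that separates points, the Kakutani--Stone lattice form of the Stone--Weierstrass theorem gives density; equivalently, the continuous piecewise-linear functions with dyadic breakpoints are uniformly dense in $C([0,1]^k,\BR)$, and post-composition with the $1$-Lipschitz clip to $[0,1]$ preserves uniform approximation.

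Granted the lemma, the induction on $\varphi$ is routine. The atomic case is trivial since atomic formulas are strict. For the quantifier case, if $\varphi=\sup_y\psi$ (or $\inf_y\psi$) and $|\psi^{\cu M}(\vec x,y)-\theta^{\cu M}(\vec x,y)|<\varepsilon$ uniformly in $\vec x$, $y$, and $\cu M$, the same bound transfers to $\sup_y\theta$ (resp.\ $\inf_y\theta$) because $\sup$ and $\inf$ are $1$-Lipschitz in the sup norm. For the connective case, $\varphi(\vec x)=u(\psi_1(\vec x),\ldots,\psi_k(\vec x))$ for some continuous $u:[0,1]^k\to[0,1]$. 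Given $\varepsilon>0$, choose $\delta>0$ by uniform continuity of $u$ so that arguments within $\delta$ produce values within $\varepsilon/2$; by the inductive hypothesis, pick strict $\theta_i$ with $|\psi_i^{\cu M}-\theta_i^{\cu M}|<\delta$ uniformly over all $\vec x$ and all $V$-structures $\cu M$; by the approximation lemma, pick $u'\in\cu S_k$ with $\sup_{[0,1]^k}|u-u'|<\varepsilon/2$. Then $\theta:=u'(\theta_1,\ldots,\theta_k)$ is a strict $V$-formula and the triangle inequality gives $|\varphi^{\cu M}-\theta^{\cu M}|<\varepsilon$.

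The main obstacle is the approximation lemma. The subtle point there is to verify that $\cdot/2$ together with $\dotminus$ manufactures enough affine structure (namely, dyadic combinations clipped to $[0,1]$) to feed into a density theorem; once that is done, the rest reduces to Stone--Weierstrass in its lattice form. Uniformity of the final bound across all $V$-structures is then automatic, since both the approximation of $u$ by $u'$ on $[0,1]^k$ and (inductively) the approximations of $\psi_i$ by $\theta_i$ are uniform over their entire domains, so the composite bound does not depend on $\cu M$.
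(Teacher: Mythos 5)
The paper gives no proof of this Fact; it is stated with a citation to Theorem 6.3 of [BBHU08], and your argument --- a lattice (Kakutani--Stone) form of Stone--Weierstrass to show that the connectives $0,1,\min,\max,\cdot/2,\dotminus$ generate a uniformly dense sublattice of $C([0,1]^k,[0,1])$, followed by a structural induction on formulas that passes the uniform bound through connectives (using uniform continuity plus density) and through quantifiers (using that $\sup$ and $\inf$ are $1$-Lipschitz) --- is essentially the standard proof underlying that reference. Two minor points of hygiene you would want to tidy in a write-up: in the quantifier step, $|\psi-\theta|<\varepsilon$ pointwise only yields $|\sup_y\psi-\sup_y\theta|\le\varepsilon$, so run the inductive hypothesis at $\varepsilon/2$; and the claim that clipped dyadic affine functions give the two-point interpolation condition needed for the lattice Stone--Weierstrass theorem deserves one explicit sentence, since it is the only place where the specific generators $\cdot/2$ and $\dotminus$ are actually exploited.
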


\begin{lemma}  \label{l-strict-sat}  If $\kappa>\aleph_0$, then a $V$-structure $\cu M$ is $\kappa$-saturated if and only if  every set of fewer than $\kappa$ formulas
with parameters from $M$ that is finitely satisfiable in $\cu M$ is satisfiable in $\cu M$.
\end{lemma}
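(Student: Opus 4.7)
The plan is to treat the two directions separately, the forward being essentially immediate and the backward requiring a reduction to strict formulas.

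For the forward direction, suppose $\cu M$ is $\kappa$-saturated and let $\Gamma$ be a finitely satisfiable set of fewer than $\kappa$ formulas with parameters from $M$. Let $A\subseteq M$ be the set of parameters actually appearing in some formula of $\Gamma$. Since each formula has only finitely many parameters and $\kappa>\aleph_0$, one has $|A|\le\max(|\Gamma|,\aleph_0)<\kappa$, so Definition~\ref{d-sat} applies to $(A,\Gamma)$ and yields that $\Gamma$ is satisfiable.

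For the backward direction, suppose the cardinality-bounded hypothesis. Fix $A\subseteq M$ with $|A|<\kappa$ and a finitely satisfiable set $\Gamma(\vec v)$ of formulas with parameters from $A$; a priori $|\Gamma|$ is unbounded, because the supply of continuous connectives is uncountable. The idea is to replace $\Gamma$ by an equivalent family $\Gamma'$ built from strict formulas. Using Fact~\ref{f-strict-approx}, for each $\varphi\in\Gamma$ and $k\ge 1$ pick a strict $V$-formula $\theta_{\varphi,k}$ with $|\theta_{\varphi,k}^{\cu N}(\vec x)-\varphi^{\cu N}(\vec x)|<2^{-k}$ for every $\vec x$ in every $V$-structure $\cu N$, and set
$$\Gamma'=\{\,\theta_{\varphi,k}\dotminus 2^{-k}\,:\,\varphi\in\Gamma,\ k\ge 1\,\}.$$
Since $2^{-k}$ is obtained from $1$ by $k$ applications of $\cdot/2$, each member of $\Gamma'$ is strict, and by Remark~\ref{r-strict} the number of strict $V$-formulas with parameters from $A$ is at most $\aleph_0+|A|$, giving $|\Gamma'|<\kappa$.

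I would then verify two properties. First, $\Gamma'$ is finitely satisfiable in $\cu M$: any $\vec x$ witnessing a finite $\Gamma_0\subseteq\Gamma$ forces $\theta_{\varphi,k}^{\cu M}(\vec x)<2^{-k}$, hence $(\theta_{\varphi,k}\dotminus 2^{-k})^{\cu M}(\vec x)=0$, for every $(\varphi,k)$ with $\varphi\in\Gamma_0$. Second, any realization of $\Gamma'$ realizes $\Gamma$: if $\vec x$ realizes $\Gamma'$ then $\theta_{\varphi,k}^{\cu M}(\vec x)\le 2^{-k}$ for each $k$, so the approximation bound gives $\varphi^{\cu M}(\vec x)\le 2^{-k+1}$ for each $k$, forcing $\varphi^{\cu M}(\vec x)=0$. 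Applying the hypothesis to $\Gamma'$ produces the required realization.

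The only conceptual obstacle is recognizing why Definition~\ref{d-sat} and the statement of the lemma are not literally the same: in Definition~\ref{d-sat} the set $\Gamma$ can have arbitrary cardinality (limited only by the uncountable stock of continuous connectives), whereas in the lemma $|\Gamma|$ is bounded. The gap is bridged precisely by the countability of the strict fragment together with Fact~\ref{f-strict-approx}.
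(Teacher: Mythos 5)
Your proof is correct and follows essentially the same approach as the paper's: the forward direction is the easy parameter-count observation, and the backward direction replaces each formula by a family of strict approximations $\theta_{\varphi,k}\dotminus 2^{-k}$, using Remark \ref{r-strict} to bound the number of strict formulas with parameters from $A$ by $|A|+\aleph_0<\kappa$ and Fact \ref{f-strict-approx} for the transfer of (finite) satisfiability. The paper's proof is extremely terse and leaves precisely the construction of $\Gamma'$ and the verification of its two properties implicit; you have simply supplied those details.
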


\begin{proof}  By Remark \ref{r-strict}, there are countably many strict $V$-formulas without parameters.  Hence there are $|A|+\aleph_0$ strict $V$-formulas with parameters from $A$.
So if $|A|<\kappa$ then there are fewer than $\kappa$ strict $V$-formulas with parameters from $A$.  The result now follows from Fact \ref{f-strict-approx}.
\end{proof}

In [BBHU08], the ultraproduct of an indexed family of $L$-metric structures with the same metric signature $L$ modulo an ultrafilter is defined, and it is proved
to again be an $L$-metric structure.  In this paper, we will need the slightly more general notion of an ultraproduct of an indexed family of $V$-structures
with the same vocabulary $V$. We will use the following notion of  reduction  of $V$-structures from the paper [Ke21].

\begin{df}  \label{d-reduced}
For $a,b\in M$, we write $a\doteq^{\cu{M}} b$ if for every atomic formula $\varphi(x,\vec{z})$ and tuple $\vec{c}\in M^{|\vec z|}$, $\varphi^{\cu{M}}(a,\vec{c})=\varphi^{\cu{M}}(b,\vec{c}).$
A $V$-structure $\cu{M}$ is \emph{reduced} if whenever $a\doteq^\cu{M} b$ we have $a=b$.
\end{df}

The relation $\doteq^{\cu{M}}$ goes back to Leibniz around 1840,
The \emph{reduction map} for $\cu M$ is the mapping that sends each element of $M$ to its equivalence class under $\doteq^{\cu M}$.
The \emph{reduction} of the $V$-structure $\cu{M}$ is the reduced $V$-structure $\cu{N}$ such that $N$ is the set of equivalence classes of elements of
$M$ under $\doteq^{\cu{M}}$, and the reduction map for $\cu{M}$ is an elementary embedding of $\cu M$ onto $\cu N$.
We say that $\cu{M},\cu{M}'$ are \emph{isomorphic}, in symbols $\cu {M}\cong\cu{M}'$, if there is an elementary embedding from the reduction of $\cu{M}$ onto the reduction of $\cu{M}'$.
 The following fact
gives a useful characterization of reduced $V$-structures $\cu M$ in the case that $\cu M$ is a premetric structure.

\begin{fact}  \label{f-reduced-premetric} (See Theorem 3.7 in [BBHU08]). Let $\cu M$ be an  $L$-premetric structure and let $d$ be the  distance predicate of $L$.
$\cu M$ is reduced if and only if for all $x,y\in M$, if  $d^{\cu M}(x,y)=0$ then $x=y$.
\end{fact}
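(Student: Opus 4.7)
The plan is to prove the two directions of this iff separately. The key inputs I would use are that the distance predicate $d$ is itself an atomic symbol of $V$ (so $d(v,z)$ is an atomic formula), together with Fact \ref{f-truth-value}, which guarantees that every atomic formula's interpretation is uniformly continuous with respect to $d^\cu M$.

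For the forward direction, suppose $\cu M$ is reduced and $d^\cu M(x,y)=0$; I aim to conclude $x=y$. By the definition of reduced, it suffices to establish $x \doteq^\cu M y$, i.e., that $\varphi^\cu M(x,\vec c) = \varphi^\cu M(y,\vec c)$ for every atomic $\varphi(v,\vec z)$ and every tuple $\vec c$ of parameters from $M$. Because $\varphi^\cu M$ is uniformly continuous coordinatewise with respect to $d^\cu M$, and the two input tuples $(x,\vec c)$ and $(y,\vec c)$ differ only in one coordinate, where the pseudo-distance is zero, the two outputs are forced to be equal. Hence $x \doteq^\cu M y$, and reducedness yields $x=y$.

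For the backward direction, assume $d^\cu M(x,y)=0 \Rightarrow x=y$, and suppose $a \doteq^\cu M b$. Applying the definition of $\doteq^\cu M$ to the specific atomic formula $\varphi(v,z) \equiv d(v,z)$ with parameter $z := b$ gives $d^\cu M(a,b) = \varphi^\cu M(a,b) = \varphi^\cu M(b,b) = d^\cu M(b,b) = 0$, the last equality coming from $d$ being interpreted as a pseudo-metric on $M$. The hypothesis then delivers $a=b$, so $\cu M$ is reduced.

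The main subtlety to flag lies in the forward direction: atomic formulas may involve nested terms built from function symbols of $V$, not just a single predicate applied to variables. Propagating pseudo-distance zero through such nested terms requires the uniform continuity of every function symbol in $V$, not merely of $d$. This is exactly what Fact \ref{f-truth-value} encapsulates, so appealing to that fact rather than re-running an induction on term complexity keeps the argument compact; no genuinely new ingredient is needed.
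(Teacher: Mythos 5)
Your proof is correct. The paper itself gives no argument for this Fact (it is cited as Theorem 3.7 of [BBHU08]), so there is no in-paper proof to compare against; but your argument is the standard one that one would extract from [BBHU08]. Both directions are sound: in the forward direction, Fact~\ref{f-truth-value} gives that every atomic formula is interpreted as a function uniformly continuous with respect to $d^{\cu M}$, so two tuples whose product pseudo-distance is $0$ must receive equal truth values, which yields $x\doteq^{\cu M}y$ and hence $x=y$ by reducedness. In the backward direction, instantiating the defining condition of $\doteq^{\cu M}$ at the atomic formula $d(v,z)$ with parameter $b$ gives $d^{\cu M}(a,b)=d^{\cu M}(b,b)=0$ (using that $d^{\cu M}$ is a pseudo-metric in an $L$-premetric structure), and the hypothesis finishes. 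Your closing remark that the uniform-continuity appeal quietly subsumes the induction over nested terms built from function symbols is exactly the right thing to flag; nothing is missing.
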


\begin{rmk}  \label{r-reduction}
\noindent\begin{itemize}
\item $\cong$ is an equivalence relation on $V$-structures.
\item Every $V$-structure is isomorphic to its reduction.
\item  If there is an elementary embedding of $\cu{M}$ \emph{onto} $\cu{N}$, then $\cu{M}\cong\cu{N}$.
\item $\cu{M}\cong\cu{N}$ implies $\cu{M}\equiv\cu{N}$.
\item A $V$-structure is $\kappa$-saturated if and only if its reduction is $\kappa$-saturated.
\end{itemize}
\end{rmk}

\begin{rmk}  \label{r-sat-metric}  Every reduced $\aleph_1$-saturated $L$-premetric structure $\cu M$ is an $L$-metric structure.
\end{rmk}

\begin{proof} By Fact \ref{f-reduced-premetric}, $(M,d^\cu M)$ is a metric space.  Completeness of the metric follows easily from $\aleph_1$-saturation.
\end{proof}

\medskip

\begin{convention} $\lambda$ will always denote an infinite cardinal, and $I$ will always be a set of cardinality $\lambda$.
When working with ultraproducts, we will use the notation $\langle r[t]\rangle_{t\in I}$ for a function $r$ with domain $I$.
\end{convention}

This convention will help keep things straight in the ultraproduct construction, by using $t$ as a variable over $I$ and leaving the symbol $i$ as a variable ranging over the natural numbers.

\begin{df}  \label{d-ultraroduct}
Let $\cu D$ be an ultrafilter over  $I$.  For each function $r\colon I\to[0,1]$, $\lim_{\cu D} r$ is the unique element $s\in[0,1]$
such that for each real $\varepsilon>0$, the set
$\{t\in I\colon |r[t]-s|<\varepsilon\}$ belongs to $\cu D$.
\end{df}

The following definition is taken from [Ke21].

\begin{df} \label{d-ultraproduct} Let $\cu{D}$ be an ultrafilter  over a set $I$ and $\cu{M}_t$ be a $V$-structure for each $t\in I$.
The \emph{pre-ultraproduct} $\prod^{\cu{D}}\cu{M}_t$ is the $V$-structure $\cu{M}'=\prod^{\cu{D}}\cu{M}_t$  such that:
\begin{itemize}
\item $M'=\prod_{t\in I} M_t$, the cartesian product.
\item For each constant symbol $c\in V$, $c^{\cu{M}'}=\langle c^{\cu{M}_t}\rangle_{t\in I}$.
\item For each $n$-ary function symbol $G\in V$ and $n$-tuple $\vec{a}=\langle \vec{a}[t]\rangle_{t\in I}$ in $M'$,
$$G^{\cu{M}'}(\vec{a})=\langle G^{\cu{M}_t}(\vec{a}[t])\rangle_{t\in I}.$$
\item For each $n$-ary predicate symbol $P\in V$ and $n$-tuple $\vec{a}$ in $M'$,
$$P^{\cu{M}'}(\vec{a})=\lim_{\cu{D}}\langle P^{\cu{M}_t}(\vec{a}[t])\rangle_{t\in I}.$$
\end{itemize}

The \emph{ultraproduct} $\prod_{\cu{D}}\cu{M}_t$ is the reduction of the pre-ultraproduct $\prod^{\cu{D}}\cu{M}_t$.
For each $a\in M'$ we also let $a_{\cu{D}}$ denote the equivalence class of $a$ under $\doteq^{\cu{M}'}$.
\end{df}

\begin{rmk} It follows from Fact \ref{f-reduced-premetric} that if $\cu M_i$ is  an $L$-metric structure with the same $L$ for each $i\in I$,
then $\prod_{\cu{D}}\cu{M}_t$ is exactly the ultraproduct of the metric structures $\cu M_i$ as defined in [BBHU08].
\end{rmk}

 The next fact is the analogue for $V$-structures of the fundamental theorem of \Los.  It is proved in the same way as the corresponding result for metric structures, Theorem 5.4 in [BBHU08].

\begin{fact} \label{f-Los-2} (\Los \ Theorem).
 Let $\cu{M}_t$ be a $V$-structure for each $t\in I$, let $\cu{D}$ be an ultrafilter over $I$, and let
$\cu{M}=\prod_{\cu{D}}\cu{M}_t$ be the ultraproduct.  Then for each formula $\varphi$ and tuple $\vec{b}$ in the cartesian product $\prod_{t\in I} M_t$,
$$\varphi^{\cu{M}}(\vec{b}_{\cu{D}})=\lim_{\cu{D}}\langle\varphi^{\cu{M}_t}(\vec{b}[t])\rangle_{t\in I}.$$
\end{fact}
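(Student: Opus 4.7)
The plan is to proceed by induction on the complexity of $\varphi$, first proving the identity on the pre-ultraproduct $\cu M' := \prod^{\cu D}\cu M_t$ and then transferring to its reduction $\cu M = \prod_{\cu D}\cu M_t$. Since Definition \ref{d-ultraproduct} (via the definition of reduction) makes the reduction map $b \mapsto b_{\cu D}$ an elementary embedding of $\cu M'$ onto $\cu M$, we have $\varphi^{\cu M}(\vec b_{\cu D}) = \varphi^{\cu M'}(\vec b)$ for every formula $\varphi$; it therefore suffices to establish
\[\varphi^{\cu M'}(\vec b) = \lim_{\cu D}\langle \varphi^{\cu M_t}(\vec b[t])\rangle_{t \in I}\]
for every formula $\varphi$ and every tuple $\vec b \in \prod_{t \in I} M_t = M'$.

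For the base case, a preliminary induction on the construction of $V$-terms shows $\tau^{\cu M'}(\vec b) = \langle \tau^{\cu M_t}(\vec b[t])\rangle_{t \in I}$, which follows immediately from the interpretations of constant and function symbols in Definition \ref{d-ultraproduct}. The atomic case then reads off directly from the predicate-symbol clause of the same definition. The connective step is routine: each continuous connective $u\colon [0,1]^n \to [0,1]$ commutes with $\lim_{\cu D}$ on $[0,1]^n$ (by uniform continuity of $u$ on the compact cube, any $\varepsilon$-control on the arguments transfers to the values), which combined with the inductive hypothesis on the subformulas closes the case.

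The quantifier case, say $\varphi(\vec y) = \sup_x \psi(x,\vec y)$ (with $\inf$ symmetric), is the only step with real content and is the main obstacle. Since the universe of $\cu M'$ is the full Cartesian product, the inductive hypothesis yields
\[\varphi^{\cu M'}(\vec b) = \sup_{a \in \prod_t M_t}\lim_{\cu D}\langle\psi^{\cu M_t}(a[t],\vec b[t])\rangle_{t \in I},\]
and we must identify this with $\lim_{\cu D}\langle \sup_{a' \in M_t} \psi^{\cu M_t}(a',\vec b[t])\rangle_{t \in I}$. The $\le$ direction is immediate by coordinatewise comparison together with monotonicity of $\lim_{\cu D}$. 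For $\ge$, given $\varepsilon > 0$, invoke the axiom of choice to pick $a[t] \in M_t$ with $\psi^{\cu M_t}(a[t],\vec b[t])$ within $\varepsilon$ of $\sup_{a' \in M_t}\psi^{\cu M_t}(a',\vec b[t])$ for each $t$; the resulting $a$ produces a value on the left-hand side within $\varepsilon$ of the right-hand side, and letting $\varepsilon \to 0$ gives equality. The only departure from the classical first order proof is that the witnessing element is now assembled coordinatewise using approximate choices, which mirrors the proof of the Łoś theorem for metric structures in [BBHU08].
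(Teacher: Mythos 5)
Your proof is correct and is exactly the standard argument the paper points to: the paper gives no proof of Fact \ref{f-Los-2} beyond the remark that "it is proved in the same way as the corresponding result for metric structures, Theorem 5.4 in [BBHU08]," which is an induction on formulas with the quantifier step handled by coordinatewise approximate witnesses, just as you do. Your added preliminary reduction to the pre-ultraproduct (exploiting that the reduction map is, by Definition \ref{d-reduced}, an elementary embedding) is a clean and harmless organizational variant that matches how the paper itself prefers to work with pre-ultraproducts throughout Section \ref{s-general}.
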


\begin{cor}  \label{f-ultraproduct-isomorphic}  Suppose $\cu D$ is an ultrafilter over $I$, and for each $t\in I$, $\cu M_t, \cu N_t$ are $V$-structures.

(i) If  $\cu M_t\cong\cu N_t$
for each $t\in I$, then $\prod_{\cu D}\cu M_t\cong\prod_{\cu D}\cu N_t$.

(ii)  If  $\cu M_t\equiv\cu N_t$
for each $t\in I$, then $\prod_{\cu D}\cu M_t\equiv\prod_{\cu D}\cu N_t$.
\end{cor}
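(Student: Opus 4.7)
Part (ii) will follow directly from the \Los \ Theorem (Fact \ref{f-Los-2}): if $\cu M_t\equiv\cu N_t$ for each $t$, then $\varphi^{\cu M_t}=\varphi^{\cu N_t}$ for every $V$-sentence $\varphi$ and every $t$, so
$$\varphi^{\prod_{\cu D}\cu M_t}=\lim_{\cu D}\langle\varphi^{\cu M_t}\rangle_{t\in I}=\lim_{\cu D}\langle\varphi^{\cu N_t}\rangle_{t\in I}=\varphi^{\prod_{\cu D}\cu N_t},$$
yielding $\prod_{\cu D}\cu M_t\equiv\prod_{\cu D}\cu N_t$.

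For part (i), the plan is to construct an elementary embedding $H\colon\prod^{\cu D}\cu M_t\to\prod^{\cu D}\cu N_t$ between the pre-ultraproducts that is ``surjective modulo $\doteq$,'' and then show it descends to an isomorphism between the reductions. For each $t$, the hypothesis $\cu M_t\cong\cu N_t$ unpacks to an elementary embedding from the reduction of $\cu M_t$ onto the reduction of $\cu N_t$; composing with the reduction map of $\cu M_t$ and selecting representatives in $N_t$ yields a function $h_t\colon M_t\to N_t$ that is an elementary embedding and has the property that every $b\in N_t$ satisfies $b\doteq^{\cu N_t}h_t(a)$ for some $a\in M_t$. Define $H$ coordinatewise by $H(\langle a[t]\rangle_t)=\langle h_t(a[t])\rangle_t$. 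Applying \Los \ termwise, $H$ is elementary, and the surjectivity-modulo-$\doteq$ property lifts from the factors to $H$: given $\vec c\in\prod_t N_t$, pick $a[t]$ with $h_t(a[t])\doteq^{\cu N_t}c[t]$ for each $t$, and \Los \ applied to atomic formulas gives $H(\vec a)\doteq^{\prod^{\cu D}\cu N_t}\vec c$.

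The main obstacle is showing that the map $[\vec a]_\doteq\mapsto[H(\vec a)]_\doteq$ is well-defined from $\prod_{\cu D}\cu M_t$ to $\prod_{\cu D}\cu N_t$: I need $\vec a\doteq\vec b$ in $\prod^{\cu D}\cu M_t$ to imply $H(\vec a)\doteq H(\vec b)$ in $\prod^{\cu D}\cu N_t$. Since the tuples witnessing $\doteq$ in $\prod^{\cu D}\cu N_t$ need not lie in the image of $H$, elementarity of $H$ alone is insufficient; but the surjectivity-modulo-$\doteq$ property of $H$ lets me replace any such witness $\vec c$ by some $H(\vec a')$ with $H(\vec a')\doteq^{\prod^{\cu D}\cu N_t}\vec c$, after which elementarity of $H$ closes the argument. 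Once well-definedness is in hand, elementarity and surjectivity of the induced map $\bar H\colon\prod_{\cu D}\cu M_t\to\prod_{\cu D}\cu N_t$ are immediate from the corresponding properties of $H$, giving the desired isomorphism $\prod_{\cu D}\cu M_t\cong\prod_{\cu D}\cu N_t$.
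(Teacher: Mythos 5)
Your proof is correct. Part (ii) is indeed a direct application of the \Los{} Theorem, which is surely why the paper labels the statement a corollary without further argument. For part (i), the paper again gives no proof, and the argument you supply is the natural one: lift the factor-wise isomorphisms to an elementary map $H$ on pre-ultraproducts that is surjective modulo $\doteq$, and then descend to the reductions. The subtle point you flag is the real one — $\vec a\doteq\vec b$ in $\prod^{\cu D}\cu M_t$ does \emph{not} imply $\vec a[t]\doteq^{\cu M_t}\vec b[t]$ on a $\cu D$-large set (the atomic formula that fails can vary with $t$), so a purely factor-wise argument for well-definedness would break down. Your detour through surjectivity-mod-$\doteq$ (replace an arbitrary parameter tuple $\vec c$ by an $\doteq$-equivalent one in the image of $H$, then invoke elementarity of $H$ and the hypothesis $\vec a\doteq\vec b$) is exactly what is needed and closes the gap. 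One small point worth making explicit if you write this up: the ``selecting representatives'' map $\sigma$ from the reduction of $\cu N_t$ back to $\cu N_t$ is elementary precisely because the reduction map $\pi_N$ is an elementary embedding and $\pi_N\circ\sigma=\mathrm{id}$, so $\varphi^{\cu N_t}(\sigma(\vec x))=\varphi^{\mathrm{red}(\cu N_t)}(\pi_N\sigma(\vec x))=\varphi^{\mathrm{red}(\cu N_t)}(\vec x)$; this justifies your claim that $h_t$ is a composition of elementary maps.
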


\begin{fact}  \label{f-ultrapower-saturated}   Every ultraproduct of $L$-metric structures with the same $L$ is an $L$-metric structure.
\end{fact}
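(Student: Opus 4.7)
The plan is to split the proof into two parts: first verify that $\cu M = \prod_{\cu D}\cu M_t$ is an $L$-premetric structure in which $d^{\cu M}$ is a genuine metric, and then prove that this metric is complete.

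For the first part, I would invoke Remark \ref{r-express} to fix a set $\Sigma$ of $V$-sentences axiomatizing the property of being an $L$-premetric structure (the pseudo-metric axioms on $d$ together with the uniform continuity bounds imposed by $L$ on each function and predicate symbol of $V$). Each $\cu M_t$ is an $L$-metric structure and hence satisfies $\Sigma$, so by the \Los\ theorem (Fact \ref{f-Los-2}) the ultraproduct $\cu M$ satisfies $\Sigma$; Corollary \ref{c-metric-theory-char} then tells us $\cu M$ is an $L$-premetric structure. Since $\cu M$ is the reduction of the pre-ultraproduct, it is reduced by construction, so Fact \ref{f-reduced-premetric} upgrades the pseudo-metric $d^{\cu M}$ to a genuine metric.

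The main obstacle is completeness of this metric, which is not a (continuous) first order property and so cannot be obtained directly from \Los. If $\cu D$ is principal then $\cu M$ is isomorphic to one of the factors, so completeness is automatic; hence assume $\cu D$ is nonprincipal. Given a Cauchy sequence $(b_n)_{n\in\BN}$ in $\cu M$, pass to a subsequence with $d(b_n,b_{n+1})\le 2^{-n}$ and choose representatives $b_n=\langle b_n[t]\rangle_{t\in I}$ in the cartesian product. By \Los, each set
$$J_n=\{t\in I\colon d^{\cu M_t}(b_n[t],b_{n+1}[t])\le 2^{-n+1}\}$$
lies in $\cu D$, and hence so does $X_n=J_0\cap\cdots\cap J_n$. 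The key construction is a diagonal representative $c\in\prod_{t\in I}M_t$ for the candidate limit: on $\bigcap_n X_n$ the sequence $(b_n[t])$ is Cauchy in the complete metric structure $\cu M_t$, so set $c[t]=\lim_n b_n[t]$; on $X_n\setminus X_{n+1}$ set $c[t]=b_{n+1}[t]$; and set $c[t]$ arbitrarily off $X_0$. A routine triangle-inequality estimate valid on all of $X_m$, combined with \Los, then gives $d(b_m,c_{\cu D})\le 2^{-m+2}$ for every $m$, so $b_n\to c_{\cu D}$ in $\cu M$. The delicate point is choosing $c$ uniformly enough that the bound holds on a set in $\cu D$ while simultaneously handling coordinates that are only eventually in the ``nice'' sets $X_n$ and coordinates that are always there.
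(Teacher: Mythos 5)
The paper states this as a Fact citing [BBHU08] (via the footnote convention) and gives no proof of its own, so there is nothing internal to compare against. Your reconstruction is correct and follows the standard argument for this result: the pre\-ultraproduct satisfies the premetric axioms by \Los, the reduction has a genuine metric by Fact \ref{f-reduced-premetric}, and completeness is handled by the diagonal construction you describe. The one place to be slightly careful is the passage from $d^{\cu M}(b_n,b_{n+1})\le 2^{-n}$ to $J_n\in\cu D$: \Los\ only gives $\lim_{\cu D} d^{\cu M_t}(b_n[t],b_{n+1}[t])\le 2^{-n}$, so a priori you only know that $\{t: d^{\cu M_t}(b_n[t],b_{n+1}[t])<2^{-n}+\varepsilon\}\in\cu D$ for every $\varepsilon>0$; your deliberate slackening of the bound to $2^{-n+1}$ in the definition of $J_n$ is exactly what makes this work, and you should make that observation explicit. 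With that noted, the triangle-inequality estimate on $X_m$ and the final appeal to \Los\ are sound, and the partition of $I$ into $\bigcap_n X_n$, the annuli $X_n\setminus X_{n+1}$, and $I\setminus X_0$ covers all coordinates.
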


One of the main reasons that metric signatures were introduced in the definition of a metric structure was to insure that Fact \ref{f-ultrapower-saturated} holds.

In the case that all the $\cu M_t$ are the same, $\cu M_t=\cu M$ for all $t\in I$, the pre-ultraproduct and ultraproduct are called
the \emph{pre-ultrapower} and  \emph{ultrapower} respectively, and are denoted by $\cu M^{\cu D}$ and $\cu M_{\cu D}$.

\begin{cor} \label{c-natural-embedding}   For every $V$-structure $\cu M$ and ultrafilter $\cu D$ over $I$, the diagonal embedding
$a\mapsto I\times \{a\}$ is an elementary embedding of $\cu M$ into $\cu M^{\cu D}$.
\end{cor}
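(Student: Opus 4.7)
The plan is to derive the corollary directly from \L{}o\'s's theorem (Fact \ref{f-Los-2}) together with the fact that the reduction map from a pre-ultraproduct onto the corresponding ultraproduct is an elementary embedding, which is built into Definition \ref{d-reduced}. So this is not really a substantive argument; it is essentially a matter of unwinding definitions in the right order.

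First I would fix an arbitrary $V$-formula $\varphi(\vec v)$ and an arbitrary tuple $\vec a$ in $M$. Set $\vec b[t] = \vec a$ for every $t \in I$, so that $\vec b = I \times \{\vec a\}$ is precisely the image of $\vec a$ under the diagonal map viewed as an element of the cartesian product $M^I$, i.e., of the universe of the pre-ultrapower $\cu M^{\cu D}$. Applying Fact \ref{f-Los-2} to the constant family $\cu M_t = \cu M$ yields
$$\varphi^{\cu M_{\cu D}}(\vec b_{\cu D}) = \lim_{\cu D} \langle \varphi^{\cu M}(\vec a) \rangle_{t \in I} = \varphi^{\cu M}(\vec a),$$
the last equality because the sequence on the right is constant.

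Next I would invoke Definition \ref{d-reduced}: the ultrapower $\cu M_{\cu D}$ is, by definition, the reduction of the pre-ultrapower $\cu M^{\cu D}$, and the reduction map $\vec b \mapsto \vec b_{\cu D}$ is an elementary embedding of $\cu M^{\cu D}$ onto $\cu M_{\cu D}$. Hence
$$\varphi^{\cu M^{\cu D}}(\vec b) = \varphi^{\cu M_{\cu D}}(\vec b_{\cu D}) = \varphi^{\cu M}(\vec a),$$
which is exactly what it means for $a \mapsto I \times \{a\}$ to preserve the truth values of all $V$-formulas; distinct elements of $M$ clearly give distinct constant functions, so the map is an embedding in the literal sense as well.

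There is no genuine obstacle here: the only potential pitfall is the mild notational point that Fact \ref{f-Los-2} is stated for the reduced ultraproduct $\prod_{\cu D} \cu M_t$ rather than for the pre-ultraproduct $\prod^{\cu D}\cu M_t$, which is what the diagonal map actually lands in. That gap is closed by the elementarity of the reduction map, which is precisely the content of the last clause of Definition \ref{d-reduced}.
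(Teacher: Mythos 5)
Your argument is correct and matches the intended derivation: the paper states this as a corollary of the \L{}o\'s Theorem (Fact \ref{f-Los-2}) without a written proof, and what you have written is exactly the unwinding of definitions it relies on. You correctly handle the one genuinely delicate point, namely that \L{}o\'s is stated for the reduced ultraproduct $\cu M_{\cu D}$ while the corollary targets the pre-ultrapower $\cu M^{\cu D}$, by appealing to the elementarity of the reduction map from Definition \ref{d-reduced}; the closing remark about injectivity is harmless but unnecessary, since the paper's definition of elementary embedding requires only preservation of formula values.
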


\begin{df}
An ultrafilter $\cu D$ over $I$ is said to be \emph{regular} if  there is a subset $\cu X\subseteq \cu D$ of cardinality $\lambda$
such that each $t\in I$ belongs to only finitely many $X\in \cu X$.  We also say that $\cu X$ \emph{regularizes} $\cu D$.
\end{df}

\begin{fact} ([FMS62])
For each infinite set $I$ there exist regular ultrafilters over $I$.
\end{fact}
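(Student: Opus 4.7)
The plan is to construct a concrete regularizing family using finite subsets of $\lambda$. Write $|I|=\lambda$. Since $|[\lambda]^{<\omega}|=\lambda$ for infinite $\lambda$, it suffices, via a bijection, to produce a regular ultrafilter on the index set $J=[\lambda]^{<\omega}$. The regularizing family on $J$ essentially writes itself.

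For each $\alpha<\lambda$, set
\[ X_\alpha = \{\, s\in J : \alpha\in s\,\}, \]
and let $\cu X=\{X_\alpha : \alpha<\lambda\}$. I would first verify two properties. (a) Each $s\in J$ lies in only finitely many $X_\alpha$: indeed $s\in X_\alpha$ iff $\alpha\in s$, so $s$ belongs to exactly $|s|$ of the sets in $\cu X$. (b) $\cu X$ has the finite intersection property: for any $\alpha_1,\dots,\alpha_n<\lambda$, the element $\{\alpha_1,\dots,\alpha_n\}\in J$ lies in $X_{\alpha_1}\cap\cdots\cap X_{\alpha_n}$.

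Given (b), the standard ultrafilter theorem (Zorn's lemma applied to filters extending $\cu X$) produces an ultrafilter $\cu D$ on $J$ with $\cu X\subseteq \cu D$. Then (a) says that $\cu X$ regularizes $\cu D$, so $\cu D$ is a regular ultrafilter on $J$ of cardinality $\lambda$. Transporting $\cu D$ along any bijection $J\to I$ yields a regular ultrafilter on $I$, completing the argument.

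The only even mildly delicate point is the cardinality bookkeeping: one needs $|\cu X|=\lambda$ (clear, since $\alpha\mapsto X_\alpha$ is injective) and $|J|=\lambda$, so that the resulting ultrafilter lives on a set of the prescribed size $\lambda$. Everything else is the finite-intersection-property argument plus Zorn, so there is no genuine obstacle; this fact is really a bookkeeping observation about $[\lambda]^{<\omega}$ rather than a substantive theorem.
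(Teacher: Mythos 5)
Your construction is exactly the one sketched in the paper's hint: index over the set of finite subsets, take $X_\alpha$ to be the collection of finite sets containing $\alpha$, check point-finiteness and the finite intersection property, and extend to an ultrafilter. Correct, and the same route.
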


Hint: If $I$ is the set of all finite subsets of $J$, let $\cu X=\{X_j\colon j\in J\}$ where $X_j=\{Y\subseteq I\colon \{j\}\subseteq Y \} $.

\begin{rmk}  \label{r-regular-aleph-1}
Let $\cu D$ is a regular ultrafilter over $I$.
\begin{itemize}
\item[(i)]  If  $\cu M_t$ is a $V$-structure for each $t\in I$,
then the ultraproduct $\prod_{\cu D}\cu M_t$ is $\aleph_1$-saturated.
\item[(ii)]  If $\cu M_i$ is an $L$-premetric structure for each $i\in I$, then  the ultraproduct $\prod_{\cu D}\cu M_t$ is an $L$-metric structure.
\end{itemize}
\end{rmk}

\begin{proof} (i)  Theorem 6.1.1 of [CK12] gives the corresponding result in first order logic.
It is routine to modify the proof in [CK12] to obtain the result stated here.

(ii) By definition, $\prod_{\cu D}\cu M_t$ is a reduced structure.  By Remark \ref{r-express} and the \Los \ theorem, $\prod_{\cu D}\cu M_t$ is an $L$-premetric structure.  By Remark \ref{r-sat-metric},
$\prod_{\cu D}\cu M_t$ is $\aleph_1$-saturated.  Then by (i) above, $\prod_{\cu D}\cu M_t$ is an $L$-metric structure.
\end{proof}

By a \emph{two-valued structure}, or a \emph{first order structure without equality},  we mean a $V$-structure $\cu M$ such that for
every $n$-ary predicate symbol $P\in V$, $P^{\cu M}\colon M^n\to\{0,1\}$.
By a  \emph{first order structure} we mean a two-valued $L$-metric structure $\cu M$ such that:
\begin{itemize}
\item $L$ has a  distance predicate  $d$ and the trivial modulus of uniform continuity for each symbol in $V$.
\item $d^\cu M(x,y)$ is the discrete metric; $d^\cu M(x,y)=0$ if $x=y$, and $d^\cu M(x,y)=1$ otherwise.
\end{itemize}

By a \emph{first order formula} we mean a formula that is built from atomic formulas using only the quantifiers $\sup,\inf$, and the connectives $0,1,\min,\max, 1\dotminus x$.
Note that every first order formula is a strict continuous formula. A first order formula  is traditionally
written with the notation $ \forall, \exists, \top, \bot, \vee, \wedge, \neg$ instead of $\sup,\inf,0,1,\min,\max, 1\dotminus x$.
First order structures and formulas are sometimes called classical structures and formulas.

\begin{fact}  \label{f-cardinality-ultrapower}  ([FMS62].  See Proposition 4.3.7 of [CK12].)
If $\cu D$ is a regular ultrafilter over $I$ and $\cu M$ is an infinite first order structure, then $\cu M_\cu D$ has cardinality $|M|^{|I|}$.
\end{fact}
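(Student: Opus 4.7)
The plan is to prove both inequalities separately. The upper bound $|\cu M_\cu D| \le |M|^{|I|}$ is immediate: the pre-ultrapower $\cu M^{\cu D}$ has universe $M^I$ of cardinality $|M|^{|I|}$, and $\cu M_\cu D$ is a quotient of it. The real content is the lower bound $|\cu M_\cu D| \ge |M|^{|I|}$, and this is where regularity is used in an essential way.

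For the lower bound, I would fix a regularizing family $\cu X = \{X_\alpha : \alpha < \lambda\} \subseteq \cu D$, so each $t \in I$ lies in only finitely many members of $\cu X$. Since $M$ is infinite, fix an injection $\sigma \colon M^{<\omega} \to M$. For each $t \in I$, let $S(t) = \{\alpha < \lambda : t \in X_\alpha\}$, which is finite, and write $S(t) = \{\alpha_1(t) < \cdots < \alpha_{n(t)}(t)\}$. Given any function $f \colon \lambda \to M$, define $\hat f \in M^I$ by
\[
\hat f(t) = \sigma\bigl(f(\alpha_1(t)), \ldots, f(\alpha_{n(t)}(t))\bigr).
\]
There are $|M|^\lambda = |M|^{|I|}$ such functions $f$, so it suffices to show that the map $f \mapsto (\hat f)_{\cu D}$ is injective into $\cu M_\cu D$.

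Suppose $f \neq g$ and pick $\alpha_0 < \lambda$ with $f(\alpha_0) \neq g(\alpha_0)$. For every $t \in X_{\alpha_0}$, we have $\alpha_0 \in S(t)$, so $\hat f(t)$ and $\hat g(t)$ are the images under the injection $\sigma$ of two finite tuples that differ at the coordinate corresponding to $\alpha_0$; hence $\hat f(t) \neq \hat g(t)$. Thus $\{t : \hat f(t) \neq \hat g(t)\} \supseteq X_{\alpha_0} \in \cu D$. Because $\cu M$ is a first order structure with the discrete metric $d^{\cu M}(x,y) \in \{0,1\}$, the Łoś theorem (Fact \ref{f-Los-2}) gives $d^{\cu M^{\cu D}}(\hat f, \hat g) = \lim_{\cu D} d^{\cu M}(\hat f(t), \hat g(t)) = 1$, so $\hat f$ and $\hat g$ are not identified in the reduction $\cu M_\cu D$ (by Fact \ref{f-reduced-premetric} and the definition of $\cu M_\cu D$). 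This provides the required injection.

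The argument has no real obstacle; the one spot that needs minor care is the existence of the injection $\sigma \colon M^{<\omega} \to M$, which uses that $M$ is infinite so that $|M^{<\omega}| = |M|$. Everything else is bookkeeping in the regularizing family together with a single application of Łoś.
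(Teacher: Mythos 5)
Your argument is correct. The paper simply cites this fact without proof (referring to [FMS62] and Proposition 4.3.7 of [CK12]), and your proof is essentially the standard one from [CK12]: the upper bound comes from the pre-ultrapower's universe $M^I$, and the lower bound codes each $f\colon\lambda\to M$ by $\hat f(t)=\sigma(f\restriction S(t))$ via a regularizing family and an injection $\sigma\colon M^{<\omega}\to M$, observing that $f\ne g$ forces $\hat f(t)\ne\hat g(t)$ on a set in $\cu D$. The one point that deserves the care you gave it is the last step — translating ``$\hat f(t)\ne\hat g(t)$ on a $\cu D$-large set'' into ``$\hat f_{\cu D}\ne\hat g_{\cu D}$'' — since in the continuous setting the ultrapower is the \emph{reduction} of the pre-ultrapower; your appeal to the discrete distance predicate, \L o\'s, and Fact \ref{f-reduced-premetric} handles this correctly.
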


The following lemma is well-known and easily checked, but is stated explicitly here for completeness.

\begin{lemma}  \label{l-classical}
Let $\cu M$ be a two-valued structure.
\begin{itemize}
\item[(i)]  For every first order formula $\varphi(\vec v)$ and $|\vec v|$-tuple $\vec a$ in $\cu M$,
$\varphi^{\cu M}(\vec a)\in\{0,1\}$.
\item[(ii)] Suppose $(\cu M,\vec a)$ and $(\cu M,\vec b)$ satisfy the same first order sentences. Then $(\cu M,\vec a)$ and $(\cu M,\vec b)$ satisfy the same
continuous sentences, that is, $(\cu M,\vec a)\equiv(\cu M,\vec b)$ as $V$-structures.
\item[(iii)] Suppose $\cu M$ is a first order structure that is $\kappa$-saturated in the sense of first order model theory.
Then $\cu M$ is $\kappa$-saturated as a $V$-structure.
\end{itemize}
\end{lemma}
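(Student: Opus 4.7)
The plan is to prove the three parts in order, with (i) as a quick warm-up and (ii) providing technical input that also drives (iii).

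For part (i), structural induction on first order formulas suffices. Atomic formulas take values in $\{0,1\}$ by the two-valued hypothesis. The connectives $0, 1, \min, \max, 1 \dotminus x$ restrict to maps $\{0,1\}^k \to \{0,1\}$, and the quantifiers $\sup_x$ and $\inf_x$ applied to a $\{0,1\}$-valued function again return values in $\{0,1\}$. The induction closes at once.

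For part (ii), the key step is a joint structural induction on strict continuous formulas $\theta(\vec v)$ establishing two things: (a) the range of $\theta^{\cu M}$ over all tuples is a finite set of dyadic rationals in $[0,1]$, and (b) for each value $r$ in that range, the level set $\{\vec a : \theta^{\cu M}(\vec a) = r\}$ is definable by a first order formula over the same parameters as $\theta$. Atomic and connective cases ($0, 1, \min, \max, \cdot/2, \dotminus$) reduce to arithmetic on finite dyadic sets, and first order definability is preserved by taking the disjunction over the finitely many tuples of constituent values yielding $r$. For the quantifier case, the inductive finiteness lets us rewrite $\sup_x \theta_1(x, \vec v) = r$ as the conjunction of $\exists x\, \theta_1(x, \vec v) = r$ with a finite disjunction expressing $\forall x\, \theta_1(x, \vec v) \leq r$; all pieces are first order by the inductive hypothesis. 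Once this is established, any two tuples $\vec a, \vec b$ in $\cu M$ satisfying the same first order sentences agree on every strict formula value, and Fact \ref{f-strict-approx} upgrades this to every $V$-sentence, giving $(\cu M, \vec a) \equiv (\cu M, \vec b)$ as $V$-structures.

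For part (iii), assume $\cu M$ is $\kappa$-saturated in the first order sense, take $A \subseteq M$ with $|A| < \kappa$, and let $\Phi(\vec v)$ be a finitely satisfiable set of $V$-formulas with parameters from $A$. For each $\varphi \in \Phi$ and each $n \geq 1$, choose a strict $\theta_{\varphi, n}(\vec v)$ with $|\varphi^{\cu M} - \theta_{\varphi, n}^{\cu M}| < 1/n$ uniformly on $\cu M$; then $\varphi^{\cu M}(\vec a) = 0$ iff $\theta_{\varphi, n}^{\cu M}(\vec a) < 2/n$ for every $n$. By the level-set definability from (ii), each condition ``$\theta_{\varphi, n}^{\cu M}(\vec v) < 2/n$'' is equivalent on $\cu M$ to a first order formula $\psi_{\varphi, n}(\vec v)$ with parameters from $A$. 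Let $\Psi = \{\psi_{\varphi, n} : \varphi \in \Phi, n \geq 1\}$. Then $\Psi$ has parameters from $A$, finite satisfiability passes from $\Phi$ to $\Psi$, and $\vec a$ satisfies $\Psi$ iff $\vec a$ satisfies $\Phi$. First-order $\kappa$-saturation of $\cu M$ realizes $\Psi$, giving the desired realization of $\Phi$.

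The main obstacle is the joint induction in (ii): one must track both finiteness of the range and first order definability of each level set, and in the quantifier case convert ``$\theta_1 \leq r$'' into a finite first order disjunction using the inductive finiteness. Everything else is routine bookkeeping.
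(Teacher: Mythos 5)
The paper does not actually supply a proof of this lemma; it is introduced with the remark ``The following lemma is well-known and easily checked, but is stated explicitly here for completeness,'' and no argument follows. So there is no paper proof to compare against. Your blind proof is correct and fills in the details that the author elides.

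A few remarks on your argument. Part (i) is exactly the routine induction one expects. The heart of your proof is the joint induction for (ii), showing that in a two-valued structure every strict formula $\theta(\vec v)$ has a finite set $S_\theta$ of possible (dyadic) values and that each level set $\{\vec a : \theta^{\cu M}(\vec a)=r\}$ is first order definable. The quantifier step is the only place that requires thought, and you handle it correctly: for fixed $\vec v$, the set $\{\theta_1^{\cu M}(x,\vec v): x\in M\}$ is a nonempty subset of the finite set $S_{\theta_1}$, so the supremum is attained, and $\sup_x \theta_1(x,\vec v)=r$ unpacks as $\bigl(\exists x\, \theta_1(x,\vec v)=r\bigr) \wedge \bigl(\forall x\, \bigvee_{s\in S_{\theta_1},\, s\le r}\theta_1(x,\vec v)=s\bigr)$, which is first order by the inductive hypothesis. (The phrase ``finite disjunction expressing $\forall x\, \theta_1(x,\vec v)\le r$'' should be read as: a universal quantification over a finite disjunction of level-set formulas.) With this in hand, agreement of $(\cu M,\vec a)$ and $(\cu M,\vec b)$ on first order sentences gives agreement on all strict formula values, and Fact \ref{f-strict-approx} promotes this to all continuous $V$-sentences. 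Part (iii) is then a clean reduction: replacing each condition $\varphi^{\cu M}(\vec v)=0$ in $\Phi$ by the countable family of first order conditions $\psi_{\varphi,n}$, one checks finite satisfiability transfers, first order $\kappa$-saturation realizes $\Psi$, and a realizer of $\Psi$ realizes $\Phi$. This is exactly the kind of routine verification the paper is alluding to by calling the lemma ``easily checked,'' and your write-up is a correct and reasonably economical way to do it.
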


If $\cu M$ is a first order structure with vocabulary $V$, we abuse notation by letting $\Th(\cu M)$ denote the set of all first order $V$-sentences true in $\cu M$,
rather than the set of continuous $V$-sentences true in $\cu M$.
By a \emph{first order theory} we mean the set $\Th(\cu M)$ where $\cu M$ is a first order structure.

\section{Comparing two continuous theories}  \label{s-general}

\begin{convention} For the rest of this paper, $\cu D$ will always denote a regular ultrafilter over a set $I$ of cardinality $\lambda$.
\end{convention}

We now define the relation $\lek$ on the  set of all continuous theories, which contains the set of all metric theories.
 The definitions will be the same as for in first order model theory, but using
the continuous notions of ultrapower and saturation.

\begin{df}  \label{d-order} Let  $\cu M$ be a $V$-structure.  We say that $\cu D$ \emph{saturates} $\cu M$
if the ultrapower $\cu M_\cu D$ is $\lambda^+$-saturated (or equivalently, the pre-ultrapower $\cu M^\cu D$ is $\lambda^+$-saturated).

Given a continuous theory $T$, we say that $\cu D$ \emph{saturates} $T$, in symbols $\cu D\in\Sat(T)$, if $\cu D$ saturates every model of $T$.
\end{df}

\begin{df}

Given continuous theories $T, U$  with possibly different vocabularies, we write $T\lek U$ if $\Sat(U)\subseteq\Sat(T)$.

We write $ T\lk U$ if $T\lek U$ but not $U\lek T$.

We say that $T, U$ are $\lek$-\emph{equivalent}, in symbols
$T\eqk U$, if $T\lek U$ and $U\lek T$.
 \end{df}

\begin{rmk} $T\eqk U$ if and only if $\Sat(T)=\Sat(U)$.
\end{rmk}

By Lemma \ref{l-classical},  for first order theories, the relations $T\lek U$ and $T\eqk U$
are the same as in [Ke67].

Note that every ultraproduct of $V$-structures  is reduced.  By definition, the reduction map is an elementary embedding
from the pre-ultraproduct onto the ultraproduct.  We will usually work with the pre-ultraproduct rather than with the ultraproduct.
One reason for this choice is that pre-ultrapowers of $V$-structures commute with expansions.  If $\cu M$ is a $V$-structure and $W$ is a subset of $V$,  the
\emph{$W$-part}\footnote{The $W$-part is called the $W$-reduct in first order model theory (see [CK12]).  We say $W$-part here because the word reduction is used in
a different way in continuous model theory.}
 of $\cu M$ is the $W$-structure $N$ with the same universe as $\cu M$ such that for each symbol $S\in W$, $S^{\cu N}=S^{\cu M}$.  If $\cu N$ is
the $W$-part of $\cu M$,  we say that
$\cu M$ is an \emph{expansion} of $\cu N$.  In that case, the pre-ultrapower $\cu N^\cu D$ is exactly the $W$-part of the pre-ultrapower $\cu M^\cu D$, while
the ultrapower $\cu N_\cu D$ is only the reduction of the $W$-part of the ultrapower $\cu M_\cu D$.
One reason that ultraproducts of $V$-structures will be useful in studying the $\lek$ relation on metric theories is that it allows us to take ultraproducts of arbitrary expansions
of metric structures to a larger vocabulary.

In the next section we will prove the following theorem, which generalizes a result in [Ke67] from first order structures to  real-valued structures.

\begin{thm}  \label{t-equiv-triangle}
For any continuous theory $T$, and hence for every metric theory $T$, we have $T\eqk T$.
\end{thm}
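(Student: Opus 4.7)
The plan is to show that for any two models $\cu M_1, \cu M_2 \models T$ and any regular ultrafilter $\cu D$ over $I$ of cardinality $\lambda$, the ultrapowers $\cu M_1^{\cu D}$ and $\cu M_2^{\cu D}$ are simultaneously $\lambda^+$-saturated. I would first dispose of the general case by amalgamation: since $\cu M_1 \equiv \cu M_2$, a routine compactness argument applied to the union of the elementary diagrams of $\cu M_1$ and $\cu M_2$ (in disjoint new constant symbols) produces a $V$-structure $\cu K$ with $\cu M_1 \prec \cu K$ and $\cu M_2 \prec \cu K$. Thus it suffices to prove the following key lemma: if $\cu M \prec \cu N$ are $V$-structures, then $\cu M^{\cu D}$ is $\lambda^+$-saturated if and only if $\cu N^{\cu D}$ is.

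For the key lemma I would translate the saturation problem into a combinatorial question about distributions in $\cu D$, via representing sequences. By Fact \ref{f-strict-approx}, Lemma \ref{l-strict-sat}, and Remark \ref{r-strict}, $\lambda^+$-saturation of an ultrapower is equivalent to the realizability of every finitely satisfiable set of at most $\lambda$ strict continuous conditions $\{\varphi_\alpha(x, \vec a_\alpha) \dotle r_\alpha : \alpha < \lambda\}$, with only countably many distinct formulas $\varphi_\alpha$ up to parameters. Lifting each $\vec a_\alpha$ to a representing sequence $\vec f_\alpha : I \to M^{|\vec y_\alpha|}$, the \Los \ theorem (Fact \ref{f-Los-2}) translates finite satisfiability into the membership of
$$A_{F,\varepsilon} \;=\; \{t \in I : \cu M \models \inf_x \max_{\alpha \in F}(\varphi_\alpha(x, \vec f_\alpha[t]) \dotminus r_\alpha) < \varepsilon\}$$
in $\cu D$ for every finite $F \subseteq \lambda$ and every positive rational $\varepsilon$, and translates realizability into the existence of a function $g : I \to M$ whose $\cu D$-class realizes all the conditions. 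So saturation reduces to a combinatorial property of the distribution $(A_{F, \varepsilon})$ in $\cu D$.

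The crucial point is that this combinatorial datum depends only on $\Th(\cu M)$: each $A_{F,\varepsilon}$ is determined by the types over $\emptyset$ of the finite subtuples $(\vec f_\alpha[t])_{\alpha \in F}$. Given a family of representing sequences $\{\vec f_\alpha : I \to N\}_{\alpha < \lambda}$ arising from $\cu N^{\cu D}$, I would use a regularizing family $\cu X = \{X_\alpha : \alpha < \lambda\} \subseteq \cu D$ to set $F_t = \{\alpha < \lambda : t \in X_\alpha\}$, which is finite for each $t$. Since $\cu M \prec \cu N$, the finite tuple $(\vec f_\alpha[t])_{\alpha \in F_t}$ realizes a type over $\emptyset$ in $\cu N$ that is also realized in $\cu M$; choose $(\vec g_\alpha[t])_{\alpha \in F_t}$ in $\cu M$ realizing the same type and set $\vec g_\alpha[t]$ arbitrarily for $\alpha \notin F_t$. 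The family $\{\vec g_\alpha : I \to M\}$ then produces the same distribution as the original modulo $\cu D$, so a realization in $\cu M^{\cu D}$ can be converted, by the symmetric pointwise type-replication using elementarity, into a realization in $\cu N^{\cu D}$; the reverse direction is analogous, pulling a realization in $\cu N^{\cu D}$ back to $\cu M^{\cu D}$ by matching the type of the realizing element at each $t$ over the (now already $\cu M$-valued) finite tuple $(\vec f_\alpha[t])_{\alpha \in F_t}$. The main obstacle is executing this type-replication coherently across all $\lambda$ indices and all $t \in I$; the finiteness of each $F_t$ provided by regularity of $\cu D$ is precisely what reduces the problem to pointwise choices and makes the argument go through in the continuous setting, where the real-valued formulas and strict-formula approximations require careful bookkeeping of the $\varepsilon$-thresholds.
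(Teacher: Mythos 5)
Your overall strategy---reducing $\lambda^+$-saturation of regular ultrapowers to a combinatorial condition on distributions in $\cu D$ and lifting parameters index-by-index using the finiteness of the sets $F_t$ supplied by regularity---is essentially the machinery the paper uses in Theorem \ref{t-lambda-triangle-ultraproduct}. But the step where you assert that the finite tuple $(\vec f_\alpha[t])_{\alpha\in F_t}$ in $\cu N$ realizes a type over $\emptyset$ that is also realized in $\cu M$ is false in continuous model theory, and this is exactly where the work lies. Even when $\cu M\prec\cu N$, a complete type over $\emptyset$ realized in $\cu N$ is in general only \emph{finitely approximately} realizable in $\cu M$: the $\inf$ quantifier provides approximate but not exact witnesses, which is the obstruction the paper points to at the start of Section 4. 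For the same reason the amalgamation reduction to $\cu M\prec\cu N$ buys nothing: elementarity carries types of $\cu M$-tuples into $\cu N$, not the reverse, and both directions of your key lemma end up needing to pull a parameter tuple or a realizing element from the $\cu N$-side down to the $\cu M$-side at some point. In first order logic, where the relevant finite-fragment type is a single formula and hence transfers exactly between elementarily equivalent structures, your pointwise matching is sound and is essentially Keisler's original proof; the continuous case is precisely where it breaks.

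You do flag at the end that the $\varepsilon$-thresholds need ``careful bookkeeping,'' but the proof as written supplies no mechanism for it, and without one the claim that the constructed family $\{\vec g_\alpha\}$ produces the \emph{same} distribution as $\{\vec f_\alpha\}$ modulo $\cu D$ is unjustified---approximate matching of the tuples changes the sets $A_{F,\varepsilon}$. The paper's fix is concrete: Lemma \ref{l-onestep} and the back-and-forth Lemma \ref{l-twostep} produce, for any finite set $\Psi$ and any accuracy $2^{-n}$, tuples in $\cu M$ and $\cu N$ whose $\Psi$-values agree to within $2^{-n}$, in both directions; and the proof of Theorem \ref{t-lambda-triangle-ultraproduct} works not with $\Gamma$ but with the approximation $\Gamma^{ap}=\{\theta\dotminus 2^{-n}\}$ and uses the bijection $\psi\mapsto\psi^+$ (tightening the threshold by a factor of two) so that the approximate pointwise matching at each $t$ still yields a genuine distribution of $\Gamma$ on the other side. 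That substitution of approximate for exact matching, together with the $\Gamma^{ap}$/$\psi^+$ bookkeeping, is the missing content of your proposal.
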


\begin{cor}  For any two models $\cu M, \cu N$ of a continuous theory $T$,  a regular ultrafilter $\cu D$  saturates $\cu M$ if and only if $\cu D$ saturated $\cu N$.
Thus $\cu D$ saturates $T$ if and only if $\cu D$ saturates some model of $T$.
\end{cor}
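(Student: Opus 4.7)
The plan is to derive both halves of the corollary directly from Theorem \ref{t-equiv-triangle}. The only real content is the first assertion; the second then follows by unwinding the definition of $\Sat(T)$.

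For the first assertion, I would read Theorem \ref{t-equiv-triangle} in the original Keisler-style form of $\lek$ that the introduction recalls: $T \lek T$ asserts that whenever $\cu M, \cu N$ are two models of $T$ and $\cu D$ is a regular ultrafilter, if $\cu D$ saturates $\cu N$ then $\cu D$ saturates $\cu M$. Since $\cu M$ and $\cu N$ play symmetric roles here, being arbitrary models of the same theory $T$, exchanging them in the statement yields the converse implication automatically. This gives the stated equivalence ``$\cu D$ saturates $\cu M$ iff $\cu D$ saturates $\cu N$'' for any two models $\cu M, \cu N$ of $T$.

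For the second assertion, I would simply chase Definition \ref{d-order}: ``$\cu D$ saturates $T$'' is by definition $\cu D \in \Sat(T)$, which means $\cu D$ saturates every model of $T$, and this trivially implies $\cu D$ saturates some model. Conversely, suppose $\cu D$ saturates some particular model $\cu M \models T$. Then the first assertion of the corollary, already established, shows that $\cu D$ saturates every other model $\cu N \models T$ as well, so $\cu D \in \Sat(T)$.

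There is no real obstacle in this deduction; all the difficulty is packed into Theorem \ref{t-equiv-triangle} itself, which must show that saturation of a regular ultrapower depends only on the theory $\Th(\cu M)$ and not on the particular model $\cu M$ chosen. That is nontrivial because two models of a continuous theory can have very different internal structure (and, by the remarks after Result A, even different distance predicates in $V$). Once that theorem is in hand, however, the corollary is a purely formal consequence of the symmetry of the relation ``$\cu M, \cu N$ both model $T$'' and the definition of $\Sat(T)$.
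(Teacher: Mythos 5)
Your proof is essentially correct and captures the intended derivation, but it is worth making the dependency precise. Under the Section \ref{s-general} definition, $T \lek U$ means $\Sat(U) \subseteq \Sat(T)$, where $\cu D \in \Sat(T)$ means $\cu D$ saturates \emph{every} model of $T$; read literally, $T \eqk T$ is then the tautology $\Sat(T) = \Sat(T)$ and gives you nothing. You sidestep this by ``reading Theorem \ref{t-equiv-triangle} in the original Keisler-style form'' from the introduction, which is exactly the right instinct, but the clean formal citation is Theorem \ref{t-lambda-triangle-ultraproduct} specialized to constant families $\cu M_t = \cu M$, $\cu N_t = \cu N$: since any two models $\cu M, \cu N$ of $T$ are elementarily equivalent, that theorem gives directly that $\cu M^{\cu D}$ is $\lambda^+$-saturated iff $\cu N^{\cu D}$ is, which (via Definition \ref{d-order}) is the first assertion of the corollary. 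Your deduction of the second assertion from the first, by unwinding the definition of $\Sat(T)$, is correct and is the only remaining step. So the argument is right; the only improvement is to ground it in Theorem \ref{t-lambda-triangle-ultraproduct} rather than in the $\Sat$-based restatement $T \eqk T$, since the latter has lost the ``for each pair of models'' quantifier that carries the content.
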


It is obvious that the relation $\lek$ is transitive, so it follows that $\lek$ is a pre-ordering on the set of all continuous theories.

\begin{df}  A continuous  theory $T$ is $\lek$-\emph{minimal} if $T\lek U$ for every continuous  theory $U$.
A continuous theory $U$ is $\lek$-\emph{maximal} if $T\lek U$ for every continuous theory $T$.
We define $\Sat(T)=\Sat(\cu M)$ where $\cu M$ is a model of $T$.
\end{df}

Note that any two $\lek$-minimal theories are $\lek$-equivalent, and any two $\lek$-maximal theories are $\lek$-equivalent.

We  recall some definitions from [Ke64].

\begin{df} \label{d-good} Let $\cu D$ be an ultrafilter over $I$.  For any set $J$, let $\cu P_{\aleph_0}(I)$ be the set of finite subsets of $J$.
A mapping $\delta\colon \cu P_{\aleph_0}(J)\to\cu D$ is called
\emph{monotone} if $\delta(u)\supseteq \delta(v)$ whenever $u\subseteq v\in\cu P_{\aleph_0}(J)$, and is called \emph{multiplicative}
if $\delta(u\cup v)=\delta(u)\cap \delta(v)$ whenever $u,v\in \cu P_{\aleph_0}(J)$.  We say that a mapping $\delta'\colon \cu P_{\aleph_0}(I)\to\cu D$
\emph{refines} $\delta$ if $\delta'(u)\subseteq \delta(u)$ for all $u\in\cu P_{\aleph_0}(J)$.  $\cu D$ is said to be \emph{good}
if whenever $|J|\le|I|$, every monotone function $\delta\colon\cu P_{\aleph_o}(J)\to\cu D$ has a multiplicative refinement.
\end{df}

\begin{fact}  \label{f-good} (Kunen [Ku72] in ZFC, [Ke64] under the GCH) For every $I$, there exists a good ultrafilter over $I$.
\end{fact}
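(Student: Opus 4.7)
The plan is to prove the existence of good ultrafilters by Kunen's method: build a good ultrafilter on $I$ by transfinite recursion of length $2^\lambda$, using an independent family of functions on $I$ to produce a multiplicative refinement at each stage.

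The first step is the standard Hausdorff independent-family construction. Identify $I$ with the set of pairs $(u,\sigma)$, where $u\in\cu P_{\aleph_0}(\lambda)$ and $\sigma\colon\cu P(u)\to\omega$ (both sides have cardinality $\lambda$), and for each $X\subseteq\lambda$ set $f_X(u,\sigma)=\sigma(u\cap X)$. This yields $2^\lambda$ functions $f_X\colon I\to\omega$ that are \emph{independent} in the strong sense that, for any distinct $X_1,\dots,X_n$ and any $m_1,\dots,m_n\in\omega$, the fiber $\{t\in I:f_{X_i}(t)=m_i\text{ for each }i\le n\}$ has cardinality $\lambda$.

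Next, enumerate all pairs $(A_\xi,(J_\xi,\delta_\xi))_{\xi<2^\lambda}$ where $A_\xi\subseteq I$ and $\delta_\xi\colon\cu P_{\aleph_0}(J_\xi)\to\cu P(I)$ is monotone with $|J_\xi|\le\lambda$, so that every such pair is eventually processed, with enough bookkeeping that for each $\delta_\xi$ each of its range values has been enumerated as some earlier $A_{\xi'}$. Recursively build a chain of proper filters $\cu F_0\subseteq\cu F_1\subseteq\cdots$ on $I$, each generated by at most $\lambda+|\xi|$ sets, together with a decreasing chain $S_\xi\subseteq 2^\lambda$ with $|2^\lambda\setminus S_\xi|\le\lambda+|\xi|$, maintaining the invariant that $\{f_X:X\in S_\xi\}$ is independent over $\cu F_\xi$, meaning no finite Boolean combination of its members is disjoint from any set in $\cu F_\xi$. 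At stage $\xi$: first add either $A_\xi$ or $I\setminus A_\xi$ to $\cu F_\xi$, whichever keeps the invariant (one of the two always works because of independence); then, if the range of $\delta_\xi$ now lies in $\cu F_\xi$, remove $\lambda$ fresh indices $\{X_j:j\in J_\xi\}$ from $S_\xi$, fix an injection $\iota\colon\cu P_{\aleph_0}(J_\xi)\hookrightarrow\omega$, and define a multiplicative refinement $\delta'_\xi$ of $\delta_\xi$ using the values $f_{X_j}(t)$ to encode, for each $t$, which finite subsets $u\subseteq J_\xi$ are ``active'' at $t$. Arrange the definition so that $\delta'_\xi(u\cup v)=\delta'_\xi(u)\cap\delta'_\xi(v)$ and $\delta'_\xi(u)\subseteq\delta_\xi(u)$ hold by construction; adjoin the sets $\delta'_\xi(u)$ to obtain $\cu F_{\xi+1}$, and use the independence of $\{f_X:X\in S_{\xi+1}\}$ over $\cu F_\xi$ to see that the new filter is proper and that the invariant passes to $S_{\xi+1}$. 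Take unions at limits.

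The resulting $\cu D=\bigcup_{\xi<2^\lambda}\cu F_\xi$ is an ultrafilter because of the $A_\xi$-decisions, and is good because every monotone $\delta\colon\cu P_{\aleph_0}(J)\to\cu D$ with $|J|\le\lambda$ equals some $\delta_\xi$ whose range entered the filter by stage $\xi$, at which a multiplicative refinement was produced. The main obstacle is the explicit definition of $\delta'_\xi$ at each stage: the partition-and-label trick using the $\lambda$ freshly-allocated functions $f_{X_j}$ to code subset-membership relations must yield multiplicativity and refinement by sheer syntax of the definition, while simultaneously preserving the independence invariant for the remaining family. This combinatorial core is Kunen's key contribution; once it is in place, properness of $\cu F_{\xi+1}$ and the ability to extend by either $A_\xi$ or its complement both reduce to direct applications of the strong independence of the Hausdorff family.
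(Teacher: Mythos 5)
The paper states this as a citation-only Fact (Kunen [Ku72]; earlier under GCH in [Ke64]) and gives no proof of its own, so there is nothing in the paper to compare against. Your proposal is an attempted reconstruction of Kunen's independent-family argument, and its overall architecture is right: build an independent family on $I$, do a transfinite construction of length $2^\lambda$, at each stage decide $A_\xi$ or its complement and produce a multiplicative refinement of the current monotone function, shrinking the reservoir of unused independent functions as you go. But there is a genuine error in the set-up of the independent family that makes the crucial refinement step impossible as you describe it.

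Your Hausdorff-style family consists of functions $f_X\colon I\to\omega$, i.e.\ \emph{$\omega$-valued} independent functions. Then, to refine a monotone $\delta_\xi\colon\cu P_{\aleph_0}(J_\xi)\to\cu D$ with $|J_\xi|\le\lambda$, you propose to ``fix an injection $\iota\colon\cu P_{\aleph_0}(J_\xi)\hookrightarrow\omega$.'' When $\lambda>\aleph_0$ and $|J_\xi|=\lambda$, one has $|\cu P_{\aleph_0}(J_\xi)|=\lambda>\aleph_0$, so no such injection exists. This is not a cosmetic slip: the entire point of Kunen's trick is to allocate a single fresh independent function $g$ whose range can be identified with $\cu P_{\aleph_0}(J_\xi)$ itself, and then set $\delta'(u)=\{t\in I: u\subseteq g(t)\text{ and }t\in\delta(g(t))\}$, which is a multiplicative refinement by direct computation (multiplicativity because $u\cup v\subseteq g(t)$ iff $u\subseteq g(t)$ and $v\subseteq g(t)$; refinement by monotonicity of $\delta$). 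For this to go through, the independent functions must have range of size $\lambda$, not $\omega$. In the Hausdorff construction, this is a one-word fix — take $\sigma\colon\cu P(u)\to\lambda$ instead of $\sigma\colon\cu P(u)\to\omega$; the cardinality count $|I|=\lambda$ still holds since $\lambda^{2^{|u|}}=\lambda$ for finite $u$. With $\omega$-valued functions, by contrast, there is no way to code arbitrary finite subsets of a set of size $\lambda$ into the values of finitely (or even $\lambda$) many independent $\omega$-valued functions at each $t$, and the ``partition-and-label trick'' you invoke cannot be made to work.

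Relatedly, you leave the explicit definition of $\delta'_\xi$ and the verification that it preserves properness and the independence invariant as an unproved black box, acknowledging this as ``the main obstacle.'' In a proof sketch that were otherwise correct, that might be acceptable as a pointer to the literature; here, the proposed set-up is incapable of supporting the needed definition, so the gap is substantive. Once you switch to $\lambda$-valued independent functions, use one fresh function per refinement stage (not $\lambda$ of them), and write down the $\delta'$ above, the remaining verifications (properness of $\cu F_{\xi+1}$, preservation of independence of the remaining family over $\cu F_{\xi+1}$, and the bookkeeping ensuring every relevant $\delta$ is eventually refined) are exactly the content of Kunen's argument and go through.
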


\begin{lemma}  \label{l-min-max-G}  ([Ke67] for the first order case).
\begin{itemize}
\item[(i)] There exist $\lek$-minimal theories.  $T$ is $\lek$-minimal if and only if $\Sat(T)$ is the  class of all regular ultrafilters.
\item[(ii)]  There exist $\lek$-maximal theories.  $T$ is $\lek$-maximal if and only if $\Sat(T)$ is the class of all good ultrafilters.
\end{itemize}
\end{lemma}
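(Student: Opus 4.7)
Since $\Sat(T)$ is always a subclass of the class of regular ultrafilters (over sets of each cardinality $\lambda$), $T$ is $\lek$-minimal iff $\Sat(T)$ equals that whole class, and $T$ is $\lek$-maximal iff $\Sat(T)=\bigcap_U \Sat(U)$. So for each part, both the existence claim and the characterization reduce to identifying the relevant class and exhibiting one theory that realizes it.

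For \textbf{(i)}, take $T_0$ to be the first order theory of an infinite set in the empty vocabulary, which is a metric theory via the discrete metric. For any regular $\cu D$ and any infinite model $\cu M$ of $T_0$, Fact \ref{f-cardinality-ultrapower} gives $|\cu M_\cu D|\ge 2^\lambda>\lambda$, and every pure-equality type over $\le\lambda$ parameters consists of inequalities $x\ne a_i$, realized by any element outside the parameter set. Hence $\cu D\in\Sat(T_0)$ for every regular $\cu D$, so $T_0$ is $\lek$-minimal, and the characterization in (i) follows.

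For \textbf{(ii)}, split into (a) every good ultrafilter saturates every continuous theory and (b) the existence of a continuous theory whose $\Sat$ is exactly the class of good ultrafilters; together with Fact \ref{f-good} these yield both existence and the characterization. For (b), the first order case [Ke67] produces a first order $T^*$ with $\Sat(T^*)$ equal to the good ultrafilters, and Lemma \ref{l-classical}(iii) together with the trivial converse (first order types are $V$-types valued in $\{0,1\}$) implies $\Sat(T^*)$ is unchanged when $T^*$ is viewed as a continuous theory, so $T^*$ is $\lek$-maximal among continuous theories.

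For \textbf{(ii)(a)}, I would adapt the Keisler-Kunen argument to continuous logic. Given good $\cu D$, a model $\cu M$ of a continuous theory $T$, and a type $p(x)$ over $A\subseteq\cu M^\cu D$ with $|A|\le\lambda$ finitely satisfiable in $\cu M^\cu D$, apply Lemma \ref{l-strict-sat} and Remark \ref{r-strict} to reduce to at most $\lambda$ strict formulas $\psi_j(x,\vec a_j)$, $j<\lambda$. Fact \ref{f-Los-2} together with finite satisfiability gives, for every finite $u\subseteq\lambda$ and every $n\ge 1$,
\[
\delta(u,n) \;=\; \Bigl\{\,t\in I : \cu M\models \inf_x\max_{j\in u}\psi_j(x,\vec a_j[t])\le 1/n\,\Bigr\} \;\in\; \cu D.
\]
Encoding $(u,n)$ as a finite subset of a single index set of size $\lambda$ and intersecting with a regularizing family yields a monotone map $\delta\colon[\lambda]^{<\omega}\to\cu D$. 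Goodness provides a multiplicative refinement $\delta'$, and regularity ensures each $t\in I$ lies in $\delta'(\{s\})$ for only finitely many $s$. For each such $t$, pick $b[t]\in M$ that approximately realizes the finitely many formulas associated with $t$ at the prescribed tolerances; Fact \ref{f-Los-2} then forces $\psi_j^{\cu M^\cu D}(b_\cu D,\vec a_j)=0$ for every $j$, so $b_\cu D$ realizes $p$. The main obstacle is precisely this bookkeeping: merging the combinatorial index $j$ with the real-valued threshold $n$ into a single monotone map and choosing the $b[t]$ uniformly enough that the Łoś limit vanishes exactly rather than merely being small. The combinatorics follows [Ku72], but the $\varepsilon$-tracking (made possible by Fact \ref{f-strict-approx}) is peculiar to the continuous setting.
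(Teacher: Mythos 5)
Your proposal follows essentially the same route as the paper: part (i) is the identical argument via Fact \ref{f-cardinality-ultrapower} applied to the theory of an infinite pure set, and part (ii) decomposes into the same two claims the paper delegates to [Ke64] and [Ke67], namely that good regular ultrafilters saturate every structure and that there is a first order $T^*$ (such as $(\BN,+,\times)$) saturated only by good ultrafilters, together with the observation (Lemma \ref{l-classical}(iii) and its easy converse) that $\Sat(T^*)$ is the same whether $T^*$ is read as a first order or continuous theory. Your sketch of adapting the Keisler--Kunen goodness argument via $\varepsilon$-thresholds and a monotone $\delta$ is the content behind the paper's one-line remark that ``the same proof works for real-valued structures,'' and it matches the distribution machinery developed in Section 4.
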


\begin{proof} (i): By Fact \ref{f-cardinality-ultrapower}, for an infinite set $M$, $|M_\cu D|=|M^I|\ge |I|^+$.  Therefore the
first order theory of an infinite set with only the equality predicate is $\lek$-minimal, so  (i) holds.

(ii):
In [Ke67], it is proved that every $\cu D$ that saturates (for example) $(\cu P_{\aleph_0}(\BN),\subseteq)$ or $(\BN,+,\times)$ is good.
In [Ke64], it is proved (in ZFC) that an ultrafilter $\cu D$ is good if and only if it saturates every first order structure.
 The same proof works for real-valued structures, and (ii) follows.
\end{proof}

\begin{cor}  \label{c-min-max-G}

(i)  A first order theory is $\lek$-minimal if and only it is $\lek$-minimal among first order theories.

(ii)  A first order theory is $\lek$-maximal if and only it is $\lek$-maximal among first order theories.
\end{cor}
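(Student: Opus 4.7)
The corollary is essentially a direct consequence of Lemma \ref{l-min-max-G}, which characterizes $\lek$-minimality and $\lek$-maximality intrinsically in terms of $\Sat(T)$, together with the fact that the witnessing theories in that lemma are themselves first order. My plan is to handle each direction of each equivalence separately, observing that one direction is completely trivial.

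For both (i) and (ii), the forward direction is immediate: if $T$ is $\lek$-minimal (resp.\ $\lek$-maximal) among all continuous theories, then in particular $T\lek U$ for every first order $U$ (resp.\ $U\lek T$ for every first order $U$), since first order theories are continuous theories. So only the reverse implications need work, and even these reduce to an application of the lemma.

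For the reverse direction of (i), I would assume $T$ is a first order theory that is $\lek$-minimal among first order theories. Let $T_0$ be the first order theory of an infinite set in the vocabulary with only equality; by Lemma \ref{l-min-max-G}(i), $T_0$ is $\lek$-minimal among all continuous theories, so $\Sat(T_0)$ is the class of all regular ultrafilters. Since $T_0$ is first order and $T$ is $\lek$-minimal among first order theories, $T\lek T_0$, i.e.\ $\Sat(T_0)\subseteq\Sat(T)$. Since $\Sat(T)$ consists by definition of regular ultrafilters, this forces $\Sat(T)$ to be exactly the class of all regular ultrafilters, and then Lemma \ref{l-min-max-G}(i) gives that $T$ is $\lek$-minimal among all continuous theories.

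The reverse direction of (ii) is symmetric. Assume $T$ is first order and $\lek$-maximal among first order theories. Take $T_1=\Th(\cu P_{\aleph_0}(\BN),\subseteq)$, which is a first order theory and is $\lek$-maximal by Lemma \ref{l-min-max-G}(ii); thus $\Sat(T_1)$ equals the class of all good ultrafilters. Since $T_1\lek T$, we get $\Sat(T)\subseteq\Sat(T_1)=\{$good ultrafilters$\}$. For the opposite inclusion, the proof of Lemma \ref{l-min-max-G}(ii) already noted that every good ultrafilter saturates every real-valued structure, hence every model of $T$, giving $\Sat(T)\supseteq\{$good ultrafilters$\}$. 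Therefore $\Sat(T)$ is exactly the class of good ultrafilters, and Lemma \ref{l-min-max-G}(ii) yields that $T$ is $\lek$-maximal among all continuous theories.

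There is no real obstacle here; the only thing to be careful about is to use that the witnessing extremal theories provided by Lemma \ref{l-min-max-G}---the infinite set with equality, and $(\cu P_{\aleph_0}(\BN),\subseteq)$ or $(\BN,+,\times)$---are first order, so that the hypothesis of $\lek$-minimality (resp.\ $\lek$-maximality) among first order theories can be applied to them directly.
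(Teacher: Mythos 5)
Your proposal is correct and fills in exactly the argument the paper leaves implicit (the paper states Corollary \ref{c-min-max-G} without a separate proof, treating it as an immediate consequence of Lemma \ref{l-min-max-G}). The key observation you isolate — that the witnessing extremal theories in Lemma \ref{l-min-max-G} (an infinite pure set, and $(\cu P_{\aleph_0}(\BN),\subseteq)$ or $(\BN,+,\times)$) are themselves first order, so the local extremality hypothesis can be applied to them to pin down $\Sat(T)$ exactly — is precisely the intended route.
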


\begin{cor}  ([Ke67] for the first order case).
 If $T$ is $\lek$-minimal and $U$ is $\lek$-maximal, then $T\lk U$.
\end{cor}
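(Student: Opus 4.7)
Since $T$ is $\lek$-minimal, the relation $T \lek U$ is immediate from the definition: $T \lek U'$ for every continuous theory $U'$, in particular for the given $U$. So the only content of the corollary is the strict inequality $U \not\lek T$, i.e.\ $\Sat(T) \not\subseteq \Sat(U)$.

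The plan is to reduce this to a statement about ultrafilters. By Lemma~\ref{l-min-max-G}(i), $\Sat(T)$ is the class of \emph{all} regular ultrafilters (over any index set $I$), and by Lemma~\ref{l-min-max-G}(ii), $\Sat(U)$ is the class of \emph{all} (regular) good ultrafilters. Hence $U \lek T$ would be equivalent to the assertion that every regular ultrafilter is good. It therefore suffices to exhibit a single regular ultrafilter $\cu D$ that is not good: such a $\cu D$ witnesses $\cu D \in \Sat(T) \setminus \Sat(U)$, establishing $U \not\lek T$.

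To produce a regular non-good ultrafilter, the plan is to reuse the first order construction underlying the first order version of this corollary from [Ke67], which is self-contained at the level of ultrafilters. The proof of Lemma~\ref{l-min-max-G}(ii) invokes the [Ke64] characterization that an ultrafilter is good iff it saturates every first order structure, and the proof of Lemma~\ref{l-min-max-G}(i) shows that the first order theory of an infinite pure set is $\lek$-minimal. Granting the existence of any first order theory $T'$ that is not $\lek$-minimal — e.g.\ any theory with the strict order property such as $\Th(\BN,<)$, which by the first order analogues of results B--D known from [Sh78] sits strictly above the minimum — one obtains a regular ultrafilter $\cu D$ and a model $\cu M'$ of $T'$ such that $\cu M'_{\cu D}$ is not $\lambda^+$-saturated, so $\cu D \notin \Sat(T')$. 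Since every good ultrafilter saturates $T'$, this $\cu D$ is not good, as required.

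The only real obstacle is the existence of the regular non-good ultrafilter; the rest of the argument is a formal unpacking of Lemma~\ref{l-min-max-G}. That existence is a purely set-theoretic/combinatorial fact from [Ke67], involving no model theory beyond first order structures, and therefore transfers verbatim to the continuous setting — the witness $\cu D$ depends only on the index set $I$, not on whether the theory being separated is classical, metric, or merely continuous.
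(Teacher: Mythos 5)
Your proof is correct, and the skeleton matches the paper's: by Lemma~\ref{l-min-max-G}, $\Sat(T)$ is the class of all regular ultrafilters and $\Sat(U)$ is the class of good regular ultrafilters, so the entire content is the existence of a regular, non-good ultrafilter. Where you differ is in how you arrive at that existence. The paper simply asserts (in one line) the standard combinatorial fact: over any \emph{uncountable} index set there is a regular ultrafilter that is not good. You instead route through model theory: granting a non-$\lek$-minimal first order theory $T'$, conclude $\Sat(T')$ is a proper subclass of regular ultrafilters, pick $\cu D \in \Sat(T)\setminus\Sat(T')$, and note $\cu D$ cannot be good since good ultrafilters saturate every first order structure. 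This is logically sound, but it is a detour: to know that $T'$ is not $\lek$-minimal one ultimately needs to exhibit a regular ultrafilter that fails to saturate some model of $T'$, which is the same kind of combinatorial construction (e.g.\ an ultrafilter with small lower cofinality, as in Fact~\ref{f-exists-D}). So your route does not save anything and leans on results (Shelah's non-minimality of unstable theories) that in this paper's logical development appear only later, in Section~6. You do acknowledge at the end that the needed fact is ``purely set-theoretic/combinatorial,'' and indeed the cleanest argument just cites it directly, as the paper does. The one point worth adding to your sketch is the restriction that $I$ must be uncountable — over a countable index set every regular ultrafilter is good, so the witness cannot be found there.
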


\begin{proof} If $I$ is uncountable, there is a regular ultrafilter over $I$ that is not good.
\end{proof}

Let us take a brief look at the big picture.
Let $\BF, \BM,$ and $\BC$ be the sets of $\lek$-equivalence classes of first order theories,  metric theories, and continuous theories respectively.
Then each of $(\BF,\lek), (\BM,\lek),$ and $(\BC,\lek)$ is a partial ordering.
By Lemma  \ref{l-min-max-G}, $(\BM,\lek)$ has a $\lek$-minimal element $\min_\BM$ and a $\lek$-maximal element $\max_\BM$, and  $\min_\BM\lk\max_\BM$.
Similarly for $\BC$ and $\BF$.

  Let $\tau\colon\BF\to\BM$ be the mapping
that sends the equivalence class of $Th(\cu M)$ in $\BF$ to the equivalence class of $Th(\cu M)$ in $\BM$ for every first order structure $\cu M$.

\begin{cor} \label{c-tau} $\tau$ is an isomorphism from $(\BF,\lek)$ onto a substructure of $(\BM,\lek)$.  Moreover, $\tau(\min_\BF)=\min_\BM$, and $\tau(\max_\BF)=\max_\BM$.
\end{cor}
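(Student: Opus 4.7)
The plan is to reduce Corollary \ref{c-tau} to a single observation: for any first order structure $\cu M$ and any regular ultrafilter $\cu D$, the set of $\cu D$ that saturate $\cu M$ is the same whether $\cu M$ is viewed as a first order structure (in the classical sense) or as a $V$-structure (in the continuous sense). Call this the \emph{invariance of $\Sat$}. Once it is established, well-definedness of $\tau$, preservation and reflection of $\lek$, and the images of $\min_\BF$ and $\max_\BF$ all drop out formally.

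To establish invariance, I would first check that the continuous ultrapower of a first order structure coincides with its classical ultrapower. An ultralimit of a function into the finite set $\{0,1\}$ lies in $\{0,1\}$, so every interpretation in the pre-ultrapower $\cu M^{\cu D}$ is $\{0,1\}$-valued and $d^{\cu M^{\cu D}}$ is a discrete pseudo-metric; moreover, for $a,b\in\prod_{t\in I}M$, the computation $d^{\cu M^{\cu D}}(a,b)=0$ iff $\{t\in I:a[t]=b[t]\}\in\cu D$ shows that the relation $\doteq^{\cu M^{\cu D}}$ (by Fact \ref{f-reduced-premetric}) is precisely the classical \Los\ equivalence, so the reduction step agrees with the classical identification. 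Hence $\cu M_{\cu D}$ is again a first order structure, and Lemma \ref{l-classical}(iii) yields that its $\lambda^+$-saturation in the first order sense implies $\lambda^+$-saturation as a $V$-structure; the converse is immediate since every first order formula is a $V$-formula.

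Given invariance, $\Sat_\BF(T)=\Sat_\BM(\tau T)$ for every first order theory $T$. This makes $\tau$ well-defined on $\eqk_\BF$-classes, and the chain $T\lek U$ in $\BF$ $\iff$ $\Sat_\BF(U)\subseteq\Sat_\BF(T)$ $\iff$ $\Sat_\BM(\tau U)\subseteq\Sat_\BM(\tau T)$ $\iff$ $\tau T\lek\tau U$ in $\BM$ shows simultaneously that $\tau$ preserves $\lek$, reflects $\lek$, and is injective; thus $\tau$ is an order isomorphism onto its image in $\BM$. For the extrema, Lemma \ref{l-min-max-G} characterizes $\lek$-minimality in $\BM$ (and, by Corollary \ref{c-min-max-G}, also in $\BF$) as $\Sat$ being the class of all regular ultrafilters, and $\lek$-maximality as $\Sat$ being the class of all good ultrafilters. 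So a representative of $\min_\BF$ has $\Sat$ equal to all regular ultrafilters in $\BF$, hence by invariance in $\BM$ too, hence is sent by $\tau$ to $\min_\BM$; the argument for $\max$ is identical.

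The only real obstacle is verifying that the continuous ultrapower of a first order structure is literally its classical ultrapower, so that Lemma \ref{l-classical}(iii) applies to $\cu M_{\cu D}$ itself and not merely to $\cu M$. Once that identification is made, the remainder is set-chasing with $\Sat$ and the already-established characterizations of the extremal classes.
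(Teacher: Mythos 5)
Your proposal is correct and follows essentially the same route as the paper: the key observation that classical and continuous ultrapowers and saturation agree for first order structures is exactly what the paper records (without the verification you carry out) in the remark following Lemma~\ref{l-classical} that ``for first order theories, the relations $T\lek U$ and $T\eqk U$ are the same as in [Ke67]'', and the identification of the extrema is handled by Corollary~\ref{c-min-max-G}. Your write-up simply makes explicit the ``invariance of $\Sat$'' step that the paper treats as immediate, after which the order-embedding and the preservation of $\min$ and $\max$ are the same routine set-chasing in both arguments.
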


\begin{proof}  By Corollary \ref{c-min-max-G}.
\end{proof}

We have not been able to answer the following important question.

\begin{question} \label{q-1} Is every metric theory $\lek$-equivalent to a first order theory?
Equivalently, is the mapping $\tau\colon\BF\to\BM$ onto?
\end{question}

The answer is ``no'' if and only if there are ``new''  $\lek$-equivalence classes of metric theories that do not correspond to
$\lek$-equivalence classes of first order theories.

We will see later, in Theorem \ref{t-metric-expansion}, that  every $\lek$-equivalence class in $\BC$ contains a metric theory.
It follows that the mapping that sends the equivalence class of each metric theory in $\BM$ its equivalence class in $\BC$ is an isomorphism from $(\BM,\lek)$ onto $(\BC,\lek)$.
So we regard the partial ordering $(\BC,\lek)$ as the same as $(\BM,\lek)$.
Beginning in Section \ref{s-stable}, we will investigate the structure of the partial ordering $(\BM,\lek)$.

One can also turn things around and use theories to compare ultrafilters instead of using ultraproducts to compare theories.

\begin{df}   $\Sat_\BM(\cu D)$ is the set of $H\in\BM$ such that $\cu D$ saturates some, or equivalently every, $T\in H$.
$\Sat_\BF(\cu D)$ is defined similarly with $\BF$ in place of $\BM$.
\end{df}

\begin{cor} \label{c-tau-min-max} (i) For all $H\in\BF$,  $H\in\Sat_\BF(\cu D)\Leftrightarrow\tau(H)\in\Sat_\BM(\cu D)$.

(ii) $\{\cu D\colon \Sat_\BF(\cu D)=\BF\}=\{\cu D\colon \Sat_\BM(\cu D)=\BM\}=\{\cu D\colon\cu D \mbox{ is good}\}.$
\end{cor}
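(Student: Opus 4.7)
The corollary is essentially bookkeeping on top of results already established. My overall plan is to first observe that for a first order structure $\cu M$, the continuous ultrapower $\cu M_\cu D$ of Definition \ref{d-ultraproduct} agrees with the classical first order ultrapower of $\cu M$, so that $\lambda^+$-saturation in the classical and in the $V$-structure sense coincide; then part (i) is immediate, and part (ii) is a packaging of Lemma \ref{l-min-max-G}(ii) together with its first order analogue.

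For part (i), fix $H\in\BF$ and choose a first order structure $\cu M$ with $\Th(\cu M)\in H$. Because each $P^\cu M$ is $\{0,1\}$-valued and $d^\cu M$ is discrete, $\lim_\cu D$ on each predicate coordinate of the pre-ultrapower $\cu M^\cu D$ simply records membership in $\cu D$, and by Fact \ref{f-reduced-premetric} the reduction $\doteq^{\cu M^\cu D}$ collapses to agreement on a $\cu D$-large set. Hence $\cu M_\cu D$ is isomorphic as a $V$-structure to the classical ultrapower of $\cu M$, and in particular is again a first order structure. Lemma \ref{l-classical}(iii) then gives that classical $\lambda^+$-saturation of $\cu M_\cu D$ implies $\lambda^+$-saturation as a $V$-structure; conversely, for any parameter set $A\subseteq M_\cu D$, the classical types over $A$ form a subclass of the continuous types over $A$, so $V$-structure saturation implies classical saturation. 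Thus $\cu D$ saturates $\cu M$ classically iff $\cu D$ saturates $\cu M$ as a $V$-structure, and using Theorem \ref{t-equiv-triangle} and the first order analogue of Theorem \ref{t-equiv-triangle} this passes from the chosen model $\cu M$ to ``every model'' in both directions. This yields $H\in\Sat_\BF(\cu D)\Leftrightarrow\tau(H)\in\Sat_\BM(\cu D)$.

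For part (ii), recall from the proof of Lemma \ref{l-min-max-G}(ii) the two consequences of [Ke64]: a regular $\cu D$ is good iff it saturates every first order structure, and iff it saturates every real-valued $V$-structure. The first equivalence combined with Corollary \ref{c-min-max-G}(ii) gives $\Sat_\BF(\cu D)=\BF\Leftrightarrow\cu D$ is good. The second equivalence, together with the fact that every metric theory is a continuous theory and that a $\lek$-maximal metric theory $T$ has $\Sat(T)=\{\cu D:\cu D\text{ good}\}$ (Lemma \ref{l-min-max-G}(ii)), gives $\Sat_\BM(\cu D)=\BM\Leftrightarrow\cu D$ is good. Chaining these two biconditionals yields the three-way equality.

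There is no substantive obstacle: the one genuine checkpoint is the identification of the continuous ultrapower of a first order structure with its classical ultrapower in part (i), which is essentially automatic from the definitions, and the remainder is transcription of Lemmas \ref{l-classical} and \ref{l-min-max-G} together with Corollary \ref{c-min-max-G}.
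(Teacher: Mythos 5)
Your proof is correct and takes essentially the same approach as the paper, which simply states that (i) is trivial and that (ii) follows from Lemma \ref{l-min-max-G} and Corollary \ref{c-min-max-G}. Your argument merely makes explicit the underlying fact (already built into the definition of $\tau$ and Corollary \ref{c-tau} via Lemma \ref{l-classical}) that for first order structures the classical and continuous ultrapowers and saturation notions agree, so no new idea is introduced beyond fleshing out what the paper leaves to the reader.
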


\begin{proof}  (i) is trivial.  (ii)  follows from Lemma \ref{l-min-max-G} and Corollary \ref{c-min-max-G}.
\end{proof}

We now review some  results of Malliaris and Shelah about first order theories.
In  $(\BF,\lek)$, the lowest three equivalence classes are, in increasing order,
the minimal class, the class of stable non-minimal first order theories, and the equivalence class $rg$ of the theory $T_{rg}$ of the random graph.  $rg$ is the minimal
class of  unstable first order theories.  The maximal class contains the class of SOP$_2$ first order theories, and
Malliaris and Shelah conjecture that the maximal class is exactly the class of SOP$_2$ first order theories.
An first order theory is simple if and only if it has neither SOP$_2$ nor TP$_2$ (and $T_{rg}$ is simple).
There are continuum many
distinct equivalence classes of simple first order theories.  There is  an first order theory $T^*_{feq}$ that is minimal among TP$_2$ theories,
and hence minimal among non-simple first order theories.  Also, $T_{rg}\lk T^*_{feq}$.
It is open whether or not the $T^*_{feq}$ is maximal.

As mentioned in the Introduction, these first order results show that the partial ordering $(\BF,\lek)$ looks like Figure 1.
In this paper we will show that the same picture applies to the partial ordering $(\BM,\lek)$.
To do so, we will generalize several notions and results from first order model theory to continuous model theory.

\section{Elementary equivalent structures are equally saturatable}

In this section we will prove Theorem \ref{t-equiv-triangle} above.
The proof will be  longer than the proof of the corresponding first order result in [Ke67],
because  continuous logic does not have negation, and the $\inf$ quantifier may only have approximate witnesses in a real-valued structure.
We will first prove two lemmas, and
use those lemmas to prove a stronger version of Theorem \ref{t-equiv-triangle} that concerns  ultraproducts,
rather than  ultrapowers.

In what follows, $\cu M, \cu N$ will be  $V$-structures.

\begin{lemma}  \label{l-onestep}
Suppose $\cu M\equiv \cu N$.  Let $\Psi(\vec v)$ be a finite set of  formulas with the free variables $\vec v$.
For every $n\in\BN$ and every $|\vec v|$-tuple $\vec a$ in $\cu M$  there is a $|\vec v|$-tuple $\vec b$ in $\cu N$ such that
for every $\psi\in\Psi$,
$\psi^{\cu M}(\vec a)$ is within $2^{-n}$ of $\psi^{\cu N}(\vec b).$
\end{lemma}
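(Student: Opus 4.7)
The plan is to encode the required approximation as a single continuous sentence, verify it is true in $\cu M$, and transfer it to $\cu N$ using $\cu M\equiv\cu N$.

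Fix $\vec a$ in $\cu M$ and for each $\psi\in\Psi$ set $r_\psi := \psi^{\cu M}(\vec a)\in[0,1]$. The definition of continuous formula here (following [BBHU08]) permits any continuous function $[0,1]^k\to[0,1]$ as a $k$-ary connective, so each real $r_\psi$ may be used as a $0$-ary connective and $(x,y)\mapsto|x-y|$ as a binary connective. I would therefore form
$$\theta(\vec v)\ :=\ \max_{\psi\in\Psi}\bigl|\psi(\vec v)-r_\psi\bigr|,$$
a continuous formula with free variables in $\vec v$.

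By construction $\theta^{\cu M}(\vec a)=0$, so the $V$-sentence $\sigma:=\inf_{\vec v}\theta(\vec v)$ satisfies $\sigma^{\cu M}=0$. Elementary equivalence then gives $\sigma^{\cu N}=\sigma^{\cu M}=0$, that is, $\inf_{\vec b\in N^{|\vec v|}}\theta^{\cu N}(\vec b)=0$. Unwinding the infimum, for any $\varepsilon>0$ (in particular $\varepsilon=2^{-n}$) there exists a tuple $\vec b$ in $\cu N$ with $\theta^{\cu N}(\vec b)<\varepsilon$, and the definition of $\theta$ immediately yields $|\psi^{\cu N}(\vec b)-r_\psi|<2^{-n}$ for every $\psi\in\Psi$, which is the conclusion.

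The only subtle step, and really the whole content of the argument, is the freedom to use the arbitrary reals $r_\psi$ as $0$-ary connectives; this is what lets a single continuous sentence capture the desired approximation. If one insisted on working with a restricted connective set (say strict continuous formulas), the plan would be to replace each $r_\psi$ by a dyadic rational approximation $q_\psi$ with $|q_\psi-r_\psi|<2^{-n-1}$, run the same argument with these $q_\psi$ and tolerance $2^{-n-1}$, and close up with the triangle inequality. Neither version presents any real obstacle; the lemma is essentially bookkeeping around the fact that $\inf$ in continuous logic need only be approximately witnessed.
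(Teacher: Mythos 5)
Your proposal is correct and takes essentially the same approach as the paper: encode the approximation as a continuous sentence and transfer it via $\cu M\equiv\cu N$, then unwind the approximate $\inf$-witness. The only difference is that you use the exact reals $\psi^{\cu M}(\vec a)$ as constant connectives (which the paper's liberal definition of continuous formula permits), whereas the paper uses dyadic rational approximations $r_\psi$ and an extra $\dotminus$-bound; your closing remark correctly identifies this as cosmetic.
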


\begin{proof}
 Let $\vec a\in\cu M^{|\vec v|}$.
For each $\psi\in\Psi$, let $r_\psi$ be a dyadic rational that is within $2^{-(n+1)}$ of $\psi^{\cu M}(\vec a)$.
Since $\Psi$ is finite,
$$\max_{\psi\in\Psi}|\psi(\vec v)-r_\psi|\dotminus 2^{-(n+2)}$$
is a  formula that is satisfied by $\vec a$ in $\cu M$.  Then by Remark \ref{r-dotminus},
$$\cu M\models(\inf_{\vec v})\max_{\psi\in\Psi}|\psi(\vec v)-r_\psi|\le 2^{-(n+2)},$$
and since $\cu M\equiv\cu N$,
$$\cu N\models(\inf_{\vec v})\max_{\psi\in\Psi}|\psi(\vec v)-r_\psi|\le 2^{-(n+2)}.$$
Therefore there is a $|\vec v|$-tuple $\vec b$ in $\cu N$ such that
$$\cu N\models\max_{\psi\in\Psi}|\psi(\vec b)-r_\psi|\le2^{-(n+1)},$$
and hence
$$|\psi^{\cu M}(\vec a)-\psi^{\cu N}(\vec b)|\le 2^{-n}.$$
for every $\psi\in\Psi$.
\end{proof}

\begin{cor}  \label{c-onestep}
Suppose $\cu M\equiv \cu N$, and $\cu N$ is $\aleph_1$-saturated.  Then for every $\vec a\in\cu M^{|\vec v|}$ there exists $\vec b\in \cu N^{|\vec v|}$ such that
$(\cu N,\vec b)\equiv(\cu M,\vec a)$.
\end{cor}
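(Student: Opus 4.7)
My plan is to exhibit a countable type $\Gamma(\vec v)$ of strict $V$-formulas whose realization in $\cu N$ by a tuple $\vec b$ forces $\psi^\cu N(\vec b)=\psi^\cu M(\vec a)$ for every strict formula $\psi$, show that Lemma \ref{l-onestep} makes $\Gamma$ finitely satisfiable in $\cu N$, and then invoke $\aleph_1$-saturation via Lemma \ref{l-strict-sat} to realize $\Gamma$. A final density step using Fact \ref{f-strict-approx} will promote the equality from strict formulas to arbitrary continuous formulas.

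Concretely, I would take
$$
\Gamma(\vec v)=\{\,|\psi(\vec v)-q|\dotminus 2^{-n}\;:\;\psi\text{ a strict }V\text{-formula},\ n\in\BN,\ q\text{ a dyadic rational},\ |q-\psi^{\cu M}(\vec a)|\le 2^{-(n+1)}\,\}.
$$
Each expression $|\psi(\vec v)-q|\dotminus 2^{-n}$ is itself strict (the needed dyadic constants are built from $1,\cdot/2,\dotminus$), and by Remark \ref{r-strict} there are only countably many such formulas, so $\Gamma$ is countable. For finite satisfiability in $\cu N$, take any finite $\Gamma_{0}\subseteq\Gamma$ mentioning formulas $\psi_{1},\ldots,\psi_{k}$ with tolerances $2^{-n_{1}},\ldots,2^{-n_{k}}$ and dyadic approximants $q_{1},\ldots,q_{k}$. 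Choose $n\ge\max_{i}n_{i}+1$ and apply Lemma \ref{l-onestep} to $\Psi=\{\psi_{1},\ldots,\psi_{k}\}$; it yields $\vec b\in\cu N^{|\vec v|}$ with $|\psi_{i}^{\cu N}(\vec b)-\psi_{i}^{\cu M}(\vec a)|\le 2^{-n}$ for every $i$, and then the triangle inequality gives $|\psi_{i}^{\cu N}(\vec b)-q_{i}|\le 2^{-n}+2^{-(n_{i}+1)}\le 2^{-n_{i}}$, so each condition in $\Gamma_{0}$ holds at $\vec b$.

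Since $\Gamma$ is countable and $\cu N$ is $\aleph_{1}$-saturated, Lemma \ref{l-strict-sat} supplies a tuple $\vec b\in\cu N^{|\vec v|}$ that realizes all of $\Gamma$. Given any strict $\psi(\vec v)$ and any $n\in\BN$, choose a dyadic $q$ within $2^{-(n+1)}$ of $\psi^{\cu M}(\vec a)$; the corresponding condition in $\Gamma$ gives $|\psi^{\cu N}(\vec b)-q|\le 2^{-n}$, hence $|\psi^{\cu N}(\vec b)-\psi^{\cu M}(\vec a)|\le 2^{-n}+2^{-(n+1)}$. Letting $n\to\infty$ forces $\psi^{\cu N}(\vec b)=\psi^{\cu M}(\vec a)$, and Fact \ref{f-strict-approx} (uniform approximation of arbitrary continuous formulas by strict ones) extends this equality to every $V$-formula $\varphi(\vec v)$, which is exactly $(\cu N,\vec b)\equiv(\cu M,\vec a)$.

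The only real obstacle is bookkeeping. Continuous logic admits no real-valued constants, so the ideal condition ``$\psi^{\cu N}(\vec b)=\psi^{\cu M}(\vec a)$'' cannot be expressed as a single formula and must be encoded as an infinite family of dyadic approximations. Once one has committed to indexing $\Gamma$ by triples $(\psi,n,q)$ and verified that the tolerance budget in Lemma \ref{l-onestep} absorbs the approximation error $2^{-(n_{i}+1)}$ via the triangle inequality, the rest is routine.
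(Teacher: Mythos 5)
Your proof is correct and carries out what the paper clearly intends but leaves unwritten (the corollary is stated without proof immediately after Lemma \ref{l-onestep}): assemble the countable type of dyadic approximations to $\tp^{\cu M}(\vec a)$, use Lemma \ref{l-onestep} with a sufficiently small tolerance to get finite satisfiability, realize the type by $\aleph_1$-saturation, and then upgrade from strict to arbitrary formulas via Fact \ref{f-strict-approx}. The bookkeeping with the tolerance $2^{-n}$ absorbing the $2^{-(n_i+1)}$ approximation error is checked correctly.
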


\begin{lemma}  \label{l-twostep}
Suppose $\cu M\equiv \cu N$.  Let $\Psi(\vec u,\vec v)$ be a finite set of  formulas with the free variables $\vec u,\vec v$.
For every $n\in\BN$ and every $|\vec v|$-tuple $\vec a$ in $\cu M$  there is a $|\vec v|$-tuple $\vec b$ in $\cu N$ such that
\begin{itemize}
\item[(i)] For every $\vec c\in\cu M^{|\vec u|}$ there exists $\vec e\in \cu N^{|\vec u|}$ such that for every $\psi\in\Psi$,
$\psi^{\cu M}(\vec c,\vec a)$ is within $2^{-n}$ of $\psi^{\cu N}(\vec e,\vec b)$.
\item[(ii)] For every $\vec e\in\cu N^{|\vec u|}$ there exists $\vec c\in \cu M^{|\vec u|}$ such that for every $\psi\in\Psi$,
$\psi^{\cu M}(\vec c,\vec a)$ is within $2^{-n}$ of $\psi^{\cu N}(\vec e,\vec b)$.
\end{itemize}
\end{lemma}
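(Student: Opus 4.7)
The plan is to reduce Lemma \ref{l-twostep} to the one-quantifier Lemma \ref{l-onestep} by encoding the two-quantifier information into a finite family of formulas that depend only on $\vec v$. Since $\Psi$ is finite, the cube $[0,1]^\Psi$ is totally bounded in the sup-norm, so I can fix a finite $\delta$-net $R\subseteq [0,1]^\Psi$ with $\delta := 2^{-(n+2)}$. For each $\vec r = (r_\psi)_{\psi\in\Psi}\in R$, introduce the formula
$$\varphi_{\vec r}(\vec v) := \inf_{\vec u}\;\max_{\psi\in\Psi}\,|\psi(\vec u,\vec v) - r_\psi|.$$
This is a legitimate continuous $V$-formula: for each constant $r\in [0,1]$ the unary map $x\mapsto |x-r|$ is a continuous connective (and is equal to $\max(x\dotminus r, r\dotminus x)$ by Remark \ref{r-dotminus}). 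In any $V$-structure, $\varphi_{\vec r}(\vec v)$ measures the sup-norm distance from $\vec r$ to the set of realizable value-vectors $\{(\psi(\vec u,\vec v))_{\psi\in\Psi} : \vec u\}$.

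First I would apply Lemma \ref{l-onestep} to the finite set $\{\varphi_{\vec r} : \vec r\in R\}$ with accuracy $\eta := 2^{-(n+3)}$, obtaining $\vec b\in \cu N^{|\vec v|}$ such that $|\varphi_{\vec r}^{\cu M}(\vec a) - \varphi_{\vec r}^{\cu N}(\vec b)| \le \eta$ for every $\vec r\in R$. To verify (i), given $\vec c\in \cu M^{|\vec u|}$, set $\vec s := (\psi^{\cu M}(\vec c,\vec a))_{\psi\in\Psi}$ and choose $\vec r\in R$ with $\|\vec r - \vec s\|_\infty \le \delta$; then $\varphi_{\vec r}^{\cu M}(\vec a) \le \delta$, so $\varphi_{\vec r}^{\cu N}(\vec b) \le \delta+\eta$, and hence there exists $\vec e\in \cu N^{|\vec u|}$ witnessing this infimum to within $2^{-(n+4)}$. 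Two applications of the triangle inequality in the sup-norm yield
$$|\psi^{\cu M}(\vec c,\vec a) - \psi^{\cu N}(\vec e,\vec b)| \;\le\; 2\delta + \eta + 2^{-(n+4)} \;=\; \tfrac{11}{16}\cdot 2^{-n} \;<\; 2^{-n}$$
for every $\psi\in\Psi$. Statement (ii) is proved by the completely symmetric argument, starting from $\vec e\in \cu N^{|\vec u|}$ and using that the distance function $\varphi_{\vec r}$ is defined in both structures in exactly the same way.

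The main (mild) obstacle is that the outer $\inf$ quantifier in $\varphi_{\vec r}$ need not be attained in $\cu N$, so the witness $\vec e$ can be chosen only approximately; this is precisely why I build an extra $2^{-(n+4)}$ of slack into the witness selection and take $\delta$ and $\eta$ slightly smaller than $2^{-(n+1)}$. Apart from this bookkeeping there is no deeper difficulty: all the model-theoretic content sits in Lemma \ref{l-onestep}, which is exactly where the hypothesis $\cu M\equiv \cu N$ is used, and the compactness of $[0,1]^\Psi$ is what makes the reduction to finitely many formulas possible.
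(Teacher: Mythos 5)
Your proof is correct, and it takes a genuinely different route from the paper's. The paper's proof works with a finite net of \emph{witnesses}: it chooses tuples $z_1,\ldots,z_k$ in $\cu M$ that are $2^{-(n+2)}$-dense over $\vec a$ (meaning every $\vec c$ has some $z_\ell$ whose $\Psi$-values over $\vec a$ are within $2^{-(n+2)}$), expresses density by a formula $\theta(z_1,\ldots,z_k,\vec v)=(\sup_{\vec u})\min_{\ell\le k}\max_{\psi\in\Psi}|\psi(\vec u,\vec v)-\psi(z_\ell,\vec v)|$, and applies Lemma \ref{l-onestep} to the finite set $\{\theta\}\cup\{\psi(z_\ell,\vec v)\colon \psi\in\Psi,\,\ell\le k\}$ with free variables $(z_1,\ldots,z_k,\vec v)$ to obtain a matched grid $(w_1,\ldots,w_k,\vec b)$ in $\cu N$ that is dense over $\vec b$; claims (i) and (ii) then follow by rounding $\vec c$ to $z_\ell$ or $\vec e$ to $w_\ell$. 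You instead discretize the \emph{value space} $[0,1]^\Psi$ by a finite $\delta$-net $R$, introduce for each $\vec r\in R$ the parameter-free formula $\varphi_{\vec r}(\vec v)=\inf_{\vec u}\max_{\psi}|\psi(\vec u,\vec v)-r_\psi|$ (the sup-norm distance from $\vec r$ to the achievable value-vectors), and apply Lemma \ref{l-onestep} to $\{\varphi_{\vec r}\colon \vec r\in R\}$ with only $\vec v$ free. Both proofs exploit the compactness of $[0,1]^\Psi$ and reduce everything to Lemma \ref{l-onestep}, but they make different things finite: the paper makes the witnesses finite (so the dense tuples depend on $\vec a$ and $\cu M$), while you make the target values finite (so the formulas $\varphi_{\vec r}$ depend only on $\Psi$ and $n$), which is arguably cleaner and makes the symmetry between (i) and (ii) immediate from the two-sided bound in Lemma \ref{l-onestep}. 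The price is a slightly fussier error budget --- the net radius $\delta$, the \ref{l-onestep} accuracy $\eta$, and the approximate-witness slack $2^{-(n+4)}$ --- which you have tracked correctly, including the final check $2\delta+\eta+2^{-(n+4)}=\tfrac{11}{16}\cdot 2^{-n}<2^{-n}$. Your observation that the obstacle is only the non-attainment of $\inf_{\vec u}$ is exactly the point the paper is also navigating.
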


\begin{proof}  Let $(z_1,\ldots,z_k)$ be a tuple of disjoint $|\vec u|$-tuples of variables that are also disjoint from $\vec u$ and $\vec v$.
In $\cu M$, we say that an assignment  of $(z_1,\ldots,z_k)$ is $2^{-n}$-\emph{dense over} $\vec a$ if for every $|\vec u|$-tuple $\vec c$ there is an $\ell\le k$
such that for every $\psi\in\Psi$, $\psi(\vec c,\vec a)$ is within $2^{-n}$ of $\psi(z_\ell,\vec a)$.
It is clear that in $\cu M$, for any $\vec a$ and any $n\in\BN$, there exists an assignment $(z_1,\ldots,z_k)$ that is $2^{-n}$-dense over $\vec a$.

Let $\theta(z_1,\ldots,z_k,\vec v)$ be the formula
$$ (\sup_{\vec u})\min_{\ell\le k}\max_{\psi\in\Psi}(|\psi(\vec u,\vec v)-\psi(z_\ell,\vec v)|).$$
Note that in $\cu M$, $(z_1,\ldots,z_k)$  is $2^{-n}$-dense over $\vec a$ if and only if
$$ \cu M\models \theta(z_1,\ldots,z_k,\vec a)\le 2^{-n}.$$

Now fix a $|\vec v|$-tuple $\vec a$ in $\cu M$.
Suppose $z=(z_1,\ldots,z_k)$ is $2^{-(n+2)}$-dense over $\vec a$ in $\cu M$.
Since $\cu N\equiv\cu M$ and $\theta$ is a formula, it follows from Lemma \ref{l-onestep} that there exist tuples $(w_1,\ldots,w_k,\vec b)$ in $\cu N$ such that
\begin{itemize}
\item[(1)] $(w_1,\ldots,w_k)$ is $2^{-(n+1)}$-dense over $\vec b$ in $\cu N$.
\item[(2)] For each $\psi\in\Psi$ and $1\le\ell\le k$,
$\psi^{\cu N}(w_\ell,\vec b)$ is within $2^{-(n+1)}$ of $\psi^{\cu M}(z_\ell,\vec a)$.
\end{itemize}

Proof of (i):  Let $\vec c\in\cu M^{|\vec u|}$.  By (1), there is an $\ell\in\{1,\ldots,k\}$ such that for every $\psi\in\Psi$,
$\psi^{\cu M}(z_\ell,\vec a)$ is within $2^{-(n+2)}$ of $\psi^{\cu M}(\vec c,\vec a)$.  Take $\vec e=z_\ell$.  By (2), for each $\psi\in\Psi$,
$\psi^{\cu N}(\vec e,\vec b)$ is within $2^{-(n+1)}$ of $\psi^{\cu M}(z_\ell,\vec a)$.  Therefore $\psi^{\cu N}(\vec e,\vec b)$ is within $2^{-n}$ of $\psi^{\cu M}(\vec c,\vec a)$, as required.

Proof of (ii):  Let $\vec e\in\cu N^{|\vec u|}$.  By (1), there is an $\ell\in\{1,\ldots,k\}$ such that for every $\psi\in\Psi$,
$\psi^{\cu N}(w_\ell,\vec b)$ is within $2^{-(n+1)}$ of $\psi^{\cu N}(\vec e,\vec b)$.  Take $\vec c=z_\ell$.  By (2), for each $\psi\in\Psi$,
$\psi^{\cu M}(\vec c,\vec a)$ is within $2^{-(n+1)}$ of $\psi^{\cu N}(w_\ell,\vec b)$.  Therefore $\psi^{\cu M}(\vec c,\vec a)$ is within $2^{-n}$ of $\psi^{\cu N}(\vec e,\vec b)$.
\end{proof}

The next result, Theorem \ref{t-lambda-triangle-ultraproduct}, is the stronger form of Theorem \ref{t-equiv-triangle} that we promised.

\begin{thm}  \label{t-lambda-triangle-ultraproduct}
Suppose $\cu D$ is a regular ultrafilter over a set $I$ of infinite cardinality $\lambda$, $\{\cu M_t\colon t\in I\}$ and $\{\cu N_t\colon t\in I\}$
are families of real-valued structures with the same vocabulary, and $\cu M_t\equiv\cu N_t$ for each $t\in I$.  Then
 $\prod^{\cu D}\cu M_t$ is $\lambda^+$-saturated if and only if
$\prod^{\cu D}\cu N_t$ is $\lambda^+$-saturated.
\end{thm}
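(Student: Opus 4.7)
The plan is to prove the result by building directly on the pre-ultraproducts $\cu{M}' = \prod^{\cu{D}}\cu{M}_t$ and $\cu{N}' = \prod^{\cu{D}}\cu{N}_t$ a partial elementary map $f$ that also enjoys the full back-and-forth property at the level of finite tuples; once such a map is in hand, the theorem follows quickly. By symmetry it suffices to assume $\cu{N}'$ is $\lambda^+$-saturated and show the same for $\cu{M}'$. Let $\Phi(v) = \{\varphi_\alpha(v, \vec{a}_\alpha) : \alpha < \lambda\}$ be a finitely satisfiable set of strict $V$-formulas (harmless by Lemma \ref{l-strict-sat} and Fact \ref{f-strict-approx}) with parameters from $\cu{M}'$, where every parameter is itself listed among the $\vec{a}_\alpha$.

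Fix a regularizing family $\cu{X} = \{X_\alpha : \alpha < \lambda\}$ for $\cu{D}$, and enumerate as $\{(\Psi_\alpha, n_\alpha) : \alpha < \lambda\}$ every pair in which $\Psi_\alpha$ is a finite set of strict formulas in one extra variable $v$ and finitely many parameter-variables $\vec{u}_\gamma$, $\gamma < \lambda$, and $n_\alpha \in \BN$, arranged so that each such pair occurs cofinally often. For each $t \in I$ put $S_t = \{\alpha : t \in X_\alpha\}$ (finite by regularity), and let $\bar{S}_t$ be $S_t$ enlarged by all indices $\gamma$ appearing in some $\Psi_\beta$ with $\beta \in S_t$; this is still finite. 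At each $t$, apply Lemma \ref{l-twostep} to $\cu{M}_t \equiv \cu{N}_t$ with parameter tuple $\vec{a}_{\bar{S}_t}[t]$ (the concatenation of $\vec{a}_\gamma[t]$ over $\gamma \in \bar{S}_t$), formula set $\bigcup_{\beta \in S_t}\Psi_\beta$ (with $v$ in the role of the extra tuple $\vec{u}$ and $\vec{u}_{\bar{S}_t}$ in the role of $\vec{v}$), and tolerance $2^{-n}$ where $n = \max_{\beta \in S_t} n_\beta$, obtaining a tuple $\vec{b}_{\bar{S}_t}[t]$ in $\cu{N}_t$. Assembling components across $t$ (with arbitrary padding when $\alpha \notin \bar{S}_t$) yields $\vec{b}_\alpha \in \prod_{t \in I}\cu{N}_t^{|\vec{a}_\alpha|}$ for each $\alpha < \lambda$.

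The map $f : \vec{a}_\alpha \mapsto \vec{b}_\alpha$ is partial elementary: given a formula $\chi$ over parameters in the domain and any $n$, choose $\alpha$ with $(\Psi_\alpha, n_\alpha) = (\{\chi\}, n)$; on the $\cu{D}$-large set $X_\alpha$ all parameter indices of $\chi$ lie in $\bar{S}_t$, and Lemma \ref{l-twostep} forces $|\chi^{\cu{M}_t} - \chi^{\cu{N}_t}| \leq 2^{-n}$ between the relevant sub-tuples, which by Fact \ref{f-Los-2} yields equality in the pre-ultraproducts. For the back-and-forth in one direction, given $c \in \cu{M}'$ and any finite tuple of parameters in the domain, at each $t$ condition (i) of Lemma \ref{l-twostep}, applied to the single tuple $\vec{b}_{\bar{S}_t}[t]$ together with the joint formula set $\bigcup_{\beta \in S_t}\Psi_\beta$, produces an element $c'[t] \in \cu{N}_t$ matching $c[t]$ simultaneously for all formulas in that set at tolerance $2^{-n}$. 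For each fixed $(\Psi, n)$ this matching then holds on the $\cu{D}$-large set witnessing its enumeration, and Fact \ref{f-Los-2} again gives that $f \cup \{(c, c')\}$ is partial elementary; the other direction is symmetric via condition (ii).

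With the back-and-forth in place, $f$ transports finite satisfiability of $\Phi$ in $\cu{M}'$ to finite satisfiability of $\{\varphi_\alpha(v, \vec{b}_\alpha)\}$ in $\cu{N}'$, which $\lambda^+$-saturation of $\cu{N}'$ realizes by some $c' \in \cu{N}'$. The backward back-and-forth produces $c \in \cu{M}'$ with $f \cup \{(c, c')\}$ partial elementary, and this $c$ realizes $\Phi$ in $\cu{M}'$. The main obstacle is the pointwise orchestration at each $t$: a single application of Lemma \ref{l-twostep} there has to anticipate every finite formula-set and tolerance that the back-and-forth could later query. Enlarging supports to $\bar{S}_t$ and using the joint formula set $\bigcup_{\beta \in S_t}\Psi_\beta$ is what allows one choice of $\vec{b}_{\bar{S}_t}[t]$ to serve simultaneously as a partial-elementary image and as a source of back-and-forth witnesses for both conditions (i) and (ii), while the regularity of $\cu{D}$ concentrates each query onto a $\cu{D}$-large set.
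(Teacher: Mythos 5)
Your proof is correct and uses the same central lemma as the paper (Lemma \ref{l-twostep}, applied pointwise at each index $t$, steered by a regularizing family), but it is organized around a genuinely different bookkeeping device: you construct an explicit partial elementary map $f$ from the parameters in $\cu M'$ to a matched family of parameters in $\cu N'$, prove $f$ has a one-step back-and-forth extension property, and then transfer the type across $f$, realize it in the saturated side, and pull the realization back. The paper instead packages the same pointwise work via the Malliaris notion of a \emph{distribution} of a type (Definition \ref{d-distribution} and Lemmas \ref{l-dist-exist}, \ref{l-dist-saturates}): it obtains a distribution $\delta_1$ of $\Gamma(u,B)$ in $\cu N'$, uses Lemma \ref{l-twostep} to manufacture parameters $A$ in $\cu M'$ so that the corresponding $\delta_2$ is a distribution of $\Gamma(u,A)$, invokes saturation to get a \emph{multiplicative} distribution $\delta_3$, and translates this back to a multiplicative distribution $\delta_4$ of $\Gamma(u,B)$, which by Lemma \ref{l-dist-saturates} yields a realization. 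Your version is arguably cleaner and more self-contained, avoiding the distribution machinery; the paper's version has the advantage of exercising the distribution technology (which it then reuses, e.g.\ in the proofs of Lemma \ref{l-TP2-solves} and Theorem \ref{t-min-rgR}). Both approaches ride on the same dyadic engine: an enumeration of (finite formula set, tolerance) pairs matched to members of the regularizing family so that only finitely many pairs are ``active'' at each $t$, and a single application of Lemma \ref{l-twostep} there.

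A few minor blemishes worth smoothing. Your index $\alpha$ does triple duty (indexing the type $\Phi$, the pairs $(\Psi_\alpha,n_\alpha)$, and the regularizing sets $X_\alpha$), and $\bar S_t$ as written mixes enumeration indices with parameter indices; this is harmless but should be disentangled, as should the remark that $f$ may fail to be a function if two tuples $\vec a_\alpha=\vec a_\beta$ coincide (easily fixed by passing to a repetition-free list). The ``cofinally often'' requirement on the enumeration is not needed: one occurrence of each pair suffices, since the argument only ever asks for some $\alpha$ with $(\Psi_\alpha,n_\alpha)=(\{\chi\},n)$. Finally, the sentence asserting that ``Lemma \ref{l-twostep} forces $|\chi^{\cu M_t}-\chi^{\cu N_t}|\le 2^{-n}$'' is a slight shortcut: for parameter-only $\chi$ this follows because (i) of Lemma \ref{l-twostep} with $\vec u=(v)$ degenerates to the unconditional matching, and for $\chi$ involving $v$ it is really the subsequent back-and-forth step (via (i) or (ii)) that carries the matching; it would read better to separate those two cases explicitly.
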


Our proof of Theorem \ref{t-lambda-triangle-ultraproduct} will use a continuous analogue of the notion of a distribution from Malliaris [Ma12].
Distributions give a useful criterion for a set of formulas with parameters to be satisfiable in a regular ultraproduct.
In what follows, we let  $\cu M'=\prod^\cu D \cu M_t$ be a pre-ultraproduct, $A$ be a set of parameters in $\cu M'$
of cardinality $|A|\le \lambda$, and $\Gamma=\Gamma(\vec x,A)$ be a set of at most $\lambda$ continuous formulas in the parameters $A$.  For each $t\in I$ and $a\in A$,
$a[t]$ will be the corresponding element of $\cu M_t$, and $A[t]=\{a[t]\colon a\in A\}$.

\begin{df}  We define the approximation $\Gamma^{ap}$ of $\Gamma$ to be the set of formulas
$$\Gamma^{ap}=\Gamma^{ap}(\vec x,A)=\{\psi(\vec x,A)\dotminus 2^{-n}\colon \psi(\vec x,A)\in\Gamma, n\in\BN\}.$$
\end{df}

\begin{rmk}  \label{r-approx-sat}   A tuple $\vec c$ satisfies $\Gamma$ in $\cu M'$ if and only
if $\vec c$ satisfies $\Gamma^{ap}$ in $\cu M'$. $\Gamma$ is finitely satisfiable in $\cu M'$ if and only if
$\Gamma^{ap}$ is finitely satisfiable in $\cu M'$.
\end{rmk}

\begin{proof}   By Fact \ref{r-regular-aleph-1},  $\cu M'$ is $\aleph_1$-saturated.
\end{proof}

\begin{df}  \label{d-distribution}
A \emph{distribution} of $\Gamma$ in $\cu M'$ is a monotone mapping $\delta\colon\cu P_{\aleph_0}(\Gamma^{ap})\to \cu D$ such that:
\begin{itemize}
\item[(a)] $\delta\colon\cu P_{\aleph_0}(\Gamma^{ap})\to \cu X$ for some $\cu X$ that regularizes $\cu D$.
\item[(b)]  For each $\Psi\in\cu P_{\aleph_0}(\Gamma^{ap})$ and $t\in \delta(\Psi)$,
$$\cu M_t\models(\inf_{\vec x})\max_{\psi\in\Psi}\psi(\vec x,A[t])=0.$$
\end{itemize}
For each $t\in I$, we define
$$\Gamma(\delta,t)=\{\psi\in\Gamma^{ap}\colon t\in \delta(\{\psi\})\}.$$

A distribution $\delta$ of $\Gamma$ in $\cu M'$ is \emph{accurate} if
\begin{itemize}
\item[(c)] For all $\Psi\in\cu P_{\aleph_0}(\Gamma^{ap})$,
$$\delta(\Psi)=\left\{t\in\left(\bigcap_{\psi\in\Psi} \delta(\{\psi\})\right)\colon \cu M_t\models(\inf_{\vec x})\max_{\psi\in\Psi}\psi(\vec x,A[t])=0\right\}.$$
\end{itemize}

\end{df}

If $\cu M$ is a first order structure and $\Gamma$ is a set of first order formulas, the notion of a distribution is the same as above except that
$\Gamma^{ap}$ is replaced by $\Gamma$, and $(\inf_{\vec x})\max_{\psi\in\Psi}\psi(\vec x,A[t])=0$ is replaced by
$(\exists\vec x)\bigwedge_{\psi\in\Psi}\psi(\vec x,A[t])$.

\begin{rmk}  \label{r-distribution}  Let $\delta$ be a distribution of $\Gamma$ in $\cu M'$, and let $\Psi\in\cu P_{\aleph_0}(\Gamma^{ap})$.

(i)  For each $t\in I$, $\Gamma(\delta,t)$ is finite.

(ii)  $\delta(\Psi)\subseteq \{t\in I\colon \Psi\subseteq \Gamma(\delta,t)\}\in\cu D$.

(iii)  If $\delta$ is multiplicative, then $\delta(\Psi)= \{t\in I\colon \Psi\subseteq \Gamma(\delta,t)\}$ and $\delta$ is accurate.

(iv) We think of $\Gamma(\delta,t)$ as the part of $\Gamma^{ap}$ that matters at the index $t$.
\end{rmk}

\begin{lemma}  \label{l-dist-exist}  The following are equivalent.
\begin{itemize}
\item[(i)] $\Gamma$ is finitely satisfiable in $\cu M'$.
\item[(ii)] $\Gamma$ has a distribution in $\cu M'$.
\item[(iii)] $\Gamma$ has an accurate distribution in $\cu M'$.
\end{itemize}
\end{lemma}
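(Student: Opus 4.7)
The plan is to establish the cycle $(iii) \Rightarrow (ii) \Rightarrow (i) \Rightarrow (iii)$. The first arrow is immediate, since every accurate distribution is in particular a distribution.

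For $(ii) \Rightarrow (i)$, I would start from a distribution $\delta$; by Remark~\ref{r-approx-sat} it suffices to show that $\Gamma^{ap}$ is finitely satisfiable in $\cu M'$. Fix a finite $\Psi \subseteq \Gamma^{ap}$. Clause (b) gives $\cu M_t \models (\inf_{\vec x}) \max_{\psi \in \Psi} \psi(\vec x, A[t]) = 0$ for every $t \in \delta(\Psi) \in \cu D$; the \Los\ theorem (Fact~\ref{f-Los-2}) lifts this to $\cu M' \models (\inf_{\vec x}) \max_{\psi \in \Psi} \psi(\vec x, A) = 0$, and then $\aleph_1$-saturation of the pre-ultraproduct (via Remarks~\ref{r-regular-aleph-1}(i) and~\ref{r-reduction}) realizes the countable type $\{\max_{\psi \in \Psi} \psi \dotminus 2^{-n} : n \in \BN\}$, producing $\vec c$ with $\psi^{\cu M'}(\vec c, A) = 0$ for every $\psi \in \Psi$.

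The key step is $(i) \Rightarrow (iii)$, where I would construct an accurate distribution explicitly. Since $\cu D$ is regular, fix a regularizing family $\cu X_0 = \{X_\alpha : \alpha < \lambda\}$, and using $|\Gamma^{ap}| \le \lambda$ enumerate $\Gamma^{ap} = \{\psi_\alpha : \alpha < \lambda\}$. Set
$$\delta(\{\psi_\alpha\}) = X_\alpha \cap \{t \in I : \cu M_t \models (\inf_{\vec x}) \psi_\alpha(\vec x, A[t]) = 0\},$$
and for a general $\Psi \in \cu P_{\aleph_0}(\Gamma^{ap})$,
$$\delta(\Psi) = \left(\bigcap_{\psi \in \Psi} \delta(\{\psi\})\right) \cap \left\{t \in I : \cu M_t \models (\inf_{\vec x}) \max_{\psi \in \Psi} \psi(\vec x, A[t]) = 0\right\}.$$
Monotonicity, clause (b), and accuracy (c) are built into this definition. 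Membership $\delta(\Psi) \in \cu D$ follows from finite satisfiability: pick $\vec c \in \cu M'$ that satisfies the finite subset of $\Gamma$ underlying $\Psi$ and apply \Los\ to each approximation $\varphi \dotminus 2^{-n}$ to conclude that $\vec c[t]$ makes $\max_\psi \psi(\vec x, A[t]) = 0$ on a set in $\cu D$.

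The main subtlety will be clause (a), the requirement that the range of $\delta$ sit inside some regularizing family. Here I would observe that $t \in \delta(\Psi)$ forces $t \in X_\alpha$ for every $\psi_\alpha \in \Psi$, so $\Psi \subseteq \Gamma(\delta, t)$, and $\Gamma(\delta, t)$ is finite because $t$ belongs to only finitely many $X_\alpha$. Hence each $t$ lies in only finitely many members of the range of $\delta$, so taking $\cu X$ to be the union of this range with $\cu X_0$ yields a regularizing family of cardinality $\lambda$ containing the range of $\delta$, which is exactly what (a) demands.
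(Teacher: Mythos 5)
Your proof is correct and follows essentially the same route as the paper: $(iii)\Rightarrow(ii)$ is immediate, $(ii)\Rightarrow(i)$ is via \Los\ and $\aleph_1$-saturation of the regular pre-ultraproduct, and $(i)\Rightarrow(iii)$ constructs the accurate distribution by pinning singletons to members of a regularizing family and extending via clause (c). (Incidentally, your appeal to clause (b) in $(ii)\Rightarrow(i)$ is what the paper intends; its citation of ``(c)'' there appears to be a typo.)

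Where you go beyond the paper's write-up: the paper simply sets $\delta(\{\psi\})=h(\psi)$ for an injection $h\colon\Gamma^{ap}\to\cu X$ and declares the map defined by (c) to be an accurate distribution, without checking clause (a) or noting that (c) applied to a singleton forces $\delta(\{\psi\})\subseteq h(\psi)$ to be a proper intersection rather than all of $h(\psi)$. You handle both points: intersecting $X_\alpha$ with the satisfiability set at the singleton level makes (c) hold verbatim rather than just in spirit, and your observation that $t\in\delta(\Psi)$ forces $\Psi\subseteq\Gamma(\delta,t)$ (a finite set, since the singleton values are contained in the $X_\alpha$'s) is exactly what one needs to see that the full range of $\delta$, adjoined to $\cu X_0$, is still a regularizing family of size $\lambda$. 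This extra care is correct and genuinely closes a small gap that the paper glosses over.
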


\begin{proof} (ii) $\Rightarrow$ (i): By Remark \ref{r-regular-aleph-1}, $\cu M'$ is $\aleph_1$-saturated, so by Remark \ref{r-approx-sat}, $\Gamma$ is finitely satisfiable
in $\cu M'$ if and only if $\Gamma^{ap}$ is finitely satisfiable in $\cu M'$.  If $\delta$ is a distribution of $\Gamma$ in $\cu M'$
and $\Psi$ is a finite subset of $\Gamma^{ap}$, then  $\delta(\Psi)\in\cu D$, so by (c), the \Los \ Theorem, and the $\aleph_1$-saturation of $\cu M'$, $\Psi$ is satisfiable
in $\cu M'$.

(i) $\Rightarrow$ (iii): Suppose $\Gamma^{ap}$ is finitely satisfiable in $\cu M'$.  Since $\cu D$ is regular, $\cu D$ has a regularizing set $\cu X$.  Thus $\cu X\subseteq\cu D$,
$|\cu X|=\lambda$, and each $t\in I$ belongs to only finitely many $X\in\cu X$.  So there is an injective function $h\colon \Gamma^{ap}\to\cu X$.
For $\psi\in\Gamma^{ap}$ let $\delta(\{\psi\})=h(\psi)$.  Then
the mapping $\delta$ defined by condition (c) of Definition \ref{d-distribution}
is an accurate distribution of $\Gamma$ in $\cu M'$.
\end{proof}

Since $\cu D$ is regular and $I$ is infinite, we may choose sets $J_1\supseteq J_2\supseteq\cdots$ in $\cu D$ such that $\bigcap_n J_n$ is empty.
Put $J_0=I$, and for each $t\in I$, let $n(t)$ be the greatest $n\in\BN$ such that $t\in J_n$.

\begin{lemma} \label{l-dist-saturates} The following are equivalent.
\begin{itemize}
\item[(i)]   $\Gamma$ is satisfiable in $\cu M'$.
\item[(ii)] $\Gamma$ has a multiplicative distribution in $\cu M'$.
\item[(iii)] $\Gamma$ is finitely satisfiable in $\cu M'$ and every accurate distribution of $\Gamma$ in $\cu M'$ has a multiplicative refinement.
\end{itemize}
\end{lemma}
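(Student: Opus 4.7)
The plan is to establish the equivalence via the cycle (ii) $\Rightarrow$ (i) $\Rightarrow$ (iii) $\Rightarrow$ (ii), with the substance concentrated in (ii) $\Rightarrow$ (i) and in the construction of a multiplicative refinement for (i) $\Rightarrow$ (iii); the step (iii) $\Rightarrow$ (ii) is then an application of Lemma \ref{l-dist-exist} plus the same refinement bookkeeping.

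For (ii) $\Rightarrow$ (i): let $\delta$ be a multiplicative distribution of $\Gamma$ in $\cu M'$. For each $t\in I$, Remark \ref{r-distribution}(i) shows $\Gamma(\delta,t)$ is finite, and multiplicativity puts $t$ in $\delta(\Gamma(\delta,t))$, so clause (b) of Definition \ref{d-distribution} supplies $\vec c[t]\in M_t^{|\vec x|}$ with $\max_{\psi\in\Gamma(\delta,t)}\psi^{\cu M_t}(\vec c[t],A[t])\le 2^{-n(t)}$ (choose $\vec c[t]$ arbitrarily if $\Gamma(\delta,t)=\emptyset$). I claim $\vec c_{\cu D}$ satisfies $\Gamma$ in $\cu M'$: for $\phi\in\Gamma$ and $n\in\BN$, the set $\delta(\{\phi\dotminus 2^{-n}\})\cap J_{n+1}$ belongs to $\cu D$, and on it both $\phi\dotminus 2^{-n}\in\Gamma(\delta,t)$ and $n(t)\ge n+1$, which together force $\phi^{\cu M_t}(\vec c[t],A[t])\le 2^{-n}+2^{-(n+1)}$; the \Los \ Theorem (Fact \ref{f-Los-2}) then bounds $\phi^{\cu M'}(\vec c_{\cu D},A)$ by $2^{-(n-1)}$ for every $n\ge 1$, so it equals $0$.

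For (i) $\Rightarrow$ (iii): finite satisfiability is immediate. Given a satisfier $\vec c_{\cu D}$ with representative $\vec c$ and an arbitrary accurate distribution $\delta$, set $W_\psi=\{t\in I:\psi^{\cu M_t}(\vec c[t],A[t])=0\}$ for each $\psi\in\Gamma^{ap}$; for $\psi=\phi\dotminus 2^{-n}$ we have $W_\psi\supseteq\{t:\phi^{\cu M_t}(\vec c[t],A[t])<2^{-n}\}$, which is in $\cu D$ by \Los \ applied to $\phi^{\cu M'}(\vec c_{\cu D},A)=0$, so $W_\psi\in\cu D$. Define
\[
\delta'(\Psi)=\bigcap_{\psi\in\Psi}\bigl(\delta(\{\psi\})\cap W_\psi\bigr).
\]
Multiplicativity is visible from the formula; the inclusion $\delta'(\Psi)\subseteq\delta(\Psi)$ uses accuracy of $\delta$ via clause (c), since for $t\in\delta'(\Psi)$ the witness $\vec c[t]$ satisfies every $\psi\in\Psi$ exactly in $\cu M_t$; clause (b) for $\delta'$ is inherited from $\delta$; and clause (a) for $\delta'$ holds because $\Gamma(\delta',t)\subseteq\Gamma(\delta,t)$ is finite and multiplicativity therefore leaves $t$ in only finitely many values $\delta'(\Psi)$, so the image of $\delta'$ lies in a regularizing family (pad with the original regularizing family for $\delta$ if necessary).

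Finally, (iii) $\Rightarrow$ (ii) follows by applying Lemma \ref{l-dist-exist} to produce an accurate distribution of $\Gamma$ and then invoking (iii) for a multiplicative refinement, which is a multiplicative distribution with clauses (a), (b) verified as above. I expect the principal obstacle to be the (ii) $\Rightarrow$ (i) step: unlike the first-order case, multiplicativity only yields approximate witnesses $\vec c[t]$ in each $\cu M_t$, and the approximation tolerance must be coupled to $t$ via the nested sets $J_n\in\cu D$ (with $\bigcap_n J_n=\emptyset$), so that the per-index imprecision is absorbed by the ultrafilter and the \Los \ limit of $\phi$ at $\vec c_{\cu D}$ lands at exactly $0$ rather than at a small positive value.
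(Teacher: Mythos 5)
Your proposal is correct and follows essentially the same route as the paper's proof: approximate per-index witnesses with tolerance $2^{-n(t)}$ tied to the nested sets $J_n$ in the step (ii) $\Rightarrow$ (i), and a refinement $\delta'$ built from a global satisfier plus accuracy in the step (i) $\Rightarrow$ (iii). The substantive content matches, so there is no gap; the differences are presentational. You run a clean cycle (ii) $\Rightarrow$ (i) $\Rightarrow$ (iii) $\Rightarrow$ (ii), whereas the paper proves the redundant implication (i) $\Rightarrow$ (ii) separately. In (i) $\Rightarrow$ (iii) you write $\delta'(\Psi)=\bigcap_{\psi\in\Psi}(\delta(\{\psi\})\cap W_\psi)$, which makes multiplicativity and monotonicity literal from the formula and uses accuracy only for the inclusion $\delta'\subseteq\delta$; the paper instead sets $\delta'(\Psi)=\{t\in\delta(\Psi):\max_{\psi\in\Psi}\psi^{\cu M_t}(\vec c[t],A[t])=0\}$ (extensionally the same map, since accuracy forces $\delta(\Psi)\supseteq\bigcap_\psi(\delta(\{\psi\})\cap W_\psi)$) and invokes accuracy to verify multiplicativity. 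You are also slightly more careful than the paper in flagging that the multiplicative refinement handed back by (iii) must itself be checked to satisfy clauses (a) and (b) of Definition \ref{d-distribution}, and in recording explicitly that $W_\psi\in\cu D$; the paper leaves both implicit. One small notational slip: in (ii) $\Rightarrow$ (i) you write $\phi^{\cu M'}(\vec c_{\cu D},A)$, but since $\cu M'$ is the pre-ultraproduct the argument should be $\vec c$, with $\vec c_{\cu D}$ its image in the reduction; the intended reading and the computation are nevertheless correct.
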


\begin{proof}  (i) $\Rightarrow$ (ii):
Suppose $\vec c$ satisfies $\Gamma$ in $\cu M'$.  Let $\cu X$ regularize $\cu D$ and let  $h\colon \Gamma^{ap}\to\cu X$
be injective.  For each $\Psi\in\cu P_{\aleph_0}(\Gamma^{ab})$, let
$$\delta(\Psi)=\bigcap_{\psi\in\Psi}(h(\psi)\cap\{t\in I\colon \cu M_t\models \psi(\vec c[t],A[t])=0\}).$$
It is clear that $\delta$ is multiplicative, and each $t\in I$ belongs to $\delta(\Psi)$ for only finitely many $\Psi$.  Consider a formula $\psi(\vec x,A)\in\Gamma^{ap}$.
For some $\theta\in\Gamma$ and $n\in\BN$, we have $\psi(\vec x,A)=(\theta(\vec a,A)\dotminus 2^{-n})$,
and $\cu M'\models \theta(\vec c,A)$.  By the \Los \ Theorem,
$$\{t\in I\colon \cu M_t\models \psi(\vec c[t],A[t])=0\}\in\cu D.$$
Therefore $\delta$ maps $\cu P_{\aleph_0}(\Gamma^{ap})$ into $\cu D$, and hence $\delta$ is a multiplicative distribution of $\Gamma$ in $\cu M'$.

(ii) $\Rightarrow$ (i):  Now suppose $\delta$ is a multiplicative distribution of $\Gamma$ in $\cu M'$.  Let $t\in I$.  By (a),  the set
$$\Psi=\{\psi\in\Gamma^{ap}\colon t\in \delta(\{\psi\})\}$$
belongs to $\cu P_{\aleph_0}(\Gamma^{ap})$. By multiplicity,
$\delta(\Psi)=\bigcap_{\psi\in\Psi} \delta(\{\psi\})$, so $t\in \delta(\Psi)\in\cu D.$
By \ref{d-distribution} (b), there is a tuple $\vec c[t]$ in $\cu M_t$ such that
$$ (\forall \psi\in\Psi) \cu M_t\models  \psi(\vec c[t],A[t])\le 2^{-n(t)}.$$
Then for each $\Psi\in\cu P_{\aleph_0}(\Gamma^{ab})$ and $n\in\BN$, we have
$$X:=\delta(\Psi)\cap J_n\in\cu D, \quad (\forall t\in X)(\forall \psi\in\Psi) \cu M_t\models  \psi(\vec c[t],A[t])\le 2^{-n}.$$
By the \Los \ Theorem, for each $\psi\in\Gamma^{ab}$ and $n\in\BN$ we have $\cu M'\models \psi(\vec c,A)\le 2^{-n}$, so $\cu M'\models \psi(\vec c,A)=0$.

(iii) $\Rightarrow$ (ii):  Assume (iii).  By Lemma \ref{l-dist-exist}, $\Gamma$ has an accurate distribution in $\cu M'$, so by  (iii),
$\Gamma$ has a multiplicative distribution in $\cu M'$.

(i) $\Rightarrow$ (iii):  Suppose $\vec c$ satisfies $\Gamma$ in $\cu M'$ and $\delta$ is an accurate distribution of $\Gamma$ in $\cu M'$.
  For $\Psi\in\cu P_{\aleph_0}(\Gamma^{ap})$, let
$$\delta'(\Psi)=\left\{t\in\delta(\Psi)\colon \cu M_t\models\max_{\psi\in\Psi} \psi(\vec c[t],A[t])=0)\right\}.$$
Then $\delta'$ is a distribution of $\Gamma$ in $\cu M'$ that refines $\delta$.
Suppose  $t\in\delta'(\Psi)\cap\delta'(\Theta)$. Then
$$ t\in\delta(\Psi)\cap\delta(\Theta) \mbox{ and } \cu M_t\models\max_{\psi\in\Psi\cup\theta} \psi(\vec c[t],A[t])=0).$$
Since $\delta$ is accurate, $t\in\delta(\Psi\cup\Theta)$, so $t\in\delta'(\Psi\cup\Theta)$ and $\delta'$ is multiplicative.
\end{proof}

\begin{proof}  [Proof of Theorem \ref{t-lambda-triangle-ultraproduct}]
Let $\cu M'=\prod^{\cu D}\cu M_t$ and $\cu N'=\prod^{\cu D}\cu N_t$.
We suppose that $\cu M'$ is $\lambda^+$-saturated, and prove that $\cu N'$ is $\lambda^+$-saturated.  Let $B$ be a $\lambda$-sequence
of elements of $\cu N'$, and assume that $\Gamma(u,B)$ is a set of at most $\lambda$ formulas  with parameters from $B$ and at most $u$ free that is
is finitely satisfiable in $\cu N'$.
By Lemma \ref{l-strict-sat}, to show that $\cu N'$ is $\lambda^+$-saturated it
suffices to prove that  $\Gamma(u,B)$ is satisfiable in $\cu N'$.
By Lemma \ref{l-dist-exist}, $\Gamma(u,B)$ has a distribution $\delta_1$ in $\cu N'$.
For each $\psi\in\Gamma^{ap}(u,B)$, we have $\psi=(\theta(\psi)\dotminus 2^{-m(\psi)})$ for some $\theta(\psi)\in\Gamma(u,A)$ and $m(\psi)\in\BN$,
and we define $\psi^+=(\theta(\psi)\dotminus 2^{-(m(\psi)+1)})$.
Then $\psi\mapsto \psi^+$ is a bijection from $\Gamma^{ap}(u,B)$ onto itself.  Define   $\Psi^+=\{\psi^+\colon \psi\in\Psi\}$.

For each $t\in I$, let $m(t)=\max\{m(\psi)\colon \psi\in\Gamma(\delta_1,t)\}$.
By Lemma \ref{l-twostep}, there is a $\lambda$-sequence $A$ of elements of $\cu M'$ such that for every $t\in I$,
\begin{itemize}
\item[(1)] For every $e[t]\in\cu N_t$ there exists $c[t]\in\cu M_t$ such that for every $\psi$ in $\Gamma(\delta_1,t)\}$,
$\theta(\psi)^{\cu N_t}(e[t],B[t])$ is within $2^{-(m(t)+1)}$ of $\theta(\psi)^{\cu M_t}(c[t],A[t])$.
\item[(2)] For every $c[t]\in\cu M_t$ there exists $e[t]\in\cu N_t$ such that for every $\psi$ in $\Gamma(\delta_1,t)\}$,
$\theta(\psi)^{\cu N_t}(e[t],B[t])$ is within $2^{-(m(t)+1)}$ of $\theta(\psi)^{\cu M_t}(c[t],A[t])$.
\end{itemize}
\medskip

It follows from (1) that the mapping $\delta_2\colon\cu P_{\aleph_0}(\Gamma^{ap}(u,A))\to \cu D$ such that
$$\delta_2(\Psi(u,A))=\delta_1(\Psi^+(u,B))$$
is a distribution of $\Gamma(u,A)$ in $\cu M'$, so by Lemma \ref{l-dist-exist}, $\Gamma(u,A)$ is finitely satisfiable in $\cu M'$.
Since $\cu M'$ is $\lambda^+$-saturated, $\Gamma(u,A)$ is satisfiable in $\cu M'$.
Then by Lemma \ref{l-dist-saturates}, there is a multiplicative distribution $\delta_3$ of $\Gamma(u,A)$ in $\cu M'$.
It follows from (2) that
the mapping $\delta_4\colon\cu P_{\aleph_0}(\Gamma^{ap}(u,B))\to \cu D$ such that
$$\delta_4(\Psi(u,B))=\delta_3(\Psi^+(u,A))$$
is a multiplicative distribution of $\Gamma(u,B)$ in $\cu N'$, so by Lemma \ref{l-dist-saturates}, $\Gamma(u,B)$ is  satisfiable in $\cu N'$.
\end{proof}

\section{Every continuous theory is $\lek$-equivalent to a metric theory}  \label{s-metric}

In this short section we will prove the following.

\begin{thm}  \label{t-metric-expansion}  For every continuous theory $T$ there us a metric theory $U$ (which may have a different vocabulary) such that $T \eqk U$.
\end{thm}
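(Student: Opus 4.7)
The plan is, given a model $\cu M \models T$ with countable vocabulary $V$, to build a premetric expansion $\cu M^+$ whose theory is a metric theory $\lek$-equivalent to $T$. Let $V^+ = V \cup \{d\}$ for a fresh binary predicate symbol $d$, enumerate the atomic $V$-formulas with one distinguished free variable as $\{\psi_n(v, \vec z_n)\}_{n \in \BN}$ (possible by countability of $V$), and interpret
$$ d^{\cu M^+}(x, y) \;=\; \sum_{n=0}^{\infty} 2^{-n-1}\,\sup_{\vec z}\,|\psi_n(x, \vec z) - \psi_n(y, \vec z)|. $$
Triangle inequality, symmetry and $d(x,x) = 0$ are routine, and a telescoping argument gives each predicate symbol of $V$ a linear modulus of uniform continuity in $d$. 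The key structural observation is that the partial sum $d_N(x, y) = \sum_{n \le N} 2^{-n-1} \sup_{\vec z} |\psi_n(x, \vec z) - \psi_n(y, \vec z)|$ is a continuous $V$-formula, with $|d^{\cu M^+} - d_N| \le 2^{-N-1}$ uniformly in $x, y$, so $d^{\cu M^+}$ is a uniform limit of $V$-definable predicates.

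The main obstacle in the premetric-structure verification is uniform continuity of function symbols $F \in V$. Substituting $F$ into the distinguished position of $\psi_n$ produces another atomic formula $\psi_{\sigma_F(n)}$, and no fixed linear modulus controls the resulting series, since $\sigma_F$ can grow faster than $n$. Instead, I would split the series at $N$: the tail $\sum_{n > N} 2^{-n-1}$ contributes at most $2^{-N-1}$ regardless of $\vec a, \vec b$, while the first $N$ terms are bounded by $k\,\delta\,M_{F,N}$ whenever $\max_i d(a_i, b_i) \le \delta$, for a finite constant $M_{F,N}$ depending only on $F$, $N$, and the enumeration of $\{\psi_n\}$. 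Choosing first $N$ and then $\delta$ appropriately yields a continuous modulus $\omega_F$ with $\omega_F(0) = 0$ depending only on $F$ and the enumeration, so these moduli can be declared a priori in a metric signature $L^+$ over $V^+$. Then $\cu M^+$ is an $L^+$-premetric structure; let $\cu N^+$ be its completion (Fact~\ref{f-premetric-metric}) and set $U := \Th(\cu N^+) = \Th(\cu M^+)$, which is a metric theory.

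It remains to verify $T \eqk U$. The direction $T \lek U$ is easy: if $\cu D \in \Sat(U)$ then by Theorem~\ref{t-equiv-triangle} $\cu D$ saturates $\cu M^+$, so $(\cu M^+)^{\cu D}$ is $\lambda^+$-saturated as a $V^+$-structure; since the pre-ultrapowers $(\cu M)^{\cu D}$ and $(\cu M^+)^{\cu D}$ share an underlying set and every $V$-type is also a $V^+$-type, $(\cu M)^{\cu D}$ is $\lambda^+$-saturated as a $V$-structure, giving $\cu D \in \Sat(T)$. For $U \lek T$, assume $\cu D \in \Sat(T)$ and let $\Gamma(v)$ be a set of at most $\lambda$ strict $V^+$-formulas over parameters $A \subseteq (\cu M^+)^{\cu D}$ with $|A| \le \lambda$, finitely satisfiable in $(\cu M^+)^{\cu D}$. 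For each $\varphi \in \Gamma$ and each $N \in \BN$, let $\varphi^{(N)}$ be the $V$-formula obtained by replacing every occurrence of $d$ in $\varphi$ by $d_N$; since all strict connectives and quantifiers are $1$-Lipschitz, $|\varphi - \varphi^{(N)}| \le C_\varphi \cdot 2^{-N-1}$ in any $L^+$-premetric structure, with $C_\varphi$ the number of occurrences of $d$ in $\varphi$. The set $\Gamma^V := \{\varphi^{(N)} \dotminus (C_\varphi \cdot 2^{-N-1}) : \varphi \in \Gamma,\, N \in \BN\}$ is a $V$-type of size at most $\lambda$, finitely satisfiable in $(\cu M)^{\cu D}$ because witnesses for finite subsets of $\Gamma$ in $(\cu M^+)^{\cu D}$ are themselves elements of the shared underlying set and the $V$-formulas $\varphi^{(N)}$ take the same value in either structure. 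By the $V$-saturation of $(\cu M)^{\cu D}$ coming from $\cu D \in \Sat(T)$, $\Gamma^V$ has a realization $v$; and then $\varphi^{(\cu M^+)^{\cu D}}(v) \le \varphi^{(N)}(v) + C_\varphi \cdot 2^{-N-1} \le 2 C_\varphi \cdot 2^{-N-1} \to 0$ for every $\varphi \in \Gamma$, so $v$ realizes $\Gamma$ in $(\cu M^+)^{\cu D}$ and $\cu D \in \Sat(U)$.
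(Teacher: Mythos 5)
Your proof is correct, but it follows a genuinely different route from the paper's. The paper's proof is a two-liner: it invokes the Expansion Theorem (Fact~\ref{f-expansion}, citing [Ke21]) to obtain a premetric expansion $T_e$, notes that $T_e$ is a metric theory by definition, and then appeals to Theorem~\ref{t-expansion-equiv}, which in turn relies on Fact~\ref{f-expansion-product} (pre-ultraproducts commute with premetric expansion) and Fact~\ref{f-expansion-sat} (premetric expansion preserves $\kappa$-saturation), both imported from [Ke21], together with Theorem~\ref{t-equiv-triangle}. You instead reconstruct the relevant special cases of those facts from scratch: you build the distance $d$ explicitly as a weighted sum over atomic formulas (this is essentially a hands-on proof of Fact~\ref{f-expansion} in the case needed), you observe that the pre-ultrapower $(\cu M^+)^{\cu D}$ is just the $V^+$-expansion of $(\cu M)^{\cu D}$ by the pointwise limit of the $d_N$ (the special case of Fact~\ref{f-expansion-product} you need), and you prove the saturation transfer directly via a type-translation argument (replacing each occurrence of $d$ by $d_N$ and padding with $\dotminus C_\varphi 2^{-N-1}$), which amounts to re-proving Fact~\ref{f-expansion-sat} for ultrapowers. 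Both approaches ultimately bottom out at Theorem~\ref{t-equiv-triangle}. What your approach buys is self-containedness — no black-box reference to [Ke21]; what it costs is length and some delicate bookkeeping (the approximation constants $C_\varphi$, the moduli). One place where your write-up is somewhat compressed and slightly imprecise: in the modulus-of-continuity argument for a function symbol $F$ of arity $k>1$, substituting $F(\vec w)$ for $v$ in $\psi_n(v,\vec z)$ does not give a single $\psi_{\sigma_F(n)}$ with one distinguished variable; you need the telescoping trick of swapping one argument position of $F$ at a time, obtaining indices $m(n,F,i)$ for $i \le k$, and setting your $M_{F,N}$ to control $\sum_{n\le N} 2^{\max_i m(n,F,i) - n}$. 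Your phrasing with $k\,\delta\,M_{F,N}$ indicates you are aware of this, but a reader could reasonably object to the $\sigma_F$ notation. Aside from that, the argument goes through, and I would accept it as an alternative proof.
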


We will see that Theorem \ref{t-metric-expansion} is an easy consequence of Theorem \ref{t-equiv-triangle} above and
results about premetric expansions from the paper [Ke21] (Fact \ref{f-expansion}--\ref{f-expansion-sat} below).

We say that a sequence $\langle\varphi_m(\vec x,\vec y)\rangle_{m\in\BN}$ of formulas is \emph{Cauchy} in a $V$-structure $\cu M$
if for each $\varepsilon >0$ there exists $m$ such that for all $k\ge m$,
$$ \cu M\models (\sup_{\vec x})(\sup_{\vec y})|\varphi_m(\vec x,\vec y)-\varphi_k(\vec x,\vec y)|\le\varepsilon.$$
If $\langle\varphi_m(\vec x,\vec y)\rangle_{m\in\BN}$ is Cauchy in $\cu M$, then there is a
unique mapping from $M^{|\vec x|}\times M^{|\vec y|}$ into $[0,1]$, denoted by $[\lim \varphi_m]^{\cu{M}}$, such that
$$(\forall \vec b\in M^{|\vec x|}) (\forall \vec c\in M^{|\vec y|})[\lim \varphi_m]^\cu{M}(\vec b,\vec c)=\lim_{m\to\infty}\varphi_m^{\cu{M}}(\vec b,\vec c).$$

\begin{df} \label{d-metric-expansion}
 Let $T$ be a  $V$-theory,  let $D$ be a  predicate symbol
that may or may not belong to $V$, and let $V_D=V\cup\{ D\}$.
We say that a $V_D$-theory $T_e$ is a \emph{premetric expansion} of $T$  if:
\begin{itemize}
\item[(i)]  There is  a metric signature $L_e$ over $V_D$ with distance predicate $D$ such that
 $T_e$ is an $L_e$-metric theory.
\item[(ii)] There is a Cauchy sequence $\langle d\rangle=\langle d_m\rangle$ of $V$-formulas  such that
the models of $T_e$ are exactly the $V_D$-structures
of the form $\cu M_e=(\cu M,[\lim d_m]^{\cu M})$, where $\cu M$ is a  model of $T$.
\end{itemize}
It follows that $\cu M_e$ is an $L_e$-premetric structure, which we call  the  \emph{premetric expansion} of  $\cu M$  to $T_e$.
\end{df}

\begin{fact}  \label{f-expansion}  (Expansion Theorem, Theorem 3.3.4 of [Ke21])  Every continuous  theory $T$ has a premetric expansion.
\end{fact}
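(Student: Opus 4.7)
The plan is to take $U$ to be a premetric expansion $T_e$ of $T$, whose existence is guaranteed by Fact \ref{f-expansion}; by clause (i) of Definition \ref{d-metric-expansion}, $T_e$ is an $L_e$-metric theory, hence a metric theory, so the only content left is to prove $T \eqk T_e$. By Theorem \ref{t-equiv-triangle}, for either inequality in this equivalence it suffices to exhibit a single pair of models whose $\cu D$-saturation agrees. I will fix any model $\cu M$ of $T$ and work with its premetric expansion $\cu M_e = (\cu M, [\lim d_m]^{\cu M})$, which is a model of $T_e$ by Definition \ref{d-metric-expansion}(ii). Because pre-ultrapowers commute with expansions (as the paper observes in the passage on $W$-parts), $(\cu M_e)^{\cu D}$ is an expansion of $\cu M^{\cu D}$ on the same underlying set.

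For $T \lek T_e$, suppose $\cu D$ saturates $\cu M_e$, so $(\cu M_e)^{\cu D}$ is $\lambda^+$-saturated. Any $V$-formula has the same truth value in $\cu M^{\cu D}$ as in its expansion $(\cu M_e)^{\cu D}$, so a set of at most $\lambda$ $V$-formulas with parameters from $\cu M^{\cu D}$ that is finitely satisfiable in $\cu M^{\cu D}$ is also finitely satisfiable in $(\cu M_e)^{\cu D}$; a realizer there witnesses satisfaction in $\cu M^{\cu D}$ since the universes coincide. Hence $\cu M^{\cu D}$ is $\lambda^+$-saturated and $\cu D$ saturates $T$.

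For $T_e \lek T$, suppose $\cu M^{\cu D}$ is $\lambda^+$-saturated. The interpretation of $D$ in $(\cu M_e)^{\cu D}$ is exactly $[\lim d_m]^{\cu M^{\cu D}}$: each sentence of the form $(\sup_{\vec x})(\sup_{\vec y})|d_m - d_k|\dotle \varepsilon$ true in $\cu M$ transfers to $\cu M^{\cu D}$ by \Los, so $\langle d_m\rangle$ remains Cauchy in $\cu M^{\cu D}$ uniformly. Consequently every $V_D$-formula is, within $2^{-n}$, approximable by a $V$-formula on the pre-ultrapower; any set of $V_D$-formulas finitely satisfiable in $(\cu M_e)^{\cu D}$ can be replaced, up to arbitrary accuracy, by a finitely satisfiable set of $V$-formulas in $\cu M^{\cu D}$, and combining the resulting realizers with Lemma \ref{l-strict-sat}, Fact \ref{f-strict-approx}, and the $\aleph_1$-saturation of the pre-ultrapower gives $\lambda^+$-saturation of $(\cu M_e)^{\cu D}$. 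This is precisely the content of the premetric-expansion saturation transfer, Fact \ref{f-expansion-sat} from [Ke21], which I will invoke as a black box.

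The main obstacle is the second direction: one must certify that $V_D$-types in the expanded pre-ultrapower really reduce to $V$-types on the base, which requires that the $d_m$ converge to $D$ \emph{uniformly} in $\cu M^{\cu D}$, not merely pointwise. This uniformity is inherited from the Cauchy property in $\cu M$ via \Los, and is exactly what Fact \ref{f-expansion-sat} packages; once that fact is in hand, the equivalence $T \eqk T_e$ falls out and, together with Theorem \ref{t-equiv-triangle}, completes the proof.
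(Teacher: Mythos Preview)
Your proposal does not prove the stated Fact~\ref{f-expansion} at all. Fact~\ref{f-expansion} asserts that every continuous theory $T$ \emph{has} a premetric expansion; this is a construction theorem, cited from [Ke21] as a black box, and the paper gives no proof of it. Your argument, by contrast, \emph{assumes} Fact~\ref{f-expansion} in its very first sentence (``whose existence is guaranteed by Fact~\ref{f-expansion}'') and then proceeds to prove that $T\eqk T_e$. What you have written is a proof of Theorem~\ref{t-metric-expansion} (via Theorem~\ref{t-expansion-equiv}), not of Fact~\ref{f-expansion}; with respect to the statement you were asked to prove, the argument is circular.

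If your intent was indeed Theorem~\ref{t-metric-expansion}, then your approach is essentially the paper's: the paper also takes $U=T_e$ via Fact~\ref{f-expansion}, then uses Fact~\ref{f-expansion-product} to identify $(\cu M_e)^{\cu D}$ with $(\cu M^{\cu D})_e$ and Fact~\ref{f-expansion-sat} to transfer $\lambda^+$-saturation, concluding with Theorem~\ref{t-equiv-triangle}. Your sketch of the $T_e\lek T$ direction is more discursive than necessary---you try to re-derive the saturation transfer by hand before invoking Fact~\ref{f-expansion-sat} anyway---but the structure matches. To actually prove Fact~\ref{f-expansion}, however, you would need to construct a Cauchy sequence $\langle d_m\rangle$ of $V$-formulas and a metric signature $L_e$ making $(\cu M,[\lim d_m]^{\cu M})$ an $L_e$-premetric structure for every model $\cu M$ of $T$; nothing in your proposal addresses this.
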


\begin{fact}  \label{f-expansion-unique}  (By Propositions 3.4.5 and 4.3.4 of [Ke21])
Every reduced $V$-structure $\cu M$ has a unique topology that is metrizable by a metric $D$ (not unique)
such that $(\cu M,D)$ is a premetric expansion of $\cu M$.
\end{fact}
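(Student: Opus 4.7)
The plan is to establish existence via the Expansion Theorem and then identify the induced topology with a canonical, $D$-independent topology on $M$, which immediately yields uniqueness.

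For existence, apply Fact~\ref{f-expansion} to the continuous theory $T = \Th(\cu M)$ to obtain a premetric expansion $T_e$ with metric signature $L_e$ over $V_D$ having distance predicate $D$, together with a Cauchy sequence $\langle d_m\rangle$ of $V$-formulas witnessing Definition~\ref{d-metric-expansion}(ii). Set $D^{\cu M} := [\lim d_m]^{\cu M}$ and $\cu M_e := (\cu M, D^{\cu M})$; by construction $\cu M_e$ is the premetric expansion of $\cu M$ to $T_e$, hence an $L_e$-premetric structure, so every symbol of $V$ is uniformly continuous with respect to $D^{\cu M}$. It remains to upgrade $D^{\cu M}$ from a pseudo-metric to a metric: if $D^{\cu M}(x,y)=0$, the uniform-continuity modulus of each atomic $V$-formula forces $\varphi^{\cu M}(x,\vec c) = \varphi^{\cu M}(y,\vec c)$ for every tuple $\vec c$, so $x \doteq^{\cu M} y$, and reducedness of $\cu M$ (Definition~\ref{d-reduced}) yields $x = y$. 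Thus $D^{\cu M}$ metrizes some topology on $M$.

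For uniqueness, define a topology $\tau$ on $M$ intrinsically from $\cu M$: a net $v_\alpha \to v$ in $\tau$ iff $\varphi^{\cu M}(v_\alpha, A) \to \varphi^{\cu M}(v, A)$ for every $V$-formula $\varphi(x, A)$ with parameters $A$ from $M$. I will show that any metric $D$ with $(\cu M, D)$ a premetric expansion of $\cu M$ induces $\tau$; since $\tau$ does not involve $D$, any two such metrics induce the same topology. The direction ``$D$-convergence $\Rightarrow$ $\tau$-convergence'' is immediate from Fact~\ref{f-truth-value}: every $V$-formula is a $V_D$-formula, hence uniformly continuous on $\cu M_e$ with respect to $D$, so $D$-convergence entails convergence of each $\varphi^{\cu M}(\cdot, A)$.

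The reverse direction ``$\tau$-convergence $\Rightarrow$ $D$-convergence'' is the crux, and crucially uses that $D^{\cu M}$ is the \emph{uniform} limit of the $V$-definable functions $d_m^{\cu M}$ on $M \times M$ (the Cauchy hypothesis, read in $\cu M$). Given $\varepsilon > 0$, choose $m$ with
\[
\sup_{x,y \in M}\, |d_m^{\cu M}(x,y) - D^{\cu M}(x,y)| < \varepsilon/3.
\]
Since $D^{\cu M}(v,v) = 0$, this gives $d_m^{\cu M}(v,v) < \varepsilon/3$. Now $d_m(x, v)$ is a $V$-formula in $x$ with parameter $v$, so $\tau$-convergence $v_\alpha \to v$ yields $d_m^{\cu M}(v_\alpha, v) \to d_m^{\cu M}(v, v) < \varepsilon/3$, whence $d_m^{\cu M}(v_\alpha, v) < 2\varepsilon/3$ eventually, and therefore $D^{\cu M}(v_\alpha, v) < \varepsilon$ eventually. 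This is the one delicate step; everything else is a direct unwinding of Definition~\ref{d-metric-expansion} and Fact~\ref{f-truth-value}. The essential inputs are the uniformity of the Cauchy convergence $d_m \to D$ (which lets the parameterised $V$-formula $d_m(x,v)$ control small values of $D^{\cu M}(\cdot, v)$) and the reducedness hypothesis (which enters existence to promote the pseudo-metric to a genuine metric).
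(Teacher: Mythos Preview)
The paper does not give its own proof of this fact; it is stated as a citation to Propositions~3.4.5 and~4.3.4 of [Ke21]. Your argument is a correct self-contained reconstruction. For existence, you correctly invoke Fact~\ref{f-expansion} and use reducedness together with the uniform continuity of atomic $V$-formulas in $\cu M_e$ (Fact~\ref{f-truth-value}) to upgrade the pseudo-metric $D^{\cu M}$ to a metric; an equally clean route is to observe that $\doteq^{\cu M_e}\subseteq\doteq^{\cu M}$, so $\cu M_e$ inherits reducedness from $\cu M$, and then apply Fact~\ref{f-reduced-premetric} directly. For uniqueness, your canonical topology $\tau$ is precisely the initial topology on $M$ generated by the maps $x\mapsto\varphi^{\cu M}(x,A)$ over all $V$-formulas with parameters, which is manifestly independent of $D$, and your use of the \emph{uniform} Cauchy convergence $d_m\to D$ (rather than mere pointwise convergence) to obtain the nontrivial implication ``$\tau$-convergence $\Rightarrow$ $D$-convergence'' is exactly the mechanism needed.
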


The following fact can be used to show that when $T_e$ is a premetric expansion of $T$, various properties hold
for $T$ if and only if they hold for $T_e$.

\begin{fact}  \label{f-expansion-approx}  (Lemma 4.2.1 in [Ke21])  Suppose $T_e$ is a premetric expansion of $T$.  Then for every continuous formula
$\varphi(\vec x)$ in the vocabulary of $T_e$, and every real $\varepsilon>0$, there is a formula $\theta(\vec x)$ in the vocabulary of $T$
such that
$$T_e\models(\sup_{\vec x})|\varphi(\vec x)-\theta(\vec x)|\le\varepsilon.$$
\end{fact}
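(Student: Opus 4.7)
The plan is to proceed by induction on the complexity of the continuous $V_D$-formula $\varphi(\vec x)$, using the Cauchy sequence $\langle d_m\rangle$ to handle the one genuinely new atomic formula $D(x,y)$, and then propagating uniform approximations upward through continuous connectives and the $\sup,\inf$ quantifiers. Throughout, I would measure approximation in the uniform ($\sup$) norm over all models of $T_e$ simultaneously, which is exactly the norm in which the statement is phrased.

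First I would dispatch the atomic cases. An atomic formula whose predicate symbol is already in $V$ is its own approximation, so take $\theta=\varphi$. For the atomic formula $D(x,y)$, I would invoke Definition \ref{d-metric-expansion}(ii): since $\langle d_m\rangle$ is Cauchy in every $V$-model of $T$, for each $\varepsilon>0$ there is some $m$ with $\sup_x\sup_y|d_m(x,y)-d_k(x,y)|\le\varepsilon$ holding in every such model, for all $k\ge m$. Taking the pointwise limit in $k$ and using that $D^{\cu M_e}=[\lim d_m]^{\cu M}$, I obtain $T_e\models\sup_x\sup_y|d_m(x,y)-D(x,y)|\le\varepsilon$, so $\theta=d_m$ works.

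Next I would carry out the inductive steps. For a continuous connective $u\colon[0,1]^n\to[0,1]$ applied as $\varphi=u(\varphi_1,\ldots,\varphi_n)$, uniform continuity of $u$ on the compact cube $[0,1]^n$ yields $\delta>0$ from $\varepsilon$; apply the inductive hypothesis to each $\varphi_i$ with tolerance $\delta$ to get $V$-formulas $\theta_i$, and set $\theta=u(\theta_1,\ldots,\theta_n)$. For a quantifier case $\varphi=\sup_y\psi$ (respectively $\inf_y\psi$), apply the inductive hypothesis to $\psi$ with tolerance $\varepsilon$ to obtain $\theta'$, and set $\theta=\sup_y\theta'$ (respectively $\inf_y\theta'$); the elementary inequality $|\sup_y a(y)-\sup_y b(y)|\le\sup_y|a(y)-b(y)|$, together with its $\inf$ counterpart, propagates the uniform bound. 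Since by Remark \ref{r-strict} and Fact \ref{f-strict-approx} it suffices to handle strict continuous formulas, the connective case reduces to a fixed finite list of Lipschitz connectives, which somewhat simplifies the bookkeeping but is not strictly necessary.

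The only step that requires genuine care is the atomic case for $D$: one must verify that a single choice of $m$ works uniformly across all models of $T_e$ simultaneously. This is immediate once one observes that the Cauchy bound $\sup_x\sup_y|d_m(x,y)-d_k(x,y)|\le\varepsilon$ is itself expressible as a $V$-sentence lying in $T$, so by Corollary \ref{c-model-premetric} it holds in the $V$-part of every model $\cu M_e$ of $T_e$, and Fact \ref{f-truth-value} justifies passing to the pointwise limit in $k$ to bring in $D$. Everything else is routine uniform-continuity accounting at the level of one connective or quantifier at a time.
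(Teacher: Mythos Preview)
Your proposal is correct. The paper does not supply its own proof of this fact; it is stated with a citation to Lemma~4.2.1 of [Ke21], so there is nothing to compare against beyond checking that your argument is sound. Your induction on formula complexity, with the Cauchy sequence $\langle d_m\rangle$ handling the atomic case for $D$, is the natural and standard route. One small point you glossed over: atomic $V_D$-formulas have the form $D(t_1,t_2)$ for arbitrary $V$-terms $t_1,t_2$, not just variables; but since $D$ is the only new symbol and is a predicate, the terms are already $V$-terms, so $d_m(t_1,t_2)$ is a $V$-formula and your uniform bound on $|D(x,y)-d_m(x,y)|$ transfers immediately. Also, your appeal to Corollary~\ref{c-model-premetric} for uniformity across models is slightly off target: the real reason a single $m$ works in every model is that $T$ is a complete theory, so the value of the $V$-sentence $(\sup_x)(\sup_y)|d_m(x,y)-d_k(x,y)|$ is the same in every model of $T$.
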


\begin{fact} \label{f-expansion-product} (Proposition 3.1.6 of [Ke21]).  Suppose $T_e$ is a premetric expansion of $T$.
Then for every pre-ultraproduct $\prod^{\cu D}\cu M_t$ of
models of $T$, $(\prod^{\cu D}\cu M_t)_e = \prod^{\cu D}((\cu M_t)_e)$.
\end{fact}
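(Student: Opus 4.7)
Both candidates $(\prod^{\cu D}\cu M_t)_e$ and $\prod^{\cu D}((\cu M_t)_e)$ are $V_D$-structures on the same underlying Cartesian product $\prod_{t\in I}M_t$, and by the definition of the pre-ultraproduct they agree on every symbol of $V$. So the only thing to check is that their two interpretations of the extra predicate $D$ agree on every pair of tuples $\vec b,\vec c$ from the product. Unpacking the definition of premetric expansion on each side and applying the \Los \ theorem (Fact \ref{f-Los-2}) to each individual $d_m$, the task reduces to the double-limit identity
\begin{equation*}
\lim_{m\to\infty}\lim_{\cu D}\bigl\langle d_m^{\cu M_t}(\vec b[t],\vec c[t])\bigr\rangle_t \;=\; \lim_{\cu D}\Bigl\langle\lim_{m\to\infty} d_m^{\cu M_t}(\vec b[t],\vec c[t])\Bigr\rangle_t,
\end{equation*}
whose left side is $D^{(\prod^{\cu D}\cu M_t)_e}(\vec b,\vec c)$ and whose right side is $D^{\prod^{\cu D}((\cu M_t)_e)}(\vec b,\vec c)$.

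The strategy is to justify this exchange of limits by upgrading the pointwise-per-model Cauchy hypothesis on $\langle d_m\rangle$ to a Cauchy rate that is uniform across all models of $T$ simultaneously. Fix $\varepsilon>0$. Because $\langle d_m\rangle$ is Cauchy in some chosen model $\cu N$ of $T$, there is $m_0$ with $\cu N\models (\sup_{\vec x})(\sup_{\vec y})|d_m(\vec x,\vec y)-d_k(\vec x,\vec y)|\le\varepsilon$ for all $m,k\ge m_0$. Each $\cu M_t$ is a model of $T$, hence elementarily equivalent to $\cu N$, so this same $V$-sentence has value at most $\varepsilon$ in every $\cu M_t$. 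Consequently $|d_m^{\cu M_t}(\vec b[t],\vec c[t])-d_k^{\cu M_t}(\vec b[t],\vec c[t])|\le\varepsilon$ for every $t\in I$, every $m,k\ge m_0$, and every choice of $\vec b[t],\vec c[t]\in M_t$.

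Now I perform the exchange in two stages. First, letting $k\to\infty$ inside each coordinate and invoking the definition of $[\lim d_m]^{\cu M_t}$ gives $|d_m^{\cu M_t}(\vec b[t],\vec c[t])-[\lim d_m]^{\cu M_t}(\vec b[t],\vec c[t])|\le\varepsilon$ for every $t$ and every $m\ge m_0$. Taking $\lim_{\cu D}$ over $t$ (which preserves non-strict inequalities) and applying \Los \ on the left yields
\begin{equation*}
\bigl|d_m^{\prod^{\cu D}\cu M_t}(\vec b,\vec c)-\lim_{\cu D}\langle [\lim d_m]^{\cu M_t}(\vec b[t],\vec c[t])\rangle_t\bigr|\le\varepsilon
\end{equation*}
for all $m\ge m_0$. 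The same uniform Cauchy bound also shows, via \Los, that $\langle d_m\rangle$ is Cauchy in the pre-ultraproduct, so the limit $[\lim d_m]^{\prod^{\cu D}\cu M_t}(\vec b,\vec c)$ exists; passing $m\to\infty$ on the left preserves the bound by $\varepsilon$, and since $\varepsilon>0$ was arbitrary, the two sides of the target identity coincide.

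The main obstacle is precisely this double-limit interchange: the Cauchy hypothesis in Definition \ref{d-metric-expansion} is stated model-by-model, so a naive reading would allow the Cauchy threshold $m_0$ to depend on $t$ and the exchange of $\lim_m$ with $\lim_{\cu D}$ would not go through. The essential observation that rescues the argument is that the rate of convergence of $\langle d_m\rangle$ is itself encoded by honest $V$-sentences over $T$, so, via elementary equivalence of any two models of $T$, it propagates as a single uniform rate across the entire index set $I$---which is the standard sufficient condition for swapping the two limits.
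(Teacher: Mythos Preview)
Your proof is correct. The paper does not give its own argument here---the statement is recorded as a Fact and attributed to Proposition 3.1.6 of [Ke21]---so there is no in-paper proof to compare against. Your approach is the natural one and almost certainly the intended one: the only nontrivial point is the interchange of $\lim_m$ and $\lim_{\cu D}$, and you identify precisely why it goes through, namely that the Cauchy rate of $\langle d_m\rangle$ is witnessed by $V$-sentences and therefore transfers uniformly to every model of $T$ (in particular to each $\cu M_t$ and to the pre-ultraproduct). One cosmetic remark: the paper's Cauchy definition fixes a single $m$ and varies $k\ge m$, so strictly speaking you get $|d_{m}-d_k|\le\varepsilon$ for that $m$ and all $k\ge m$; the two-index bound $|d_{m}-d_{k}|\le 2\varepsilon$ for all $m,k\ge m_0$ follows by the triangle inequality, which is all your argument needs.
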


\begin{fact} \label{f-expansion-sat} (Proposition 4.1.6 of [Ke21])
If $\cu M_e$ is a premetric expansion of $\cu M$, then for each infinite cardinal $\kappa$,
$\cu M_e$ is $\kappa$-saturated if and only if $\cu M$ is $\kappa$-saturated.
 \end{fact}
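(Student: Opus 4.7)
The plan is to take $U := T_e$, the premetric expansion of $T$ provided by Fact \ref{f-expansion}. By clause (i) of Definition \ref{d-metric-expansion}, the theory $T_e$ is already an $L_e$-metric theory in the vocabulary $V_D = V \cup \{D\}$, so $U$ qualifies as the required metric theory; the entire task reduces to establishing $\Sat(T) = \Sat(U)$.

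To do this, I fix one arbitrary model $\cu M$ of $T$ and form its premetric expansion $\cu M_e = (\cu M, [\lim d_m]^{\cu M})$, which by Definition \ref{d-metric-expansion}(ii) is a model of $U$. For any regular ultrafilter $\cu D$ over a set of cardinality $\lambda$, Fact \ref{f-expansion-product} identifies the pre-ultrapowers as $(\cu M^\cu D)_e = (\cu M_e)^{\cu D}$, and Fact \ref{f-expansion-sat} then tells us the $\lambda^+$-saturation of $(\cu M_e)^{\cu D}$ is equivalent to the $\lambda^+$-saturation of $\cu M^\cu D$. Hence $\cu D$ saturates the particular model $\cu M$ of $T$ if and only if it saturates the particular model $\cu M_e$ of $U$.

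Finally, I invoke Theorem \ref{t-equiv-triangle} twice: applied to $T$, it converts ``$\cu D$ saturates the specific model $\cu M$'' into ``$\cu D \in \Sat(T)$'', and applied to $U$, it similarly converts ``$\cu D$ saturates the specific model $\cu M_e$'' into ``$\cu D \in \Sat(U)$''. Chaining these three equivalences yields $\Sat(T) = \Sat(U)$, hence $T \eqk U$.

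The main obstacle has already been absorbed into the cited results: the commutation of expansion with pre-ultrapowers (Fact \ref{f-expansion-product}) and the preservation of saturation under premetric expansion (Fact \ref{f-expansion-sat}), together with Theorem \ref{t-equiv-triangle}, jointly carry the proof. The delicate conceptual point worth noting is that Fact \ref{f-expansion-product} is stated for pre-ultrapowers and not for reduced ultrapowers (expansion does not in general commute with reduction); this is precisely why the machinery of the paper was set up to compare structures of the form $\cu M^\cu D$ rather than their reductions $\cu M_\cu D$, and Theorem \ref{t-equiv-triangle} is what lets us collapse ``$\cu D$ saturates every model of $T$'' down to ``$\cu D$ saturates the single chosen model $\cu M$''.
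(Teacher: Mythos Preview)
Your proposal does not prove the statement in question. Fact~\ref{f-expansion-sat} asserts that for a single structure $\cu M$ and its premetric expansion $\cu M_e$, the structure $\cu M_e$ is $\kappa$-saturated if and only if $\cu M$ is $\kappa$-saturated. Your argument, however, \emph{assumes} Fact~\ref{f-expansion-sat} as a black box and uses it (together with Facts~\ref{f-expansion} and~\ref{f-expansion-product} and Theorem~\ref{t-equiv-triangle}) to derive Theorem~\ref{t-metric-expansion}, namely that every continuous theory is $\lek$-equivalent to a metric theory. You have written a correct proof of Theorem~\ref{t-metric-expansion} --- indeed essentially the paper's own proof of that theorem --- but you have not touched Fact~\ref{f-expansion-sat} itself; invoking a statement in the course of proving something else is not the same as proving it.

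To actually prove Fact~\ref{f-expansion-sat}, you need to compare types over $\cu M$ with types over $\cu M_e$. The key input is Fact~\ref{f-expansion-approx}: every $V_D$-formula is uniformly approximated, modulo $T_e$, by a $V$-formula. From this one shows that a set of $V_D$-formulas with parameters in $\cu M_e$ is (finitely) satisfiable in $\cu M_e$ if and only if the corresponding set of approximating $V$-formulas is (finitely) satisfiable in $\cu M$; since $\cu M$ and $\cu M_e$ share the same universe and the same parameters, $\kappa$-saturation transfers in both directions. The paper does not supply this argument because the result is imported from [Ke21], but that is the content you were asked to provide.
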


\begin{thm}  \label{t-expansion-equiv}
 If $T$ is a continuous theory and $T_e$ is a premetric expansion of $T$,
then $T_e\eqk T$.
\end{thm}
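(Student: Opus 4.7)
The plan is to prove $T_e \eqk T$ by showing that for every regular ultrafilter $\cu D$, we have $\cu D \in \Sat(T)$ if and only if $\cu D \in \Sat(T_e)$. The argument is essentially a short chain of bi-implications that chain together the three key facts about premetric expansions (Facts \ref{f-expansion-product} and \ref{f-expansion-sat}) with Theorem \ref{t-equiv-triangle}.

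First I would invoke Theorem \ref{t-equiv-triangle} (more precisely, its corollary): since $T \eqk T$ and $T_e \eqk T_e$, the ultrafilter $\cu D$ saturates $T$ if and only if it saturates some particular model of $T$, and similarly for $T_e$. So it suffices to pick one model $\cu M$ of $T$ and show that $\cu D$ saturates $\cu M$ if and only if $\cu D$ saturates the corresponding premetric expansion $\cu M_e = (\cu M, [\lim d_m]^{\cu M})$, which by Definition \ref{d-metric-expansion}(ii) is a model of $T_e$.

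Next I would set up the core biconditional using pre-ultrapowers. By definition $\cu D$ saturates $\cu M$ iff $\cu M^{\cu D}$ is $\lambda^+$-saturated, and $\cu D$ saturates $\cu M_e$ iff $(\cu M_e)^{\cu D}$ is $\lambda^+$-saturated. Applying Fact \ref{f-expansion-product} to the constant family $\langle \cu M \rangle_{t \in I}$ yields the identification
\[
(\cu M_e)^{\cu D} = \prod{}^{\cu D}(\cu M_e) = \left(\prod{}^{\cu D} \cu M\right)_e = (\cu M^{\cu D})_e,
\]
so that $(\cu M_e)^{\cu D}$ is a premetric expansion of $\cu M^{\cu D}$. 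Then Fact \ref{f-expansion-sat}, applied with $\kappa = \lambda^+$, tells us that $(\cu M^{\cu D})_e$ is $\lambda^+$-saturated if and only if $\cu M^{\cu D}$ is $\lambda^+$-saturated. Combining the two gives $\cu D$ saturates $\cu M$ iff $\cu D$ saturates $\cu M_e$, which completes the proof.

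There is no real obstacle: the substantive content has already been absorbed into the cited facts. The only things to be careful about are (a) consistently using pre-ultrapowers rather than ultrapowers, which is harmless since $\lambda^+$-saturation of the pre-ultrapower is equivalent to $\lambda^+$-saturation of the ultrapower (Definition \ref{d-order}), and (b) remembering that Theorem \ref{t-equiv-triangle} applies separately to $T$ and to $T_e$, so that restricting attention to a single model $\cu M$ on the $T$-side and its expansion $\cu M_e$ on the $T_e$-side is sufficient to conclude saturation of the whole theory.
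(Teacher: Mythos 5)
Your argument is exactly the paper's proof: apply Fact \ref{f-expansion-product} to identify $(\cu M_e)^{\cu D}$ with $(\cu M^{\cu D})_e$, then Fact \ref{f-expansion-sat} to transfer $\lambda^+$-saturation between the pre-ultrapower and its premetric expansion, and finally Theorem \ref{t-equiv-triangle} to pass from a single model to the theory. You simply make explicit the reduction to one model on each side, which the paper leaves implicit.
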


\begin{proof} [Proof of Theorem \ref{t-expansion-equiv}]   Let $\cu M$ be a model of $T$.
By Fact \ref{f-expansion-product}, $ \prod^{\cu D}(\cu M_e)=(\prod^{\cu D}\cu M)_e$, so $\prod^{\cu D}(\cu M_e)$ is a premetric expansion of $\prod^{\cu D}\cu M$.
By Fact \ref{f-expansion-sat}, $\prod^{\cu D}(\cu M_e)$ is $\lambda^+$-saturated if and only if $\prod^{\cu D}\cu M$  is $\lambda^+$-saturated.
Therefore by Theorem \ref{t-equiv-triangle}, we have $T_e\eqk T$.
\end{proof}

\begin{proof}  [Proof of Theorem \ref{t-metric-expansion}]  By Fact \ref{f-expansion}, $T$ has a pre-metric expansion $T_e$.
By definition, $T_e$ is a metric theory.  By Theorem \ref{t-expansion-equiv}, $T_e\eqk T$.
\end{proof}

\section{Stable theories}  \label{s-stable}

In view of the Theorem \ref{t-expansion-equiv}, from here on we may confine our attention to metric theories.

\begin{convention} For the rest of this paper, $L$ will be a metric signature over $V$ with distance predicate $d\in V$, and $T$  will be an $L$-metric theory.
\end{convention}

Thus by Corollary \ref{c-model-premetric}, every model of $T$ is an $L$-premetric structure.

In this section we study the $\lek$ ordering on stable metric theories.
Theorem \ref{t-stable-vs-unstable} (iii) below will show that if $T, U$ are metric theories, $T$ is stable, and $U$ is unstable, then $T\lk U$.

The papers [BBHU08] and [BU09] gave equivalent definitions of a stable  metric theory.    Here we will use the definition in [BU09]
(Definition \ref{d-stable-local} below).

For every result proved in this section, the corresponding result
for first order theories was proved by Shelah in Section VI of [Sh78].  The arguments in this section are similar to the arguments in [Sh78],
but generalized  to the continuous case.
Many first order results about  Morley sequences and indiscernible sets have been extended to continuous model logic (e.g. in [BU09], [BY03], [BY09], [BY13], and [EG12]),
but we will need to extend some additional results about indiscernible sets from first order logic to continuous logic here.
 It will be convenient to use
the properties of a stable independence relation (Definition \ref{d-stable}) rather than the notion of a non-forking extension of a type,
and sometimes to use ultraproducts of real-valued structures rather than just metric structures.

\begin{df} \label{d-stable-local} (Local stability.  See Definition 7.1 of [BU09]).

\noindent(i)  A continuous formula $\varphi(\vec x,\vec y)$ is \emph{unstable with bounds $(s,r)$ in $T$}
if  $0\le s<r\le 1$ and in some model
$\cu M\models T$ there is an infinite sequence $\langle \vec a_h,\vec b_h\rangle_{h\in\BN}$  such that whenever $h<k$,
$\varphi^{\cu M}(\vec a_h,\vec b_k)\le s$ and $\varphi^{\cu M}(\vec a_k,\vec b_h)\ge r$.  $\varphi$ is \emph{unstable} in $T$ if it is unstable in $T$ for some $(s,r)$,
and is \emph{stable} in $T$ otherwise.

(ii) $T$ is stable if  every continuous formula $\varphi(\vec x,\vec y)$ is stable in $T$.
\end{df}

Note that if $\varphi(\vec x,\vec y)$ is unstable in $T$, then there is  a model $\cu M$ as in (i) that  has a countable dense set.

\begin{cor}  \label{c-local-unstable}
$T$ is unstable if and only if there is a continuous formula $\theta(\vec x,\vec y)$ that is unstable for $(0,1)$ in $T$.
That is, in some model
$\cu M\models T$ there is an infinite sequence $\langle \vec a_h,\vec b_h\rangle_{h\in\BN}$  such that whenever $h<k$,
$\varphi^{\cu M}(\vec a_h,\vec b_k)=0$ and $\varphi^{\cu M}(\vec a_k,\vec b_h)=1$.
\end{cor}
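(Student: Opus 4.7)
The backward direction is immediate from Definition \ref{d-stable-local}(i)--(ii): if $\theta$ is unstable with bounds $(0,1)$ in $T$, then $\theta$ is unstable in $T$, so $T$ itself is unstable. All the work lies in the forward direction, which asks us to upgrade instability at arbitrary bounds $(s,r)$, $0\le s<r\le 1$, to instability at the extreme bounds $(0,1)$ using (possibly) a different formula.

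My plan is to exploit the closure of continuous formulas under arbitrary continuous connectives $[0,1]^n\to[0,1]$. Given a formula $\varphi(\vec x,\vec y)$ unstable at $(s,r)$ in some $\cu M\models T$, witnessed by a sequence $\langle \vec a_h,\vec b_h\rangle_{h\in\BN}$, I would compose $\varphi$ with the unary connective
\[
f(x)=\max\!\left(0,\;\min\!\left(1,\;\tfrac{x-s}{r-s}\right)\right),
\]
which is a continuous, non-decreasing map $[0,1]\to[0,1]$ with $f(s)=0$ and $f(r)=1$. Setting $\theta(\vec x,\vec y):=f(\varphi(\vec x,\vec y))$ yields a continuous $V$-formula. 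The verification is then purely mechanical: monotonicity of $f$ together with the defining inequalities $\varphi^{\cu M}(\vec a_h,\vec b_k)\le s$ and $\varphi^{\cu M}(\vec a_k,\vec b_h)\ge r$ for $h<k$ force $\theta^{\cu M}(\vec a_h,\vec b_k)\le f(s)=0$ and $\theta^{\cu M}(\vec a_k,\vec b_h)\ge f(r)=1$, so these values are exactly $0$ and $1$ respectively. Thus the same sequence witnesses that $\theta$ is unstable at $(0,1)$ in $T$.

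There is no substantive obstacle; no ultraproducts, saturation, or deeper model theory are required. The one point worth flagging is that $f$ is not a strict connective in the sense of the list $\{0,1,\min,\max,\cdot/2,\dotminus\}$ from Section~2, so the argument genuinely uses the full class of continuous connectives permitted in Definition \ref{d-stable-local}. This is compatible with the setup, since instability in Definition \ref{d-stable-local} is already formulated for arbitrary continuous formulas $\varphi$, and hence also for $\theta=f\circ\varphi$.
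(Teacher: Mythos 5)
Your proof is correct and is essentially the same as the paper's: both compose $\varphi$ with a continuous connective $f\colon[0,1]\to[0,1]$ sending $[0,s]$ to $0$ and $[r,1]$ to $1$, with the same sequence witnessing instability of $f\circ\varphi$ at $(0,1)$. You simply make $f$ explicit (piecewise linear), whereas the paper only asserts its existence.
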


\begin{proof}  Suppose $\varphi(\vec x,\vec y)$ is unstable for $(s,r)$ in $T$.
There is a continuous function $f\colon [0,1]\to[0,1]$ that maps $[0,s]$ to $0$ and maps $[r,1]$ to $1$.
Then the formula $f(\varphi(\vec x,\vec y))$ is unstable for $(0,1)$ in $T$.
\end{proof}

As is often done in the literature, we assume  for convenience that there is an uncountable inaccessible cardinal $\Upsilon$.
That assumption simplifies things but can be avoided.
We call a set \emph{small} if it has cardinality less than $\Upsilon$.
By a \emph{monster structure} we mean a   $\Upsilon$-saturated metric structure  of cardinality $\le\Upsilon$.

\begin{fact} \label{f-monster}  Up to isomorphism,  $T$ has a unique monster model $\FM$.
Every small model of $T$  is elementarily embeddable in $\FM$.
\end{fact}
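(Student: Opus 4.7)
The plan has three parts: construct a monster model of $T$, show that any two such models are isomorphic, and verify the universality property. All three will be adaptations of the classical first-order construction to the metric setting, using the saturation and ultrapower tools already assembled.

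\emph{Existence.} I would start with any small $L$-metric model $\cu M_0$ of $T$ (one exists since $T$ is a metric theory) and build an elementary chain $\langle \cu M_\alpha : \alpha < \Upsilon\rangle$ of $L$-metric structures in which $\cu M_{\alpha+1}$ is a $|\cu M_\alpha|^+$-saturated elementary extension of $\cu M_\alpha$. Such extensions can be produced by iterating regular ultrapowers (Remark \ref{r-regular-aleph-1}) or by a single ultrapower modulo a good ultrafilter (Fact \ref{f-good}). At limit stages one takes the union and, if necessary, the completion supplied by Fact \ref{f-premetric-metric} to remain within the $L$-metric category. Because $\Upsilon$ is inaccessible, the cardinals $|\cu M_\alpha|$ stay strictly below $\Upsilon$, so $\FM := \bigcup_{\alpha<\Upsilon}\cu M_\alpha$ has cardinality at most $\Upsilon$. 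Any parameter set $A\subseteq\FM$ of cardinality $<\Upsilon$ is contained in some $\cu M_\alpha$, and any type over $A$ finitely satisfiable in $\FM$ is then realized already in $\cu M_{\alpha+1}$, giving $\Upsilon$-saturation of $\FM$.

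\emph{Uniqueness.} Given two monster models $\FM_1, \FM_2$ of $T$, I would run a back-and-forth of length $\Upsilon$, enumerating each universe in a sequence of length $\le\Upsilon$. The invariant is a partial elementary map $h_\alpha$ between subsets of cardinality $<\Upsilon$. To extend $h_\alpha$ by the next element $a\in\FM_1$, I use $\Upsilon$-saturation of $\FM_2$ to realize the $h_\alpha$-image of $\tp^{\FM_1}(a/\mathrm{dom}\,h_\alpha)$; this pushforward is finitely satisfiable in $\FM_2$ because $\FM_1\equiv\FM_2$, and one verifies finite satisfiability via Corollary \ref{c-onestep} and Lemma \ref{l-twostep}. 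Alternating sides ensures every element of either structure is eventually covered. The resulting map is elementary on the union; by uniform continuity of formulas (Fact \ref{f-truth-value}) together with Fact \ref{f-reduced-premetric}, it is in particular an isometry, yielding $\FM_1\cong\FM_2$.

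\emph{Universality.} Given a small model $\cu M$ of $T$, enumerate $M = \{a_\alpha: \alpha<\kappa\}$ for some $\kappa<\Upsilon$ and recursively construct $h\colon \cu M\to\FM$. At stage $\alpha$, the parameter set $\{h(a_\beta): \beta<\alpha\}$ has cardinality $<\Upsilon$, and the pushforward along $h$ of $\tp^{\cu M}(a_\alpha/\{a_\beta:\beta<\alpha\})$ is finitely satisfiable in $\FM$ because $\cu M\equiv\FM$; $\Upsilon$-saturation of $\FM$ then supplies a realization $h(a_\alpha)$.

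\emph{Main obstacle.} The principal subtlety I anticipate is staying inside the category of $L$-metric structures throughout. Ultrapowers and unions of elementary chains of $L$-premetric structures are $L$-premetric but can fail to be complete, and the back-and-forth map is initially defined only on a dense subset, so it must be extended to the completion. This is handled by systematic use of Fact \ref{f-premetric-metric}, Remark \ref{r-sat-metric}, and Fact \ref{f-truth-value} to pass to completions without disturbing elementary equivalence or saturation. A secondary concern is the cardinal bookkeeping needed to ensure that the construction stays strictly below $\Upsilon$ at every stage; this is where inaccessibility of $\Upsilon$ is essential.
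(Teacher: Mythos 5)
Your proposal is correct and is the same plan the paper has in mind: build $\FM$ by an elementary chain of length $\Upsilon$, prove uniqueness by back-and-forth, and embed small models by realizing pushed-forward types. The paper's stated proof is essentially just ``similar to the first-order case'' together with the observation in Remark~\ref{r-sat-metric} that a reduced $\aleph_1$-saturated $L$-premetric structure is automatically a complete $L$-metric structure. That observation renders the completions you insert at limit stages superfluous --- the plain union of a chain of reduced structures is again reduced, and a single appeal to Remark~\ref{r-sat-metric} at the end handles completeness --- and it also sidesteps a point your version glosses over: Fact~\ref{f-premetric-metric} as cited only says the completion is elementarily \emph{equivalent} to the premetric structure, whereas to keep the chain elementary you need it to be an elementary \emph{extension} (true, but not among the facts the paper records). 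Similarly, Corollary~\ref{c-onestep} and Lemma~\ref{l-twostep} are beside the point in the back-and-forth step; those lemmas were built to compare ultraproducts of merely elementarily equivalent structures, whereas in the back-and-forth you already maintain the invariant $(\FM_1)_{A_1}\equiv(\FM_2)_{A_2}$, so the pushforward type is finitely satisfiable by elementary equivalence of $\inf$-statements together with $\aleph_1$-saturation, and then realized by $\Upsilon$-saturation.
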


\begin{proof}  The proof that $T$ a unique reduced $\Upsilon$-saturated model $\FM$ of cardinality $\le\Upsilon$  is similar to the proof of the analogous result in first order model theory.
Since $\FM$ is reduced and $\aleph_1$-saturated, it is a metric structure by Remark \ref{r-sat-metric}.
\end{proof}

We recall some notation from the literature.
We sometimes write $AB$ for $A\cup B$.  We write $A\equiv_B A'$ if $\FM_{BA}\equiv \FM_{BA'}$. Given an $n$-tuple $\vec a$ in the monster model $\FM$ of $T$ and a subset $B$ of $\FM$,
the \emph{type} of $\vec a$ over $B$  is the set
$\tp(\vec a/B)$ of all formulas $\varphi(\vec x,B)$ with parameters in $B$ such that $\FM\models \varphi(\vec a,B)=0.$
The set of all $n$-types over $B$ is $S_n(B)=\{\tp(\vec a/B)\colon \vec a\in\FM^n\}$.
For each $p\in S_n(B)$ and formula $\psi(\vec x,B)$ with $|\vec x|=n$, $p^\psi$ is the unique $r\in[0,1]$ such that $|\psi- r|\in p$.

The following definition is equivalent to Definition 14.13 of [BBHU08], in view of Theorem 8.10 of [BU09].

\begin{df} \label{d-stable}
A \emph{stable independence relation} for $T$ is a ternary
relation $\ind$ on the small subsets of the monster model $\FM$ of $T$ that has the following
properties:
\begin{itemize}
\item \emph{Invariance under automorphisms of $\FM$.}
\item \emph{Symmetry:} $A\ind_C B$ if and only if $B\ind_C A$.
\item \emph{Transitivity:} $A\ind_C BD$ if and only if $A\ind_C B$ and $A\ind_{BC} D$.
\item \emph{Finite Character:} $A\ind_C B$ if and only if $\vec a\ind_C B$ for all finite $\vec a\subseteq A$.
\item \emph{Existence:}  For all $A,B,C$ there exists $A'$ such that
$A'\equiv_C A$  and $A'\ind_C B$.
\item \emph{Strong Local Character:} For each finite $\vec a$, there exists $C\subseteq B$ of cardinality $\le \aleph_0$ with $\vec a\ind_{C} B$.
\item \emph{Stationarity:} For all small $B$, algebraically closed $C$,   and tuples $\vec a,\vec a\,'$,
$$ \mbox{If}\quad \vec a\equiv_C \vec a',  \quad \vec a\ind_{C} B, \quad \vec a\,'\ind_{C} B, \quad \mbox{then}\quad  \vec a\equiv_{BC}\vec a'.$$
\end{itemize}
\end{df}

\begin{fact}  \label{f-stable}  (See Theorems 14.6 and 14.14 of [BBHU08].)
$T$ is stable if and only if there is a stable independence relation for $T$, and also if and only if there is a unique stable independence relation for $T$.
\end{fact}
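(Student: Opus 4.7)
The plan is to adapt the first-order proof of the characterization of stability via independence relations to the continuous setting, using the non-forking (equivalently, non-dividing) relation as the natural candidate for the forward direction and stationarity as the engine behind uniqueness.

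For the forward direction, assume $T$ is stable and declare $A \ind_C B$ to mean that $\tp(\vec a / BC)$ does not fork over $C$ for every finite tuple $\vec a$ from $A$. Here forking and dividing are defined in the continuous way: a partial type $\Pi(\vec x, \vec a)$ divides over $C$ if there is a $C$-indiscernible sequence $(\vec a_n)_{n \in \omega}$ starting at $\vec a$ together with some $\varepsilon > 0$ such that no single $\vec b$ satisfies all $\Pi(\vec x, \vec a_n)$ to within $\varepsilon$; a formula \emph{forks} over $C$ if it is bounded above by a finite $\min$ of formulas each dividing over $C$. Stability of each individual formula $\varphi(\vec x, \vec y)$ supplies a bounded local $\varphi$-rank and the definability of $\varphi$-types over models, which are the technical engines behind symmetry of dividing, the existence of Morley sequences, and the uniqueness of non-forking extensions over algebraically closed sets. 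With these ingredients, the eight axioms of Definition \ref{d-stable} are verified one at a time: invariance and finite character are immediate, transitivity follows from the tree structure of dividing, existence and strong local character come from the bounded-rank argument combined with the fact that each type has only countably many extensions avoiding dividing over a suitable countable base, and stationarity over $\acl$-closed sets follows from the canonical-base argument adapted to $[0,1]$-valued formulas via $\max, \min, \dotminus$ combinations.

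For the reverse direction, suppose a stable independence relation $\ind$ exists but $T$ is unstable. By Corollary \ref{c-local-unstable} fix $\varphi(\vec x, \vec y)$ unstable with bounds $(0,1)$, and by iterated compactness and Ramsey-style extraction produce in $\FM$ an indiscernible sequence $(\vec a_\alpha, \vec b_\alpha)_{\alpha < \kappa}$ of length $\kappa$ chosen to exceed any cardinal that could arise as a strong-local-character witness, with $\varphi(\vec a_\alpha, \vec b_\beta) = 0$ when $\alpha < \beta$ and $= 1$ when $\alpha > \beta$. Applying strong local character to a further tuple realizing the type of $\vec b$-coordinates past the sequence yields a countable base $C_0$ over which independence from $\{\vec a_\alpha : \alpha < \kappa\}$ must hold; but the strict order pattern forces this further tuple to carry nontrivial information about cofinally many $\vec a_\alpha$, contradicting the existence axiom applied relative to $C_0$ via an $\varepsilon$-dividing witness extracted from the indiscernibility.

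For uniqueness, given two stable independence relations $\ind^1$ and $\ind^2$, suppose $A \ind^1_C B$. Replacing $C$ with $\acl(C)$ using invariance (neither relation changes, by finite character and algebraicity), apply existence for $\ind^2$ to get $A' \equiv_C A$ with $A' \ind^2_C B$; pulling back by an automorphism and using existence and symmetry of $\ind^1$, arrange that $A' \ind^1_C B$ as well. Stationarity of $\ind^1$ over the algebraically closed set $C$ then gives $A \equiv_{BC} A'$, and invariance of $\ind^2$ yields $A \ind^2_C B$. The symmetric argument gives the converse, so $\ind^1 = \ind^2$. The main obstacle is the forward direction: symmetry of dividing, existence of Morley sequences, and stationarity in the continuous setting all require replacing first-order disjunctions and negations by $\max$, $\min$, and $\dotminus$ combinations in a way that interacts correctly with the $\varepsilon$-approximation definition of dividing; once those lemmas are in place the axiomatic verification and the uniqueness argument follow the classical template.
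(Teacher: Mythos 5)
The paper does not prove this Fact at all: it is stated with a citation to Theorems~14.6 and~14.14 of [BBHU08] and used as a black box. So there is no ``paper's own proof'' to compare against; what you have produced is a reconstruction of the argument from the cited source.

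Your forward direction (stable $\Rightarrow$ non-forking satisfies the axioms of Definition~\ref{d-stable}) and your uniqueness argument are both sound in outline and match the standard route. In particular, the uniqueness proof---apply existence for $\ind^2$ to get $A'$ with $A'\ind^2_C B$ and $A'\equiv_C A$, arrange $A'\ind^1_C B$, then use stationarity of $\ind^1$ over $\acl(C)$ to conclude $A\equiv_{BC}A'$, then pull back via invariance of $\ind^2$---is exactly the right argument and is clean.

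The gap is in the reverse direction (stable independence relation exists $\Rightarrow T$ stable). You claim to derive a contradiction with the \emph{existence} axiom by extracting an $\varepsilon$-dividing witness from a long order-property sequence after applying strong local character. This does not have the right shape: the existence axiom only asserts that some conjugate $A'$ of $A$ over $C$ is independent from $B$, and an order pattern among the $\vec a_\alpha$ and $\vec b_\alpha$ gives no obstruction to producing such a conjugate. The axiom that an unstable pattern actually threatens is stationarity (or a counting consequence of strong local character plus stationarity), not existence. The standard proof in [BBHU08] goes through type-counting: by strong local character every finite tuple $\vec a$ is independent over a countable algebraically closed $C\subseteq\acl(B)$, and by stationarity $\tp(\vec a/B)$ is then determined by $\tp(\vec a/C)$; this bounds the number of types over any $B$ of cardinality $\lambda$ with $\lambda^{\aleph_0}=\lambda$ by $\lambda$, and $\lambda$-stability for such $\lambda$ implies stability. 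As written, your sketch of this step does not constitute a proof and should be replaced by the counting argument (or by a precise argument that the order-property indiscernible sequence, together with strong local character, forces two realizations of the same type over a countable base to have distinct types over the full sequence, contradicting stationarity).
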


\begin{convention} Hereafter, when $T$ is a stable theory, $\ind$ will denote the unique stable independence relation for $T$.
\end{convention}

By a proper tuple we mean a tuple of distinct elements.
Let $n\in\BN$.  A set $\cu I\subseteq\FM^n$ is called $B$-indiscernible if every two proper tuples of elements of $\cu I$ of the same length
have the same type over $B$.  A sequence $\cu I=\langle \vec a_h\rangle_{h\in\BN}$ of elements of $\FM^n$ is $B$-indiscernible if every two
strictly increasing finite subsequences of $\cu I$ of the same length have the same type over $B$.
Indiscernible means $\emptyset$-indiscernible.

\begin{lemma}  \label{l-seq-set}  Suppose $T$ is stable.  Then every $B$-indiscernible sequence is a $B$-indiscernible set.
\end{lemma}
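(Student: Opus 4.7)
The plan is to prove the contrapositive: if the $B$-indiscernible sequence $\cu I=\langle\vec a_h\rangle_{h\in\BN}$ in $\FM$ fails to be a $B$-indiscernible set, then $T$ is unstable, contradicting Definition \ref{d-stable-local}. (Note that $B$-indiscernibility as a sequence forces all $\vec a_h$ to be either pairwise equal, in which case the underlying set is trivially indiscernible, or pairwise distinct, which is the case I handle.) Unpacking the failure yields a formula $\varphi(\vec x_1,\dots,\vec x_n,\vec y)$, a parameter tuple $\vec b$ from $B$, and a non-trivial permutation $\sigma\in S_n$ such that $\varphi^\FM(\vec a_{m_1},\dots,\vec a_{m_n},\vec b)\ne\varphi^\FM(\vec a_{m_{\sigma(1)}},\dots,\vec a_{m_{\sigma(n)}},\vec b)$ for some (equivalently, every) strictly increasing $m_1<\dots<m_n$ in $\BN$. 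The first observation is that $B$-indiscernibility of $\cu I$ implies that for each $\tau\in S_n$ the value $\varphi^\FM(\vec a_{m_{\tau(1)}},\dots,\vec a_{m_{\tau(n)}},\vec b)$ is independent of the choice of $m_1<\dots<m_n$, because it equals the value of the formula $\varphi^\tau$, obtained from $\varphi$ by permuting the first $n$ variables via $\tau$, evaluated at the increasing tuple. Writing $\sigma$ as a product of adjacent transpositions and tracking the value along this chain forces some adjacent transposition $\tau_0=(p,p+1)$ that changes the value. So there exist reals $r_1\ne r_2$ in $[0,1]$ with
\[
\varphi^\FM(\vec a_{m_1},\dots,\vec a_{m_n},\vec b)=r_1\qquad\text{and}\qquad \varphi^\FM(\vec a_{m_1},\dots,\vec a_{m_{p+1}},\vec a_{m_p},\dots,\vec a_{m_n},\vec b)=r_2
\]
for every increasing $m_1<\dots<m_n$ in $\BN$.

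Next, using $\Upsilon$-saturation of $\FM$ together with finite satisfiability (any finite demand on indiscernibility and on the EM-type is witnessed inside $\cu I$), I would extend $\cu I$ to a $B$-indiscernible sequence $\langle\vec a_q\rangle_{q\in\BQ}$ inside $\FM$ with the same EM-type as $\cu I$; the observation above then applies in this extended sequence, so the $\tau_0$-swap still produces $r_2$. Fix $q_1<\dots<q_{p-1}<0$ and $1<q'_1<\dots<q'_{n-p-1}$ in $\BQ$, and form the tuple $\vec e=(\vec a_{q_1},\dots,\vec a_{q_{p-1}},\vec a_{q'_1},\dots,\vec a_{q'_{n-p-1}},\vec b)$. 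Let $\psi(\vec x,\vec y,\vec z)$ be the parameter-free formula obtained from $\varphi$ by placing $\vec x,\vec y$ at positions $p,p+1$ and filling the remaining argument slots (both the other $\vec x_i$-slots and the $\vec y$-slot of $\varphi$) from the components of $\vec z$. For every $h<k$ in the infinite set $(0,1)\cap\BQ$, the index tuple $(q_1,\dots,q_{p-1},h,k,q'_1,\dots,q'_{n-p-1})$ is strictly increasing, so
\[
\psi^\FM(\vec a_h,\vec a_k,\vec e)=r_1,\qquad \psi^\FM(\vec a_k,\vec a_h,\vec e)=r_2,
\]
the second equality by applying the $\tau_0$-observation to the same increasing tuple.

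To conclude, enumerate any infinite increasing subsequence $h_1<h_2<\cdots$ of $(0,1)\cap\BQ$ and define $\vec u_i=\vec a_{h_i}$, $\vec w_i=(\vec a_{h_i},\vec e)$, and the parameter-free formula $\alpha(\vec x,(\vec y_0,\vec y_1))=\psi(\vec x,\vec y_0,\vec y_1)$. Then $\alpha^\FM(\vec u_i,\vec w_j)=r_1$ and $\alpha^\FM(\vec u_j,\vec w_i)=r_2$ for all $i<j$. If $r_1<r_2$, then $\alpha$ is unstable in $T$ with bounds $(r_1,r_2)$; if $r_1>r_2$, then the continuous formula $1\dotminus\alpha$ is unstable with bounds $(1-r_1,1-r_2)$. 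Either way $T$ is unstable, contradicting the hypothesis. The only step demanding substantive content beyond routine bookkeeping is the $\BQ$-extension of $\cu I$, the continuous analogue of a standard first-order fact; for this one reduces to the countably many strict formulas via Fact \ref{f-strict-approx} and then runs the usual compactness/saturation argument inside the monster.
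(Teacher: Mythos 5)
Your proof is correct and is exactly the argument the paper has in mind: the paper gives no details and simply points to the first-order result (Pillay [Pi83], Proposition 7.1, with the continuous notion of a stable formula), and your write-up is a careful continuous-logic elaboration of that standard proof (locate an adjacent transposition that changes the value of some formula, extend to a $\BQ$-indexed $B$-indiscernible sequence with the same EM-type via saturation and the countably many strict formulas, then read off an instability witness, using $1\dotminus\alpha$ in place of negation when $r_1>r_2$). The only cosmetic remark is that the chain step literally yields a permutation $\rho$ and adjacent transposition $\tau_0$ with $v(\rho)\ne v(\tau_0\rho)$, after which one replaces $\varphi$ by the permuted formula $\varphi^\rho$ to normalize to $\rho=\mathrm{id}$ --- a bookkeeping move you correctly elide.
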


\begin{proof} The proof is the same as the proof of the corresponding result in first order logic (see, for example,
Proposition 7.1 in [Pi83]), but using the continuous notion of a stable formula in Definition \ref{d-stable-local}
instead of the first order notion of a stable formula.
\end{proof}

The next lemma is a continuous analogue of Definition III.1.5 and Lemma III.1.7 in [Sh78].

\begin{lemma}  \label{l-ave}
Suppose $T$ is stable, $n\in\BN$, $\cu I\subseteq\FM^n$  is an infinite indiscernible set, and $B\subseteq \FM$ is small.
There is a unique type $\Av(\cu I,B)\in S_k(B)$ such that for each formula $\psi(\vec x,B)$ with $|\vec x|=n$ and $\varepsilon>0$,
for all but finitely many $\vec a\in\cu I$ we have
\[
|\psi^{\FM}(\vec a,B)-\Av(\cu I,B)^\psi| <\varepsilon.
\]
\end{lemma}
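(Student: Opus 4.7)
The plan is to establish uniqueness of $\Av(\cu I, B)$, then the existence of a candidate value $r_\psi := \Av(\cu I,B)^\psi$ for each formula $\psi(\vec x, B)$, and finally to realize these values in a consistent complete type via $\Upsilon$-saturation of $\FM$.  Uniqueness is immediate: if $p, p' \in S_n(B)$ both satisfy the conclusion, then for each formula $\psi$ and each $\varepsilon > 0$, cofinitely many $\vec a \in \cu I$ satisfy both $|\psi^{\FM}(\vec a, B) - p^\psi| < \varepsilon$ and $|\psi^{\FM}(\vec a, B) - (p')^\psi| < \varepsilon$; since $\cu I$ is infinite some $\vec a$ realizes both, yielding $|p^\psi - (p')^\psi| < 2\varepsilon$, and letting $\varepsilon \to 0$ forces $p = p'$.

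The heart of the argument, and the main obstacle, is existence.  Fix a formula $\psi(\vec x, \vec y)$ and a tuple $\vec B$ from $B$.  Since $\cu I$ is infinite and $[0,1]$ is compact, the multiset $\{\psi^{\FM}(\vec a, \vec B) : \vec a \in \cu I\}$ has at least one accumulation point; I plan to derive a contradiction with stability of $T$ if there are two.  Suppose distinct accumulation values $r_0 < r_1$ exist, pick $\delta$ with $0 < 3\delta < r_1 - r_0$, and set $s = r_0 + \delta$, $r = r_1 - \delta$.  Then both
\[
I_0 = \{\vec a \in \cu I : \psi^{\FM}(\vec a, \vec B) \le s\}, \qquad I_1 = \{\vec a \in \cu I : \psi^{\FM}(\vec a, \vec B) \ge r\}
\]
are infinite subsets of $\cu I$.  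Fix distinct $\vec a_0, \vec a_1, \ldots \in \cu I$; the goal is to produce, for each $k$, a tuple $\vec b_k \in \FM^{|\vec y|}$ with $\psi(\vec a_h, \vec b_k) \le s$ for $h < k$ and $\psi(\vec a_h, \vec b_k) \ge r$ for $h > k$.  By Definition \ref{d-stable-local}, the resulting sequence $\langle (\vec a_h, \vec b_h)\rangle_{h \in \BN}$ would exhibit $\psi(\vec x, \vec y)$ as unstable with bounds $(s, r)$ in $T$, contradicting stability.  Finite satisfiability of the partial type defining $\vec b_k$ over $\{\vec a_h\}_{h\in\BN}$ is the crucial step: for any $N > k$, pick distinct $\vec c_0, \ldots, \vec c_{N-1} \in \cu I$ with $\vec c_i \in I_0$ for $i < k$ and $\vec c_i \in I_1$ for $i > k$ (possible since both are infinite); by indiscernibility of $\cu I$ \emph{as a set}, the tuple $(\vec c_0, \ldots, \vec c_{N-1})$ realizes the same $\emptyset$-type as $(\vec a_0, \ldots, \vec a_{N-1})$, so homogeneity of $\FM$ furnishes an automorphism $\tau$ with $\tau(\vec c_i) = \vec a_i$, and $\tau(\vec B)$ witnesses the relevant finite conditions.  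Then $\Upsilon$-saturation of $\FM$ produces $\vec b_k$ realizing the full partial type.  The use of set-indiscernibility (as opposed to mere sequence-indiscernibility) is essential here, since we must rearrange "small" and "large" tuples within each block---this is precisely where Lemma \ref{l-seq-set} enters.

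With the $r_\psi$ in hand, define $p$ by $p^\psi := r_\psi$ for every formula $\psi(\vec x, B)$.  For any finite collection $\psi_1, \ldots, \psi_m$ and any $\varepsilon > 0$, cofinitely many $\vec a \in \cu I$ simultaneously satisfy $|\psi_i^{\FM}(\vec a, B) - r_{\psi_i}| < \varepsilon$, so the approximate type $\Sigma = \{|\psi(\vec x, B) - r_\psi| \le \varepsilon : \psi \text{ a formula},\ \varepsilon > 0\}$ is finitely satisfiable in $\FM$.  By $\Upsilon$-saturation, $\Sigma$ is realized by some $\vec a^* \in \FM^n$; then $\psi^{\FM}(\vec a^*, B) = r_\psi$ for every $\psi$, so $\Av(\cu I, B) := \tp(\vec a^*/B) \in S_n(B)$ is the desired type.
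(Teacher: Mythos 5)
Your proof is correct, but it takes a genuinely different route from the paper's in the crucial existence step. Both arguments share the same skeleton (stability forces the values $\psi^{\FM}(\vec a,\vec b)$ along $\cu I$ to concentrate near a single limit, then one patches the per-formula limits into a realized type), but the mechanics differ. The paper works with ``broad'' closed intervals (intervals containing $\psi^{\FM}(\vec a,\vec b)$ for cofinitely many $\vec a\in\cu I$), proves via instability that for any $s<r$ at least one of $[0,r],[s,1]$ is broad, and then intersects a nested sequence of broad intervals of shrinking length to extract the unique limit $t(\psi)$. You instead argue by compactness of $[0,1]$ that accumulation points exist, and derive a contradiction from two of them by \emph{explicitly} building an order-property sequence $\langle(\vec a_h,\vec b_h)\rangle$ with the $\vec b_h$ genuinely varying, obtained via set-indiscernibility of $\cu I$ (Lemma \ref{l-seq-set}), homogeneity of $\FM$, and $\Upsilon$-saturation. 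This is the most substantive difference: the paper's Claim \ref{l-ave}.1 is terse and proceeds by declaring ``$\vec b_h=\vec b$ for all $h$,'' which does not, on its face, satisfy Definition \ref{d-stable-local}(i) (with constant $\vec b$ the required inequalities $\varphi(\vec a_h,\vec b)\le s$ for all $h<k$ and $\varphi(\vec a_k,\vec b)\ge r$ are mutually contradictory across different $k$ for an alternating sequence). Your construction of varying $\vec b_k$ via automorphisms is precisely the standard way to make this rigorous, so your proposal effectively supplies the detail the paper elides. Your final assembly into a realized type in $S_n(B)$ via an approximate type and saturation is also a bit more explicit than the paper's, which simply declares $\Av(\cu I,B)=\{\psi:t(\psi)=0\}$ to ``have the required property.''
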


\begin{proof}  Consider a formula $\psi(\vec x,B)$.  We have   $\psi^{\FM}(\vec x,B)=\psi^{\FM}(\vec x,\vec b)$ for some finite $\vec b\subseteq B$.
It suffices to show that there is a unique value $t(\psi)\in[0,1]$ such that for every $\varepsilon>0$, for all but finitely many $\vec a\in\cu I$ we have
$$|\psi^\FM(|\vec a,\vec b)-t(\psi)|<\varepsilon,$$
because then
$$\Av(\cu I,B)=\{\psi(\vec x,\vec b)\colon t(\psi)=0\}$$
 has the required property.
It is clear that  there is at most one such value $t(\psi)$.

Let is call a set $X\subseteq [0,1]$ \emph{broad} if $X$ is a closed interval, and $\psi^{\FM}(\vec a,\vec b)\in X$ for all but finitely many $\vec a\in\cu I$.
Note that any intersection of finitely many broad sets is broad.

\textbf{Claim \ref{l-ave}.1:}  Suppose $0\le s<r\le 1$.
Then at least one of the intervals $[0,r], [s,1]$ is broad.

Proof of Claim \ref{l-ave}.1.  Assume not.   Then there are infinitely many $\vec a\in\cu I$ such that $\psi^{\FM}(\vec a,\vec b) < s$, and infinitely many $\vec c\in\cu I$
such that $\psi^{\FM}(\vec c,\vec b)>r$.
Hence there is an infinite subsequence $\langle \vec c_h\rangle_{h\in\BN}$ of $\cu I$  such that for all even $h\in\BN$ we have
$\psi^{\FM}(\vec c_h,\vec b) < s$, and for all odd $j\in\BN$ we have $\psi^{\FM}(\vec c_j,\vec b)>r$.  Taking $\vec b_h=\vec b$ for all $h$, we see that
$\psi(\vec x,\vec y)$ is unstable in $T$.  But since $T$ is stable by hypothesis, $\psi(\vec x,\vec y)$ is stable in $T$
by Fact \ref{f-stable}.  This proves Claim \ref{l-ave}.1.

\textbf{Claim \ref{l-ave}.2:}  For each positive $k\in\BN$ there exists a broad set $X_k$ of length $\le 2/k$.

Proof of Claim \ref{l-ave}.2.  There is a least integer $m_0\le k$ such that $[0,m_0/k]$ is broad, and a greatest integer $m_1\le k$
such that $[m_1/k,1]$ is broad.  Then the set
$$X_k=[0,m_0/k]\cap[m_1/k,1]=[m_1/k,m_0/k]$$
is broad, but neither of the sets $[0,(m_0-1)/k]$, $[(m_1+1)/k,1]$ is broad.   By Claim \ref{l-ave}.1, we cannot have $m_0 - 1 > m_1 + 1$.  Hence $m_0\le m_1 + 2$, so
the interval $X_k$ has length $\le 2/k$.  This proves Claim \ref{l-ave}.2.

Since $[0,1]$ is compact,
$\bigcap_{n=1}^\infty X_k$ is non-empty, and contains exactly one point $t\in[0,1]$.  The result now follows with $t(\psi)=t$.
\end{proof}

\begin{cor}  \label{c-parallel}
Suppose $T$ is stable, $n\in\BN$, $\cu I\subseteq\FM^n$  is an infinite indiscernible set, and $B\subseteq \FM$ is small.
Then:
\begin{itemize}
\item[(i)] If $\cu I$ is $B$-indiscernible, then each $\vec a\in\cu I$ realizes $\Av(\cu I,B)$.
\item[(ii)]  For any infinite $\cu H\subseteq\cu I$, $\Av(\cu H,B)=\Av(\cu I,B)$.
\end{itemize}
\end{cor}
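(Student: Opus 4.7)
The plan is to deduce both parts directly from Lemma \ref{l-ave}, which already produced the average type $\Av(\cu I, B)$ and pinned down its values by the approximation property. Neither part requires independence, stationarity, or any further stability machinery beyond what went into Lemma \ref{l-ave} itself.

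For (i), assume $\cu I$ is $B$-indiscernible, and fix a formula $\psi(\vec x, B)$ with parameters from $B$. Because any two elements of $\cu I$ are proper $1$-tuples of the same length, $B$-indiscernibility forces $\tp(\vec a/B) = \tp(\vec a\,'/B)$ for all $\vec a, \vec a\,' \in \cu I$, so the map $\vec a \mapsto \psi^{\FM}(\vec a, B)$ is constant on $\cu I$, with some value $c_\psi$. By Lemma \ref{l-ave}, for every $\varepsilon > 0$ all but finitely many $\vec a \in \cu I$ satisfy $|c_\psi - \Av(\cu I, B)^\psi| < \varepsilon$; since $\cu I$ is infinite, at least one such $\vec a$ exists, and letting $\varepsilon \to 0$ gives $c_\psi = \Av(\cu I, B)^\psi$. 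As $\psi$ was an arbitrary formula over $B$, each $\vec a \in \cu I$ realizes $\Av(\cu I, B)$.

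For (ii), first observe that any infinite $\cu H \subseteq \cu I$ is itself an indiscernible set: every pair of proper tuples from $\cu H$ of the same length is also a pair of proper tuples from $\cu I$, so inherits the required common type. Hence Lemma \ref{l-ave} applies to $\cu H$ as well, yielding a type $\Av(\cu H, B)$. Fix a formula $\psi(\vec x, B)$ and $\varepsilon > 0$. Lemma \ref{l-ave} applied to $\cu I$ gives a cofinite subset $\cu I_\varepsilon \subseteq \cu I$ on which $|\psi^{\FM}(\vec a, B) - \Av(\cu I, B)^\psi| < \varepsilon$, and applied to $\cu H$ gives a cofinite subset $\cu H_\varepsilon \subseteq \cu H$ on which $|\psi^{\FM}(\vec a, B) - \Av(\cu H, B)^\psi| < \varepsilon$. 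Since $\cu H$ is infinite and $\cu H_\varepsilon \cap \cu I_\varepsilon$ is cofinite in $\cu H$, we can pick some $\vec a$ in this intersection; the triangle inequality then gives $|\Av(\cu I, B)^\psi - \Av(\cu H, B)^\psi| < 2\varepsilon$. Letting $\varepsilon \to 0$ and varying $\psi$ yields $\Av(\cu I, B) = \Av(\cu H, B)$.

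There is no real obstacle in this argument; the content lies entirely in Lemma \ref{l-ave}, and both parts are formal corollaries. The only bookkeeping point worth stating explicitly is the elementary observation that indiscernibility and $B$-indiscernibility of a set are inherited by arbitrary subsets, which is immediate from the definition via proper tuples.
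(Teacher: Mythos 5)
Your proof is correct and follows essentially the same route as the paper: part (i) is, as you observe, a direct consequence of $B$-indiscernibility forcing the map $\vec a \mapsto \psi^{\FM}(\vec a,B)$ to be constant, and part (ii) is the observation that the approximation property of $\Av(\cu I,B)$ is inherited by the infinite subset $\cu H$. The paper packages (ii) more tersely as a containment of complete types ($\Av(\cu I,B)\subseteq\Av(\cu H,B)$, and both lie in $S_n(B)$), whereas you argue at the level of individual truth values with the triangle inequality; these are the same argument in different notation.
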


\begin{proof}  (i) is clear.

(ii)  By Lemma \ref{l-ave} we have $\Av(\cu I,B)\subseteq\Av(\cu H,B)$.  Since $\Av(\cu I,B)\in S_n(B)$ and $\Av(\cu H,B)\in S_n(B)$, $\Av(\cu H,B)=\Av(\cu J,B)$.
\end{proof}

We will only need the following lemma for subsets of  $\FM$, but a slightly more complicated argument will prove
the corresponding result for subsets of $\FM^n$.

\begin{lemma}  \label{l-morley-seq}  Suppose $T$ is stable, and $C$ is small and algebraically closed.
For each element $a\in\FM\setminus C$ there is an infinite
$C$-indiscernible set  $\cu I\subseteq\FM$  such that $a\in\cu I$, and
$X\ind_C Y$ for any  disjoint $X,Y\subseteq \cu I$.
\end{lemma}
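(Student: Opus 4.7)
The plan is to build a Morley sequence over $C$ starting at $a$, use Lemma \ref{l-seq-set} to promote $C$-indiscernibility of the sequence to indiscernibility of the underlying set, and then extract independence of disjoint subsets from the defining relations via iterated Transitivity.

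First I would construct $\langle a_n\rangle_{n\in\BN}$ inductively with $a_0=a$, using the Existence axiom of $\ind$ at each stage to pick $a_{n+1}$ with $a_{n+1}\equiv_C a$ and $a_{n+1}\ind_C\{a_0,\ldots,a_n\}$. The hypothesis $a\in\FM\setminus C$, together with $C=\acl(C)$, enters to guarantee pairwise distinctness: in a stable theory, the fact that $x\in\acl(\{x\})$ combined with $x\ind_C x$ forces $x\in\acl(C)=C$ (via the standard argument that being algebraic over a larger set and independent from it over a smaller base yields algebraicity over the smaller base). Hence if $a_{n+1}=a_i$ for some $i\le n$, monotonicity (a consequence of Transitivity) would give $a_i\ind_C a_i$, so $a_i\in C$; then $a_i\equiv_C a$ forces $d(a,a_i)=0$, contradicting $a\notin C$.

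To see that $\langle a_n\rangle$ is $C$-indiscernible I would show by induction on $k$ that any two strictly increasing $k$-tuples from the sequence have the same type over $C$. In the inductive step, Invariance provides a $C$-automorphism of $\FM$ carrying the two prefixes of length $k-1$ onto one another; applying Stationarity over the algebraically closed base $C$ to the final coordinates of each tuple (both realize $\tp(a/C)$ and are, by monotonicity, independent over $C$ from the preceding $k-1$ elements) upgrades the prefix type equality to a full type equality. Lemma \ref{l-seq-set} then gives that the set $\cu I=\{a_n:n\in\BN\}$ is $C$-indiscernible as a set.

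For the independence claim, let $X,Y\subseteq\cu I$ be disjoint. Finite Character applied to each side via Symmetry reduces the problem to finite disjoint $X_0,Y_0\subseteq\cu I$. Indiscernibility of $\cu I$ as a set together with the saturation of $\FM$ yields a $C$-automorphism of $\FM$ carrying $X_0\cup Y_0$ onto an initial segment $\{a_0,\ldots,a_{m+k-1}\}$ of the Morley sequence, with $X_0\mapsto\{a_0,\ldots,a_{m-1}\}$ and $Y_0\mapsto\{a_m,\ldots,a_{m+k-1}\}$; by Invariance, it suffices to prove independence in this ordered configuration. Iterated Transitivity applied to the defining relations $a_{m+j}\ind_C\{a_0,\ldots,a_{m+j-1}\}$, cut down by monotonicity to the relevant subsets, then yields $\{a_m,\ldots,a_{m+k-1}\}\ind_C\{a_0,\ldots,a_{m-1}\}$, whence $X_0\ind_C Y_0$ by Symmetry. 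The main technical subtlety is precisely this last reordering step: indiscernibility of $\cu I$ as a set only equates types of same-length subtuples, so one must invoke the saturation (and homogeneity) of $\FM$ to upgrade the type equality to a genuine $C$-automorphism before Invariance of $\ind$ can be used to transport the desired independence relation.
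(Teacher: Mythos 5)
Your proof is correct and follows essentially the same approach as the paper's: build a Morley sequence via Existence, use Anti-reflexivity for distinctness, prove $C$-indiscernibility via Stationarity over the algebraically closed base $C$ and pass to set-indiscernibility via Lemma~\ref{l-seq-set}, and derive independence of disjoint subsets from the Morley relations using Symmetry, Transitivity, and Finite Character. The one structural difference is in the final step: the paper establishes $X\ind_C Y$ for disjoint $X,Y\subseteq\{a_0,\ldots,a_n\}$ by a direct induction on $n$ \emph{before} proving indiscernibility (and then reuses these relations in the Stationarity step), whereas you prove indiscernibility first and use the resulting homogeneity to reorder an arbitrary finite disjoint pair into a prefix/suffix configuration before iterating Transitivity --- a valid but slightly longer route, since the direct induction handles arbitrary disjoint subsets of an initial segment without any need for reordering.
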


\begin{proof}  The proof is the same as the corresponding argument in first order logic.
By Existence and Symmetry, for any $X$ and $Y$ we have $X\ind_C \emptyset$ and $\emptyset\ind_C Y$.
By Existence, one can inductively build a sequence $\cu I=\langle a_h\rangle_{h\in\BN}$ in $\FM$  such that $a_0=a$, and for each $n\in\BN$ we have
$$ a_n\equiv_C  a \mbox{ and } a_n \ind_C\{ a_h\colon h<n\}.$$
By Anti-reflexivity and Finite Character, $a_n\ne a_h$ whenever $h<n$, so $\cu I$ is infinite.
Using Symmetry and Transitivity for  $\ind$, one can show by induction on $n$ that $X\ind_C Y$ for each
pair $X, Y$ of disjoint subsets of $\{ a_0,\ldots, a_n\}$.
By Symmetry and Finite Character, it follows that $X\ind_C Y$ for any  disjoint $X,Y\subseteq \cu I$.

By Lemma \ref{l-seq-set}, to show that $\cu I$ is a $C$-indiscernible set, it suffices to show that $\cu I$ is a $C$-indiscernible sequence.
This means that for all $n\in\BN$, $X\equiv_C Y$ for all strictly increasing subsequences
$$X=\{x_1,\ldots,x_n\},  Y=\{y_1,\ldots,y_n\}$$
of $\cu I$ of length $n$.  We argue by induction on $n$.  The result holds trivially for $n=1$.
Suppose  $(X,u)$, $(Y,v)$ are strictly increasing subsequences of $\cu I$ of length $n+1$, and $X\equiv_C Y$.  Without loss of generality
we may assume that $u$ is equal to or  before $v$ in $\cu I$.  Then $u\equiv_C v$,
$X\ind_C v$, and $Y\ind_C v$, so by Stationarity we have $X\equiv_{Cv} Y$ and hence $(X,v)\equiv_C (Y,v)$.
By a similar argument but using Symmetry, we have  $u\equiv_{CX} v$, and hence
$$(X,u)\equiv_C (X,v)\equiv_C  (Y,v).$$
This completes the induction.
\end{proof}

The sequence $\cu I$ in the above proof is called a \emph{Morley sequence}  in type $\tp(a_0/C)$.
A proof of the following fact for first order logic using Morley sequences can be found, for example, in [Ad09].  The same proof works for continuous logic (see also [EG12]).

\begin{fact}   Every stable independence relation $\ind$ has the following additional properties:
\begin{itemize}
\item Base Monotonicity: If $C\subseteq D\subseteq B$ and $A\ind_C B$, then $A\ind_D B$.
\item Normality: If $A\ind_C B$ then $AC\ind_C B$.
\item Anti-reflexivity: If $a\ind_C a$ then $a$ belongs to the algebraic closure of $C$.
\end{itemize}
\end{fact}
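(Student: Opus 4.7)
The plan is to derive each of Base Monotonicity, Normality, and Anti-reflexivity from the seven axioms of Definition \ref{d-stable}, with Transitivity carrying the first two and Lemma \ref{l-morley-seq} together with Stationarity carrying the third.

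For Base Monotonicity, given $C \subseteq D \subseteq B$ with $A \ind_C B$, note $BD = B$, so $A \ind_C BD$ holds; Transitivity applied with the right-hand side grouped as $D \cup B$ (legal since the union is symmetric) yields $A \ind_C D$ together with $A \ind_{CD} B$, and $CD = D$ makes the second conjunct exactly $A \ind_D B$. The same argument for arbitrary $Y \subseteq B$ gives Right Monotonicity as a by-product, and combining Right Monotonicity with Existence and Invariance yields the auxiliary fact that $X \ind_Z Y$ whenever $Y \subseteq Z$: produce $X'$ with $X' \equiv_Z X$ and $X' \ind_Z Z$ via Existence, restrict to $Y$ via Right Monotonicity, then transport back via the automorphism of $\FM$ fixing $Z$ pointwise and sending $X'$ to $X$. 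For Normality, Symmetry reduces $AC \ind_C B$ to $B \ind_C AC$ from $B \ind_C A$; Finite Character further reduces to $\vec b \ind_C A \Rightarrow \vec b \ind_C AC$ for finite $\vec b \subseteq B$; Transitivity rewrites the conclusion as $\vec b \ind_C A$ together with $\vec b \ind_{CA} C$, of which the first is the hypothesis and the second is an instance of the auxiliary fact since $C \subseteq CA$.

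For Anti-reflexivity, argue by contrapositive: assume $a \notin C^* := \acl(C)$ and derive $\neg(a \ind_C a)$. Apply Lemma \ref{l-morley-seq} over $C^*$ to produce an infinite $C^*$-indiscernible set $\cu I \ni a$ whose disjoint subsets are $\ind_{C^*}$-independent, and pick $a' \in \cu I \setminus \{a\}$, so that $a' \equiv_{C^*} a$ and $a' \ind_{C^*} a$. Combined with $a \ind_{C^*} a$, Stationarity over the algebraically closed base $C^*$, applied with parameter set $\{a\}$ and tuples $(a, a')$, gives $a \equiv_{C^*a} a'$; since $d(x, a) = 0$ lies in $\tp(a/C^*a)$, this forces $d(a', a) = 0$, so $a = a'$ in the reduced monster, contradicting $a' \neq a$.

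The hard part is therefore upgrading the hypothesis $a \ind_C a$ to $a \ind_{C^*} a$, since the listed axioms provide no direct way to enlarge the base across the algebraic closure. My plan is to define a candidate relation by setting $A \ind'_C B$ to hold iff $A \ind_{\acl(C)} B\acl(C)$, verify that $\ind'$ satisfies all seven axioms of Definition \ref{d-stable} (the subtle case being Transitivity, where one must reconcile $B\acl(C)$ and $\acl(BC)$ using Base Monotonicity and the auxiliary fact about independence from a subset of one's base), and then invoke the uniqueness half of Fact \ref{f-stable} to conclude $\ind' = \ind$. Specializing the resulting identity to $A = B = \{a\}$ and applying Right Monotonicity then delivers $a \ind_{C^*} a$ and completes the argument.
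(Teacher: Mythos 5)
The paper's own ``proof'' of this Fact is a citation to [Ad09] and [EG12], so there is no argument in the paper to compare against; I will assess your self-contained derivation on its own terms. Your arguments for Base Monotonicity, Right Monotonicity, the auxiliary fact ($X \ind_Z Y$ whenever $Y \subseteq Z$), and Normality are correct and in the standard abstract-independence style. Your Morley-sequence-plus-Stationarity contradiction for Anti-reflexivity is also correct as far as it goes, and you have correctly identified the hinge: one must upgrade $a \ind_C a$ to $a \ind_{\acl(C)} a$, since both Lemma~\ref{l-morley-seq} and Stationarity demand an algebraically closed base.

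The gap is in that upgrade. Your proposed route through $\ind'$ and the uniqueness half of Fact~\ref{f-stable} stalls at $\ind'$-Transitivity exactly where you flag it, and the two tools you name do not suffice to pass between a base $B\acl(C)$ and a base $\acl(BC)$: Base Monotonicity only strengthens the base into a set contained in the right-hand side, and the auxiliary fact covers $Y \subseteq Z$ but not $Y \subseteq \acl(Z)$. What is actually needed is the stronger lemma that $A \ind_E \acl(E)$ for all small $A$ and $E$. This is provable by the same Existence-plus-Invariance trick you already use for the auxiliary fact: Existence produces $A'$ with $A' \equiv_E A$ and $A' \ind_E \acl(E)$, and the automorphism of $\FM$ fixing $E$ pointwise that carries $A'$ to $A$ also fixes $\acl(E)$ setwise, so Invariance transports the independence to $A \ind_E \acl(E)$. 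Once this lemma is in hand you can dispense with $\ind'$ and the uniqueness argument entirely: taking $E = aC$ and applying Right Monotonicity gives $a \ind_{aC} \acl(C)$; Transitivity combined with $a \ind_C a$ then gives $a \ind_C a\acl(C)$; regrouping the right-hand side and applying Transitivity once more extracts $a \ind_{\acl(C)} a$, which is exactly what your final Stationarity step requires.
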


\begin{lemma}  \label{l-remove}  Suppose $T$ is stable and $\cu I$ is an indiscernible set.  Then for any set $B\subseteq\FM$
there is a set $X\subseteq\cu I$ such that $|X|\le |B|+\aleph_1$, and $\cu I\setminus X$ is $(B\cup\bigcup X)$-indiscernible.
\end{lemma}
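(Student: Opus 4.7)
My strategy is a countable iteration: at each stage, I handle all strict formulas with parameters from the current parameter set, using Lemma \ref{l-ave} as the core tool to identify the countably many ``bad'' elements that must be removed.

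By Fact \ref{f-strict-approx} and Remark \ref{r-strict}, it suffices to arrange that for each strict formula $\psi(\vec x_1, \ldots, \vec x_k, \vec y)$ (with $|\vec x_i|$ equal to the arity $n$ of $\cu I$) and each finite tuple $\vec b$ from $B \cup \bigcup X$, the value $\psi^\FM(\vec a_1, \ldots, \vec a_k, \vec b)$ is independent of the choice of distinct $\vec a_1, \ldots, \vec a_k$ in $\cu I \setminus X$. The central sublemma I will establish is the following: for any strict $\psi$ of the above shape and any parameter tuple $\vec b$, there exists a constant $r = r(\psi, \vec b) \in [0,1]$ and a countable set $F(\psi, \vec b) \subseteq \cu I$ such that $\psi^\FM(\vec a_1, \ldots, \vec a_k, \vec b) = r$ for every tuple of distinct $\vec a_1, \ldots, \vec a_k$ drawn from $\cu I \setminus F(\psi, \vec b)$.

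For $k = 1$ the sublemma is immediate from Lemma \ref{l-ave}: take $r = \Av(\cu I, \vec b)^{\psi}$ and $F(\psi,\vec b) = \bigcup_{m \in \BN^+} F_m$, where $F_m$ is the finite set of $\vec a \in \cu I$ with $|\psi^{\FM}(\vec a, \vec b) - r| \geq 1/m$. For $k > 1$, I will proceed by induction on $k$, treating $\vec a_2, \ldots, \vec a_k$ temporarily as parameters and applying Lemma \ref{l-ave} to $\psi(\cdot, \vec a_2, \ldots, \vec a_k, \vec b)$; the uniformity of the constant $r$ and the bad set across choices of the additional arguments from $\cu I$ is guaranteed by Stationarity of the stable independence relation together with Lemma \ref{l-seq-set}, which tells us that the type of $(\vec a_2, \ldots, \vec a_k)$ over $\vec b$ is suitably controlled as the tuple varies over distinct elements of $\cu I$.

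Granting the sublemma, the construction is routine. Set $X_0 = \emptyset$, $D_n = B \cup \bigcup X_n$, and
\[
X_{n+1} \;=\; X_n \;\cup\; \bigcup \{ F(\psi, \vec b) : \psi \text{ strict}, \; \vec b \text{ a finite tuple from } D_n\}.
\]
There are countably many strict formulas and at most $|D_n| + \aleph_0$ finite parameter tuples, and each $F(\psi, \vec b)$ is countable, so $|X_{n+1}| \leq |X_n| + |B| + \aleph_1$. Setting $X = \bigcup_{n \in \BN} X_n$ yields $|X| \leq |B| + \aleph_1$. Any finite $\vec b$ from $B \cup \bigcup X$ lies in some $D_n$ and hence is handled at stage $n+1$, so by the sublemma $\cu I \setminus X$ is $(B \cup \bigcup X)$-indiscernible.

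The main obstacle is the sublemma for $k > 1$. For $k = 1$ it is essentially a restatement of Lemma \ref{l-ave}, but for higher arities one must check that the ``generic value'' $r$ and the bad set are genuinely uniform as the other $(k-1)$ arguments vary over $\cu I$; this is where one must appeal to Stationarity in $\ind$ (and, implicitly, to definability of stable types in the continuous setting) to convert pointwise control of the bad set into the required uniform control.
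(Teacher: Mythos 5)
The iteration scheme (countably many stages $X_0 \subseteq X_1 \subseteq \cdots$, each absorbing the exceptional elements for all formulas with parameters in $D_n$) is a sound framework, and for $k = 1$ your reduction to Lemma \ref{l-ave} is correct. But the heart of the lemma is precisely the sublemma for $k > 1$, and there your argument is not just incomplete — the sketched route does not work. If you treat $\vec a_2, \ldots, \vec a_k$ as parameters and apply Lemma \ref{l-ave} to $\psi(\cdot, \vec a_2, \ldots, \vec a_k, \vec b)$, you get an exceptional set $F(\psi, (\vec a_2, \ldots, \vec a_k, \vec b))$ that varies with the parameters $\vec a_2, \ldots, \vec a_k$; as these range over $\cu I$, the union of exceptional sets has no reason to stay countable — it could be all of $\cu I$. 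Producing a \emph{single} countable $F(\psi, \vec b)$ that works for all choices simultaneously is the whole difficulty, and the induction on $k$ does not reduce it to a simpler problem: uniformizing in $\vec a_2, \ldots, \vec a_k$ is the same kind of claim you are trying to prove, shifted by one argument. The cryptic appeal to ``Stationarity together with Lemma \ref{l-seq-set}'' does not bridge this; Lemma \ref{l-seq-set} says an indiscernible sequence is an indiscernible set, which is not what is needed here.

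The correct mechanism (and the one Shelah's Corollary III.3.5, cited by the paper, uses) is to invoke Strong Local Character and Finite Character to find a single countable $C \subseteq \bigcup\cu I$ with $\vec b \ind_{C} \bigcup\cu I$; remove from $\cu I$ the (countably many) tuples meeting $C$, call this $F$; observe $\cu I\setminus F$ is still indiscernible over $\bigcup F$ because $\cu I$ is a $\emptyset$-indiscernible set and $\bigcup F \subseteq \bigcup\cu I$; and then, since every finite tuple $\vec a$ of distinct elements of $\cu I \setminus F$ satisfies $\vec a \ind_{\bigcup F} \vec b$ (Base Monotonicity, Symmetry, Finite Character), Stationarity should yield $\vec a \equiv_{\vec b\cup\bigcup F} \vec a'$ for any two such tuples of the same length — giving the uniform constant $r$ and bad set $F$ at one stroke for all $k$. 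Even this, however, is not quite a proof as you have set things up: Stationarity as stated in Definition \ref{d-stable} requires the base to be algebraically closed, so one must verify that $\cu I\setminus F$ remains indiscernible over $\acl(\bigcup F)$, not merely over $\bigcup F$; this is a genuine point (it is one of the things Shelah's Section III.1 establishes about average types and the canonical base) and neither your write-up nor the parenthetical reference to ``definability of stable types'' discharges it. So there is a real gap: the sublemma is plausible but unproven, the stated inductive strategy for $k > 1$ is circular, and the $\acl$-closure hypothesis in Stationarity is unaddressed.
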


\begin{proof}  The argument is exactly the same as the proof of Corollary III.3.5 of [Sh78], but applied to continuous rather than first order logic.
\end{proof}

\begin{lemma}  \label{l-maximal-indiscernible}  Suppose $T$ is stable, $\cu M\prec\FM$ is small and $\aleph_1$-saturated,
and  every indiscernible set   $\cu I\subseteq \cu M$ of cardinality $\aleph_0$ can be extended to an
indiscernible set  $\cu H\subseteq\cu M$
of cardinality $\kappa$.   Then $\cu M$ is $\kappa$-saturated.
\end{lemma}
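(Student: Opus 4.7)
The plan is to show $\cu M$ realizes every type $p(v) \in S_1(B)$ with $B \subseteq M$ and $|B| < \kappa$ that is finitely satisfiable in $\cu M$; by Lemma~\ref{l-strict-sat} (and the standard reduction from $n$-variable to $1$-variable types) this suffices for $\kappa$-saturation. The case $\kappa \le \aleph_1$ is immediate from $\aleph_1$-saturation of $\cu M$, so assume $\kappa > \aleph_1$. Fix such $p$ and $B$, and realize $p$ by some $a \in \FM$.

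First I would use Strong Local Character of the stable independence relation to find a countable $C_0 \subseteq B$ with $a \ind_{C_0} B$, and set $C = \acl(C_0)$, still countable. Enlarging $B$ to $B \cup C$ (the size stays below $\kappa$), we may assume $C \subseteq B$, and by Stationarity over the algebraically closed set $C$ the type $p = \tp(a/B)$ is then the unique non-forking extension of $\tp(a/C)$ to $B$.

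Next, I would build a Morley sequence $\cu I_0 = \{a_n : n < \omega\}$ in $\tp(a/C)$ over $C$ inside $\cu M$: iteratively pick $a_n \in \cu M$ realizing the unique non-forking extension of $\tp(a/C)$ to $C \cup \{a_0, \ldots, a_{n-1}\}$, which is a type over countable parameters and hence realized in $\cu M$ by $\aleph_1$-saturation. By Lemma~\ref{l-seq-set}, $\cu I_0$ is a $C$-indiscernible set, hence in particular $\emptyset$-indiscernible. I would then apply the hypothesis to extend $\cu I_0$ to an $\emptyset$-indiscernible set $\cu H \subseteq \cu M$ with $|\cu H| = \kappa$, and apply Lemma~\ref{l-remove} to $\cu H$ with parameter set $B$ to obtain $X \subseteq \cu H$ with $|X| \le |B| + \aleph_1 < \kappa$ such that $\cu H' := \cu H \setminus X$ is $(B \cup X)$-indiscernible, in particular $B$-indiscernible, and still has cardinality $\kappa$.

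To finish, Corollary~\ref{c-parallel}(ii) applied to the infinite subsets $\cu I_0, \cu H' \subseteq \cu H$ gives $\Av(\cu H', B) = \Av(\cu I_0, B)$, and Corollary~\ref{c-parallel}(i) applied to the $B$-indiscernible set $\cu H'$ says that every element of $\cu H'$ realizes $\Av(\cu H', B)$; so any $b \in \cu H' \subseteq \cu M$ realizes $p$ provided $\Av(\cu I_0, B) = p$. This last identification is the main obstacle: it is a standard stable-theory fact that the average over $B \supseteq C$ of a Morley sequence in $\tp(a/C)$ over algebraically closed $C$ equals the unique non-forking extension of $\tp(a/C)$ to $B$. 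Concretely, using the stability of each formula $\varphi(v,\vec y)$ in the sense of Definition~\ref{d-stable-local}, one shows that $\Av(\cu I_0, B)$ is defined by a formula over $C$; hence it restricts to $\tp(a/C)$ and does not fork over $C$, so Stationarity yields $\Av(\cu I_0, B) = p$.
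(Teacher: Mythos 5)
Your proof takes a genuinely different route from the paper, and the difference is precisely where a gap opens. The paper builds its Morley sequence $\cu I = \langle a_h\rangle$ over \emph{all of $M$} (using Lemma~\ref{l-morley-seq} with base $M$), with $a_0 = a$. Because $\cu I$ is then $M$-indiscernible and $a\in\cu I$, Corollary~\ref{c-parallel}(i) gives at once that $a$ realizes $\Av(\cu I, M)$, so $\Av(\cu I, M) = \tp(a/M)$ and hence $\Av(\cu I, BC) = \tp(a/BC)$ with no appeal to nonforking or definability. The price is that $\cu I$ lives outside $M$, which the paper pays by the $\cu I_e,\cu I_o,\cu I'$ device to transfer the average to an indiscernible set inside $M$. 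You instead build a Morley sequence $\cu I_0$ over the countable base $C$ \emph{inside} $M$, which lets you skip the transfer step, but $a\notin\cu I_0$ and $\cu I_0$ is only $C$-indiscernible, so you cannot read off $\Av(\cu I_0,B)=p$ from Corollary~\ref{c-parallel}(i).

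The resulting gap is in your last paragraph, where you assert that $\Av(\cu I_0, B)$ equals the nonforking extension $p$. The justification offered --- that $\Av(\cu I_0, B)$ is definable over $C$, hence does not fork over $C$, hence by Stationarity equals $p$ --- leans on machinery the paper does not develop and is not complete as stated. Concretely: (a) the claim that the average type of a Morley sequence over $C$ is $C$-definable is itself a substantive fact (its proof uses local stability ranks or $\varphi$-definitions, none of which appear in the paper; the paper's entire toolkit here is Definition~\ref{d-stable}, Lemmas~\ref{l-seq-set}, \ref{l-ave}, \ref{l-morley-seq}, \ref{l-remove}, and Corollary~\ref{c-parallel}); and (b) the implication ``definable over $C$ $\Rightarrow$ does not fork over $C$'' for a type over an \emph{arbitrary} set $B$ (rather than over a model) is not immediate --- one must first extend the type to a model by the defining scheme and argue this extension is consistent and nonforking, which requires knowing the scheme is a ``good'' definition. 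You would need to either prove the Morley-average-equals-nonforking-extension lemma in the continuous setting from the tools given, or cite a reference for it. Alternatively, you could adopt the paper's device: build the Morley sequence over $M$ so that $a$ itself is a member, and then transport the average into $M$ via an elementary copy $\cu I'$; that makes the identification $\Av(\cdot, BC) = \tp(a/BC)$ trivial, at the cost of one extra transfer step.
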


\begin{proof} The proof is similar to the proof of Lemma III.3.10 in [Sh78].
Suppose $\cu M$ is not $\kappa$-saturated.  Then $\kappa > \aleph_1$, and there is a  set $B\subseteq M$ of cardinality $|B|<\kappa$ and an element $a\in\FM$
such that no element of $M$ realizes $\tp(a/B)$.  By Strong Local Character and Base Monotonicity, there is a countable algebraically closed set $C\subseteq M$
such that $a\ind_C M$.

By Lemma \ref{l-morley-seq}, there is a Morley sequence $\cu I=\langle a_h\rangle_{h\in\BN}$ in $\tp(a/M)$, with $a_0=a$.
By Corollary \ref{c-parallel} (i), each $a_h\in\cu I$ realizes $\Av(\cu I,M)$.
Let $\cu I_e=\{a_h\colon h \mbox{ is even}\}$ and $\cu I_o=\{a_h\colon h \mbox{ is odd}\}$.
Since $\cu M$ is $\aleph_1$-saturated and $C$ is countable, there is a sequence $\cu I'$ of elements of $M$ such that
$(\cu I_e,\cu I_o)\equiv_C (\cu I_e,\cu I')$.  Then $\cu I_e\cup \cu I'$ is $C$-indiscernible.  By Corollary \ref{c-parallel} (ii),
$$\Av(\cu I,M)=\Av(\cu I_e,M)=\Av(\cu I',M).$$
By hypothesis, $\cu I'$ can be extended to an indiscernible set $\cu H\subseteq M$ of cardinality $\kappa$.
By Lemma \ref{l-remove}, there is a set $X\subseteq\cu H$ such that $|X|\le|B C|+\aleph_1<\kappa$ and $\cu H'=\cu H\setminus X$ is $BC$-indiscernible.
Then $\cu H'$ is infinite.  By Corollary \ref{c-parallel} (ii),
$$\Av(\cu I',M)=\Av(\cu H,M)=\Av(\cu H',M),$$
so
$$\Av(\cu I,BC)=\Av(\cu H',BC).$$
Pick an element $a'\in\cu H'$.  Then $a'\in M$.
By Corollary \ref{c-parallel} (i), $a$ realizes $\Av(\cu I,BC)$ and $a'$ realizes $\Av(\cu H',BC)$.
But then $a\equiv_{BC} a'$, which contradicts the fact that no element of $M$ realizes $\tp(a/B)$.
So $\cu M$ is $\kappa$-saturated after all.
\end{proof}

The following definition is from Shelah [Sh78].

\begin{df} Let $\cu D$ be a regular ultrafilter.  The \emph{lower cofinality} $\lcf(\cu D)$ is the  least cardinal $\kappa$ such that there is a
co-initial set $C$ of $\kappa$ infinite elements of the ultrapower $(\BN,\le)_\cu D$.  That is, $C$ is a set of  infinite elements of $(\BN,\le)_\cu D$,
$|C|=\kappa$, and for each infinite element $e\in(\BN,\le)_\cu D$,
we have $(\cu N,\le)_{\cu D}\models c\le e$ for some $c\in C$.
\end{df}

\begin{df}  Let $\cu D$ be a regular ultrafilter.  The \emph{lower cardinality} $\lca(\cu D)$ is the  least cardinality of an infinite initial segment of
the ultrapower $(\BN,\le)_\cu D$.  That is, the least infinite cardinal $\lambda$ such that
for some $a\in (\BN,\le)_\cu D$, $\lambda=|\{b\colon (\BN,\le)_\cu D\models b\le a\}|$.
\end{df}

Note that for every regular ultrafilter $\cu D$ over $I$, we have
$$\aleph_0<\lcf(\cu D)\le\lca(\cu D)\le |2^I|.$$

\begin{fact}  \label{f-exists-D} (Theorem VI.3.12 in [Sh78]).  If $\aleph_0<\kappa\le\mu=\mu^{\aleph_0}\le |2^I|$, then there is a regular ultrafilter $\cu D$ over $I$
such that $\lcf(\cu D)=\kappa$ and $\lca(\cu D)=\mu$.
\end{fact}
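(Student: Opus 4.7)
The plan is to build $\cu D$ by a transfinite recursion of length $\le 2^\lambda$, extending a base regular filter on $I$ using an independent family of functions $I\to\omega$ and adding constraints that pin down the invariants $\lcf$ and $\lca$. The main tools are Kunen's method for independent families (they remain independent modulo suitably small extensions of the filter) together with a priority bookkeeping over all potential ``bad'' witnesses to a smaller $\lcf$ or to an unwanted value of $\lca$.

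\textbf{Setup and construction.} First I fix a regularizing family generating a regular filter $\cu D_0$ on $I$ and an independent family $\cu F$ of functions $I\to\omega$ of size $2^\lambda$, which exists since $|I|=\lambda$. Partition $\cu F$ into three disjoint subfamilies: a descending-chain piece of size $\kappa$, a segment piece of size $\mu$, and a reserve piece of size $2^\lambda$ used for preventing collapses. For the chain piece, enumerate $\langle f_\alpha\rangle_{\alpha<\kappa}$ and at stage $\alpha$ add to the filter the conditions that force $[f_\beta]_{\cu D}>[f_\alpha]_{\cu D}$ for every $\beta<\alpha$ and that each $[f_\alpha]_{\cu D}$ be non-standard; independence keeps these requirements consistent with the filter built so far. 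Next, pick a distinguished function $h\colon I\to\omega$ whose class will be non-standard, and from the segment piece choose $\langle g_\xi\rangle_{\xi<\mu}$ with $g_\xi\le h$ pointwise, then add conditions forcing the $[g_\xi]_{\cu D}$ to be $\mu$ pairwise distinct elements below $[h]_{\cu D}$.

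\textbf{Priority and main obstacle.} At each stage the construction dovetails with the following priorities drawn from the reserve piece. To prevent $\lcf(\cu D)<\kappa$, enumerate all potential descending sequences of length $<\kappa$ of non-standard elements appearing in the partial ultrapower and, for each, add a reserved $f_\alpha$ forced to be strictly below every term of that sequence. To prevent $\lca(\cu D)<\mu$, similarly produce new predecessors of $[h]_{\cu D}$ whenever a candidate initial segment threatens to close off at cardinality below $\mu$. To prevent $\lca(\cu D)>\mu$ one uses the hypothesis $\mu=\mu^{\aleph_0}$: the total number of equivalence classes that are ever committed to lie below $[h]_{\cu D}$ is controlled by countable ``names'' and hence bounded by $\mu^{\aleph_0}=\mu$. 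The main obstacle is exactly this simultaneous priority argument, namely showing that the independence reservoir $\cu F$ is never exhausted while both invariants are being pinned and that every conflicting demand can be resolved by an independent function; the cardinal arithmetic $\mu=\mu^{\aleph_0}\le 2^\lambda$ is used precisely to bound both the recursion length and the number of collapse-preventing tasks. Once the recursion terminates and the result is extended to a maximal filter $\cu D\supseteq\cu D_0$, regularity follows from $\cu D_0\subseteq\cu D$, and $\lcf(\cu D)=\kappa$ and $\lca(\cu D)=\mu$ are read off from the construction.
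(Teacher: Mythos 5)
The paper does not prove this statement; it is cited verbatim from Shelah's Classification Theory (Theorem VI.3.12), so there is no internal proof to compare against. Your sketch is at least aimed in the right direction: the transfinite recursion over $2^\lambda$ steps, extending a regular filter via a Kunen-style independent family and dovetailing demands that pin down the invariants, is indeed the broad shape of the argument in [Sh78]. But at the single most delicate point the sketch does not actually work.

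The problem is the bound $\lca(\cu D)\le\mu$, i.e.\ producing an infinite $[h]_{\cu D}$ whose initial segment has size exactly $\mu$. You assert that ``the total number of equivalence classes that are ever committed to lie below $[h]_{\cu D}$ is controlled by countable names and hence bounded by $\mu^{\aleph_0}=\mu$.'' This conflates commitments made during the recursion with the cardinality of $\{\,b\colon b\le[h]_{\cu D}\,\}$ in the \emph{final} ultrapower. For any regular $\cu D$ over $I$ with $|I|=\lambda$, the full ultrapower $\BN_{\cu D}$ has cardinality $\aleph_0^\lambda=2^\lambda$ (Fact~\ref{f-cardinality-ultrapower}), and a priori there are $2^\lambda$ many functions $g\colon I\to\BN$ with $g\le h$ pointwise; the number of $\doteq^{\cu D}$-classes they fall into is governed by $\cu D$ as a whole, not by the bookkeeping you happened to do along the way. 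To get this count down to $\mu$ one has to build $\cu D$ so that every $g\le h$ is eventually \emph{decided} to agree $\cu D$-a.e.\ with one of a fixed $\mu$-family; that is a substantive requirement, it is where the hypothesis $\mu^{\aleph_0}=\mu$ actually does work, and your sketch does not say how to arrange it. Worse, your final step ``extend the result to a maximal filter $\cu D\supseteq\cu D_0$'' would freely enlarge $\cu D$ and could collapse or decollapse classes arbitrarily, destroying both $\lcf$ and $\lca$; the maximality has to be driven by the recursion itself (enumerating all subsets of $I$ and deciding them along the way), not bolted on at the end.

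Two smaller gaps worth flagging. First, your coinitial $\kappa$-chain $\langle[f_\alpha]\rangle_{\alpha<\kappa}$ has coinitiality $\operatorname{cf}(\kappa)$, not $\kappa$, so the upper bound $\lcf(\cu D)\le\kappa$ only follows when $\kappa$ is regular; this is harmless (lower cofinality is always regular, so the Fact is implicitly about regular $\kappa$), but it should be said. Second, the priority scheme for $\lcf(\cu D)\ge\kappa$ quantifies over all ``potential descending sequences of length $<\kappa$ of non-standard elements''; there are up to $(2^\lambda)^{<\kappa}$ of these, and one has to argue that only those with countable or $\lambda$-sized ``support'' in the construction need handling to get the task count down to $2^\lambda$ so the independent reservoir suffices. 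That accounting is precisely the content of Shelah's proof, and the sketch takes it for granted.
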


\begin{df}  Let  $\varepsilon\in[0,1]$, and $\Delta$ be a  finite set of formulas  with free variables included in $\vec y$ that contains at least the formula $d(y_0,y_1)$.
 A sequence $\cu I\subseteq \cu M$ is $(\Delta,\varepsilon)$-\emph{indiscernible} in $\cu M$ if
for all proper $|\vec y|$-tuples $\vec b,\vec c$
of elements of $\cu I$ and every $\varphi(\vec y)\in\Delta$, we have
$$|\varphi(\vec b)^{\cu M} - \varphi(\vec c)^{\cu M}|\le\varepsilon.$$
$\cu I$ is $r$-\emph{separated} if $d^{\cu M}(u,v)\ge r$ for some $u,v\in\cu I$.
\end{df}

We make some easy observations about $(\Delta,\varepsilon)$-indiscernibility.  Let $1\ge r>\varepsilon$.
If $\cu I$ is $r$-separated, then any superset of $\cu I$ in $\cu M$ is $r$-separated.  Since $d(y_0,y_1)$ is in $\Delta$, if  $\cu I$ is $(\Delta,\varepsilon)$-indiscernible
in $\cu M$ and $r$-separated,
 then two elements $u,v\in\cu I$ are distinct if and only if $d^{\cu M}(u,v)\ge r-\varepsilon$.

For each $k\in\BN$, there is a single formula $\varphi(\vec z)$ with $|\vec z|=k$ such that for each $k$-tuple $\cu H$ in $\cu M$,
$\varphi^\FM(\cu H)$ holds
if and only if $\cu H$ is $(\Delta,\varepsilon)$-indiscernible in $T$ and $r$-separated.
$\cu I$ is indiscernible in $T$ if and only if it is $(\Delta,1/n)$-indiscernible in $T$ for all non-empty finite $\Delta$ and all $0<n\in\BN$.
The property of $(\Delta,\varepsilon)$-indiscernibility gets stronger as $\Delta$ gets larger and $\varepsilon$ gets smaller.

The proof of the next lemma  is similar to  the argument for Theorem VI.5.1 (2) in [Sh78], but is simpler because the argument in [Sh78]
also dealt with three other more difficult cases at the same time.

\begin{lemma}  \label{l-stable-saturated}  If $T$ is stable, $\cu D$ is a regular ultrafilter, and $\cu M\models T$, then
 $\cu M_\cu D$ is $\lca(\cu D)$-saturated.
\end{lemma}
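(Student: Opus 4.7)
The plan is to apply Lemma \ref{l-maximal-indiscernible} with $\kappa = \lca(\cu D)$.  Since $\cu D$ is regular, Remark \ref{r-regular-aleph-1} gives that $\cu M_\cu D$ is $\aleph_1$-saturated, so it suffices to show that every countable indiscernible set $\cu I \subseteq \cu M_\cu D$ admits an indiscernible extension of cardinality $\lca(\cu D)$ inside $\cu M_\cu D$.

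Fix such $\cu I = \{a_i : i \in \BN\}$ with representatives $a_i = \langle a_i[t]\rangle_{t\in I}$, pick a positive rational $r < d^{\cu M_\cu D}(a_0,a_1)$, and enumerate an increasing chain $\Delta_0 \subseteq \Delta_1 \subseteq \cdots$ of finite sets of strict formulas (possible by Remark \ref{r-strict}), each containing $d(y_0,y_1)$, whose union is the set of all strict formulas.  Using the \Los\ Theorem together with the decreasing sequence $J_m \in \cu D$ of empty intersection fixed before Lemma \ref{l-dist-saturates}, construct $g : I \to \BN$ such that at every $t$ the set $\{a_0[t],\ldots,a_{g(t)}[t]\}$ is $(\Delta_{g(t)},1/g(t))$-indiscernible and $r$-separated in $\cu M$, while $\{t : g(t) \ge m\} \in \cu D$ for each $m$; then $[g]_\cu D$ is infinite in $(\BN,\le)_\cu D$.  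By the definition of $\lca(\cu D)$ some infinite $[e]_\cu D$ has initial segment of cardinality exactly $\lca(\cu D)$; set $[f]_\cu D = \min([e]_\cu D,[g]_\cu D)$.  This is still infinite, its initial segment sits inside the one below $[e]_\cu D$ (so has size $\le \lca(\cu D)$) and is itself infinite (so has size $\ge \lca(\cu D)$ by the minimality defining $\lca$), hence has size exactly $\lca(\cu D)$.  Pick a representative $f \le g$ pointwise.

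For each $h : I \to \BN$ with $[h]_\cu D \le [f]_\cu D$, set $b_h = \langle a_{h(t)}[t]\rangle_{t\in I} \in \cu M_\cu D$, and let $\cu H = \{b_h : [h]_\cu D \le [f]_\cu D\}$.  Three verifications finish the proof.  (a)~$\cu I \subseteq \cu H$ via the constant functions $h_i \equiv i$.  (b)~If $[h]_\cu D \ne [h']_\cu D$, then $h(t) \ne h'(t)$ with both $\le g(t)$ on a set in $\cu D$, and the combination of $(\Delta_{g(t)}, 1/g(t))$-indiscernibility with $r$-separation forces $d^{\cu M}(a_{h(t)}[t], a_{h'(t)}[t]) \ge r - 1/g(t)$; \Los\ gives $d^{\cu M_\cu D}(b_h, b_{h'}) \ge r > 0$, so $[h]_\cu D \mapsto b_h$ is injective and $|\cu H| = \lca(\cu D)$.  (c)~$\cu H$ is indiscernible: any formula $\varphi$ is within $\varepsilon$ of some strict formula $\theta \in \Delta_N$ by Fact \ref{f-strict-approx}; for proper $k$-tuples $\vec u, \vec v$ from $\cu H$, for $\cu D$-a.e.\ $t$ both $\vec u[t]$ and $\vec v[t]$ are proper subtuples of $\{a_0[t], \ldots, a_{g(t)}[t]\}$ with $g(t) \ge N$, so $(\Delta_{g(t)}, 1/g(t))$-indiscernibility and the triangle inequality yield $|\varphi^{\cu M}(\vec u[t]) - \varphi^{\cu M}(\vec v[t])| \le 2\varepsilon + 1/g(t)$; passing to the \Los\ limit and letting $\varepsilon \to 0$ gives equality.

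The main obstacle is the coordinated diagonalization in the definition of $g$ and the subsequent selection of $[f]_\cu D$.  One must arrange that $g(t)$ simultaneously grows in both the indiscernibility-shape $\Delta_m$ and the precision $1/m$, so that $g(t) \to \infty$ along $\cu D$ automatically delivers full indiscernibility in the ultrapower, while also squeezing $[f]_\cu D$ below $[g]_\cu D$ so its initial segment realizes precisely $\lca(\cu D)$.  Once this calibration is in place, the verifications above are routine \Los{}-style computations leveraging the uniform continuity packaged in $(\Delta_m, 1/m)$-indiscernibility.
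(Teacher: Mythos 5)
Your reduction is the same as the paper's (Remark \ref{r-regular-aleph-1} gives $\aleph_1$-saturation, then Lemma \ref{l-maximal-indiscernible} reduces to extending countable indiscernible sets), but the way you realize the extension is genuinely different. The paper passes to an auxiliary two-sorted real-valued structure $\cu M'=(\cu M,S,\in)$ with $S$ the finite subsets of $M$, records the extension problem as a single countable type $\Gamma(\bo z)$ over the $a_h$ consisting of the formulas $a_h\in\bo z$ and $\varphi_{\Delta,n,r}(\bo z)$, realizes $\Gamma$ by the $\aleph_1$-saturation of $\cu M'_\cu D$ (Remark \ref{r-regular-aleph-1} again), and then reads off $\cu H=\{a\colon a\in b\}$; the cardinality bound $|\cu H|=|\prod_\cu D b[t]|\ge\lca(\cu D)$ comes for free since $\cu H$ is an infinite ultraproduct of finite sets. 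You instead carry out the coordinate-wise diagonalization by hand: you build $g\colon I\to\BN$ from \Los\ and the regularizing chain $J_m$, and index $\cu H$ directly by the initial segment of $(\BN,\le)_\cu D$ below $[f]_\cu D=\min([e]_\cu D,[g]_\cu D)$, where $[e]_\cu D$ witnesses $\lca(\cu D)$. What the paper's two-sorted device buys is that $\aleph_1$-saturation and Fact \ref{f-cardinality-ultrapower} hide the diagonalization and the counting; what your approach buys is staying entirely inside $\cu M_\cu D$ and pinning the cardinality of $\cu H$ down exactly. Two small points should be made explicit: arrange $g(t)>1/r$ (e.g.\ intersect each $Y_m$ with $J_{\lceil 1/r\rceil+1}$) so that $r$-separation together with $(\Delta_{g(t)},1/g(t))$-indiscernibility forces $a_0[t],\ldots,a_{g(t)}[t]$ to be pairwise distinct, which is what your injectivity step (b) and ``proper subtuple'' claim in (c) rely on; and the exhausting chain $\Delta_0\subseteq\Delta_1\subseteq\cdots$ must sweep up strict formulas of \emph{every} arity, so that the passage to the limit in (c) really yields indiscernibility for tuples of all lengths. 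Both are routine and do not affect the substance of the argument.
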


\begin{proof}
  By Remark \ref{r-regular-aleph-1}, $\cu M_\cu D$ is $\aleph_1$-saturated.
By Lemma \ref{l-maximal-indiscernible}, it suffices to prove that  every countably infinite indiscernible set $\cu I=\{a_h\colon h\in\BN\}$ in $\cu M_{\cu D}$
 can be extended to an indiscernible set $\cu H$ in $\cu M_\cu D$ of cardinality $\ge\lca(\cu D)$.

$\cu I$ is $2r$-separated for some $1\ge r>0$.
Let $\cu M'=(\cu M, S,\in)$ be the two-sorted real-valued structure where one sort is $M$, the other sort  is the set $S$ of finite subsets of $M$,
and $\in$ is the $\in$ relation on $M\times S$. ($\cu M'$ is not a metric structure.) Let $V$ and $V'$ be the vocabularies of $\cu M$ and $\cu M'$.
Let $\bo z$ be a variable of sort $S$.
For every  finite set $\Delta(\vec y)$ of strict $V$-formulas containing $d(y_0,y_1)$ and every $1/r<n\in\BN$,
there is a $V'$-formula $\varphi_{\Delta,n,r}(\bo z)$ saying that the set
$\bo z$ is $(\Delta,1/n)$-indiscernible and $r$-separated.
By Remark \ref{r-regular-aleph-1}, the ultrapower $\cu M'_{\cu D}$ is $\aleph_1$-saturated.
The  set of $V'$-formulas
$$\Gamma(\bo z)=\{a_h\in \bo z\colon h\in\BN\}\cup\{\varphi_{\Delta,n,r}(\bo z)\colon \Delta \mbox{ finite and strict}, 0<n\in\BN\}$$
is finitely satisfiable in $\cu M'_{\cu D}$, because each finite subset $\Gamma_0\subseteq\Gamma$ is satisfied by
the element $b_{0\cu D}$ of $S_{\cu D}$ where for each $t\in I$, $b_0[t]$ is the set of $a_h[t]$ such that $a_h$ occurs in $\Gamma_0$.
$\Gamma(\bo z)$ is countable because there are only countably many strict $V$-formulas.
Therefore $\Gamma(\bo z)$ is satisfied in $\cu M'_{\cu D}$ by some element $b_{\cu D}$ of sort $S$ in $\cu M'_{\cu D}$.
It follows that the set
$$\cu H=\{a\in M_{\cu D}\colon \cu M'_{\cu D}\models a\in b\}$$
contains $\cu I$ and is indiscernible in $\cu M_{\cu D}$.  Since $\cu H$ is infinite, we have
$$|\cu H|=\left|\prod_{\cu D} b[t]\right|\ge \lca(\cu D),$$
as required.
\end{proof}

\begin{lemma}  \label{l-lcf}
Suppose $\cu D$ is a regular ultrafilter, and  the ultrapower $\cu M_\cu D$ is  $\lcf(\cu D)^+$-saturated.  Then $\Th(\cu M)$ is stable.
\end{lemma}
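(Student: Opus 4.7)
The plan is to argue the contrapositive: assume $T = \Th(\cu M)$ is unstable and exhibit a finitely satisfiable type over a parameter set of cardinality $\leq \lcf(\cu D)$ in $\cu M_\cu D$ that is not realized there.

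First I would apply Corollary \ref{c-local-unstable} to extract a formula $\theta(\vec x, \vec y)$ that is unstable with bounds $(0,1)$ in $T$. Since for each $n$ and $\varepsilon > 0$ the theory $T$ entails the existence in $\cu M$ of a length-$n$ witness sequence with error at most $\varepsilon$, and since $\cu D$ is regular, I can select for each $t \in I$ a sequence $\langle \vec a_h[t], \vec b_h[t] \rangle_{h < n(t)}$ in $\cu M$ realizing the order pattern with error at most $1/m(t)$, where $n(t), m(t) \to \infty$ modulo $\cu D$. These representatives yield, for each $c \in (\BN, \leq)_\cu D$ with $c < [n]_\cu D$, tuples $\vec a_c, \vec b_c \in \cu M_\cu D$ defined diagonally by $\vec a_c[t] := \vec a_{c(t)}[t]$ and $\vec b_c[t] := \vec b_{c(t)}[t]$. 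The \Los\ theorem, combined with the vanishing of the errors $1/m(t)$ modulo $\cu D$, ensures that $\theta(\vec a_{c_1}, \vec b_{c_2}) = 0$ whenever $c_1 < c_2$ and $\theta(\vec a_{c_2}, \vec b_{c_1}) = 1$ whenever $c_1 < c_2$.

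Next, by the definition of $\lcf(\cu D)$, I would fix a strictly decreasing coinitial sequence $\{c_\alpha : \alpha < \lcf(\cu D)\}$ of infinite elements of $(\BN, \leq)_\cu D$, chosen so that all $c_\alpha < [n]_\cu D$; this can be arranged by taking $n(t)$ to grow fast enough to dominate some fixed coinitial sequence. Set
$$p(\vec x) = \{\theta(\vec x, \vec b_h) = 1 : h \in \BN\} \cup \{\theta(\vec x, \vec b_{c_\alpha}) = 0 : \alpha < \lcf(\cu D)\},$$
a set of formulas with parameters in a subset of $\cu M_\cu D$ of cardinality $\leq \lcf(\cu D)$. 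Finite satisfiability is immediate: any finite $p_0 \subseteq p$ mentions finitely many standard $h$'s and finitely many $c_\alpha$'s, and $m := \max\{h\} + 1$ is a standard integer below every $c_\alpha$ mentioned, so $\vec a_m$ realizes $p_0$ by the order pattern established above.

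The main obstacle is showing $p$ is not realized; this is exactly where $\lcf(\cu D)$ enters. Suppose $\vec x \in \cu M_\cu D$ realizes $p$, with representative $\langle \vec x[t] \rangle$. Define $m : I \to \BN$ by $m(t) = \min\{k < n(t) : \theta^{\cu M}(\vec x[t], \vec b_k[t]) < 1/2\}$ on the $\cu D$-large set where this is well-defined (nonemptiness coming from any single condition $\theta(\vec x, \vec b_{c_\alpha}) = 0$), and arbitrarily elsewhere. The conditions $\theta(\vec x, \vec b_h) = 1$ for standard $h$ force, via \Los, $m(t) > h$ on a $\cu D$-large set, so $[m]_\cu D$ is an infinite element of $(\BN, \leq)_\cu D$; the conditions $\theta(\vec x, \vec b_{c_\alpha}) = 0$ force $m(t) \leq c_\alpha(t)$ on $\cu D$-large sets, so $[m]_\cu D \leq c_\alpha$ for every $\alpha$. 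But coinitiality produces some $\alpha$ with $c_\alpha < [m]_\cu D$, and then strict decrease of the sequence gives $\beta$ with $c_\beta < c_\alpha \leq c_\alpha$, contradicting $[m]_\cu D \leq c_\beta$. The delicate aspects are arranging the coinitial sequence below $[n]_\cu D$ and verifying that the diagonal tuples $\vec a_c, \vec b_c$ behave correctly under \Los; with those in hand, the cut argument goes through and $\cu M_\cu D$ fails to be $\lcf(\cu D)^+$-saturated.
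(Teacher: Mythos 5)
Your proposal is correct and uses essentially the same contrapositive argument as the paper: exploit the unstable formula to build a finitely satisfiable but unrealized type of cardinality $\lcf(\cu D)$, with parameters indexed by the standard integers together with a coinitial family of infinite elements of $(\BN,\le)_\cu D$, and derive the contradiction by locating a ``cut point'' $[m]_\cu D$ that lies below the whole coinitial set. The main technical difference is in how the nonstandard-indexed parameters are realized in $\cu M_\cu D$: the paper first invokes Theorem \ref{t-equiv-triangle} to replace $\cu M$ by an elementary submodel of the monster containing an infinite witness sequence $\langle\vec b_m\rangle$ and then transfers via the isomorphism $f_\cu D\colon(\BN,\le)_\cu D\to(K,\le^K)_\cu D\subseteq M_\cu D$ with $K=\{b_m\colon m\in\BN\}$, whereas you build diagonal tuples from coordinatewise finite approximate witness sequences of length $n(t)$ with error $1/m(t)$, which avoids changing $\cu M$ at the cost of some bookkeeping (arranging the coinitial family below $[n]_\cu D$). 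One small blemish: your closing sentence (``$c_\beta<c_\alpha\le c_\alpha$, contradicting $[m]_\cu D\le c_\beta$'') is garbled, but the intended contradiction is sound --- coinitiality applied to the infinite element $[m]_\cu D$ gives some $c_\alpha\le[m]_\cu D$, so $c_\alpha=[m]_\cu D$, and a strictly decreasing coinitial sequence (available since $\lcf(\cu D)$ is a regular cardinal) then yields some $c_\beta<[m]_\cu D\le c_\beta$; alternatively, simply apply coinitiality to $[m]_\cu D-1$, which is also infinite, and avoid the need for strict decrease.
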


\begin{proof}  Suppose $\varphi(\vec x,\vec y)$  is an unstable formula in $\Th(\cu M)$.  By Theorem \ref{t-equiv-triangle}, we may assume that $\cu M\prec\FM$.
There are  dyadic rationals $r>s$ in $[0,1]$,
and a sequence $\langle (\vec a_k,\vec b_k)\rangle_{k\in\BN}$ of tuples in $\cu M$ such that whenever $h< j<k$ in $\BN$ we have
$\varphi^{\cu M}(\vec a_j,\vec b_h)\ge r$ and $\varphi^{\cu M}(\vec a_j,\vec b_k)\le s$.  For simplicity we give the proof in the case that
$\vec x,\vec y$ are $1$-tuples, and that $\varphi^{\cu M}(a_j,b_h)\ge r$ and $\varphi^{\cu M}(a_j,b_k)\le s$ whenever $h\le j<k$.  (The proof in general is
essentially the same).  We may take the $b_k$ to be distinct.
Consider the countable first order structure $(K,\le^K)$ where $K=\{b_m\colon m\in\BN\}$ and $\le^K=\{(b_m,b_n)\colon m\le n\}$.
Let $f$ be the unique isomorphism from $(\BN,\le)$ onto $(K,\le^K)$, and $f_\cu D$ be the corresponding isomorphism from
$(\BN,\le)_\cu D$ onto $(K,\le^K)_\cu D$.  Note that $K\subseteq M$, so $K_{\cu D}\subseteq M_{\cu D}$.
Let $\kappa=\lcf(\cu D)$.  There is a co-initial set $C$ of infinite elements of $(\BN,\le)_\cu D$ such that
$|C|=\kappa$.  Let $\Sigma(x)$ be the set of formulas
$$\Sigma(x)=\{r\dotle\varphi(x,f_\cu D(n))\colon n\in\BN\}\cup\{\varphi(x,f_\cu D(c))\dotle s\colon c\in C\}.$$
(By Remark \ref{r-dotminus}, we are using $\dotle$ as an alternate notation for $\dotminus$).
Then $\Sigma(x)$ is finitely satisfiable in $\cu M_\cu D$ and has cardinality $\kappa$.

We claim that $\Sigma(x)$ is not satisfiable in $\cu M_\cu D$.  To see this, suppose
$a$ satisfies $\Sigma(x)$ in $\cu M_\cu D$.  Then for some $X\in\cu D$, for every $t\in X$ there exists $e[t]\in\BN$ such that
$\varphi^\cu M(a[t],b_{e[t]})\ge r$ and $\varphi^\cu M(a[t],b_{e[t]+1})\le s$.
Then $\varphi^{\cu M_\cu D}(a,f_\cu D(e))\ge r$ and $\varphi^{\cu M_\cu D}(a,f_\cu D(e+1))\le s$.  Moreover,
$e$ is an infinite element of $(\BN,\le)_\cu D$, so $c\le_\cu D e$ for some $c\in C$.  Then $\varphi^{\cu M_\cu D}(a,f_\cu D(c))\ge r$, which contradicts
the assumption that $a$ satisfies $\Sigma(x)$ in $\cu M_\cu D$.  This shows that $\Sigma(x)$ is not satisfiable in $\cu M_\cu D$, so
$\cu M_\cu D$ is not $\kappa^+$-saturated.
\end{proof}

\begin{df}  $LCA$ is the class of regular ultrafilters $\cu D$ over sets $I$ such that $|I|<\lca(\cu D)$.

$LCF$ is the class of regular ultrafilters  $\cu D$
over sets $I$ such that $|I|<\lcf(\cu D)$.
\end{df}

By Fact \ref{f-exists-D}, $LCF$ is a proper subclass of $LCA$.  In fact, whenever $\aleph_1<2^{|I|}$,
there is a regular ultrafilter $\cu D$ over $I$ such that $\cu D\in LCA\setminus LCF$.

\begin{thm} \label{t-stable-vs-unstable}

\noindent\begin{itemize}
\item[(i)]  $T$ is stable if and only if $\Sat(T)\supseteq LCA$.
\item[(ii)] $T$ is unstable if and only if $\Sat(T)\subseteq LCF$.
\item[(iii)] If  $T$ is stable and $U$ is unstable, then $T\lk U$.
\end{itemize}
\end{thm}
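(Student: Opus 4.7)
The plan is to deduce all three parts from the two preparatory lemmas: Lemma \ref{l-stable-saturated} (stable $T$ gives $\lca(\cu D)$-saturated ultrapowers) and Lemma \ref{l-lcf} (its near-converse through $\lcf$), together with the cardinal-arithmetic existence result Fact \ref{f-exists-D}. The arguments are essentially bookkeeping once one unpacks what $LCA$ and $LCF$ say about the relationship between $|I|$, $\lcf(\cu D)$, and $\lca(\cu D)$.

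For the forward direction of (i), I fix a stable $T$, a model $\cu M\models T$, and $\cu D \in LCA$ over a set $I$; by definition $|I| < \lca(\cu D)$, so $|I|^+ \le \lca(\cu D)$, and Lemma \ref{l-stable-saturated} gives that $\cu M_\cu D$ is $\lca(\cu D)$-saturated, hence $|I|^+$-saturated, so $\cu D \in \Sat(T)$. For the converse, I suppose $T$ is unstable and produce a $\cu D \in LCA \setminus \Sat(T)$. Choose $I$ with $|I|=\aleph_1$ and apply Fact \ref{f-exists-D} with $\kappa = \aleph_1$ and $\mu = 2^{|I|}$ (noting $\mu^{\aleph_0} = \mu$) to get a regular $\cu D$ over $I$ with $\lcf(\cu D) = \aleph_1$ and $\lca(\cu D) = 2^{\aleph_1} > |I|$, so $\cu D \in LCA$. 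The contrapositive of Lemma \ref{l-lcf} then says $\cu M_\cu D$ is not $\lcf(\cu D)^+$-saturated; since $\lcf(\cu D)^+ = \aleph_2 = |I|^+$, this means $\cu M_\cu D$ fails to be $|I|^+$-saturated, i.e., $\cu D \notin \Sat(T)$.

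For (ii), the forward direction again uses the contrapositive of Lemma \ref{l-lcf}: if $T$ is unstable and $\cu D \in \Sat(T)$, then $\cu M_\cu D$ is $|I|^+$-saturated but not $\lcf(\cu D)^+$-saturated, which forces $|I|^+ < \lcf(\cu D)^+$, hence $\cu D \in LCF$. The converse follows formally from (i): if $T$ were stable, then $\Sat(T) \supseteq LCA$, and since (as noted after Fact \ref{f-exists-D}) there exist ultrafilters in $LCA \setminus LCF$ over any $I$ with $\aleph_1 < 2^{|I|}$, this would contradict $\Sat(T) \subseteq LCF$.

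Finally, (iii) is immediate from (i) and (ii): the chain $\Sat(U) \subseteq LCF \subseteq LCA \subseteq \Sat(T)$ gives $T \lek U$, while any $\cu D \in LCA \setminus LCF$ (exhibited via Fact \ref{f-exists-D} as above) lies in $\Sat(T) \setminus \Sat(U)$, so $U \not\lek T$. The only conceptually delicate steps in the whole argument are already absorbed into the two preparatory lemmas and into Fact \ref{f-exists-D}; the main obstacle, had it not been dispatched upstream, would have been the construction of a regular ultrafilter with prescribed lower cofinality well below its lower cardinality, which is imported from Shelah.
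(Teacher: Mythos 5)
Your proposal is correct and uses the same machinery and essentially the same argument as the paper: Lemma \ref{l-stable-saturated} for the forward direction of (i), the contrapositive of Lemma \ref{l-lcf} for (ii), and Fact \ref{f-exists-D} to produce ultrafilters in $LCA\setminus LCF$ for the reverse directions and for (iii). The only cosmetic difference is that you prove the reverse direction of (i) directly by constructing an explicit $\cu D\in LCA\setminus\Sat(T)$ over a set of size $\aleph_1$, whereas the paper proves (ii) first and then deduces (i) reverse from it; both routes carry the same content.
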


\begin{proof}  Let $\cu M$ be a model of $T$.

(i) Forward:  Suppose $T$ is stable.  By Lemma \ref{l-stable-saturated},
$\cu M_{\cu D}$ is $\lca(\cu D)$-saturated.
If $\cu D\in LCA$, then $|I|<\lca(\cu D)$, so $\cu M_{\cu D}$ is $|I|^+$-saturated and $\cu D\in\Sat(T)$.  Therefore $\Sat(T)\supseteq LCA$.

(ii)  Suppose $T$ is unstable.  By Lemma \ref{l-lcf}, $\cu M_\cu D$ is not $\lcf(\cu D)^+$-saturated.
If $\cu D\in\Sat(T)$, then $\cu M_{\cu D}$ is $|I|^+$-saturated, so $|I|<\lcf(\cu D)$ and $\cu D\in LCF$.  Thus $\Sat(T)\subseteq LCF$.
Now suppose $\Sat(T)\subseteq LCF$.  Then $\Sat(T)\supseteq LCA$ fails, so $T$ is unstable by (i) forward,

(i) Reverse:  Suppose $T$ is unstable. Take $\cu D\in LCA\setminus LCF$.  By (ii),  $\cu D \notin\Sat(T)$, so $\Sat(T)\supseteq LCA$ fails.

(iii) follows by (i), (ii), and Fact \ref{f-exists-D}.
\end{proof}

\begin{cor}  \label{c-stable-vs-unstable}
There are regular ultrafilters $\cu D$ that saturate all stable metric theories but no unstable metric theories.
The class of all such $\cu D$ is $LCA\setminus LCF$.
\end{cor}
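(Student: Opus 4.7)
The corollary is essentially a packaging of the two halves of Theorem \ref{t-stable-vs-unstable} with the existence statement Fact \ref{f-exists-D}, so the plan is short. First, I would produce a concrete $\cu D \in LCA \setminus LCF$ by choosing an uncountable set $I$ and applying Fact \ref{f-exists-D} with $\kappa = \aleph_1$ and some $\mu$ satisfying $|I| < \mu = \mu^{\aleph_0} \le 2^{|I|}$ (for instance, $\mu = 2^{|I|}$, using $(2^{|I|})^{\aleph_0} = 2^{|I|}$). This yields a regular ultrafilter $\cu D$ over $I$ with $\lcf(\cu D) = \aleph_1 \le |I|$ and $\lca(\cu D) = \mu > |I|$, so $\cu D \in LCA \setminus LCF$.

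Next, I would verify that every such $\cu D$ has the desired saturation behavior. If $T$ is a stable metric theory, Theorem \ref{t-stable-vs-unstable}(i) gives $\Sat(T) \supseteq LCA$, hence $\cu D \in \Sat(T)$ and $\cu D$ saturates $T$. If $T$ is an unstable metric theory, Theorem \ref{t-stable-vs-unstable}(ii) gives $\Sat(T) \subseteq LCF$, and since $\cu D \notin LCF$ we conclude $\cu D \notin \Sat(T)$, so $\cu D$ fails to saturate $T$. Combined with the first step, this establishes the existence statement and the fact that every $\cu D \in LCA \setminus LCF$ has the stated property.

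Finally, to get the converse direction in the characterization of the class, I would argue by contrapositive: if $\cu D \notin LCA$, exhibit a stable metric theory that $\cu D$ fails to saturate, and if $\cu D \in LCF$, exhibit an unstable metric theory that $\cu D$ does saturate. I expect this to be the main obstacle, because Theorem \ref{t-stable-vs-unstable} supplies only one-sided containments for $\Sat(T)$. The natural route is to invoke the later classification, using the random graph $T_{rg}$ (minimal among unstable metric theories by Theorem \ref{t-rg-minimal}) to secure an unstable theory with $\Sat(T_{rg}) = LCF$, and a non-minimal stable theory (available by Theorem \ref{t-stable-notmin}) to secure a stable theory with $\Sat(\cdot) = LCA$. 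Then $\cu D \in LCF$ would force $\cu D$ to saturate the unstable $T_{rg}$, and $\cu D \notin LCA$ would force $\cu D$ to fail to saturate the chosen non-minimal stable theory --- each contradicting the hypothesis.
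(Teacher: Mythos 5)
Your handling of the easy direction is fine: Fact \ref{f-exists-D} yields a $\cu D\in LCA\setminus LCF$, and Theorem \ref{t-stable-vs-unstable} immediately shows that every $\cu D\in LCA\setminus LCF$ saturates all stable and no unstable metric theories. The $\cu D\notin LCA$ half of your contrapositive is also correct: a stable non-minimal theory (which exists by Theorem \ref{t-stable-notmin}(ii)) has $\Sat=LCA$ by Theorem \ref{t-stable-notmin}(i), so $\cu D\notin LCA$ would make $\cu D$ miss a stable theory.

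The gap is in the $\cu D\in LCF$ half. You write that Theorem \ref{t-rg-minimal} secures an unstable theory with $\Sat(T_{rg})=LCF$, but Theorem \ref{t-rg-minimal} only says $T_{rg}\lek T$ for every unstable metric theory $T$, i.e.\ $\Sat(T)\subseteq\Sat(T_{rg})$. Combined with Theorem \ref{t-stable-vs-unstable}(ii) this gives $\Sat(T_{rg})\subseteq LCF$ and identifies $\Sat(T_{rg})$ as the largest among $\Sat(T)$ over unstable $T$, but it says nothing about the reverse inclusion $LCF\subseteq\Sat(T_{rg})$. That reverse inclusion --- i.e.\ that every $\cu D$ with $|I|<\lcf(\cu D)$ already saturates the random graph --- is precisely what you need, and it is a nontrivial first-order combinatorial fact that is neither proved in this paper nor a formal consequence of the theorems you cite. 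Without it, $\cu D\in LCF$ could in principle saturate all stable theories (since $LCF\subseteq LCA$) yet saturate no unstable one, so you cannot conclude $\cu D\notin LCF$.

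The paper's own proof avoids the issue by importing the first-order version of the corollary as a black box from [Sh78]: it takes the equality $\{\cu D: \Sat_\BF(\cu D)=\stb_\BF\}=LCA\setminus LCF$ as known (this is exactly what packages the missing fact about $T_{rg}$), then observes via Theorem \ref{t-stable-vs-unstable} that $\{\cu D: \Sat_\BF(\cu D)=\stb_\BF\}$ coincides with $\{\cu D: \Sat_\BM(\cu D)=\stb_\BM\}$ --- the $\supseteq$ inclusion is trivial because first-order theories are metric theories, and the $\subseteq$ inclusion follows from the first-order equality together with the $LCA\setminus LCF$ direction of Theorem \ref{t-stable-vs-unstable}. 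So if you want to repair your argument, replace the appeal to Theorem \ref{t-rg-minimal} by citing the first-order characterization from [Sh78], or adopt the paper's cite-and-lift strategy directly.
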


\begin{proof}
Let $\stb_\BM$ be the set of $\lek$-equivalence classes of stable metric theories in $\BM$, and
$\stb_\BF$ be the set of $\lek$-equivalence classes of stable first order theories in $\BF$.
By Fact \ref{f-exists-D},
$$\{\cu D\colon \Sat_\BF(\cu D)=\stb_\BF\}=LCA\setminus LCF\ne\emptyset.$$
By Theorem \ref{t-stable-vs-unstable}, we have
$$\{\cu D\colon \Sat_\BF(\cu D)=\stb_\BF\}=\{\cu D\colon \Sat_\BM(\cu D)=\stb_\BM\}.$$
\end{proof}

\section{$\lek$-minimal theories}

Theorem \ref{t-minimal=fsp} below gives a characterization of $\lek$-minimal metric theories.  Theorem
\ref{t-stable-notmin} shows that there are exactly two $\lek$-classes of stable metric theories, and that
they are the lowest two $\lek$-equivalence classes.  We are indebted to James Hanson for
pointing out an error in the definition of NFCP and the proof of
Theorem \ref{t-minimal=fsp} in an earlier version of this section.  To correct that error we modified the definition
of NFCP from the earlier version.

For first order logic, in
[Sh78], Section VI, Shelah identified  the first two classes in the partial ordering $(\BF,\lek)$.

\begin{df}  \label{d-snm} We denote the set  of   first order theories that are stable but not $\lek$-minimal by $\snm_\BF$.
We denote the set  of  metric theories that are stable but not $\lek$-minimal by $\snm_\BM$.
\end{df}

\begin{fact}  \label{f-shelah} (Theorem VI.5.9 in [Sh78]),

(i) $\snm_\BF\in \BF$.

(ii)  For each $H\in\BF$, either $H=\min_\BF$, $H=\snm_\BF$,
or $\snm_\BF\lk H$.
\end{fact}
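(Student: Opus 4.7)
The plan is to use Shelah's notion of the finite cover property (FCP) to split the stable first order theories into exactly two $\lek$-classes, and to combine this with Theorem \ref{t-stable-vs-unstable}(iii) to obtain the strict inequality $\snm_\BF\lk H$ for all remaining $H\in\BF$. Recall that a formula $\varphi(\vec x,\vec y)$ has FCP in $T$ if for some $k$ one can find, for arbitrarily large $n$, instances $\varphi(\vec x,\vec a_1),\dots,\varphi(\vec x,\vec a_n)$ that are $k$-consistent but jointly inconsistent; $T$ has NFCP if no formula has FCP.

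Following Shelah [Sh78, VI.5.1], the first step is to establish, for stable first order $T$ and regular $\cu D$ over $I$ of cardinality $\lambda$, the dichotomy that (a) if $T$ has NFCP then $\cu M_\cu D$ is $\lambda^+$-saturated for every regular $\cu D$, while (b) if $T$ has FCP then $\cu M_\cu D$ is $\lambda^+$-saturated if and only if $\cu D\in LCA$. For (a), NFCP gives a bound depending only on $\varphi$ on the number of complete $\varphi$-types over any finite parameter set, which allows any distribution of a type $p$ of size $\le\lambda$ (in the sense of Definition \ref{d-distribution}) to be refined to a multiplicative one; Lemma \ref{l-dist-saturates} then realizes $p$. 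The forward direction of (b) is already Lemma \ref{l-stable-saturated}. For the reverse, an FCP witness $\varphi$ combined with stability produces a uniformly definable linearly ordered configuration $(a_n)_{n\in\BN}$ in $\cu M$ such that the type asserting that $x$ lies above every standard level is finitely satisfiable in $\cu M_\cu D$ but realized only when $(\BN,\le)_\cu D$ has no initial segment of size $\lambda$, i.e.\ only when $\cu D\in LCA$.

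From this dichotomy, a stable first order $T$ is $\lek$-minimal iff it has NFCP, and $\Sat(T)=LCA$ for every stable $T$ with FCP, so all stable non-minimal first order theories are $\lek$-equivalent. This proves (i). For (ii), let $H\in\BF$ with $H\ne\min_\BF$. If $H$ contains a stable theory $T$, then $T$ is not minimal, so $T$ has FCP by the dichotomy, and hence $H=\snm_\BF$. If instead $H$ contains an unstable theory $U$, then for any $T\in\snm_\BF$, Theorem \ref{t-stable-vs-unstable}(iii) applied to first order theories yields $T\lk U$, so $\snm_\BF\lk H$.

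The main obstacle is the reverse direction of (b): producing, for each stable $T$ with FCP and each regular $\cu D\notin LCA$, an explicit type over $\cu M_\cu D$ witnessing the failure of $\lambda^+$-saturation. This is the technical core of Shelah's VI.5.1, and requires converting the FCP combinatorics into a uniformly definable copy of $(\BN,\le)$ inside $\cu M$ whose ultrapower inherits the cardinality deficit $|I|\ge\lca(\cu D)$ of $\cu D$.
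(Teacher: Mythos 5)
This statement is a \emph{Fact} cited from Theorem VI.5.9 of [Sh78]; the paper does not prove it, though it does prove the metric analogues (Theorems \ref{t-minimal=fsp} and \ref{t-stable-notmin}, Corollary \ref{c-first-two}) by an argument that closely tracks Shelah's. Your overall decomposition --- dichotomy between NFCP and FCP among stable theories, then Theorem \ref{t-stable-vs-unstable}(iii) to separate stable from unstable --- is indeed the right skeleton, matching both Shelah's proof and the paper's metric-case argument.

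However, your treatment of direction (a) contains a concrete error. NFCP does \emph{not} give a bound, depending only on $\varphi$, on the number of complete $\varphi$-types over finite parameter sets: for the theory of an infinite set with only equality (which is NFCP and $\lek$-minimal), the number of $(x=y)$-types over a set of size $n$ is $n+1$, unbounded. What NFCP actually gives is a $k_\varphi$ such that $k_\varphi$-consistency of a set of $\varphi$-instances implies consistency. Even with the corrected statement, the claim that this lets one refine an arbitrary distribution of a type to a multiplicative one is asserted without argument and does not obviously go through: the natural candidate $\delta'(\Psi)=\bigcap\{\delta(\Psi')\colon \Psi'\subseteq\Psi,\ |\Psi'|\le k_\varphi\}$ fails multiplicativity because a $k_\varphi$-subset of $\Psi\cup\Theta$ need not lie in $\Psi$ or $\Theta$. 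Shelah's actual route, reproduced in the paper's Lemmas \ref{l-extend-indiscernible} and \ref{l-maximal-indiscernible} and in the (iv)$\Rightarrow$(i) direction of Theorem \ref{t-minimal=fsp}, is entirely different: one uses NFCP to extend finite $(\Delta,\varepsilon)$-indiscernible sets, encodes ``be an indiscernible set containing $\cu I$'' in an auxiliary two-sorted structure, and invokes $\lca(\cu D)$-saturation via indiscernible sets rather than via distribution combinatorics. Your sketch of (b) is also imprecise: a stable theory has no ``uniformly definable linearly ordered configuration,'' and the actual argument (cf.\ the (ii)$\Rightarrow$(iii) direction of Theorem \ref{t-minimal=fsp}) instead uses the FCP witness to build, for each $\cu D\notin LCA$, a family of $\lca(\cu D)$ instances that is finitely but not fully satisfiable in the ultrapower. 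The high-level plan is sound and the use of Theorem \ref{t-stable-vs-unstable}(iii) in part (ii) is correct, but the core dichotomy is not established by the argument as written.
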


In Theorem \ref{t-stable-notmin} and Corollary \ref{c-first-two} below, we will prove the analogous results for metric theories.
In this section, $T$ will be a complete metric theory and $\cu M$ will denote an $\aleph_1$-saturated model of $T$.

\begin{df}  Let $\varepsilon\in[0,1]$ and  $\Sigma(\vec y)$ be a set of formulas with parameters in $\cu M$
(where $\vec y$ is a possibly infinite sequence of variables).  We say that $\Sigma(\vec y)$ is $\varepsilon$-\emph{satisfiable} in $\cu M$
if there is a $|\vec y|$-sequence of elements $\vec a$ of $\cu M$
such that $\psi^{\cu M}(\vec a)\le\varepsilon$ for all $\psi\in\Sigma(\vec y)$.
\end{df}

\begin{cor}  \label{c-varepsilon-sat}  Let $\Sigma(\vec y)$ be a set of formulas with parameters in $\cu M$.

(i)  $\Sigma(\vec y)$ is $\varepsilon$-satisfiable in $\cu M$ if and only if the set of formulas
$$\{\psi(\vec y)\dotminus \varepsilon\colon\psi\in\Sigma\}$$
is satisfiable in $\cu M$.

(ii)  If $\Sigma(\vec y)$ is finite, $\Sigma(\vec y)$ is $\varepsilon$-satisfiable in $\cu M$ if and only if the sentence
$(\inf_{\vec y})\max(\Sigma(\vec y))\dotminus\varepsilon$ holds in $\cu M$.

(iii)  If $\Sigma(\vec y)$ is countable and every finite subset of $\Sigma(\vec y)$ is $\varepsilon$-satisfiable in $\cu M$,
then $\Sigma(\vec y)$ is $\varepsilon$-satisfiable in $\cu M$.
\end{cor}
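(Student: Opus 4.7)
The plan is to treat the three parts in order, with each part essentially a routine translation between $\dotminus$-inequalities, $\inf$-sentences, and saturation, the common thread being Remark \ref{r-dotminus} and the $\aleph_1$-saturation of $\cu M$.

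For part (i), I would simply unfold definitions. By Remark \ref{r-dotminus}, $\psi^{\cu M}(\vec a) \le \varepsilon$ if and only if $(\psi \dotminus \varepsilon)^{\cu M}(\vec a) = 0$. Thus $\vec a$ witnesses $\varepsilon$-satisfiability of $\Sigma$ in $\cu M$ if and only if $\vec a$ satisfies the formula set $\{\psi \dotminus \varepsilon : \psi \in \Sigma\}$ in the ordinary sense. No further input is needed.

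For part (ii), the forward direction is immediate: a witness $\vec a$ to $\varepsilon$-satisfiability makes $\max(\Sigma(\vec a)) \le \varepsilon$, so $\inf_{\vec y}\max(\Sigma(\vec y)) \le \varepsilon$, and by Remark \ref{r-dotminus} the sentence $(\inf_{\vec y})\max(\Sigma(\vec y)) \dotminus \varepsilon$ takes value $0$ in $\cu M$. The reverse direction is where saturation is needed: if the sentence holds, then $\inf_{\vec y}\max(\Sigma(\vec y)) \le \varepsilon$ in $\cu M$, so for each $n \in \BN$ there is $\vec a_n$ with $\max(\Sigma(\vec a_n)) \le \varepsilon + 2^{-n}$. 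Thus the countable set of formulas $\{\psi(\vec y) \dotminus (\varepsilon + 2^{-n}) : \psi \in \Sigma,\ n \in \BN\}$ is finitely satisfiable (indeed each individual formula is realized by some $\vec a_n$, and since $\Sigma$ is finite, one can combine them). By $\aleph_1$-saturation this set is realized by some $\vec a$, and for that $\vec a$ we have $\psi^{\cu M}(\vec a) \le \varepsilon + 2^{-n}$ for every $n$, hence $\psi^{\cu M}(\vec a) \le \varepsilon$, giving $\varepsilon$-satisfiability.

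For part (iii), the idea is to reduce to ordinary satisfiability via part (i). By (i), every finite subset of $\Sigma(\vec y)$ being $\varepsilon$-satisfiable is equivalent to every finite subset of $\Sigma'(\vec y) := \{\psi \dotminus \varepsilon : \psi \in \Sigma\}$ being satisfiable in $\cu M$, i.e., to $\Sigma'$ being finitely satisfiable. Since $\Sigma$ is countable, so is $\Sigma'$, and it mentions only countably many parameters from $M$. The $\aleph_1$-saturation of $\cu M$ then yields a realization of $\Sigma'$, which by (i) again is a witness to $\varepsilon$-satisfiability of $\Sigma$.

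The only non-trivial step is the reverse direction of (ii); all other claims are direct manipulations. That step, however, requires only $\aleph_1$-saturation applied to a countable type, so no real obstacle arises — one just has to be careful to package the approximate witnesses $\vec a_n$ into a single finitely satisfiable countable type, which is straightforward because $\Sigma$ is finite.
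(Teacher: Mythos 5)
Your proof is correct and follows essentially the same route as the paper's (which merely says (i) is a restatement of the definition, (ii) follows from $\aleph_1$-saturation, and (iii) reduces to (i) plus $\aleph_1$-saturation). You have simply filled in the details the paper leaves implicit, particularly the approximate-witness argument in the reverse direction of (ii).
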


\begin{proof}
(i) is a restatement of the definition, and
(ii) follows easily from the fact that $\cu M$ is $\aleph_1$-saturated.

(iii): By (i), the countable set of formulas
$$\{\psi(\vec y)\dotminus\varepsilon\colon\psi(\vec y)\in\Sigma(\vec y)\}$$
is finitely satisfiable in $\cu M$.  Since $\cu M$ is $\aleph_1$-saturated, that set of formulas is satisfiable in $\cu M$.
By (i), $\Sigma(\vec y)$ is $\varepsilon$-satisfiable in $\cu M$.
\end{proof}

\begin{df}
Let $\vec x$ be an $n$-tuple of distinct variables, $B\subseteq \cu M$, and $\Delta(\vec x)$ be a finite set of formulas with free variables among $\vec x$.
By an $n$-\emph{tuple over} $(\vec y,B)$ we mean an $n$-tuple $(u_0,\ldots,u_{n-1})$ where each $u_i$ is in $\vec y\cup B$,
and no variable or parameter appears more than once in $\vec u$.
By a \emph{$\Delta$-formula over $(\vec y,B)$}
we mean a formula $\psi(\vec u)$ obtained by a formula $\psi(\vec x)\in\Delta$ by replacing $\vec x$ by an $n$-tuple $\vec u$  over $(\vec y,B)$.

In the case that $B=M$, we sometimes say ``over $(\vec y,\cu M)$'' instead of ``over $(\vec y,B)$''.
\end{df}

\begin{df}   \label{d-fsp}  \label{d-NFCP}
$T$ has the  \emph{non-finite cover property} (NFCP) in $\cu M$ (a better name would be ``finite satisfaction property'') if
$T$ is stable, and for each  non-empty finite set $\Delta(\vec x)$ of formulas and $\varepsilon \in(0,1]$,
there is an $m(\Delta,\varepsilon)\in\BN$ such that for every
finite set $\Sigma(\vec y)$ of $\Delta$-formulas over $(\vec y,\cu M)$,
 if every subset of $\Sigma(\vec y)$ of cardinality $\le m(\Delta,\varepsilon)$
is satisfiable in $\cu M$, then $\Sigma(\vec y)$ is $\varepsilon$-satisfiable in $\cu M$.

$T$ has the NFCP if $T$ has the NFCP in every $\aleph_1$-saturated model of $T$.
\end{df}

We will see in Corollary \ref{c-FO-minimal} below that a first order theory has the NFCP if and only if it does not have the finite cover property as defined in [Ke67].

\begin{lemma}  \label{l-NFCP-every}
If $T$ has the NFCP in some $\aleph_1$-saturated model of $T$, then $T$ has the NFCP.
\end{lemma}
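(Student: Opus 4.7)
The plan is to leverage the completeness of $T$: any two models of $T$ are elementarily equivalent, so the NFCP in one $\aleph_1$-saturated model transfers to another provided the defining condition can be captured by $V$-sentences. Fix an $\aleph_1$-saturated $\cu M \models T$ in which NFCP holds, and let $\cu N$ be any other $\aleph_1$-saturated model of $T$. Given a finite $\Delta(\vec x)$ and $\varepsilon \in (0,1]$, let $m = m(\Delta, \varepsilon)$ be the bound witnessing NFCP for $(\Delta, \varepsilon)$ in $\cu M$. I will show the same $m$ works in $\cu N$, arguing by contradiction: assume some finite $\Sigma = \{\psi_i(\vec u_i)\}_{i \le k}$ of $\Delta$-formulas over $(\vec y, \cu N)$ has every $m$-subset satisfiable in $\cu N$ but is itself not $\varepsilon$-satisfiable in $\cu N$.

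The crucial step is to encode this bad witness as a single $V$-sentence. Decompose $\Sigma$ into a \emph{form} $F$ and a parameter tuple $\vec b \in N^p$: the form records, for each $i \le k$, which $\psi \in \Delta$ is chosen as $\psi_i$ and the pattern of which entries of $\vec u_i$ are variables of $\vec y$ and which are parameter slots. This is finite combinatorial data. Relative to fixed $F$, define two $V$-formulas in the parameter variables $\vec z$ of length $p$:
\[
\theta_F(\vec z) := \max_S \inf_{\vec y} \max_{i \in S} \psi_i(\vec u_i[\vec y, \vec z]), \qquad
\chi_F(\vec z) := \inf_{\vec y} \max_{i \le k} \psi_i(\vec u_i[\vec y, \vec z]),
\]
where $S$ ranges over $m$-subsets of $\{1, \ldots, k\}$. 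Using $\aleph_1$-saturation of $\cu N$ together with Corollary \ref{c-varepsilon-sat}, the assumption on $\Sigma$ is equivalent to $\theta_F^{\cu N}(\vec b) = 0$ (each attained infimum yields a satisfying tuple in $\cu N$) and $\chi_F^{\cu N}(\vec b) > \varepsilon$. The strict inequality then supplies a rational $\delta > 0$ with $\chi_F^{\cu N}(\vec b) \ge \varepsilon + \delta$.

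Now form the $V$-sentence
\[
\sigma := \inf_{\vec z}\, \max\bigl(\theta_F(\vec z),\; (\varepsilon + \delta) \dotminus \chi_F(\vec z)\bigr),
\]
which has truth value $0$ in $\cu N$, witnessed by $\vec z = \vec b$. Since $T$ is complete, $\cu M \equiv \cu N$, so $\sigma^{\cu M} = 0$, and $\aleph_1$-saturation of $\cu M$ produces $\vec c \in M^p$ attaining the infimum; thus $\theta_F^{\cu M}(\vec c) = 0$ and $\chi_F^{\cu M}(\vec c) \ge \varepsilon + \delta > \varepsilon$. Unwinding as before (and again invoking $\aleph_1$-saturation of $\cu M$ to pass from vanishing infima to actual realizers), every $m$-subset of the induced $\Sigma_F(\vec y, \vec c)$ is satisfiable in $\cu M$, while $\Sigma_F(\vec y, \vec c)$ is not $\varepsilon$-satisfiable in $\cu M$, contradicting NFCP in $\cu M$ with bound $m$. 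The main obstacle is that ``$\Sigma$ is not $\varepsilon$-satisfiable'' is a strict inequality and continuous logic has no negation; this is circumvented by exploiting the positive slack $\delta$, which replaces the inexpressible negation with the quantifier-free formula $(\varepsilon + \delta) \dotminus \chi_F(\vec z)$ whose vanishing is preserved by elementary equivalence.
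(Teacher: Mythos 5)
Your proof is correct, and it gets to the same place as the paper by a genuinely different technical route. The paper's proof is short and works at the level of parameters: since $\cu M$ and $\cu N$ are both $\aleph_1$-saturated models of the same complete theory, any finite (or countable) parameter tuple $\vec c$ in $\cu N$ has a counterpart $\vec b$ in $\cu M$ with $(\cu M,\vec b)\equiv(\cu N,\vec c)$; then one observes (via Corollary~\ref{c-varepsilon-sat}) that both ``every $\le m$-subset is satisfiable'' and ``$\varepsilon$-satisfiable'' are properties of the elementary type of the parameters, so the bound $m(\Delta,\varepsilon)$ transfers unchanged. Your proof works at the level of sentences: you compress the failure of NFCP for a given combinatorial form $F$ into the single sentence $\sigma$, using the rational slack $\delta$ to sidestep the inexpressibility of strict inequality, and then use $\cu M\equiv\cu N$ plus $\aleph_1$-saturation of $\cu M$ to pull a realizer $\vec c$ back and decode it into a violation of NFCP in $\cu M$. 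What your approach buys is an explicit, self-contained encoding of NFCP-violation that does not invoke the parameter-transfer lemma (Corollary~\ref{c-onestep}); what the paper's approach buys is brevity and the fact that it handles the quantification over all forms $F$ implicitly rather than by a contradiction with a single fixed $F$.

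One small wrinkle you share with the paper, worth noting but not a real gap: the definition of a $\Delta$-formula over $(\vec y,B)$ forbids repeated parameters within a single substituted tuple $\vec u$, and the realizer $\vec c$ extracted from $\sigma^{\cu M}=0$ is not a priori a tuple of pairwise-distinct elements. This is harmless — if two entries of $\vec c$ coincide one simply collapses the corresponding parameter slots to obtain a legitimate $\Delta$-formula set with the same satisfiability behavior, or, alternatively, one adds the finitely many clauses $\eta\dotminus d(z_i,z_j)$ (for some $\eta>0$ below the minimum separation of $\vec b$) into $\sigma$, which $\aleph_1$-saturation still lets you realize. It would be cleaner to say this explicitly, but the argument as written is sound.
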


\begin{proof}  Suppose $T$ has the NFCP in  $\cu M$, and $\cu N$ is another $\aleph_1$-saturated model of $T$.
For every countable sequence $\vec b\subseteq\cu M$ there is a countable sequence $\vec c\subseteq N$ such that $(\cu M,\vec b)\equiv(\cu N,\vec c)$.
By Corollary \ref{c-varepsilon-sat}, a countable set of formulas $\Sigma(\vec y,\vec b)$ is $\varepsilon$-satisfiable in $\cu M$
if and only if $\Sigma(\vec y,\vec c)$ is $\varepsilon$-satisfiable in $\cu N$.
It follows that $T$ has the NFCP in $\cu N$ as well as in $\cu M$.
\end{proof}

\begin{cor}  \label{r-fcp-big}  Suppose $T$ has the NFCP and $\Delta(\vec x)$, $\varepsilon$, and $m(\Delta,\varepsilon)$ are
as in Definition  \ref{d-fsp}.  Then for every countable  set $\Sigma(\vec y)$ of $\Delta$-formulas over $(\vec y,\cu M)$,
 if every subset of $\Sigma(\vec y)$ of cardinality $\le m(\Delta,\varepsilon)$
is satisfiable in $\cu M$, then $\Sigma(\vec y)$ is $\varepsilon$-satisfiable in $\cu M$.
\end{cor}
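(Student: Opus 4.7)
The plan is to bootstrap the NFCP from finite sets to countable sets, using the $\aleph_1$-saturation of $\cu M$ in the form of Corollary \ref{c-varepsilon-sat}(iii) as the bridge.

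First, I would pick an arbitrary finite subset $\Sigma_0(\vec y)\subseteq\Sigma(\vec y)$ and verify that the hypothesis of NFCP is met for $\Sigma_0$: every subset of $\Sigma_0$ of cardinality $\le m(\Delta,\varepsilon)$ is in particular a subset of $\Sigma$ of that cardinality, and by the hypothesis on $\Sigma$ it is satisfiable in $\cu M$. Since $\Sigma_0$ consists of $\Delta$-formulas over $(\vec y,\cu M)$ and $T$ has the NFCP (which by Lemma \ref{l-NFCP-every} applies in $\cu M$), we conclude that $\Sigma_0$ is $\varepsilon$-satisfiable in $\cu M$.

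Next, since $\Sigma_0\subseteq\Sigma$ was an arbitrary finite subset, every finite subset of $\Sigma$ is $\varepsilon$-satisfiable in $\cu M$. Now $\Sigma$ is countable by hypothesis, so Corollary \ref{c-varepsilon-sat}(iii) applies and yields that $\Sigma$ itself is $\varepsilon$-satisfiable in $\cu M$, which is the desired conclusion.

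There is essentially no obstacle here: the real content has already been absorbed into Definition \ref{d-NFCP} and into the $\aleph_1$-saturation of $\cu M$, which was packaged into Corollary \ref{c-varepsilon-sat}(iii) via the observation that $\varepsilon$-satisfiability of $\Sigma(\vec y)$ is equivalent to satisfiability of $\{\psi(\vec y)\dotminus\varepsilon\colon \psi\in\Sigma\}$. The only thing to be careful about is that $\Sigma_0$ inherits the hypothesis from $\Sigma$ automatically, so no extra combinatorics (such as counting or diagonalization over the countable index set) is required.
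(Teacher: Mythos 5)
Your proof is correct and is exactly the argument the paper intends when it simply writes ``By Corollary \ref{c-varepsilon-sat}'': apply the NFCP to each finite subset of $\Sigma$ (whose small subsets inherit satisfiability from $\Sigma$), then pass from finite $\varepsilon$-satisfiability to $\varepsilon$-satisfiability of the whole countable $\Sigma$ via Corollary \ref{c-varepsilon-sat}(iii) and $\aleph_1$-saturation. No differences of substance.
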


\begin{proof}  By Corollary \ref{c-varepsilon-sat}.
\end{proof}

\begin{lemma}  \label{l-extend-indiscernible}
Suppose $T$ has the NFCP. Then for each $ r/2>\varepsilon'>\varepsilon\ge 0$ and
 finite set of formulas $\Delta(\vec x)$ containing $d(x_0,x_1)$,
there exists $k\in\BN$
such that  every $r$-separated finite $(\Delta,\varepsilon)$-indiscernible set in $\cu M$ of cardinality $\ge k$ can be extended to an
$(\Delta,\varepsilon')$-indiscernible set in $\cu M$ of cardinality $\aleph_0$.
\end{lemma}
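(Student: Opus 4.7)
Plan: The strategy is to apply the NFCP to a finite set of formulas built from $\Delta$ that detects $(\Delta,\varepsilon')$-indiscernibility, using the $(\Delta,\varepsilon)$-indiscernibility of the given finite set to verify the finite-satisfiability hypothesis. Set $n=|\vec x|$ and $\varepsilon'':=\varepsilon'-\varepsilon>0$. I will take $\Delta'$ to be the finite set consisting, for each $\varphi\in\Delta$, each $s\in\{n,\ldots,2n\}$, and each pair of injections $\pi,\rho\colon\{0,\ldots,n-1\}\to\{0,\ldots,s-1\}$, of the formula $|\varphi(z_{\pi(0)},\ldots,z_{\pi(n-1)}) - \varphi(z_{\rho(0)},\ldots,z_{\rho(n-1)})| \dotminus \varepsilon$ in the $s$ free variables $z_0,\ldots,z_{s-1}$. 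The point of this choice is that $\Delta'$ covers all ``overlap patterns'' between two proper $n$-tuples from a set: on every proper tuple of elements of a $(\Delta,\varepsilon)$-indiscernible set every formula in $\Delta'$ has value $0$; and $\varepsilon''$-satisfaction of every $\Delta'$-formula on every proper tuple from a set $\cu H$ is precisely $(\Delta,\varepsilon')$-indiscernibility of $\cu H$.

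By NFCP, Definition \ref{d-NFCP} supplies a bound $m := m(\Delta',\varepsilon'')$. I will then take $k := 4nm + 1$. Given any $r$-separated $(\Delta,\varepsilon)$-indiscernible set $\{a_0,\ldots,a_{k-1}\}$ in $\cu M$, I form the countable set $\Sigma(\vec y)$, with $\vec y=(y_0,y_1,\ldots)$, consisting of all $\Delta'$-formulas over $(\vec y,\{a_0,\ldots,a_{k-1}\})$. To verify the hypothesis of Corollary \ref{r-fcp-big}: for any $\Sigma_0\subseteq\Sigma$ with $|\Sigma_0|\le m$, $\Sigma_0$ mentions at most $2nm$ variables $y_i$ and at most $2nm$ parameters $a_j$, so using $k>4nm$ I can injectively substitute each occurring $y_i$ by some $a_\ell$ not already appearing in $\Sigma_0$. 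Each resulting formula becomes $|\varphi(\vec w)-\varphi(\vec w')|\dotminus\varepsilon$ evaluated on a proper tuple from $\{a_0,\ldots,a_{k-1}\}$, and by $(\Delta,\varepsilon)$-indiscernibility each such value is $0$. Thus every $m$-subset of $\Sigma$ is satisfiable in $\cu M$, and Corollary \ref{r-fcp-big} yields the $\varepsilon''$-satisfiability of $\Sigma(\vec y)$ in $\cu M$.

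Any witness $\vec b=(b_0,b_1,\ldots)$ of the $\varepsilon''$-satisfiability of $\Sigma$ will give $|\varphi(\vec u)-\varphi(\vec u')|\le\varepsilon'$ for every $\varphi\in\Delta$ and every proper pair $\vec u,\vec u'$ of tuples from $\vec b\cup\{a_0,\ldots,a_{k-1}\}$, which is exactly $(\Delta,\varepsilon')$-indiscernibility of the union. Since $d(x_0,x_1)\in\Delta$ and the proper pairs in the original set satisfy $d(a_i,a_j)\ge r-\varepsilon$, we will obtain $d(b_i,b_j)\ge r-\varepsilon-\varepsilon'>0$ whenever $i\ne j$ (using $\varepsilon'<r/2$), and similarly $d(b_i,a_j)>0$; hence the union has cardinality $\aleph_0$, giving the required extension.

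The main obstacle will be the bookkeeping around the ``proper tuple'' constraint in the definition of a $\Delta$-formula over $(\vec y,B)$: set indiscernibility compares two proper $n$-tuples that may share elements, whereas the formal notion of $\Delta$-formula requires a single proper tuple of length $n$. The passage from $\Delta$ to the enlarged $\Delta'$ is what covers all overlap patterns, but the counting used to choose $k$ and to carry out the injective relabeling inside $\{a_0,\ldots,a_{k-1}\}$ must be done with care. These are technical rather than conceptual difficulties, once the right $\Delta'$ is in place.
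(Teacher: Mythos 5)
Your proof is correct and follows essentially the same strategy as the paper's: build an auxiliary finite set $\Delta'$ of formulas of the form $|\varphi(\cdot)-\varphi(\cdot)|\dotminus\varepsilon$, let $\Sigma(\vec y,\cu I)$ be the corresponding set of $\Delta'$-formulas over $(\vec y,\cu I)$, set $k$ to roughly $4nm(\Delta',\varepsilon'-\varepsilon)$, verify the $\le m$-subset satisfiability hypothesis by relabeling the free $y$-variables with fresh elements of $\cu I$, apply the countable form of the NFCP (Corollary \ref{r-fcp-big}), and read off $(\Delta,\varepsilon')$-indiscernibility and $r$-separation of $\cu I\cup\vec a$ from $(\varepsilon'-\varepsilon)$-satisfaction of $\Sigma$.

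The one place you diverge from the paper, and where your treatment is in fact more careful, is the construction of $\Delta'$. The paper takes $\Delta'=\Delta\cup\{|\varphi(\vec x)-\varphi(\vec x')|\dotminus\varepsilon\colon\varphi\in\Delta\}$ with $\vec x,\vec x'$ disjoint $n$-tuples, and then asserts that $\Sigma(\vec y,\cu I)=\{|\varphi(\vec u)-\varphi(\vec v)|\dotminus\varepsilon\}$, where $\vec u,\vec v$ range over \emph{all} $n$-tuples over $(\vec y,\cu I)$, is a set of $\Delta'$-formulas over $(\vec y,\cu I)$. Read literally against the paper's definition of a $\Delta$-formula over $(\vec y,B)$ (which substitutes a single proper tuple for the full variable string of the base formula), this omits the formulas in which $\vec u$ and $\vec v$ overlap, since $(\vec u,\vec v)$ is then not a proper $2n$-tuple. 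Your $\Delta'$ ranges over all overlap patterns $(s,\pi,\rho)$ with $n\le s\le 2n$, so every comparison of two possibly-overlapping proper $n$-tuples is literally a $\Delta'$-formula over $(\vec y,\cu I)$, and the appeal to NFCP for $(\Delta',\varepsilon'-\varepsilon)$ is fully justified. This is exactly the bookkeeping issue you flagged, and your fix is the right one; everything else (the counting for $k$, the use of $(\Delta,\varepsilon)$-indiscernibility to witness small subsets, the distinctness/separation argument using $d\in\Delta$ and $\varepsilon'<r/2$) matches the paper's argument.
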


\begin{proof}
Let $n=|\vec x|$ and $\vec x'$ be an $n$-tuple of new distinct variables, and let $\Delta'$ be the finite set of formulas
$$\Delta'=\Delta\cup\{|\varphi(\vec x)-\varphi(\vec x')|\dotminus\varepsilon\colon \varphi(\vec x)\in\Delta\}.$$

Suppose that $\cu I$ is $(\Delta,\varepsilon)$-indiscernible in $\cu M$, $r$-separated, and at most countable.  Let
$\vec y$ be a countable sequence of variables and let
$$\Sigma(\vec y,\cu I)=\{|\varphi(\vec u)-\varphi(\vec v)|\dotminus\varepsilon\colon  \vec u,\vec v
\mbox{ are $n$-tuples over } (\vec y,\cu I) \mbox{ and } \varphi\in\Delta\}.$$
Then $\Sigma(\vec y,\cu I)$ is a countable set of $\Delta'$-formulas over $(\vec y,\cu I)$.
\medskip

\textbf{Claim \ref{l-extend-indiscernible}.1.} \emph{Assume that $\vec a$ is a  countable sequence in $\cu M$ that $(\varepsilon'-\varepsilon)$-satisfies $\Sigma(\vec y,\cu I)$.
Then $\vec a$ is disjoint from $\cu I$ and
$\cu I\cup\vec a$ is $(\Delta,\varepsilon')$-indiscernible in $\cu M$.}

\begin{proof} [Proof of Claim \ref{l-extend-indiscernible}.1]
Work in $\cu M$.  Since $\cu I$ is $r$-separated, for each distinct $b,c$ in $\cu I$ we have $d(b,c)\ge r-\varepsilon$.
Then for each $a_i\in\vec a$ and $c\in\cu I$  we have $|d(a_i,c)-d(b,c)|\le\varepsilon'$, so $d(a_i,c)\ge r-2\varepsilon'>0$ and hence $a_i\notin\cu I$.
For each pair of $n$-tuples $\vec u,\vec v$ over $(\vec y,\cu I)$ and $\varphi(\vec x)\in\Delta$,
 the formula $|\varphi(\vec u)-\varphi(\vec v)|\dotminus\varepsilon$
belongs to $\Sigma(\vec y,\cu I)$.  Since $\vec a$ $(\varepsilon'-\varepsilon)$-satisfies $\Sigma(\vec y,\cu I)$
and $\cu I$ is $(\Delta,\varepsilon)$-indiscernible, $\vec a$ satisfies
$|\varphi(\vec u)-\varphi(\vec v)|\le\varepsilon'.$  Therefore $\cu I\cup\vec a$ is $(\Delta,\varepsilon')$-indiscernible in $\cu M$.
This proves Claim \ref{l-extend-indiscernible}.1.
\end{proof}

Now let $m:=m(\Delta',\varepsilon'-\varepsilon)$ and $k:= 4mn$.
\medskip

\textbf{Claim \ref{l-extend-indiscernible}.2.} \emph{Assume $\cu I$ has cardinality $\ge k$.
Then every subset of $\Sigma(\vec y,\cu I)$ of cardinality $\le m$ is satisfiable in $\cu M$.}

\begin{proof} [Proof of Claim\ref{l-extend-indiscernible}.2]
 Let $\Sigma_0(\vec z)$ be a subset  of $\Sigma(\vec y,\cu I)$ of cardinality $\le m$
where $\vec z$ is the finite subsequence of $\vec y$ whose terms actually occur in $\Sigma_0(\vec z)$.  $\Sigma_0(\vec z)$ is contained in
$\Sigma(\vec y,\cu H)$ for some $\cu H\subseteq\cu I$
of cardinality at most $2mn$.  Also, the length of $\vec z$ is at most $2mn$.
Since $\cu I$ has cardinality $\ge k=4mn$, there is a sequence $\vec c$ of distinct elements of $\cu I\setminus\cu H$ of length $|\vec z|$.
Since $\cu I$ is $(\Delta,\varepsilon)$-indiscernible in $\cu M$, $\Sigma_0(\vec z)$ is satisfied by $\vec c$ in $\cu M$.
 This proves Claim \ref{l-extend-indiscernible}.2.
\end{proof}

By Corollary \ref{r-fcp-big} and Claim \ref{l-extend-indiscernible}.2, $\Sigma(\vec y,\cu I)$
is $(\varepsilon'-\varepsilon)$-satisfiable by some infinite sequence $\vec a$ in $\cu M$.
Then by Claim \ref{l-extend-indiscernible}.1, $\cu I\cup\vec a$ is $(\Delta,\varepsilon')$-indiscernible in $\cu M$ and has cardinality $\aleph_0$.
\end{proof}

The following theorem gives a characterization of $\lek$-minimal theories.

\begin{thm}  \label{t-minimal=fsp}
Let $T=\Th(\cu M)$. The following are equivalent:
\begin{itemize}
\item[(i)]  $T$ is $\lek$-minimal.
\item[(ii)]  $\Sat(T)\supsetneqq LCA$.
\item[(iii)]  $T$ has the NFCP.
\item[(iv)]  $T$ is stable and satisfies the conclusion of Lemma \ref{l-extend-indiscernible}.
\end{itemize}
\end{thm}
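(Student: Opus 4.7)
The plan is to establish the cycle (iii)$\Rightarrow$(iv)$\Rightarrow$(i)$\Rightarrow$(ii)$\Rightarrow$(iii). The first implication (iii)$\Rightarrow$(iv) is immediate, since NFCP includes stability by Definition \ref{d-NFCP} and (iv) is exactly the conclusion of Lemma \ref{l-extend-indiscernible}. For (i)$\Rightarrow$(ii), Lemma \ref{l-min-max-G}(i) identifies $\lek$-minimality with $\Sat(T)$ being the class of \emph{all} regular ultrafilters, which trivially contains $LCA$. Strict containment follows from Fact \ref{f-exists-D}: choosing $I$ with $|I|=|I|^{\aleph_0}$ and taking $\mu=|I|$ yields a regular $\cu D$ over $I$ with $\lca(\cu D)=|I|$, so $\cu D\notin LCA$.

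The core technical step (iv)$\Rightarrow$(i) is a strengthening of the argument for Lemma \ref{l-stable-saturated}. Given a regular $\cu D$ over $I$ and $\cu M\models T$, I must show that $\cu M_\cu D$ is $|I|^+$-saturated. By Remark \ref{r-regular-aleph-1}(i) $\cu M_\cu D$ is $\aleph_1$-saturated, and since $T$ is stable Lemma \ref{l-maximal-indiscernible} reduces the task to extending each countable indiscernible set $\cu I=\{a_h\colon h\in\BN\}\subseteq M_\cu D$ to an indiscernible set of size $|I|^+$. Pick $r>0$ with $\cu I$ being $2r$-separated and work in the two-sorted real-valued structure $\cu M'=(\cu M,\cu P(M),\in)$ as in Lemma \ref{l-stable-saturated}, whose pre-ultrapower $(\cu M')^{\cu D}$ is $\aleph_1$-saturated. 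For each finite set $\Delta$ of strict $V$-formulas containing $d(y_0,y_1)$ and each $n\in\BN$, let $\varphi_{\Delta,n}(\bo z)$ assert that $\bo z$ is $(\Delta,1/n)$-indiscernible and $r$-separated, and consider the countable type
\[
\Gamma(\bo z)=\{a_h\in\bo z\colon h\in\BN\}\cup\{\varphi_{\Delta,n}(\bo z)\colon\Delta\text{ finite strict},\ n\in\BN\}.
\]
Finite satisfiability of $\Gamma$ in $(\cu M')^{\cu D}$ follows from indiscernibility of $\cu I$ together with (iv): given a finite $\Gamma_0$, consolidate into $(\Delta,n,F)$, pick $\varepsilon<\min(1/n,r/2)$, enlarge $F$ to some $F'\subseteq\BN$ with $|F'|\geq k(\Delta,\varepsilon,1/n)$ as supplied by (iv), and use \Los\ together with indiscernibility of $\cu I$ in $\cu M_\cu D$ to ensure that on a $\cu D$-large set of $t$ the finite set $\{a_h[t]\colon h\in F'\}$ is $(\Delta,\varepsilon)$-indiscernible and $r$-separated. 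At each such $t$, (iv) supplies a countable $(\Delta,1/n)$-indiscernible $r$-separated extension $b[t]\subseteq M$, and $\langle b[t]\rangle_{t\in I}$ witnesses $\Gamma_0$. By $\aleph_1$-saturation of $(\cu M')^{\cu D}$, $\Gamma$ is realized by some $b$; the set $\cu H=\{c\in M_\cu D\colon c\in^{(\cu M')^{\cu D}} b\}$ then contains $\cu I$, is indiscernible in $\cu M_\cu D$, and by Fact \ref{f-cardinality-ultrapower} has cardinality $\aleph_0^{|I|}=2^{|I|}\geq|I|^+$.

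The hard part, and the main obstacle, is (ii)$\Rightarrow$(iii). Theorem \ref{t-stable-vs-unstable}(ii) combined with Fact \ref{f-exists-D} immediately rules out $T$ unstable, so $T$ is stable. The remaining task is to show that stable $T$ failing NFCP must have $\Sat(T)=LCA$ (with $\supseteq$ from Theorem \ref{t-stable-vs-unstable}(i)), contradicting $\Sat(T)\supsetneqq LCA$. This is the continuous analogue of (the hard direction of) Theorem VI.5.9 of [Sh78]. Failure of NFCP supplies $\Delta,\varepsilon$ and, for each $m\in\BN$, a finite set $\Sigma_m$ of $\Delta$-formulas (with parameters) such that every $m$-subset of $\Sigma_m$ is satisfiable in $\cu M$ but $\Sigma_m$ itself is not $\varepsilon$-satisfiable. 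The plan is: given regular $\cu D$ over $I$ with $\lca(\cu D)\leq|I|$, encode the sequence $(\Sigma_m)$ in an auxiliary two-sorted ultrapower, fix a nonstandard $\nu\in(\BN,\le)_\cu D$ whose initial segment has cardinality $\lca(\cu D)$, and use the $\in$-predicate to produce a set $\Sigma^*_\nu$ of $\Delta$-formulas in $\cu M_\cu D$ of cardinality $\le|I|$. By \Los\ this $\Sigma^*_\nu$ is finitely satisfiable in $\cu M_\cu D$ (from the $m$-subset property) but is not $\varepsilon$-satisfiable (from the non-$\varepsilon$-satisfiability of each $\Sigma_m$), so $\cu M_\cu D$ is not $|I|^+$-saturated and $\cu D\notin\Sat(T)$. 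The main technical difficulty is managing the $\dotminus\varepsilon$ slack in the continuous definition of NFCP so that the ``$m$-subset satisfiable versus not $\varepsilon$-satisfiable'' dichotomy transfers correctly through \Los\ to the nonstandard-size unsatisfiable type in the ultrapower.
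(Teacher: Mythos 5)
Your proposal follows the same cycle as the paper — (i)$\Rightarrow$(ii)$\Rightarrow$(iii), (iii)$\Rightarrow$(iv), (iv)$\Rightarrow$(i) — and uses the same ingredients: Lemma \ref{l-min-max-G} and Fact \ref{f-exists-D} for (i)$\Rightarrow$(ii), Lemma \ref{l-extend-indiscernible} for (iii)$\Rightarrow$(iv), and the two-sorted pre-ultrapower argument in the style of Lemma \ref{l-stable-saturated} together with Lemma \ref{l-maximal-indiscernible} and Fact \ref{f-cardinality-ultrapower} for (iv)$\Rightarrow$(i). Those three implications are essentially the paper's argument.

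The (ii)$\Rightarrow$(iii) step, however, has a genuine gap that is not the one you flagged. You write that the obstacle is managing the $\dotminus\varepsilon$ slack, but the harder problem is controlling the \emph{cardinality} of the unsatisfiable type produced in $\cu M_{\cu D}$. If you simply pick a nonstandard $\nu\in(\BN,\le)_{\cu D}$ whose initial segment has cardinality $\lca(\cu D)$ and set $\Sigma^*_\nu$ to be the \Los-transfer of $\langle\Sigma_{\nu[t]}\rangle_t$, then $|\Sigma^*_\nu|$ is governed by $|\prod_{\cu D}\Sigma_{\nu[t]}|$, and since $|\Sigma_m|=f(m)$ grows strictly faster than $m$, this is (up to) $|\{\beta\colon\beta\le f(\nu)\}|$ — an initial segment of the \emph{larger} element $f(\nu)$, which need not have cardinality $\le|I|$. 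Your phrase ``of cardinality $\le|I|$'' asserts the bound without securing it. The paper's device that fixes this is the slowly growing inverse function $g(\ell)=\max\{m\colon f(m)\le\ell\}$: one chooses $\eta$ with initial segment of size $\lca(\cu D)$, puts $\Sigma[t]=\Sigma_{g(\eta[t])}$, and then $|\Sigma[t]|=f(g(\eta[t]))\le\eta[t]$, which pins $|\Sigma_{\cu D}|$ at $\lca(\cu D)\le|I|$ while $g(\eta)$ is still infinite (so finite subsets remain satisfiable). Without some analogue of this $f/g$ bookkeeping, the proof of (ii)$\Rightarrow$(iii) does not close.
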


\begin{proof}
(i) $\Rightarrow$ (ii):  Assume (i).  By Lemma \ref{l-min-max-G}, $\Sat(T)$ is the class of all regular ultrafilters, so $\Sat(T)\supseteq LCA$.
By Fact \ref{f-exists-D}, if $|I|\ge 2^{\aleph_0}$ then there is a regular ultrafilter $\cu D$ over $I$ such that $\lca(\cu D)=2^{\aleph_0}\le|I|$,
so $\cu D\in\Sat(T)\setminus LCA$ and (ii) holds.

(ii) $\Rightarrow$ (iii): Assume (ii) holds but (iii) fails.  By (ii) and  Theorem \ref{t-stable-vs-unstable} (i), $T$ is stable.
By (ii), there is an ultrafilter  $\cu D$  over $I$ such that $\cu D\in\Sat(T)\setminus LCA$. Since $\cu D\notin LCA$, $\lca(\cu D)\le|I|$.
Since (iii) fails, there is non-empty finite set of formulas $\Delta(\vec x)$, and an $\varepsilon>0$
such that  for each $m\in \BN$ there is a finite tuple $\vec b_m$ of parameters in $\cu M$ and
a finite set  $\Sigma_m(\vec y,\vec b_m)$ of $\Delta$-formulas over $(\vec y,\vec b_m)$ with the following property:
\medskip

\noindent\emph{ Each subset of $\Sigma_m$ of cardinality $\le m$ is satisfiable in $\cu M$, but $\Sigma_m$ is not $\varepsilon$-satisfiable in $\cu M$.}
\medskip

Let $f(m)$ be the smallest cardinality of a set $\Sigma_m$ with the above property.  Then $m<f(m)\in\BN$, and in $\cu M$,
every subset of $\Sigma_m$ of cardinality $\le m$ is satisfiable,
and every proper subset of $\Sigma_m$ is $\varepsilon$-satisfiable, but
$\Sigma_m$ is not $\varepsilon$-satisfiable.  Whenever $f(0)<\ell\in\BN$, let $g(\ell)$ be the greatest $m\in\BN$ such that
$f(m)\le \ell$.  For each $\ell\le f(0)$, let $g(\ell)=0$.
Then $g(\ell)<\max(\ell,f(0))$ for all $\ell\in\BN$, and $\lim_{\ell\to\infty} g(\ell)=\infty$.

By the definition of $\lca(\cu D)$, there is an $\eta\in (\BN,\le)_\cu D$  such that
$$|\{\beta\colon(\BN,\le)_\cu D\models \beta\le\eta\}|=\lca(\cu D).$$
Then in $(\BN,\le,g)_\cu D$, $g(\eta)$ is infinite and $g(\eta)\le\eta$, so
$$|\{\beta\colon(\BN,\le,g)_\cu D\models \beta\le g(\eta)\}|=\lca(\cu D).$$
For each $t\in I$, let $\Sigma[t](\vec y)$ be the  set of formulas $\Sigma_{g(\eta[t])}(\vec y)$,
which is finite because $\eta[t]\in\BN$. Since $\Delta$ is finite, if $\psi[t](\vec x)\in\Delta$ for each $t\in I$ and
$$\{t\in I\colon\psi[t](\vec y,\vec b[t])\in \Sigma[t](\vec y)\}\in\cu D,$$
then there is a unique $\Delta$-formula $\theta(\vec y,\vec b_{\cu D})$ over $(\vec y,\cu M_{\cu D})$ such that
$$\{t\in I\colon\psi[t](\vec y,\vec b[t])=\theta(\vec y,\vec b[t]\}\in\cu D.$$

Let $\Sigma_{\cu D}(\vec y)$ be the set of all $\Delta$-formulas $\theta(\vec y,\vec b_{\cu D})$ over $(\vec y,\cu M_{\cu D})$ such that
$$ \{t\colon \theta(\vec y,\vec b[t])\in \Sigma[t])\}\in\cu D.$$
Then $|\Sigma_{\cu D}(\vec y)|=\lca(\cu D)$, and in $\cu M_{\cu D}$, every
finite subset of $\Sigma_{\cu D}(\vec y)$ is $\varepsilon$-satisfiable, but $\Sigma_{\cu D}(\vec y)$ is not $\varepsilon$-satisfiable.
By Remark \ref{r-regular-aleph-1}, the ultrapower $\cu M_{\cu D}$ is an $\aleph_1$-saturated model of $T$.
Let
$$\Gamma_{\cu D}(\vec y)=\{\theta(\vec y,\vec b_{\cu D})\dotminus\varepsilon\colon \theta(\vec y,\vec b_{\cu D})\in\Sigma_{\cu D}(\vec y)\}.$$
Then  $\Gamma_{\cu D}(\vec y)$ is a set of formulas with parameters in $\cu M_{\cu D}$ of cardinality $\lca(\cu D)$
that is finitely satisfiable but not satisfiable in $\cu M_{\cu D}$, so
$\cu M_{\cu D}$ is not $\lca(\cu D)^+$-saturated. Since $\lca(\cu D)\le{I}$,
$\cu M_{\cu D}$ is not ${I}^+$-saturated, contradicting the hypothesis that $\cu D\in\Sat(T)$. Therefore (ii) implies (iii).

By Lemma \ref{l-extend-indiscernible}, (iii) implies (iv).

(iv) $\Rightarrow$ (i):  Assume (iv).  
Our argument will be similar to the proof of Theorem VI.5.1 (1) of [Sh78] and of Lemma \ref{l-stable-saturated} above.
 Let $\cu D$ be a regular ultrafilter over $I$, and let $\cu M$ be an $\aleph_1$-saturated model of $T$.
We prove that $\cu M_\cu D$ is $2^{|I|}$-saturated, and hence $|I|^+$-saturated and $\cu D\in\Sat(T)$.  Since $\cu D$ is arbitrary, this will prove (i).

By Remark \ref{r-regular-aleph-1}, $\cu M_\cu D$ is $\aleph_1$-saturated.  Suppose that $\cu I=\langle a_h\rangle_{h\in\BN}$ is an
indiscernible $\omega$-sequence in $\cu M_\cu D$.

Let $r=d^{\cu M_{\cu D}}(a_0,a_1)/2$.  Then $r>0$ and $\cu I$ is $2r$-separated.
By Lemma \ref{l-maximal-indiscernible}, to prove that $\cu M_{\cu D}$ is $2^{|I|}$-saturated
it is enough to show that $\cu I$  can be extended to
an indiscernible sequence of cardinality $2^{|I|}$ in $\cu M_{\cu D}$.
Let $\cu M'=(\cu M, S,\in)$ be the two-sorted real-valued structure where one sort is $M$, the other sort is the set $S$ of
$\omega$-sequences $\bo b$ of elements of $M$ such that $d^{\cu M}(\bo b_i,\bo b_j)\ge r$ whenever $i<j$, $\bo z$ is a variable of sort $S$,
and $\in$ is the predicate on $M\times S$ such that the formula $a\in \bo z$ has value $0$ if $a$ occurs in $\bo z$, and has value $1$ otherwise.  (Again, $\cu M'$ is not a metric structure.)
By Remark \ref{r-regular-aleph-1}, $\cu M'_{\cu D}$ is $\aleph_1$-saturated.  For each $t\in I$, the sequence $\cu I[t]=\langle a_h[t]\rangle_{h\in\BN}$
is an element of sort $S$ in $\cu M'$.  Let $\cu I_{\cu D}$ be the corresponding element of sort $S$ in $\cu M'_{\cu D}$.

Let
$ \varphi_{\Delta,n,r}(\bo z)$ be the $V'$-formula saying that the sequence
$\bo z$ is $(\Delta,1/n)$-indiscernible and $r$-separated.  Let
$$\Gamma(\bo z)=\{a_h\in \bo z\colon h\in\BN\}\cup\{\varphi_{\Delta,n,r}(\bo z)\colon \Delta \mbox{ finite and strict}, n\in\BN\cap(2/r,\infty)\},$$
which is a countable set of $V'$-formulas.
\medskip

\textbf{Claim \ref{t-minimal=fsp}.1.}
Suppose $\bo b$ satisfies $\Gamma(\bo z)$ in $\cu M'_{\cu D}$ and
$$\cu H=\{a\in\cu M_{\cu D}\colon \cu M'_{\cu D}\models a\in\bo b\}.$$
Then $\cu H\supseteq\cu I$, $\cu H$ is indiscernible and $r$-separated in $\cu M_{\cu D}$, and $|\cu H|=2^{|I|}$.

\begin{proof} [Proof of Claim \ref{t-minimal=fsp}.1]
Since $\bo z$ has sort $S$, $\bo b[t]$ has sort $S$ in $\cu M'$ for each $t\in I$, and $\bo b$ is the element of the
ultraproduct corresponding to $\langle\bo b[t]\rangle_{t\in I}$.  The set of formulas $\Gamma(\bo z)$ guarantees that
$a_h\in\cu H$ for each $h\in\BN$ so $\cu H\supseteq\cu I$.  Also,
$\cu H$ is $(\Delta,1/n)$-indiscernible and $r$-separated in $\cu M_{\cu D}$ for each $n\in\BN$, and hence $\cu H$ is indiscernible and $r$-separated in $\cu M_{\cu D}$.
For each $t\in I$, the set
$$\cu H[t]=\{c\in\cu M\colon \cu M'\models c\in \bo b[t]\}$$
has cardinality $\aleph_0$.  Then by Fact \ref{f-cardinality-ultrapower}, we have
$$|\cu H|=|\BN_{\cu D}| = \aleph_0^{|I|} = 2^{|I|}.$$
\end{proof}

\textbf{Claim \ref{t-minimal=fsp}.2.}  $\Gamma(\bo z)$ is finitely satisfiable in $\cu M'_{\cu D}$.
\medskip

\begin{proof} [Proof of Claim \ref{t-minimal=fsp}.2]
For each $n\in\BN\cap(2/r,\infty)$ and finite $\Delta$, let
$$\Gamma_{n,\Delta}(\bo z)=\{a_h\in \bo z\colon h\le n\}\cup\{\varphi_{\Delta,n,r}(\bo z)\}.$$
Each $\Gamma_{n,\Delta}(\bo z)$ is finite.  Every finite subset of $\Gamma(\bo z)$ is implied by some $\Gamma_{n,\Delta}(\bo z)$,
because the formulas $\varphi_{\Delta,n,r}(\bo z)$ become stronger as $\Delta$ and $n$ increase.
Hence it suffices to prove that each $\Gamma_{n,\Delta}(\bo z)$ is satisfiable in $\cu M'_{\cu D}$.
Fix a finite $\Delta$ and $n\in\BN\cap(2/r,\infty)$.  By (iv), the conclusion of Lemma \ref{l-extend-indiscernible} holds.
Whenever $n\in\BN\cap(2/r,\infty]$ we have $r/2>1/n$.
So there exists $2\le k\in\BN$ such that whenever
$\Delta'\subseteq\Delta$,
every finite $r$-separated $(\Delta',1/(n+1))$-indiscernible set in $\cu M$ of cardinality $>k$ can be extended to a
$(\Delta',1/n)$-indiscernible set in $\cu M$ of cardinality $\aleph_0$.

There is a $V'$-formula $\theta(u_0,\ldots,u_k,\bo z)$ saying that $u_i\in \bo z$ for each $i\le k$, and $\vec u$ is $(\Delta,1/(n+1))$-indiscernible and $r$-separated.
Let $X$ be the set of $t\in I$ such that $\theta(a_0[t],\ldots,a_k[t],\cu I[t])=0$ in $\cu M'$.

Let $\psi(u_0,\ldots,u_k,\bo z)$
be the $V'$-formula saying that either $u_i\notin \bo z$ for some $i\le k$, or $d(u_i,u_j)\le r$ for some $i<j\le k$,
or $|\delta(\vec v)-\delta(\vec w)|\ge 1/(n+1)$ for some $\delta(\vec x)\in\Delta$
and some pair of $|\vec x|$-subtuples $\vec v,\vec w$ of $\vec u$.  Let $Y$ be the set of $t\in I$ such that $\psi(a_0[t],\ldots,a_k[t],\cu I[t])=0$ in $\cu M'$.

One can check that for every $(\vec u,\bo z)$ in $\cu M'$, either $\theta^{\cu M'}(\vec u,\bo z)=0$ or $\psi^{\cu M'}(\vec u,\bo z)=0$.
Therefore $X\cup Y=I$.  Since $\cu I$ is indiscernible and $2r$-separated in $\cu M_{\cu D}$, $\psi(\vec u,\cu I)$ has value $>0$ in $\cu M_{\cu D}$.
Therefore $Y\notin\cu D$, and hence $X\in\cu D$.
By the \Los \ Theorem, $\theta(a_0,\ldots,a_k,\cu I_{\cu D})=0$ in $\cu M'_{\cu D}$.
So $(a_0,\ldots,a_k)$ is $(\Delta,1/(n+1))$-indiscernible and $r$-separated in $\cu M_{\cu D}$.

For each $t\in X$, $(a_0[t],\ldots,a_k[t])$ has cardinality $>k$ and is $(\Delta,1/(n+1))$-indiscernible and $r$-separated in $\cu M$,
so $(a_0[t],\ldots,a_k[t])$ can be extended to a $(\Delta,1/n)$-indiscernible $\omega$-sequence $\bo c[t]$  in $\cu M$.
Therefore  $\Gamma(\bo c[t])$ holds in $\cu M'$ for each $t\in X$, and by the \Los \ Theorem, $\bo c_{\cu D}$ satisfies $\Gamma(\bo z)$ in $\cu M'_{\cu D}$.
This proves Claim  \ref{t-minimal=fsp}.2.
\end{proof}

Since $\Gamma(\bo z)$ is countable and finitely satisfiable in the $\aleph_1$-saturated structure $\cu M'_{\cu D}$, there is an element $\bo b$
that satisfies $\Gamma(\bo z)$ in $\cu M'_{\cu D}$.  By Claim \ref{t-minimal=fsp}.1,
$\cu H$ contains $\cu I$ and is indiscernible and $r$-separated in $\cu M_{\cu D}$, and $|\cu H|=2^{|I|}$.
Then by Lemma \ref{l-maximal-indiscernible} , $\cu M_{\cu D}$ is $2^{|I|}$-saturated.  As mentioned above, this  prove (i).
\end{proof}

\begin{cor}  \label{c-FO-minimal}  A first order theory has the NFCP if and only if it does not have the finite cover property as defined in [Ke67].
\end{cor}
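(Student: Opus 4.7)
The plan is to chain together three equivalences already available in the text. First, by Theorem \ref{t-minimal=fsp}, a first order theory $T$, viewed as a metric theory, has NFCP if and only if $T$ is $\lek$-minimal in $\BM$. Second, by Corollary \ref{c-min-max-G}(i), a first order theory is $\lek$-minimal (in the class of all metric theories) if and only if it is $\lek$-minimal among first order theories. Third, the original theorem of [Ke67] states that a first order theory is $\lek$-minimal among first order theories if and only if it does not have the finite cover property as defined there. Concatenating these three equivalences yields the corollary.

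The only content to verify is that the NFCP hypothesis of Definition \ref{d-fsp}, when restricted to first order $T$, says the same thing as the classical absence of FCP. By Lemma \ref{l-classical}(i), for a first order formula $\varphi$ and tuple $\vec a$ in a first order structure $\cu M$ we have $\varphi^{\cu M}(\vec a)\in\{0,1\}$. Hence for every $\varepsilon\in(0,1)$ the inequality $\varphi^{\cu M}(\vec a)\le\varepsilon$ is just $\varphi^{\cu M}(\vec a)=0$, i.e.\ classical satisfaction. Moreover since $\Delta$-formulas over $(\vec y,\cu M)$ are obtained by renaming and substituting parameters, they are still first order. Thus the NFCP clause becomes: \emph{$T$ is stable and for every non-empty finite first order $\Delta(\vec x)$ there is $m(\Delta)\in\BN$ such that every finite set $\Sigma(\vec y)$ of $\Delta$-formulas over $(\vec y,\cu M)$ whose $m(\Delta)$-element subsets are all satisfiable in $\cu M$ is itself satisfiable in $\cu M$.} This is exactly the classical uniform finite-satisfiability statement that Keisler proves equivalent to $\neg$FCP in [Ke67]; stability drops out because in first order logic the classical FCP already implies instability, so the stability clause is automatic on the $\neg$FCP side.

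The main (and only) potential obstacle is the bookkeeping of matching Keisler's $\neg$FCP formulation from [Ke67] (where one usually fixes a single formula $\varphi(\vec x;\vec y)$ and bounds its cover number) to the $\Delta$-indexed formulation appearing in Definition \ref{d-fsp}. The two are equivalent by an easy passage: an unbounded cover number for a single $\varphi$ witnesses failure of NFCP with $\Delta=\{\varphi\}$, and conversely failure of NFCP with a finite $\Delta$ can be converted, by a finite disjunction and parameter coding standard in first order logic, into unbounded cover number for a single formula. Once this translation is noted, the three-step chain of equivalences delivers the corollary with no further work.
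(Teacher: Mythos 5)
Your three-step chain is exactly the paper's proof: Theorem \ref{t-minimal=fsp} (NFCP $\Leftrightarrow$ $\lek$-minimal), the bridge between metric and first order minimality (Lemma/Corollary \ref{l-min-max-G}/\ref{c-min-max-G}), and the known first order equivalence with $\neg$FCP. One minor attribution correction: the statement that a first order theory is $\lek_\BF$-minimal if and only if it lacks the FCP is due to Shelah (Theorem VI.5.8(i) of [Sh78]), not to [Ke67]; Keisler introduced the FCP and proved one direction, but the full equivalence requires Shelah's result that the FCP implies instability. Also note that once the three-step chain is in place, your second and third paragraphs attempting a direct syntactic translation between Definition \ref{d-fsp} and the classical $\neg$FCP are superfluous for the corollary, and the chain is actually the cleaner route precisely because it lets you avoid that bookkeeping.
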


\begin{proof}  Let $T$ be a complete first order theory.  By Theorem VI.5.8 (i) of [Sh78], $T$ is $\lek_\BF$-minimal if and only if $T$
does not have the FCP in the sense of [Ke67].   By Theorem \ref{t-minimal=fsp}, $T$ is $\lek$-minimal if and only if $T$ has the NFCP.
By Lemma \ref{l-min-max-G} (i), $\Sat(T)$ is the class of all regular ultrafilters iff $T$ is $\lek_\BF$-minimal, and also iff $T$ is $\lek$-minimal.
\end{proof}

\begin{thm}  \label{t-stable-notmin}
(i) A metric theory $T$ is stable but not $\lek$-minimal if and only if  $\Sat(T)=LCA$.

(ii)  There are exactly two $\lek$-equivalence classes of stable metric theories, the set of $\lek$-minimal theories and the
set of stable non-$\lek$-minimal theories.
\end{thm}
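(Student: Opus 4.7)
The plan is to derive both parts directly from Theorem~\ref{t-stable-vs-unstable}(i) (stability is equivalent to $\Sat(T) \supseteq LCA$) together with the equivalence (i)~$\Leftrightarrow$~(ii) of Theorem~\ref{t-minimal=fsp} ($\lek$-minimality is equivalent to $\Sat(T) \supsetneqq LCA$). No further continuous model theory is needed; the proof reduces to bookkeeping with the classes $\Sat(T)$ and $LCA$.

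For part (i) I would argue both directions symmetrically. If $T$ is stable and not $\lek$-minimal, then $\Sat(T) \supseteq LCA$ by Theorem~\ref{t-stable-vs-unstable}(i) while $\Sat(T) \not\supsetneqq LCA$ by Theorem~\ref{t-minimal=fsp}, which together force $\Sat(T) = LCA$. Conversely, $\Sat(T) = LCA$ trivially gives $\Sat(T) \supseteq LCA$ (hence stability) and denies $\Sat(T) \supsetneqq LCA$ (hence non-$\lek$-minimality).

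For part (ii), I first observe that, by Lemma~\ref{l-min-max-G}(i), every $\lek$-minimal theory has $\Sat(T)$ equal to the class of all regular ultrafilters, so the $\lek$-minimal theories are pairwise $\eqk$-equivalent and form a single class. By part (i), every stable non-$\lek$-minimal theory has $\Sat(T) = LCA$, so these too form a single $\eqk$-class. The two classes are distinct: Fact~\ref{f-exists-D}, applied with $|I| = \mu = \aleph_1$, produces a regular ultrafilter $\cu D$ with $\lca(\cu D) = \aleph_1 = |I|$, hence $\cu D \notin LCA$ even though $\cu D$ saturates every $\lek$-minimal theory.

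The only subsidiary point and essentially the only obstacle is showing that the stable non-$\lek$-minimal class is actually non-empty (so that ``exactly two'' is justified). The $\lek$-minimal class is clearly non-empty by Lemma~\ref{l-min-max-G}(i) itself. For the other class I would appeal to the first order case: Fact~\ref{f-shelah}(i) asserts that $\snm_\BF$ is a single non-empty element of $\BF$, and via the isomorphic embedding $\tau$ of Corollary~\ref{c-tau} any representative transports to a metric theory that is non-$\lek$-minimal (since $\tau(\min_\BF) = \min_\BM$) and that remains stable in the continuous sense; the latter point follows either from Fact~\ref{f-stable} (classical non-forking furnishes a stable independence relation, also in the continuous sense) or by a direct reduction, on a two-valued structure, of any continuously unstable formula to a classically unstable one via Corollary~\ref{c-local-unstable}.
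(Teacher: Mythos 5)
Your proof is correct and follows essentially the same route as the paper's: part (i) by combining Theorem \ref{t-stable-vs-unstable}(i) with Theorem \ref{t-minimal=fsp}, and part (ii) by classifying via (i) together with transporting a stable non-$\lek$-minimal first-order theory to witness non-emptiness. The paper is terser (it asserts the transport in one line and does not need your separate ``distinctness'' step, since two non-empty disjoint sets of theories automatically give distinct $\eqk$-classes), but the substance is identical, and your care about why first-order stability persists as continuous stability usefully spells out a point the paper leaves implicit.
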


\begin{proof}

(i) By Theorem \ref{t-stable-vs-unstable}, $T$ is stable iff $\Sat(T)=LCA$ or $\Sat(T)\supsetneqq LCA$.  By
Theorem \ref{t-minimal=fsp}, $T$ is $\lek$-minimal iff $\Sat(T)\supsetneqq LCA$.  So (i) holds.

(ii)  If $T$ is $\lek$-minimal, then $T$ has the NFCP by Theorem \ref{t-minimal=fsp}, so $T$ is stable.
There are first order theories that are stable and not $\lek$-minimal, so the set of stable non-$\lek$-minimal theories is non-empty.
If $T$ and $U$ are both stable but not minimal, then by (i), $\Sat(T)=LCA=\Sat(U)$, so the set of stable non-$\lek$-minimal theories belongs to $\BM$.
\end{proof}

As corollaries, we identify the first two elements in the partial ordering $(\BM,\lek)$, and show that there is a $\cu D$ such that $\Sat_\BM(\cu D)=\{\min_\BM\}$.

\begin{cor}  \label{c-first-two}
For each $H\in\BM$, one of the following holds:
$$ H=  \miin_\BM,\qquad H=\snm_\BM,\qquad \snm_\BM\lk H.$$
\end{cor}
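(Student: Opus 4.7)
The plan is to fix a representative $T \in H$ and split into cases according to whether $T$ is stable. If $T$ is stable, Theorem \ref{t-stable-notmin}(ii) already says there are exactly two $\lek$-equivalence classes of stable metric theories, namely $\miin_\BM$ and $\snm_\BM$, so $H$ is one of these two. Thus the only substantive case is when $T$ is unstable, and the goal there is to prove $\snm_\BM \lk H$.

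So assume $T$ is unstable and pick any $U \in \snm_\BM$. First I would establish $U \lek T$, i.e.\ $\Sat(T) \subseteq \Sat(U)$. By Theorem \ref{t-stable-vs-unstable}(ii), $\Sat(T) \subseteq LCF$, and by Theorem \ref{t-stable-notmin}(i), $\Sat(U) = LCA$. Since $\lcf(\cu D) \le \lca(\cu D)$ for every regular $\cu D$, one has $LCF \subseteq LCA$, and hence $\Sat(T) \subseteq LCF \subseteq LCA = \Sat(U)$, giving $U \lek T$.

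The remaining step, and the only place any real work happens, is to rule out $T \lek U$. That is, I need to exhibit a regular ultrafilter $\cu D$ in $\Sat(U) \setminus \Sat(T) = LCA \setminus LCF$. This is exactly the content of the comment immediately following Fact \ref{f-exists-D}: whenever $\aleph_1 < 2^{|I|}$, Fact \ref{f-exists-D} (taking, say, $\kappa = \aleph_1$ and $\mu = 2^{|I|}$, which satisfies $\mu^{\aleph_0} = \mu$) produces a regular ultrafilter $\cu D$ over $I$ with $\lcf(\cu D) \le |I| < \lca(\cu D)$, i.e.\ $\cu D \in LCA \setminus LCF$. Such a $\cu D$ saturates $U$ but not $T$, so $T \not\lek U$, and combined with $U \lek T$ this yields $\snm_\BM \lk H$.

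I do not expect any serious obstacles: all three cases ($H = \miin_\BM$, $H = \snm_\BM$, $\snm_\BM \lk H$) are forced by direct bookkeeping with the $\Sat(\cdot)$ sets, and the only non-trivial input, the existence of a regular ultrafilter in $LCA \setminus LCF$, has already been recorded as a consequence of Fact \ref{f-exists-D}. The corollary is really just the synthesis of Theorems \ref{t-stable-vs-unstable}, \ref{t-minimal=fsp}, and \ref{t-stable-notmin} with this one set-theoretic fact.
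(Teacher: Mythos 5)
Your proof is correct and follows essentially the same route as the paper: the stable case is dispatched by Theorem \ref{t-stable-notmin}(ii), and the unstable case boils down to $\Sat(T)\subseteq LCF\subsetneqq LCA=\Sat(U)$ via Theorems \ref{t-stable-vs-unstable} and \ref{t-stable-notmin}(i), with Fact \ref{f-exists-D} supplying a witness in $LCA\setminus LCF$. The only cosmetic difference is that the paper cites Theorem \ref{t-stable-vs-unstable}(iii) in one stroke where you re-derive the strict comparison from parts (i) and (ii) directly.
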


\begin{proof} We have $\min_\BM\in\BM$ by Lemma \ref{l-min-max-G}, and  $\snm_\BM\in\BM$ by Theorem \ref{t-stable-notmin}.
Therefore, $H\in\BM\setminus\{\min_\BM,\snm_\BM\}$ if and only if the theories in $H$ are unstable.
By Fact \ref{f-exists-D}, $LCF$ is a proper subclass of $LCA$, so by Theorem \ref{t-stable-vs-unstable}, if the theories in $H$ are unstable then $\snm_\BM\lk H$.
\end{proof}

\begin{cor}

$$\{\cu D\colon \Sat_\BM(\cu D)=\{\miin_\BM\}\}=\{\cu D\colon \Sat_\BF(\cu D)=\{\miin_\BF\}\}=\{\cu D\colon \cu D\notin LCA\}\ne\emptyset.$$
\end{cor}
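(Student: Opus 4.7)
The strategy is to assemble the classification results on the lowest $\lek$-classes, namely Theorem \ref{t-stable-vs-unstable}, Theorem \ref{t-stable-notmin}, and Corollary \ref{c-first-two}, together with Fact \ref{f-exists-D} for nonemptiness. The two equalities are proved by the same argument carried out in parallel for $\BM$ and $\BF$.

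First I would verify the inclusion $\{\cu D\colon \cu D\notin LCA\}\subseteq\{\cu D\colon \Sat_\BM(\cu D)=\{\miin_\BM\}\}$. Fix $\cu D\notin LCA$. Since every regular ultrafilter saturates every $\lek$-minimal theory (Lemma \ref{l-min-max-G}(i)), we have $\miin_\BM\in\Sat_\BM(\cu D)$. For the class $\snm_\BM$, Theorem \ref{t-stable-notmin}(i) gives $\Sat(T)=LCA$ for any $T\in\snm_\BM$, so $\cu D\notin\Sat(T)$, i.e., $\snm_\BM\notin\Sat_\BM(\cu D)$. For any unstable class $H\in\BM$ and any $T\in H$, Theorem \ref{t-stable-vs-unstable}(ii) gives $\Sat(T)\subseteq LCF\subseteq LCA$, so again $\cu D\notin\Sat(T)$ and $H\notin\Sat_\BM(\cu D)$. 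By Corollary \ref{c-first-two}, every $H\in\BM$ is either $\miin_\BM$, $\snm_\BM$, or an unstable class, so these three cases exhaust $\BM$ and yield $\Sat_\BM(\cu D)=\{\miin_\BM\}$.

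Next I would establish the reverse inclusion. Suppose $\cu D\in LCA$. By Theorem \ref{t-stable-vs-unstable}(i), $\cu D\in\Sat(T)$ for every stable metric theory $T$, in particular for any $T\in\snm_\BM$; the class $\snm_\BM$ is nonempty because it contains (images of) stable non-minimal first order theories, as noted in the proof of Theorem \ref{t-stable-notmin}(ii). Hence $\snm_\BM\in\Sat_\BM(\cu D)$, forcing $\Sat_\BM(\cu D)\neq\{\miin_\BM\}$. The same chain of implications works verbatim for $\BF$ in place of $\BM$, using Fact \ref{f-shelah} for the first order analogue of Corollary \ref{c-first-two}; alternatively, one can deduce the $\BF$ statement from the $\BM$ statement via the isomorphism $\tau$ of Corollary \ref{c-tau}, together with Corollary \ref{c-tau-min-max}.

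Finally, nonemptiness follows from Fact \ref{f-exists-D}: taking $\kappa=\mu=2^{\aleph_0}$ (which satisfies $\mu=\mu^{\aleph_0}$) and any $I$ with $|I|\ge 2^{\aleph_0}$, there is a regular ultrafilter $\cu D$ over $I$ with $\lca(\cu D)=2^{\aleph_0}\le|I|$, so $\cu D\notin LCA$. There is no real obstacle in this proof; it is essentially a bookkeeping exercise that organizes the classification of the two lowest $\lek$-classes already established, and the only point that requires a moment of thought is making sure $\snm_\BM$ is genuinely nonempty so that the reverse inclusion has content.
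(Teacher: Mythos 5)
Your proof is correct. The route differs from the paper's in organization, though the mathematical content overlaps heavily. The paper first cites Section VI of [Sh78] for the first order identity $\{\cu D\colon \Sat_\BF(\cu D)=\{\miin_\BF\}\}=\{\cu D\colon \cu D\notin LCA\}\ne\emptyset$, and then proves the equivalence $\Sat_\BM(\cu D)=\{\min_\BM\}\Leftrightarrow\Sat_\BF(\cu D)=\{\min_\BF\}$ by two implications (one via $\tau$ and Corollary \ref{c-tau-min-max}, the other by a case analysis on $T\in\Sat_\BM(\cu D)$ using Theorems \ref{t-stable-notmin}(i) and \ref{t-stable-vs-unstable}(ii)). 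You instead prove $\{\cu D\colon\cu D\notin LCA\}=\{\cu D\colon\Sat_\BM(\cu D)=\{\min_\BM\}\}$ directly from the trichotomy of Corollary \ref{c-first-two} and the two classification theorems, and note the $\BF$ case is parallel via Fact \ref{f-shelah}. What your organization buys is self-containment: you never need to invoke [Sh78] as a black box, since the paper's own Theorems \ref{t-stable-vs-unstable} and \ref{t-stable-notmin} and Corollary \ref{c-first-two} already suffice, and you supply the nonemptiness argument explicitly from Fact \ref{f-exists-D} rather than inheriting it from [Sh78]. What the paper's organization buys is a shorter write-up that foregrounds the bridge between $\BM$ and $\BF$ via $\tau$, which is conceptually the main point of the surrounding discussion. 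Your attention to the nonemptiness of $\snm_\BM$ (needed to make the reverse inclusion nonvacuous) is a correct and worthwhile observation.
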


\begin{proof}  It follows from Section VI of [Sh78]  that
$$\{\cu D\colon \Sat_\BF(\cu D)=\{\miin_\BF\}\}=\{\cu D\colon \cu D\notin LCA\}\ne\emptyset.$$
We prove
$$\{\cu D\colon \Sat_\BM(\cu D)=\{\miin_\BM\}\}=\{\cu D\colon \Sat_\BF(\cu D)=\{\miin_\BF\}\}.$$

Assume that $\Sat_\BF(\cu D)\ne\{\min_\BF\}$.  Then there exists $H\in \Sat_\BF(\cu D)$ such that $H\ne\min_\BF$.
By Corollaries \ref{c-tau} and \ref{c-tau-min-max}, $\tau(H)\ne\min_\BM$ and $\tau(H)\in\Sat_\BM(\cu D)$, so $\Sat_\BM(\cu D)\ne\{\min_\BM\}$.

Assume that $\Sat_\BF(\cu D)=\{\min_\BF\}$.  Then $\cu D\notin LCA$.   Suppose $T\in \Sat_\BM(\cu D)$.  Then $\Sat(T)\ne LCA$, so by Theorem \ref{t-stable-notmin} (i),
$T$ is either unstable or $\lek$-minimal.  But if $T$  is unstable, then $\cu D\in LCF$ by Theorem \ref{t-stable-vs-unstable} (ii),
contradicting $\cu D\notin LCA$, so $T$ is stable.  Therefore $T$ is $\lek$-minimal, so $\Sat_\BM(\cu D)=\{\min_\BM\}$.
\end{proof}

\section{$\lek$-maximal theories and SOP$_2$}

Shelah [Sh78] introduced a family of ``strict order properties'' of first order theories, including the property SOP$_2$.
Malliaris and Shelah [MS16a] proved that every first order theory with SOP$_2$ is maximal in
$(\BF,\lek)$, and conjectured that the converse also holds.    In this section we
introduce a metric analogue SOP$_2$, and show that metric theories that
that have SOP$_2$ are maximal in $(\BM,\lek)$.

We refer to  [Sh78], [MS15], and the expository article [Co12] for background about  SOP$_2$ and related notions in first order logic.

We now introduce some notation for working with binary trees.  We identify each $n\in\BN$ with the set $n=\{0,\ldots,n-1\}$.
Let $\{0,1\}^{<\BN}$ be the set of finite sequences of elements of $\{0,1\}$,  let $\{0,1\}^\BN$ be the set of infinite sequences of elements of $\{0,1\}$,
and let $\{0,1\}^n$ be the set of $n$-tuples of elements of $\{0,1\}$.
Thus $\{0,1\}^{<\BN}=\bigcup_n\{0,1\}^n$.
We regard the elements $\{0,1\}^{<\BN}$ as nodes of the full binary tree, and regard the elements of $\{0,1\}^\BN$ as branches of the full binary tree.
For $\sigma\in \{0,1\}^\BN$ and $n\in\BN$, let $\sigma|n=\langle \sigma(k)\rangle_{k<n}\in \{0,1\}^n$.
For $\mu,\nu\in \{0,1\}^{<\BN}$, note that $\mu\subseteq \nu$ iff  $\mu$ is an initial segment of $\nu$.
We call $\mu,\nu$ \emph{comparable} if $\mu\subseteq\nu$ or $\nu\subseteq\mu$, and \emph{incomparable} otherwise.

\begin{df} \label{d-SOP2}
A  metric theory $T$ has SOP$_2$ if there is a formula $\theta(\vec x,\vec z)$, called an SOP$_2$ formula for $T$,
such that in some model $\cu M\models T$ there are $|\vec z|$-tuples
$\langle \vec b_\mu\rangle_{\mu\in \{0,1\}^{<\BN}}$ such that for each $\sigma\in \{0,1\}^\BN$ and $n\in\BN$,
\begin{equation}  \label{eq-SOP1}
\cu M\models(\inf_{\vec x})\max\{\theta(\vec x,\vec b_{\sigma|k})\colon k<n\}=0,
\end{equation}
but for each  incomparable pair $\mu,\nu\in \{0,1\}^{<\BN}$ we have
\begin{equation}  \label{e-SOP2}
\cu M\models (\inf_{\vec x})\max(\theta(\vec x,\vec b_\mu),\theta(\vec x,\vec b_\nu))=1.
\end{equation}
\end{df}

\begin{rmk}    A first order theory  has SOP$_2$ in the above sense if and only if it has SOP$_2$ in the sense of [Sh78].
\end{rmk}

\begin{cor}  \label{c-SOP2-approx}
 $T$ has SOP$_2$ if and only if for some $0\le s<r\le 1$, Definition \ref{d-SOP2} holds  with (1) and (2) replaced by
\begin{equation}
\cu M\models(\inf_{\vec x})\max\{\theta(\vec x,\vec b_{\sigma|k})\colon k<n\}\le s,
\end{equation}
and
\begin{equation}
 \cu M\models (\inf_{\vec x})\max(\theta(\vec x,\vec b_\mu),\theta(\vec x,\vec b_\nu))\ge r.
 \end{equation}
We call $\theta$ an SOP$_2$-formula with bounds $(s,r)$ in $T$.
\end{cor}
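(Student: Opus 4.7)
The plan is to observe that this is a standard ``truncation and rescaling'' argument, entirely analogous to the proof of Corollary \ref{c-local-unstable} for instability. The forward implication is immediate: SOP$_2$ in the sense of Definition \ref{d-SOP2} is precisely the case $(s,r) = (0,1)$, and all tuples and formulas carry over without change.

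For the reverse direction, suppose $\theta(\vec x, \vec z)$ witnesses the bounded version with some $0 \le s < r \le 1$ in a model $\cu M \models T$ with tuples $\langle \vec b_\mu \rangle_{\mu \in \{0,1\}^{<\BN}}$. I would choose a continuous, monotone nondecreasing function $f \colon [0,1] \to [0,1]$ with $f(t) = 0$ for $t \le s$ and $f(t) = 1$ for $t \ge r$ (for instance, piecewise linear), and define $\theta'(\vec x, \vec z) := f(\theta(\vec x, \vec z))$. Since $f$ is a continuous connective and $\theta$ is a continuous formula, $\theta'$ is again a continuous formula in the same vocabulary, and $\langle \vec b_\mu \rangle$ remains the same tree of parameters in $\cu M$, so it suffices to verify the two clauses of Definition \ref{d-SOP2} for $\theta'$.

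For each branch $\sigma \in \{0,1\}^\BN$ and $n \in \BN$, the hypothesis $(\inf_{\vec x}) \max_{k<n} \theta(\vec x, \vec b_{\sigma|k}) \le s$ gives, for each $\varepsilon > 0$, some $\vec x$ in $\cu M$ with $\max_{k<n} \theta(\vec x, \vec b_{\sigma|k}) \le s + \varepsilon$. Since $f$ is monotone nondecreasing it commutes with finite maxima, so $\max_{k<n} \theta'(\vec x, \vec b_{\sigma|k}) = f(\max_{k<n}\theta(\vec x, \vec b_{\sigma|k})) \le f(s+\varepsilon)$; by continuity of $f$ at $s$ together with $f(s)=0$, letting $\varepsilon \to 0^+$ forces $(\inf_{\vec x}) \max_{k<n} \theta'(\vec x, \vec b_{\sigma|k}) = 0$. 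For incomparable $\mu, \nu$, the hypothesis $(\inf_{\vec x}) \max(\theta(\vec x, \vec b_\mu), \theta(\vec x, \vec b_\nu)) \ge r$ means that for every $\vec x$ in $\cu M$ at least one of $\theta(\vec x, \vec b_\mu), \theta(\vec x, \vec b_\nu)$ is $\ge r$; applying $f$ makes the corresponding value equal $1$, so $\max(\theta'(\vec x, \vec b_\mu), \theta'(\vec x, \vec b_\nu)) = 1$ for every $\vec x$, and hence $(\inf_{\vec x}) \max(\theta'(\vec x, \vec b_\mu), \theta'(\vec x, \vec b_\nu)) = 1$.

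The only subtlety worth flagging is that the infimum in the branch condition need not be attained in $\cu M$, so one cannot simply pick an $\vec x$ realizing the bound $s$; this is precisely why continuity of $f$ at $s$, rather than merely $f(s)=0$, is used. No saturation or \Los \ theorem is required, since the entire argument stays inside the original model $\cu M$ and produces the SOP$_2$-formula $\theta'$ with bounds $(0,1)$ witnessed by the same tree $\langle \vec b_\mu \rangle$.
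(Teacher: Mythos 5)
Your proposal is correct and takes essentially the same approach as the paper, which simply cites the analogous proof of Corollary \ref{c-local-unstable}: compose $\theta$ with a continuous connective $f$ collapsing $[0,s]$ to $0$ and $[r,1]$ to $1$. You have supplied the details the paper leaves implicit (monotonicity of $f$ so it commutes with finite maxima, and continuity of $f$ at $s$ to handle non-attained infima), and these details check out.
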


\begin{proof}
Similar to the proof of Corollary \ref{c-local-unstable}.
\end{proof}

\begin{fact}  \label{f-FO-maximal} (Malliaris and Shelah [MS16a], Theorem 11.11). Every first order theory with SOP$_2$ is maximal in $(\BF,\lek)$.
\end{fact}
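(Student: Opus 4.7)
The plan is to prove that every regular ultrafilter saturating $T$ is good, which by Lemma \ref{l-min-max-G}(ii) is equivalent to $\lek$-maximality of $T$. Since good ultrafilters saturate every theory, only one direction needs argument: fix a regular ultrafilter $\cu D$ over $I$ that saturates $T$ and a monotone map $\delta\colon \cu P_{\aleph_0}(J) \to \cu D$ with $|J| \leq |I|$; the task is to produce a multiplicative refinement.

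Let $\theta(\vec x, \vec z)$ be an SOP$_2$-formula for $T$ with bounds $(s,r)$ as in Corollary \ref{c-SOP2-approx}, witnessed in a model $\cu M$ of $T$ by a tree $\langle \vec b_\mu\rangle_{\mu \in \{0,1\}^{<\BN}}$. For each $t \in I$, regularity of $\cu D$ guarantees that the set $U(t) = \{u \in \cu P_{\aleph_0}(J) : t \in \delta(u)\}$ is finite (after standard regularization). I would combinatorially assign at each $t$ an injection $u \mapsto \mu(u,t) \in \{0,1\}^{<\BN}$ defined on $U(t)$ so that comparability of $\mu(u,t)$ and $\mu(v,t)$ matches containment of $u$ and $v$ within $U(t)$. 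Index-by-index packaging gives tuples $\vec c_u \in \cu M^\cu D$ with $\vec c_u[t] = \vec b_{\mu(u,t)}$ whenever $t \in \delta(u)$.

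Consider the partial type $\Gamma(\vec x) = \{\theta(\vec x, \vec c_u) \dotminus s : u \in \cu P_{\aleph_0}(J)\}$ over $\cu M^\cu D$. Any finite subset $\Gamma_0$ corresponds, on a $\cu D$-large set of $t$, to a finite collection of pairwise comparable tree nodes $\mu(u,t)$; by the SOP$_2$ consistency clause from Corollary \ref{c-SOP2-approx}, $\Gamma_0$ is satisfiable at each such $t$ in $\cu M$, and the \Los\ theorem gives finite satisfiability of $\Gamma$ in $\cu M^\cu D$. Since $\cu D$ saturates $T$ and $|\Gamma| \leq |I|$, $\Gamma$ is realized by some $\vec a$. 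Define
\[
\delta'(u) = \{t \in \delta(u) : \theta^{\cu M}(\vec a[t], \vec c_u[t]) \leq s\}.
\]
Then $\delta'(u) \in \cu D$ by the \Los\ theorem. Moreover, if $t \in \delta'(u) \cap \delta'(v)$ then the incomparability clause of Corollary \ref{c-SOP2-approx} precludes $\mu(u,t)$ and $\mu(v,t)$ from being incomparable in the tree, so by the faithful coding they must be comparable in $U(t)$, whence $u \cup v \in U(t)$ and $t \in \delta'(u \cup v)$. This yields multiplicativity.

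The hard part will be the tree-coding step: constructing $\mu(u,t)$ so that the comparability relation on the image faithfully reflects the inclusion relation on $U(t)$, and doing so coherently across $\cu D$-large sets of $t$. The subtlety is that $U(t)$ itself varies with $t$, and a naive coding may fail comparability-faithfulness once $\delta(u \cup v)$ is a proper subset of $\delta(u) \cap \delta(v)$; the Malliaris–Shelah construction handles this by an inductive embedding of each finite poset $U(t)$ into the binary tree that respects incomparabilities, exploiting regularity to synchronize these local embeddings. Once the coding is in place, extracting multiplicativity from the SOP$_2$ clauses is essentially immediate, as sketched above.
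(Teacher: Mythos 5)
Your proposal aims to verify goodness directly, by taking an arbitrary monotone $\delta\colon\cu P_{\aleph_0}(J)\to\cu D$ and manufacturing a multiplicative refinement via a tree coding; this is a genuinely different route from the one the paper uses for the analogous Theorem~\ref{t-SOP2}. The paper instead invokes the treetops characterization of good ultrafilters (Fact~\ref{f-treetops}): $\cu D$ is good iff every pairwise comparable subset of $\cu K_{bt}^{\cu D}$ of size $\le|I|$ has an upper bound. One takes such a pairwise comparable set $A$, transfers each $a\in A$ to a parameter $\vec b_a$ in $\cu M^{\cu D}$ via the SOP$_2$ tree, realizes the resulting type in $\cu M^{\cu D}$, and reads an upper bound for $A$ off the realization coordinatewise. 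That argument never needs to embed the poset $\cu P_{\aleph_0}(J)$ into a tree, precisely because the input is already a pairwise comparable set.

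Your route founders at the ``hard part'' you flag. First, a faithful coding $\mu(\cdot,t)\colon U(t)\to\{0,1\}^{<\BN}$ with ``$\mu(u,t),\mu(v,t)$ comparable iff $u,v$ comparable in $U(t)$'' does not exist in general: the comparability graph of any subset of a tree (under the prefix order) contains no induced path $P_4$, yet the inclusion poset on finite sets readily produces $P_4$'s. For instance if $\{1\},\{1,2\},\{2\},\{2,3\}$ all land in $U(t)$, their comparability graph is the path $\{1\}-\{1,2\}-\{2\}-\{2,3\}$, which cannot be realized by tree comparability. Second, and more damagingly, even granting the coding, the type $\Gamma(\vec x)=\{\theta(\vec x,\vec c_u)\dotminus s : u\in\cu P_{\aleph_0}(J)\}$ is \emph{not} finitely satisfiable: take incomparable $u,v$; a faithful coding forces $\mu(u,t),\mu(v,t)$ incomparable on the whole common domain, and the SOP$_2$ incomparability clause (value $\ge r > s$) makes $\{\theta(\vec x,\vec c_u)\dotminus s,\ \theta(\vec x,\vec c_v)\dotminus s\}$ unsatisfiable. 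Equivalently, your own multiplicativity argument would give $\delta'(u)\cap\delta'(v)=\emptyset$ for incomparable $u,v$, which is impossible if both lie in the ultrafilter. So the realizing $\vec a$ you want cannot exist. To repair the proof you would need to route through Fact~\ref{f-treetops} as the paper does: given $\delta$, build a pairwise comparable family of tree elements in $\cu K_{bt}^{\cu D}$ encoding the finite sets that ``matter'' at each index (this is where regularity produces a chain at each $t$), find its upper bound using SOP$_2$-saturation, and only then read off the multiplicative refinement.
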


Let $\cu K_{bt}$ be the first order structure $(\{0,1\}^{<\BN},\subseteq)$ (a full binary tree).  The first order theory $\Th(\cu K_{bt})$ has SOP$_2$.
We will need the following characterization of good regular ultrafilters, which was shown and used in [MS16a] to prove Fact \ref{f-FO-maximal}.

\begin{fact}  \label{f-treetops}  (By Theorem 10.26 and Lemma 11.6 in [MS16a])
A regular ultrafilter $\cu D$ is good if and only if in the pre-ultrapower $\cu K_{bt}^\cu D$, every pairwise comparable set of cardinality $\le |I|$
has an upper bound.
\end{fact}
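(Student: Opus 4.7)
The forward direction is immediate from what has already been developed. By the proof of Lemma~\ref{l-min-max-G}(ii), $\cu D$ is good iff $\cu D$ saturates every first order structure, so in particular the ultrapower (equivalently, pre-ultrapower) $\cu K_{bt}^{\cu D}$ is $|I|^+$-saturated. Given a pairwise comparable $C \subseteq \cu K_{bt}^{\cu D}$ with $|C| \le |I|$, the $1$-type $p(x) = \{c \subseteq x : c \in C\}$ has cardinality at most $|I|$, and every finite subset of $p$ is realized by the maximum of the corresponding finite linearly ordered subset of $C$. So $p$ is finitely satisfiable, hence realized by saturation, yielding the desired upper bound.

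For the reverse direction, assume the treetops property and take a monotone $\delta : \cu P_{\aleph_0}(J) \to \cu D$ with $|J| \le |I|$; I must produce a multiplicative refinement $\delta'$. The plan is to encode $\delta$ into a pairwise comparable chain in $\cu K_{bt}^{\cu D}$ whose upper bound decodes to $\delta'$. Fix an enumeration $J = \{j_\alpha : \alpha < |J|\}$ and set $J_\alpha = \{j_\beta : \beta < \alpha\}$. By a preliminary reduction using regularity of $\cu D$ (replacing $\delta$ by an equivalent monotone map whose values lie in a regularizing family), one may assume each $t \in I$ belongs to only finitely many sets $\delta(u)$. For each $\alpha$, fix a coding of the countable poset $\cu P_{\aleph_0}(J_\alpha)$ by nodes of the finite binary tree, and define $\mu_\alpha[t] \in \{0,1\}^{<\BN}$ to be the finite binary string recording the finite set $\{u \in \cu P_{\aleph_0}(J_\alpha) : t \in \delta(u)\}$. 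Let $\mu_\alpha \in \cu K_{bt}^{\cu D}$ be the class of $\langle \mu_\alpha[t]\rangle_{t \in I}$. Monotonicity of $\delta$ forces $\mu_\alpha \subseteq \mu_\beta$ in $\cu K_{bt}^{\cu D}$ whenever $\alpha < \beta$, so $\langle \mu_\alpha\rangle_{\alpha < |J|}$ is a pairwise comparable chain of cardinality $\le |I|$. By the treetops hypothesis it has an upper bound $\nu \in \cu K_{bt}^{\cu D}$. Now define $\delta'(u)$, for each $u \in \cu P_{\aleph_0}(J)$, to be the $\cu D$-large set of $t$ for which $u$ is flagged in the decoding of $\nu[t]$. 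Because the flagging at each $t$ comes from the single node $\nu[t]$, consistency of choices across all finite $u$ is automatic, so $\delta'(u \cup v) = \delta'(u) \cap \delta'(v)$ and $\delta'$ refines $\delta$.

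The hardest part, and what I would keep schematic above, is the combinatorics of the encoding: arranging the coding of each $\cu P_{\aleph_0}(J_\alpha)$ so that (i) the $\mu_\alpha$ really assemble into a pairwise comparable chain in $\cu K_{bt}^{\cu D}$ and (ii) an arbitrary upper bound $\nu$ does decode to a function into $\cu D$ that refines $\delta$ and is multiplicative. This is the technical content of Theorem~10.26 and Lemma~11.6 of [MS16a]. The conceptual payoff is that, because $\cu K_{bt}$ freely realizes all finite binary branching, information flowing up a branch of $\cu K_{bt}^{\cu D}$ is \emph{universal} for monotone data indexed modulo $\cu D$; consequently the existence of upper bounds for linearly ordered $|I|$-sequences (treetops) exactly matches the existence of multiplicative refinements of monotone maps out of $\cu P_{\aleph_0}(J)$ with $|J| \le |I|$ (goodness).
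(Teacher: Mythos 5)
The paper does not prove this statement; it is cited as Fact~\ref{f-treetops} from [MS16a], so there is no proof in the paper to compare against. Assessing your attempt on its own terms: the forward direction is correct and essentially complete. Goodness gives $\lambda^+$-saturation of every (pre-)ultrapower, the type $\{c\subseteq x:c\in C\}$ over a pairwise comparable set $C$ of size $\le\lambda$ is finitely satisfiable (realize a finite piece by its maximum, which exists because $\cu K_{bt}^{\cu D}\equiv\cu K_{bt}$), and saturation realizes it. This is tidy.

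The reverse direction, however, has genuine gaps beyond the honest disclaimer at the end. First, the claim that \emph{monotonicity of $\delta$} forces $\mu_\alpha\subseteq\mu_\beta$ is a misattribution: that chain property comes entirely from how the coding of $\cu P_{\aleph_0}(J_\alpha)$ is nested inside the coding of $\cu P_{\aleph_0}(J_\beta)$; monotonicity of $\delta$ plays no role there. Second, and more seriously, the sentence ``Because the flagging at each $t$ comes from the single node $\nu[t]$, consistency of choices across all finite $u$ is automatic, so $\delta'(u\cup v)=\delta'(u)\cap\delta'(v)$'' is simply false as stated. A single node $\nu[t]$ decodes to a set $S[t]\subseteq\cu P_{\aleph_0}(J)$, and nothing about being a single node forces $S[t]$ to be closed under unions. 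Indeed, the sets $\{u:t\in\delta(u)\}$ you are encoding are downward closed but typically not ideals --- that failure is exactly why a multiplicative refinement is needed at all. You would have to design the decoding so that any extension of any $\mu_\alpha[t]$ in the binary tree decodes to an ideal, and there is no reason an arbitrary upper bound $\nu$ supplied by the treetops hypothesis respects such a constraint. Third, you never establish the refinement property $\delta'(u)\subseteq\delta(u)$: the upper bound $\nu[t]$ may flag pairs $(u,t)$ with $t\notin\delta(u)$, and the natural repair of intersecting with $\delta(u)$ destroys multiplicativity because $\delta(u\cup v)\ne\delta(u)\cap\delta(v)$ in general. These three points are precisely where the content of Theorem~10.26 and Lemma~11.6 of [MS16a] lives, and the actual argument there is substantially more involved (it goes through cofinality spectrum problems and is not a direct encode--then--decode argument on $\cu K_{bt}$). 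Your conceptual picture is in the right neighborhood, but the sketch presents as ``automatic'' the parts that carry all the weight.
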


\begin{thm}  \label{t-SOP2}  Every metric theory $T$ with SOP$_2$ is maximal in $(\BM,\lek)$.
\end{thm}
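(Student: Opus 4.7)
The plan is to show $\lek$-maximality via Lemma \ref{l-min-max-G}(ii): it suffices to prove that every $\cu D \in \Sat(T)$ is a good ultrafilter. By Fact \ref{f-treetops}, goodness of $\cu D$ is equivalent to the following \emph{treetops} property: every pairwise comparable subset of $\cu K_{bt}^\cu D$ of cardinality $\le |I|$ has an upper bound. So fix $\cu D \in \Sat(T)$ and a pairwise comparable family $\{\eta_\alpha : \alpha < \lambda\} \subseteq \cu K_{bt}^\cu D$ (where $\lambda = |I|$); our goal is to produce an upper bound $\eta \in \cu K_{bt}^\cu D$.

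Fix a model $\cu M \models T$ with SOP$_2$ witnesses $\langle \vec b_\mu\rangle_{\mu \in \{0,1\}^{<\BN}}$ for a formula $\theta(\vec x,\vec z)$ with bounds $(s,r)$ as in Corollary \ref{c-SOP2-approx}. For each $\eta \in \cu K_{bt}^\cu D$, represented by $\eta \colon I \to \{0,1\}^{<\BN}$, set $\vec b_\eta = \langle \vec b_{\eta[t]}\rangle_{t \in I}$, an element of $\cu M^\cu D$. Consider the set of formulas
\[
\Gamma(\vec x) \;=\; \{\theta(\vec x, \vec b_{\eta_\alpha}) \dotminus s : \alpha < \lambda\}
\]
in the pre-ultrapower $\cu M^\cu D$, which has cardinality $\le \lambda$. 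For any finite subfamily $\{\eta_{\alpha_1},\dots,\eta_{\alpha_n}\}$, pairwise comparability implies that for $\cu D$-almost every $t$, the tuple $\eta_{\alpha_1}[t],\dots,\eta_{\alpha_n}[t]$ lies on a common branch of $\{0,1\}^{<\BN}$; the chain clause of SOP$_2$ then gives $(\inf_{\vec x})\max_i \theta^\cu M(\vec x, \vec b_{\eta_{\alpha_i}[t]}) \le s$, and the \Los \ theorem (Fact \ref{f-Los-2}) transfers this to $\cu M^\cu D$, so the finite subset of $\Gamma$ is satisfiable. Since $\cu D \in \Sat(T)$, the pre-ultrapower $\cu M^\cu D$ is $|I|^+$-saturated, so $\Gamma$ is realized by some $\vec c \in \cu M^\cu D$.

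It remains to extract an upper bound $\eta$ from $\vec c$. Pick any $s' \in (s,r)$ and define, for each $t \in I$,
\[
S[t] \;=\; \{\mu \in \{0,1\}^{<\BN} : \theta^\cu M(\vec c[t], \vec b_\mu) \le s'\}.
\]
The incomparability clause of SOP$_2$ forces $S[t]$ to be a $\subseteq$-chain in $\{0,1\}^{<\BN}$: if $\mu,\nu \in S[t]$ were incomparable, then the witnesses $\vec c[t]$ would satisfy $\max(\theta(\vec c[t],\vec b_\mu), \theta(\vec c[t],\vec b_\nu)) \le s' < r$, contradicting equation (4) of Corollary \ref{c-SOP2-approx}. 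Moreover, for each $\alpha$, the \Los \ theorem combined with $\theta^{\cu M^\cu D}(\vec c,\vec b_{\eta_\alpha}) \le s < s'$ gives $\{t : \eta_\alpha[t] \in S[t]\} \in \cu D$. The natural choice is then to declare $\eta[t]$ to be a $\subseteq$-maximum of $S[t]$ whenever one exists, which makes $\eta_\alpha \subseteq \eta$ hold in $\cu K_{bt}^\cu D$ on the $\cu D$-large set of such $t$.

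The main obstacle is the remaining case: for $\cu D$-many $t$ the chain $S[t]$ may be infinite and therefore contain no $\subseteq$-maximum. To handle this I would truncate at a judiciously chosen length $K[t] \in \BN$, setting $\eta[t]$ equal to the (now guaranteed) longest element of $S[t]$ of length $\le K[t]$; the threshold $K \in \BN^\cu D$ must dominate $|\eta_\alpha[t]|$ modulo $\cu D$ for each $\alpha$ simultaneously. The regularity of $\cu D$ together with a Kunen-style independent-family / diagonalization argument applied to the auxiliary structure $(\cu M, \BN, <, S)$, using the $\aleph_1$-saturation of the pre-ultrapower from Remark \ref{r-regular-aleph-1}(i), should produce such a $K$; this is directly parallel to the truncation step in Malliaris--Shelah's proof of Fact \ref{f-FO-maximal} and is where the passage from the first-order to the metric setting requires the most care, since $\theta$ is $[0,1]$-valued and the relevant ``level sets'' $S[t]$ are defined by a continuous cutoff rather than a Boolean condition. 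Once $\eta$ is so constructed, $\eta_\alpha \subseteq \eta$ in $\cu K_{bt}^\cu D$ for every $\alpha < \lambda$, which contradicts the supposed failure of treetops and thereby establishes goodness of $\cu D$.
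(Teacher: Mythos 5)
Your overall strategy matches the paper exactly: reduce maximality to goodness of every $\cu D\in\Sat(T)$ via Lemma \ref{l-min-max-G}(ii), reformulate goodness as the treetops property using Fact \ref{f-treetops}, transfer the pairwise comparable family $\{\eta_\alpha\}$ into the $\theta$-type $\Gamma$ on $\cu M^{\cu D}$, and realize $\Gamma$ by $\lambda^+$-saturation. Up to and including the production of $\vec c$, the argument is correct and essentially the paper's.

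The gap is precisely where you flag it, and your proposed repair will not go through as stated. You want a threshold $K\in\BN^{\cu D}$ that dominates $|\eta_\alpha|$ mod $\cu D$ \emph{for every} $\alpha<\lambda$ simultaneously, so that the chains $S[t]$ can be truncated. But a $\le\lambda$-sized family in $\BN^{\cu D}$ having a common upper bound is itself a treetops-type condition for $(\BN,<)$; indeed this is essentially what you are trying to establish for $\cu K_{bt}$, so invoking it here would be circular. Neither regularity nor $\aleph_1$-saturation nor a Kunen-style independent-family argument gives you such a uniform $K$ for free, and the appeal to those tools here is a gesture rather than a proof.

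The paper's resolution is simpler and sidesteps the truncation issue entirely. Fix a regularizing family $\cu X$ for $\cu D$ and an injection $h\colon A\to\cu X$. Since each $t\in I$ lies in only finitely many members of $\cu X$, the set $A_t=\{a\in A\colon t\in h(a)\}$ of ``indices that matter at $t$'' is \emph{finite}. One then defines $B_t=\{a\in A_t\colon \theta^{\cu M}(\vec c[t],\vec b_{a[t]})<1\}$ (equivalently $\le s'$ under your bounds), notes that by the incomparability clause $\{a[t]\colon a\in B_t\}$ is a finite chain, and sets $u[t]=\bigcup\{a[t]\colon a\in B_t\}$, its maximum. No threshold $K$ is needed because one never considers the full (possibly infinite) chain $S[t]$, only the finite piece indexed by $A_t$. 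The $h(a)$ factor is then exactly what makes $Y=h(a)\cap\{t\colon \theta^{\cu M}(\vec c[t],\vec b_{a[t]})<1\}$ land in $\cu D$ with $a[t]\subseteq u[t]$ for $t\in Y$, whence $a\subseteq u$ in $\cu K_{bt}^{\cu D}$ by \L o\'s. So the missing ingredient is not a new truncation argument but the observation that the regularizing family already localizes the problem to a finite set of parameters at each coordinate.
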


\begin{proof}  Let $\cu D\in\Sat(T)$.
Then $\cu D$ is a regular ultrafilter over a set $I$, and for every model $\cu M$ of $T$, the pre-ultrapower  $\cu M^{\cu D}$ is $|I|^+$ saturated.
By Lemma \ref{l-min-max-G} (ii), it suffices to prove that $\cu D$ is good.
Suppose $A$ is a pairwise comparable subset of  $ \cu K_{bt}^\cu D$ of cardinality $\le|I|$. By Fact \ref{f-treetops} above, it is enough to
show that the set of formulas $\Gamma=\{a\subseteq u\colon a\in A\}$  is satisfiable in $\cu K_{bt}^\cu D$.
Since $\cu K_{bt}^\cu D\equiv\cu K_{bt}$, every finite pairwise comparable subset of $\cu K_{bt}^\cu D$ has a greatest element, so $\Gamma$ is finitely satisfiable in
$\cu K_{bt}^\cu D$.

There is an SOP$_2$-formula $\theta(\vec x,\vec z)$ for $T$, a model $\cu M$ of $T$, and $|\vec z|$-tuples
$\langle \vec b_\mu\rangle_{\mu\in \{0,1\}^{<\BN}}$ in $\cu M$ that satisfy Conditions (1) and (2) of Definition \ref{d-SOP2}.
For each $a\in A$, let $\vec b_a$ be the $|\vec z|$-tuple in $\cu M^\cu  D$ such that for each $t\in I$, $\vec b_a[t]=\vec b_{a[t]}$.
Consider a finite $A_0\subseteq A$.  By the \Los \ Theorem, there is a set $X_0\in\cu D$ such that for each $t\in X_0$ the set $\{a[t]\colon a\in A_0\}$
is pairwise comparable in $\cu K_{bt}$.  By Condition (1), for each $t\in X_0$ we have
$$\cu M\models (\inf_{\vec x})\max\{\theta(\vec x,\vec b_a[t])\colon a\in A_0\}=0,$$
so by the \Los \ Theorem,
$$\cu M^\cu D\models (\inf_{\vec x})\max\{\theta(\vec x,\vec b_a)\colon a\in A_0\}=0.$$
By Remark \ref{r-regular-aleph-1}, $\cu M^\cu D$ is $\aleph_1$-saturated, so there exists $\vec x$ in $\cu M^\cu D$ such that
$$\cu M^\cu D\models \max\{\theta(\vec x,\vec b_a)\colon a\in A_0\}=0.$$
Thus the set of formulas
$$\Gamma'=\{\theta(\vec x,\vec b_a)\colon a\in A\}$$
is finitely satisfiable in $\cu M^\cu D$.  Since $\cu M^\cu D$ is $|I|^+$-saturated, there is a tuple $\vec x$
that satisfies $\Gamma'$ in $\cu M^\cu D$.

Let $\cu X$ regularize $\cu D$, so $\cu X\subseteq\cu D$,  $|\cu X|=|I|$, and each $t\in I$ belongs to only finitely many $X\in\cu X$.
Pick an injective function $h\colon A\to\cu X$.
Consider an element $t\in I$. Let $A_t=\{a\in A\colon t\in h(a)\}$ be the finite set of $a\in A$ that ``matter'' for $t$.  Let
$$B_t=\{a\in A_t\colon \cu M\models \theta(\vec x[t],\vec b_a[t])<1\}.$$
By Condition (2), any two elements of $B_t$ are comparable in $\{0,1\}^{<\BN}$.   Let $u[t]=\bigcup B_t$, the maximum element of $B_t$ in $\cu K_{bt}$
(with the convention that $u[t]$ is the empty sequence if $B_t$ is empty).  Then $u\in\cu K_{bt}^\cu D$.

Fix an element $a\in A$.  Let
$$ Y= h(a)\cap\{t\in I\colon a\in B_t\}=h(a)\cap\{t\in I\colon \cu M\models \theta(\vec x[t],\vec b_a[t])<1\}.$$
By the \Los \ Theorem and the fact that $\cu M^\cu D\models \theta(\vec x,\vec b_a)=0$, we have $Y\in\cu D$.  For each $t\in Y$ we have
$\cu K_{bt}\models a[t]\subseteq u[t]$, so by the \Los \ Theorem again, $\cu K_{bt}^\cu D\models a\subseteq u$.  Therefore $u$ satisfies $\Gamma$ in $\cu K_{bt}^\cu D$,
and the proof is complete.
\end{proof}

\section{$\lek$-minimal unstable theories and the independence property}

The theory $T_{rg}$ of the random (or Rado) graph is the first order theory with a single binary relation $R$ whose axioms say that $R$ is symmetric and reflexive, and that for any two
disjoint finite sets of elements $X,Y$ there is an element $z$ such that $R(z,x)\wedge \neg R(z,y)$ for all $x\in X$ and $y\in Y$.  $T_{rg}$
is  $\aleph_0$-categorical, simple, unstable, and has quantifier elimination.

In Theorem \ref{t-rg-minimal} below, we  show that $T_{rg}$
is $\lek$-minimal among all unstable metric theories.
It will follow that the equivalence class of $T_{rg}$ is the third element of $(\BM,\lek)$ and is not maximal in $(\BM,\lek)$.
This is a continuous analogue of the result of Malliaris that  $T_{rg}$  is $\lek$-minimal among all unstable first order  theories (Fact \ref{f-rg-classical} below).

Shelah, in Theorem II.4.7 of [Sh78], proved that a first order theory is unstable if and only if it has the strict order property or the independence property.
Dzamonja and Shelah [DS04] proved that a first order theory with the strict order property has SOP$_2$, so a first order theory is unstable if and
only if it has either SOP$_2$ or the independence property.
In [BY09], Ben Yaacov introduced the continuous analogue of the independence property, which is equivalent to Definition \ref{d-indep} below.
We will show in Theorem \ref{t-SOP2-IP} below that a metric theory is unstable if and only if it has either the SOP$_2$
or the independence property.

\begin{df} \label{d-indep}
 $T$ has the \emph{independence property} if there is a formula $\varphi(\vec x,\vec y)$ and a model $\cu M$ of $T$ such that
for each $m\in\BN$ there are $|\vec x|$-tuples $\langle\vec a_Z\rangle_{Z\subseteq m}$ and $|\vec y|$-tuples $\langle\vec b_n\rangle_{n<m}$
such that for all $n<m$ and $Z\subseteq m$,
$$ n\in Z\Rightarrow \varphi^\cu M(\vec a_Z,\vec b_n)=0,\qquad n\notin Z\Rightarrow \varphi^\cu M(\vec a_Z,\vec b_n)=1.$$
We also say that $\varphi(\vec x,\vec y)$ has the independence property for $T$ (in $\cu M$).
\end{df}

\begin{rmk}  \label{r-indep}
If $\varphi(\vec x,\vec y)$ has the independence property for $T$, then for every $\aleph_1$-saturated model $\cu M$ of $T$,
$\varphi(\vec x,\vec y)$ has the independence property for $T$ in $\cu M$.
\end{rmk}

\begin{lemma}  \label{l-indep-approx}  $T$ has the independence property if and only if there exist $0\le s<r\le 1$ and a formula $\varphi(\vec x,\vec y)$
such that Definition \ref{d-indep} holds with the displayed formulas replaced by
$$ n\in Z\Rightarrow \varphi^\cu M(\vec a_Z,\vec b_n)\le s,\qquad n\notin Z\Rightarrow \varphi^\cu M(\vec a_Z,\vec b_n)\ge r.$$
\end{lemma}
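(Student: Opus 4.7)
The plan is to mirror the argument already used for Corollary \ref{c-local-unstable} (local stability) and Corollary \ref{c-SOP2-approx} (SOP$_2$): compose $\varphi$ with a continuous threshold function to convert approximate bounds $(s,r)$ into the exact bounds $(0,1)$ required by Definition \ref{d-indep}. The forward direction is immediate, since Definition \ref{d-indep} is exactly the case $(s,r)=(0,1)$ of the approximate statement, so I would dispatch it in one line.

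For the reverse direction, assume $0\le s<r\le 1$ and that $\varphi(\vec x,\vec y)$ satisfies the approximate conclusion in some $\cu M\models T$. First I would choose a continuous function $f\colon[0,1]\to[0,1]$ with $f([0,s])=\{0\}$ and $f([r,1])=\{1\}$ (e.g.\ the piecewise-linear map fixing $0$ on $[0,s]$, rising linearly on $[s,r]$, and constantly $1$ on $[r,1]$). Since continuous functions on $[0,1]$ are allowed as connectives, $\psi(\vec x,\vec y):=f(\varphi(\vec x,\vec y))$ is a $V$-formula. Then for each $m\in\BN$, take the same tuples $\langle\vec a_Z\rangle_{Z\subseteq m}$ and $\langle\vec b_n\rangle_{n<m}$ witnessing the approximate independence property for $\varphi$ at level $m$. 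The hypothesis gives $\varphi^{\cu M}(\vec a_Z,\vec b_n)\le s$ when $n\in Z$ and $\varphi^{\cu M}(\vec a_Z,\vec b_n)\ge r$ when $n\notin Z$, so applying $f$ yields
\[
n\in Z \Rightarrow \psi^{\cu M}(\vec a_Z,\vec b_n)=0, \qquad n\notin Z\Rightarrow \psi^{\cu M}(\vec a_Z,\vec b_n)=1,
\]
which is exactly the condition of Definition \ref{d-indep} for $\psi$ in $\cu M$. Hence $\psi$ has the independence property for $T$.

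There is no genuine obstacle here; the argument is the standard ``threshold composition'' trick already in the paper, and the same witnesses work for every $m$ because the continuous function $f$ is chosen once and for all, independently of $m$. The only minor bookkeeping point is to remember that Definition \ref{d-indep} quantifies over all $m\in\BN$ and the witnessing tuples can vary with $m$; but since $f$ does not depend on $m$, composing does not disturb this quantifier structure.
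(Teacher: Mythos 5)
Your proof is correct and is exactly the paper's intended argument: the paper's proof simply says ``Similar to the proof of Corollary~\ref{c-local-unstable},'' which is precisely the threshold-composition trick you spell out. Nothing to add.
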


\begin{proof}  Similar to the proof of Corollary \ref{c-local-unstable}.
\end{proof}

\begin{lemma} \label{l-indep-limit}  Let $\cu M$ be an $\aleph_1$-saturated model of $T$.  The following are equivalent:
\begin{itemize}
\item[(i)] $T$ has the independence property.
\item[(ii)] There are real numbers $0\le s<r\le 1$,  a formula $\varphi(\vec x,\vec y)$,
 a tuple $\vec a$ in $\cu M$, and an indiscernible sequence $\langle\vec b_n\rangle_{n\in\BN}$ in $\cu M$ such that
$\varphi^{\cu M}(\vec a,\vec b_n)\le s$ for all even $n$, and $\varphi^{\cu M}(\vec a,\vec b_n)\ge r$ for all odd $n$.
\item[(iii)] There is a formula $\varphi(\vec x,\vec y)$, a tuple $\vec a$ in $\cu M$, and an indiscernible sequence $\langle\vec b_n\rangle_{n\in\BN}$ in $\cu M$ such that
$\lim_{n\to\infty}\varphi^{\cu M}(\vec a,\vec b_n)$ does not exist.
\end{itemize}
\end{lemma}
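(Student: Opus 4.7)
The plan is to prove the circular chain (ii) $\Rightarrow$ (iii) $\Rightarrow$ (ii), together with (i) $\Rightarrow$ (ii) $\Rightarrow$ (i), so that all three conditions coincide.

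The two easy directions are (ii) $\Rightarrow$ (iii) and (iii) $\Rightarrow$ (ii). The first is immediate: the alternating bounds on $\varphi^{\cu M}(\vec a,\vec b_n)$ preclude convergence. For (iii) $\Rightarrow$ (ii), suppose $\lim_{n\to\infty}\varphi^{\cu M}(\vec a,\vec b_n)$ does not exist. Since $[0,1]$ is compact, $\liminf_n\varphi^{\cu M}(\vec a,\vec b_n) < \limsup_n\varphi^{\cu M}(\vec a,\vec b_n)$; choose rationals $s<r$ strictly between these, so that infinitely many $n$ give value $\le s$ and infinitely many give value $\ge r$. Alternately selecting indices $n_0<n_1<n_2<\cdots$ from these two infinite sets produces a subsequence $\langle \vec b_{n_k}\rangle$ with the alternating property. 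Any subsequence of an indiscernible sequence is indiscernible (same EM-type), so (ii) follows with the same $\vec a$.

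For (i) $\Rightarrow$ (ii), apply Lemma~\ref{l-indep-approx} to fix $s<r$ and a formula $\varphi(\vec x,\vec y)$ witnessing IP with bounds $(s,r)$; by Remark~\ref{r-indep}, the witnesses live in $\cu M$. Build a countable sequence $\langle \vec b_n\rangle_{n\in\BN}$ in $\cu M$ such that for every finite map $\epsilon\colon\{0,\ldots,k-1\}\to\{\le s,\ge r\}$ and every strictly increasing $n_0<\cdots<n_{k-1}$, some $\vec a$ satisfies $\varphi(\vec a,\vec b_{n_j})\,\epsilon_j$ all $j$. Then extract a (countable) EM-type by a Ramsey-plus-compactness argument on strict formulas (the standard continuous-logic construction; cf.\ [BU09, Lemma~5.4] and [BY09]), realized by an indiscernible sequence $\langle \vec c_n\rangle$ whose $\varphi$-values on increasing tuples are obtained as $\cu D$-limits of those of $\vec b_n$. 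This sequence, together with the universal witness property, yields a countable type $p(\vec x,\vec y_0,\vec y_1,\ldots)$ asserting that $\langle \vec y_n\rangle$ is indiscernible, agrees with the EM-type, and satisfies $\varphi(\vec x,\vec y_n)\le s$ for even $n$ and $\ge r$ for odd $n$. This $p$ is consistent with $T$, hence finitely satisfiable in $\cu M$; since $p$ is countable and $\cu M$ is $\aleph_1$-saturated, $p$ is realized in $\cu M$, producing the required $\vec a$ and indiscernible $\langle \vec b_n\rangle$.

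Finally, (ii) $\Rightarrow$ (i). Fix $\vec a$ and the indiscernible sequence $\langle \vec b_n\rangle$ with $\varphi^{\cu M}(\vec a,\vec b_n)\le s$ for even $n$ and $\ge r$ for odd $n$. Given $m\in\BN$, set $\vec b^m_n:=\vec b_{2n}$ for $n<m$. For each $Z\subseteq m$, choose a strictly increasing map $\sigma_Z\colon m\to\BN$ such that $\sigma_Z(n)$ is even iff $n\in Z$; this is always possible since both even and odd indices are cofinal. Then $\varphi^{\cu M}(\vec a,\vec b_{\sigma_Z(n)})\le s$ iff $n\in Z$, and $\ge r$ iff $n\notin Z$. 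By indiscernibility, the tuples $(\vec b_{\sigma_Z(0)},\ldots,\vec b_{\sigma_Z(m-1)})$ and $(\vec b^m_0,\ldots,\vec b^m_{m-1})$ realize the same type over $\emptyset$, so the partial type in variables $\vec x$ asserting $\varphi(\vec x,\vec b^m_n)\le s$ for $n\in Z$ and $\ge r$ for $n\notin Z$ is finitely satisfiable in $\cu M$; by $\aleph_1$-saturation of $\cu M$ (applied to this finite type), there is $\vec a_Z\in\cu M$ realizing it. Letting $m$ range over $\BN$ gives the data demanded by Lemma~\ref{l-indep-approx}, establishing IP.

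The main obstacle is the indiscernible extraction used in (i) $\Rightarrow$ (ii): one must pick the correct EM-type in continuous logic (working with strict formulas, $\varepsilon$-approximations, and $\cu D$-limits over increasing tuples) and verify that the alternating $\varphi$-bounds transfer from the original sequence to the indiscernible one. Everything else is routine manipulation of types, subsequence extraction, and $\aleph_1$-saturation.
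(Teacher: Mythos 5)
Your proof is correct and takes essentially the same approach as the paper's very terse proof, filling in the details the paper leaves implicit: the $\liminf/\limsup$ argument for (iii)$\Rightarrow$(ii), the Ramsey/EM-type extraction for (i)$\Rightarrow$(ii), and the indiscernibility-plus-saturation transfer for (ii)$\Rightarrow$(i). One small precision point in (ii)$\Rightarrow$(i): calling the finite $\vec x$-type over $\vec b^m_0,\ldots,\vec b^m_{m-1}$ ``finitely satisfiable'' is vacuous for a finite set; what indiscernibility actually transfers is that the corresponding $\inf_{\vec x}$-formula has value $0$, and $\aleph_1$-saturation (applied to the countable set of $2^{-k}$-approximations) then upgrades approximate realizers to an exact $\vec a_Z$.
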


\begin{proof}
It is trivial that (ii) implies (iii).  If (iii) holds, then it is easily seen that (ii) holds with $\langle \vec b_n\rangle_{n]\in\BN}$ replaced by
some infinite subsequence of $\langle \vec b_n\rangle_{n]\in\BN}$.  Assume (i).  Then there exist $0\le s<r\le 1$ and a formula $\varphi(\vec x,\vec y)$ satisfying the conditions
of  Lemma \ref{l-indep-approx}.   By $\aleph_1$-saturation and Ramsey's theorem, (ii) holds.

Assume (ii), and let $r,s$, $\varphi(\vec x,\vec y)$, $\vec a$, and  $\langle\vec b_n\rangle_{n\in\BN}$  be as in (ii).  Then
$$ n\in Z\Rightarrow \varphi^\cu M(\vec a,\vec b_{f(n)})\le s,\qquad n\notin Z\Rightarrow \varphi^\cu M(\vec a,\vec b_{f(n)})\ge r.$$
Since $\langle\vec b_n\rangle_{n\in\BN}$ is indiscernible in $\cu M$, it follows from Lemma \ref{l-indep-approx} that $T$ has the independence property, so (i) holds.
\end{proof}

In [BY09], the independence property was defined for metric theories  in a different and more complicated way.
Lemma 5.4 in [BY09] is the same as Lemma \ref{l-indep-limit} above except that $T$ is a metric theory, and (i) refers to the independence property as defined in [BY09].
 Thus a metric theory $T$ has the independence property in the sense of Definition \ref{d-indep} if and only if it has the independence
property in the sense of [BY09].
Here is a continuous analogue of Shelah's result that a first order theory is unstable iff it has the strict order property or the independence property.
\footnote{In [Kh19], Khanaki proved another continuous analogue of Shelah's result.  He introduced a property of metric theories called $wSOP$,
and showed that every unstable metric theory has either wSOP or the independence property.}

\begin{thm}  \label{t-SOP2-IP}  A  metric theory is unstable if and only if it has either  SOP$_2$ or the independence property.
\end{thm}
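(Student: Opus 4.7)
The plan is to prove the two directions separately. The reverse (SOP$_2$ or IP implies unstable) is a direct extraction of order-property witnesses. The forward direction (unstable and NIP implies SOP$_2$) adapts Shelah's classical dichotomy unstable $\Rightarrow$ IP or SOP (Theorem II.4.7 of [Sh78]) together with the standard passage from the strict order property to SOP$_2$.

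For the reverse direction, suppose $T$ has IP via $\varphi(\vec x,\vec y)$ with bounds $(s,r)$ and witnesses as in Lemma \ref{l-indep-approx}. Then in the monster model, the set $\{\varphi(\vec x,\vec b_k)\le s:k\ge h\}\cup\{\varphi(\vec x,\vec b_k)\ge r:k<h\}$ is finitely satisfiable, so by $\aleph_1$-saturation I obtain $\vec A_h$ realizing it; then $\langle\vec A_h,\vec b_h\rangle$ witnesses the order property for $\varphi$ with bounds $(s,r)$. Suppose instead $T$ has SOP$_2$ via $\theta(\vec x,\vec z)$ with bounds $(s,r)$ and tree $\langle\vec b_\mu\rangle$ (Corollary \ref{c-SOP2-approx}), and set $\vec B_k:=\vec b_{0^k}$. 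For each $h$, $\aleph_1$-saturation applied to the branch $0^h 1 0^\infty$ yields $\vec A_h$ with $\theta(\vec A_h,\vec B_k)\le s$ for every $k\le h$ and $\theta(\vec A_h,\vec b_{0^h 1})\le s$. For $k>h$ the nodes $0^k$ and $0^h 1$ are incomparable, so the inconsistency clause combined with $s<r$ forces $\theta(\vec A_h,\vec B_k)\ge r$. The role-swapped formula $\theta'(\vec y,\vec x):=\theta(\vec x,\vec y)$ then has the order property with bounds $(s,r)$ via $\langle\vec B_h,\vec A_h\rangle_{h\in\BN}$, so $T$ is unstable.

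For the forward direction, assume $T$ is unstable and NIP. Fix a formula $\varphi(\vec x,\vec y)$ unstable with bounds $(0,1)$ (Corollary \ref{c-local-unstable}), and by Ramsey together with $\aleph_1$-saturation of the monster extract an indiscernible sequence $\langle\vec a_h,\vec b_h\rangle_{h\in\BN}$ with $\varphi(\vec a_h,\vec b_k)=0$ when $h<k$ and $=1$ when $h>k$. Under NIP, Lemma \ref{l-indep-limit} forces $\lim_n\varphi(\vec c,\vec b_n)$ to exist for every tuple $\vec c$, and a further Ramsey refinement produces an indiscernible sequence on which these limits stabilize, yielding a Dedekind-cut structure on the sequence that gives a strict-order formula $\psi(\vec y,\vec y')$ with an infinite increasing chain. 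Using the chain, I embed the binary tree dyadically: for each $\mu\in\{0,1\}^{<\BN}$ choose indices $k(\mu)<k'(\mu)$ so that the intervals $[k(\mu),k'(\mu)]$ strictly nest along each branch and are pairwise disjoint for incomparable nodes, and set $\vec b_\mu=(\vec b_{k(\mu)},\vec b_{k'(\mu)})$. The SOP$_2$ candidate
\[
\theta(\vec x,(\vec y_1,\vec y_2)):=\max\bigl(\varphi(\vec x,\vec y_1),\,1\dotminus\varphi(\vec x,\vec y_2)\bigr)
\]
satisfies the branch clause, because along each branch the nested intervals together with the order-property witnesses $\vec a_h$ at intermediate indices and $\aleph_1$-saturation produce a realizer of $\theta=0$ at all initial segments simultaneously; and it satisfies the inconsistency clause, because for incomparable $\mu,\nu$ the disjointness of the two intervals combined with the cut structure forbids any common realizer to make both $\varphi(\vec x,\vec y_1)=0$ and $\varphi(\vec x,\vec y_2)=1$ on both nodes.

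The main obstacle is the construction of the strict-order/cut structure and its encoding as SOP$_2$ in the continuous setting. Classically, NIP prevents unbounded alternation of $\varphi(\vec c,\vec b_n)$ along any indiscernible sequence, which is precisely what converts disjoint interval requirements into genuine inconsistency and yields the Dedekind cuts; in continuous logic the same idea applies, but one must track approximation bounds carefully through the $\dotminus$ and $\max$ connectives and verify that the required separation $s<r$ in the SOP$_2$ bounds is preserved uniformly across all branches and all incomparable pairs of the tree.
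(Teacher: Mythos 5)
Your reverse direction is correct. For IP $\Rightarrow$ unstable you extract the order property directly, and for SOP$_2 \Rightarrow$ unstable you give an elegant elementary argument (picking witnesses along branches $0^h 1 0^\infty$ and using the incomparability clause to force the $\ge r$ side). The paper instead routes SOP$_2 \Rightarrow$ unstable through Theorem \ref{t-SOP2} ($\lek$-maximal) and Theorem \ref{t-stable-vs-unstable}, so your argument is a self-contained alternative that avoids the ultrafilter machinery.

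The forward direction has a genuine gap. Your candidate SOP$_2$ formula $\theta(\vec x,(\vec y_1,\vec y_2))=\max(\varphi(\vec x,\vec y_1),1\dotminus\varphi(\vec x,\vec y_2))$ is a single betweenness constraint — the continuous analogue of the paper's $\psi_0$ — and it does not in general satisfy the inconsistency clause. The issue is that in continuous logic NIP only forces $\lim_n\varphi^{\cu M}(\vec c,\vec b_n)$ to exist for each $\vec c$; it does not produce a sharp cut. The "transition region" where $\varphi^{\cu M}(\vec c,\vec b_n)$ is strictly between $0$ and $1$ can be arbitrarily wide, so for two incomparable nodes with disjoint index intervals there may still be $\vec x$ making both betweenness values arbitrarily small, and $\inf_{\vec x}\max(\theta(\vec x,\vec b_\mu),\theta(\vec x,\vec b_\nu))$ has no uniform positive lower bound. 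The paper fixes this by introducing $\psi_n$, a simultaneous conjunction of $2^n$ betweenness constraints across $2^{n+1}$ evenly-spaced parameters, and using NIP to locate the greatest $N$ with $r_N:=\inf_{\vec t}\psi_N=0$; the SOP$_2$ formula is then $\psi_N$, and the separation constant $r_{N+1}>0$ comes from the maximality of $N$ together with the observation that $\max$ of two disjoint $\psi_N$ instances has the shape of $\psi_{N+1}$. You gesture at "tracking approximation bounds carefully," but this is not a bookkeeping issue: a qualitatively different formula of exponentially larger arity is required, and your two-parameter candidate cannot be repaired into it.
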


\begin{proof}  It is easily seen that if $T$ has the independence property, then $T$ is unstable.  If $T$ has SOP$_2$, then
$T$ is $\lek$-maximal by Theorem \ref{t-SOP2}, so $T$ is unstable by Theorem \ref{t-stable-vs-unstable}.

Suppose $T$ is unstable and does not have the independence property.  By Corollary \ref{c-local-unstable}, there is  a
formula $\varphi(\vec x,\vec y)$ that is unstable for $(0,1)$ in $T$.
Thus in some $\aleph_1$-saturated model
$\cu M\models T$ there is an infinite sequence $\langle \vec a_n,\vec b_n\rangle_{n\in\BN}$ of $|(\vec x,\vec y)|$-tuples such that whenever $h<k$,
$\varphi^{\cu M}(\vec a_h,\vec b_k)=0$ and $\varphi^{\cu M}(\vec a_k,\vec b_h)=1$.  We may also take $\langle \vec a_n,\vec b_n\rangle_{n\in\BN}$
so that $\varphi(\vec a_h,\vec b_h)=0$ for all $h$.

Let $\vec u=(\vec x,\vec y)$,  let $\vec v=(\vec x_*, \vec y_*)$ be a tuple of new variables with the same length as $\vec u$, and let $\vec c_h=(\vec a_h,\vec b_h)$.
Then $\theta(\vec u,\vec c_h)$ is the formula $\varphi(\vec x,\vec b_h)$, and $\theta(\vec c_h,\vec c_k)$ is the formula $\varphi(\vec a_h,\vec b_k)$.
 $\theta(\vec u,\vec v)$ is unstable for $(0,1)$ in $T$ with $|\vec u|=|\vec v|$,
and whenever $h<k$ we have $\theta^\cu M(\vec c_h,\vec c_k)=0$ and $\theta^{\cu M}(\vec c_k,\vec c_h)=1$.
Using Ramsey's theorem and $\aleph_1$-saturation, there is an indiscernible family $\cu I=\langle \vec c_m\rangle_{m\in\BQ}$ in $\cu M$
indexed by the set $\BQ$ of dyadic rationals in $(0,1)$
such that whenever $h<k$ we have $\theta^\cu M(\vec c_h,\vec c_k)=0$ and $\theta^{\cu M}(\vec c_k,\vec c_h)=1$, and $\theta^{\cu M}(\vec c_h,\vec c_h)=0$.

Let $\vec u\sqsubset\vec v$ be the formula
$$\vec u\sqsubset\vec v:\quad \max(\theta(\vec u,\vec v),1-\theta(\vec v,\vec u)).$$
Thus $\cu M\models\vec u\sqsubset\vec v = 0$ if and only if $ \theta^{\cu M}(\vec u,\vec v)=0$ and $ \theta^{\cu M}(\vec v,\vec u)=1$.
Let $\beta(\vec u,\vec t,\vec v)$ be the formula saying that $\vec t$ is ``between'' $\vec u$ and $\vec v$:
$$\beta(\vec u,\vec t,\vec v):\quad\max(\vec u\sqsubset\vec v,\vec u\sqsubset\vec t,\vec t\sqsubset\vec v).$$

For each $n\in\BN$, let $\psi_n(\vec t,\langle \vec u_m\rangle_{m<2^{n+1}})$ be the formula saying that
$\vec t$ is between $\vec u_m$ and $\vec u_{m+1}$ for each even $m<2^{n+1}$.
 Formally,
$$ \psi_n(\vec t,\langle \vec u_m\rangle_{m<2^{n+1}}): \quad \max_{h< 2^n}\beta(\vec u_{2h},\vec t,\vec u_{2h+1}).$$
By indiscernibility, for each $n\in\BN$ and increasing sequence $\langle h(0),\ldots,h(2^n-1)\rangle$ in $\BQ$, the sentence
$(\inf_{\vec t})\psi_n(\vec t,\langle \vec c_{h(m)}\rangle_{m<2^{n+1}})$ has the same value $r_n$ in $\cu M$.
It is clear that $r_0=0$, because $\beta^{\cu M}(\vec c_h,\vec c_k,\vec c_\ell)=0$ when $h<k<\ell$.

If there were infinitely many $n$ such that $r_n=0$, there would be an indiscernible subsequence $\langle \vec a_n\rangle_{n\in\BN}$ of $\cu I$
and a tuple $\vec t$ in $\cu M$ such that $\theta^{\cu M}(\vec t,\vec a_n)= 1$ for each even $n$ and $\theta^{\cu M}(\vec t,\vec a_n)= 0$ for each odd $n$.
But by Lemma \ref{l-indep-limit}, this would contradict the hypothesis that $T$ does not have the independence property.  Therefore there is a greatest $N\in\BN$
such that $r_N=0$, and thus $r_{N+1}>0$.

We will show that  $\psi_N$ is an SOP$_2$ formula with bounds $(0,r_{N+1})$ for $T$. Then by Corollary \ref{c-SOP2-approx}, it will follow that $T$ has SOP$_2$.

For each $h<k$ in $\BQ$,  let $\vec d_{h,k}$ be the sequence  with $2^{N+1}$ elements evenly spaced between $\vec c_h$ and $\vec c_k$,
so $\vec d_{h,k}=\langle \vec c_{h+m\delta}\rangle_{m<2^{N+1}}$
where $\delta=(k-h)/2^{N+1}$.  It follows that whenever $h_1<k_1<h_2<k_2$ in $\BQ$, we have
\begin{equation}  \label{e-SOP2-1}
\cu M\models (\inf_{\vec t})\psi_N(\vec t,\vec d_{h_1,k_1})=0,
\end{equation}
but
\begin{equation} \label{e-SOP2-2}
\cu M\models (\inf_{\vec t})\max[\psi_N(\vec t,\vec d_{h_1,k_1}),\psi_n(\vec t,\vec d_{h_2,k_2})]\ge r_{N+1}>0.
\end{equation}
For each node $\mu$ in the binary tree $\{0,1\}^{<\BN}$, pick dyadic rationals $h_\mu,k_\mu\in\BQ$ such that
$$h_\mu<h_{\mu0}<k_{\mu0}<h_{\mu1}<k_{\mu1}<k_\mu.$$
This gives us a nested binary tree of intervals in $\BQ$.  By (\ref{e-SOP2-1}) and (\ref{e-SOP2-2}), $\Psi_N$ is an SOP$_2$
formula with bounds $(0,r_N)$ in $T$ with the parameters $\vec b_\mu=\vec d_{h_\mu,k_\mu}$.
\end{proof}
\medskip

\begin{fact}  \label{f-rg-classical}  (Lemma 5.3 in Malliaris [Ma12]) $T_{rg}$  is $\lek$-minimal among all unstable first order theories,
that is, $T_{rg}\lek S$ for every unstable first order theory $S$.
\end{fact}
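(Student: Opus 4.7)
The plan is to use the distribution-based criterion for saturation (Lemma \ref{l-dist-saturates}) together with the first-order dichotomy that every unstable first order theory has either $\text{SOP}_2$ or the independence property. Fix a regular ultrafilter $\cu D$ over $I$ with $|I|=\lambda$ that saturates an unstable first order theory $S$; I want to show $\cu D$ saturates $T_{rg}$. If $S$ has $\text{SOP}_2$, then by Fact \ref{f-FO-maximal}, $S$ is $\lek$-maximal among first order theories and $T_{rg}\lek S$ is immediate. So I may assume that $S$ has IP in the first-order sense, witnessed by a formula $\varphi(x,\vec y)$ and parameters $(\vec b_n)_{n\in\BN}$ in some $\cu M\models S$.

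Let $\cu K_{rg}$ be the countable random graph. To show $\cu D$ saturates $T_{rg}$, I show $\cu K_{rg}^{\cu D}$ is $\lambda^+$-saturated. Let $p(x)$ be a finitely satisfiable type over parameters $\{\alpha_i:i<\lambda\}\subseteq \cu K_{rg}^{\cu D}$. By quantifier elimination for $T_{rg}$, I may assume
$$p(x)=\{x\neq\alpha_i:i<\lambda\}\cup\{R(x,\alpha_i)^{\epsilon_i}:i<\lambda\},$$
where $R^1=R$, $R^0=\neg R$, and $\epsilon\colon\lambda\to\{0,1\}$. By Lemma \ref{l-dist-exist}, $p$ has an accurate distribution $\delta$ in $\cu K_{rg}^{\cu D}$; by Lemma \ref{l-dist-saturates} it suffices to find a multiplicative refinement of $\delta$.

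I now translate the problem into $\cu M^{\cu D}$ via $\varphi$. Using the IP witnesses, I construct parameters $\vec\beta_i\in\cu M^{\cu D}$ so that the type $q(x)=\{\varphi(x,\vec\beta_i)^{\epsilon_i}:i<\lambda\}$ is finitely satisfiable in $\cu M^{\cu D}$; the point is that IP lets any finite $\{0,1\}$-pattern be realized in $\cu M$ by $\varphi$, which via \Los{} transports to the ultrapower. The distribution $\delta$ then induces an associated distribution $\delta^\ast$ of $q$ in $\cu M^{\cu D}$. Since $\cu D$ saturates $S$, applying Lemma \ref{l-dist-saturates} to $q$ yields a multiplicative refinement $\delta^{\ast\prime}$ of $\delta^\ast$. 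I then invert the translation to extract from $\delta^{\ast\prime}$ a multiplicative refinement $\delta'$ of $\delta$, using that once the finite index set in the refinement is fixed at a coordinate $t\in I$, a random graph vertex with the prescribed edge pattern to the $\alpha_i[t]$'s exists in $\cu K_{rg}$.

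The main obstacle is controlling the pointwise behavior of $p$ through the translation. Although $T_{rg}$-types are combinatorially simple, the parameters $\alpha_i[t]$ for distinct $i$ may collide in $\cu K_{rg}$ at individual coordinates, so the naive pointwise recipe -- pick any $c[t]\in\cu K_{rg}$ with the right edges -- fails when $\alpha_i[t]=\alpha_j[t]$ with $\epsilon_i\neq\epsilon_j$. The saturation of $S$ is exactly what supplies a distribution whose refinement restricts to ``collision-free'' index sets on each coordinate. Arranging the translation so that multiplicativity on the $S$-side pulls back to multiplicativity on the $T_{rg}$-side, while respecting this pointwise distinctness property, is the technical heart of the argument and the step that distinguishes the ultrafilters saturating unstable theories from arbitrary regular ultrafilters.
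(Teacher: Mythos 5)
Your high-level strategy --- the SOP$_2$/IP dichotomy and a translation of the random graph type into $\cu M^{\cu D}$ via the IP formula --- is correct and matches the paper's proof of the metric generalization (Theorem \ref{t-rg-minimal}), but there is a genuine gap in the translation step, and your last paragraph, while it spots the right symptom (pointwise collisions of the $\alpha_i[t]$), misattributes its cure. Saturation of $S$ is not what resolves the collision problem. What resolves it is a constraint on the choice of the translated parameters that you never impose: at every coordinate $t$, the value $\vec\beta_i[t]$ must depend only on $\alpha_i[t]$. Concretely, at each $t$ one fixes a finite family of IP witnesses indexed by the finite set of values $\{\alpha_i[t]\colon t\in h(\alpha_i)\}$, and sets $\vec\beta_i[t]$ to be the witness attached to the value $\alpha_i[t]$. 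This guarantees that a collision $\alpha_i[t]=\alpha_j[t]$ in $\cu K_{rg}$ forces $\vec\beta_i[t]=\vec\beta_j[t]$ in $\cu M$, so that satisfiability of a $q$-side fragment at $t$ actually implies satisfiability of the corresponding $p$-side fragment at $t$.

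Without that coordination, your transferred map $\delta^\ast$ need not be an \emph{accurate} distribution of $q$: a coordinate $t$ where the $\vec\beta_i[t]$ happen to be distinct while the $\alpha_i[t]$ collide with opposite signs witnesses the failure of accuracy. Then Lemma \ref{l-dist-saturates}(iii) does not apply to $\delta^\ast$, and a multiplicative distribution of $q$ produced by Lemma \ref{l-dist-saturates}(ii) (attached to some unrelated accurate distribution of $q$) has no reason to pull back to a distribution of $p$, precisely because $\Psi[t]$ being satisfiable in $\cu M$ does not entail $\Phi[t]$ being satisfiable in $\cu K_{rg}$. The paper's own proof bypasses the distribution machinery for this result: it builds the coordinated parameters $\hat a$ in $\cu M^{\cu D}$ with $\hat a[t]$ determined by $a[t]$, realizes the transported type $\Gamma'$ using $|I|^+$-saturation of $\cu M^{\cu D}$, and then constructs $z[t]$ coordinate-by-coordinate, exploiting the observation that $\varphi^{\cu M}(\vec x[t],\hat b[t])\le r$ and $\varphi^{\cu M}(\vec x[t],\hat c[t])\ge 1-r$ with $r<1/2$ force $\hat b[t]\ne\hat c[t]$ and hence, by the coordination, $b[t]\ne c[t]$, so the required vertex exists by the random graph axioms. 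Your distribution-based route can be repaired, but only after the coordination step is built in; as written, it is the missing step.
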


We now improve Fact \ref{f-rg-classical} by showing that $T_{rg}$ is $\lek$-minimal among all unstable metric theories.

\begin{thm}  \label{t-rg-minimal}  $T_{rg}\lek T$ for every unstable metric theory $T$.
\end{thm}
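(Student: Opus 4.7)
By Theorem \ref{t-SOP2-IP}, any unstable metric theory $T$ has either SOP$_2$ or the independence property. If $T$ has SOP$_2$, then $T$ is $\lek$-maximal by Theorem \ref{t-SOP2}, so $T_{rg}\lek T$ is immediate. The remaining case is where $T$ has the independence property, and my plan is to adapt Malliaris's first-order argument (Fact \ref{f-rg-classical}) to this setting, using the machinery of continuous distributions from Section \ref{s-general}.

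Fix $\cu D\in\Sat(T)$, a model $\cu N\models T_{rg}$, and a type $p(z)$ over parameters $A=\{a_\alpha\colon\alpha<\lambda\}\subseteq N^\cu D$ with $|A|\le\lambda=|I|$. Since $T_{rg}$ has quantifier elimination and is two-valued, $p$ is determined by a function $\eta\colon A\to\{0,1\}$ recording whether $R(z,a_\alpha)$ holds. By Lemma \ref{l-dist-saturates} it suffices to produce a multiplicative distribution of $p$ in $\cu N^\cu D$. Let $\varphi(\vec x,\vec y)$ witness IP for $T$ in some model $\cu M\models T$ with bounds $(s,r)$ via Lemma \ref{l-indep-approx}. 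First, using regularity of $\cu D$, I fix a regularizing family $\cu X\subseteq\cu D$ and an injective map $A\to\cu X$, $a_\alpha\mapsto X_\alpha$, so that for each $t\in I$ the ``active'' set $F_t=\{\alpha\colon t\in X_\alpha\}$ is finite. For each $t\in I$, since $\varphi$ has IP in $\cu M$, I can choose tuples $\langle\vec b_\alpha[t]\colon\alpha\in F_t\rangle$ in $\cu M$ and a tuple $\vec a^*[t]\in\cu M$ such that $\varphi^{\cu M}(\vec a^*[t],\vec b_\alpha[t])\le s$ when $\eta(a_\alpha)=1$ and $\ge r$ when $\eta(a_\alpha)=0$, for all $\alpha\in F_t$; this is possible because IP provides arbitrarily large finite $\varphi$-independent families in $\cu M$. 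For $\alpha\notin F_t$, set $\vec b_\alpha[t]$ to anything. This gives tuples $\vec b_\alpha\in M^\cu D$ and a witness tuple $\vec a^*\in M^\cu D$.

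The consequence is that the $\varphi$-type
\[
q(\vec x)=\{\varphi(\vec x,\vec b_\alpha)\dotle s\colon\eta(a_\alpha)=1\}\cup\{r\dotle\varphi(\vec x,\vec b_\alpha)\colon\eta(a_\alpha)=0\}
\]
is finitely satisfiable in $\cu M^\cu D$ (in fact, the map $\Psi\mapsto\bigcap_{\psi\in\Psi}X_\alpha(\psi)$ where $\alpha(\psi)$ is the parameter index of $\psi$, intersected with $\{t\colon\cu M\models\psi[t]=0\}$, is a multiplicative distribution of $q$). Since $\cu D\in\Sat(T)$, $\cu M^\cu D$ is $\lambda^+$-saturated, so by Lemma \ref{l-dist-saturates} applied in the forward direction (i) $\Rightarrow$ (ii), there is a multiplicative distribution $\delta^*$ of $q$ in $\cu M^\cu D$. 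The bijection between atomic formulas of $p$ and formulas of $q$ sending $R^{\eta(a_\alpha)}(z,a_\alpha)$ to the corresponding $\varphi$-inequality induces a bijection $\Psi\mapsto\Psi^\sharp$ on finite subsets; setting $\delta_0(\Psi)=\delta^*(\Psi^\sharp)\cap\{t\in I\colon\Psi\text{ is satisfiable in }\cu N\text{ at }t\}$ gives a multiplicative distribution of $p$ in $\cu N^\cu D$ (using that any finite partition of random-graph vertices can be separated in $\cu N$ at every index). By Lemma \ref{l-dist-saturates} (ii) $\Rightarrow$ (i), $p$ is realized in $\cu N^\cu D$.

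\textbf{The main obstacle} is the coordinated choice in the second paragraph: the tuples $\vec b_\alpha$ must be chosen so that at each index $t$ only finitely many ``active'' parameters need to be $\varphi$-independent, and simultaneously the atomic-formula bijection $\Psi\mapsto\Psi^\sharp$ must preserve the multiplicativity property. The regularizing trick solves both issues simultaneously, but the metric aspect (the strict inequality $s<r$ instead of $0<1$) forces one to use $\dotle$ approximations and appeal to the $\aleph_1$-saturation of the ultrapowers from Remark \ref{r-regular-aleph-1} to pass between the $\varepsilon$-satisfiability formulation of formulas in $q$ and genuine realization via $\inf$-witnesses.
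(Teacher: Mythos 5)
Your reduction to the SOP$_2$ and IP cases via Theorems \ref{t-SOP2-IP} and \ref{t-SOP2} matches the paper, and your instinct to transplant the $R$-constraints into a $\varphi$-type in $\cu M^\cu D$ and solve it there is also the right skeleton. But there is a genuine gap in the second half, and it shows up in two linked ways.

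First, a red flag: you select, at each index $t$, a witness tuple $\vec a^*[t]\in\cu M$ realizing the pattern $\eta\!\restriction\!F_t$. Then for each parameter $a_\alpha$ with $\eta(a_\alpha)=1$ the set $X_\alpha\subseteq\{t\colon\varphi^{\cu M}(\vec a^*[t],\vec b_\alpha[t])\le s\}$ lies in $\cu D$, and similarly for $\eta=0$, so by \L o\'s the product tuple $\vec a^*$ already realizes $q$ in $\cu M^\cu D$. Your appeal to $\lambda^+$-saturation is therefore vacuous, and the hypothesis $\cu D\in\Sat(T)$ is never actually used. A proof of $T_{rg}\lek T$ that never uses a property of $T$ would prove $T_{rg}$ is $\lek$-minimal, which is false; that means the load-bearing step has to be elsewhere, and that step is broken.

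Second, the transfer $\delta^*\mapsto\delta_0$ is not multiplicative. If $\Psi_1$ involves $b\in B$ and $\Psi_2$ involves $c\in C$ with $\eta(b)\ne\eta(c)$, then $\Psi_1$ and $\Psi_2$ can each be satisfiable in $\cu N$ at $t$ while $\Psi_1\cup\Psi_2$ is not, precisely when $b[t]=c[t]$. So $\delta_0(\Psi_1)\cap\delta_0(\Psi_2)$ may strictly contain $\delta_0(\Psi_1\cup\Psi_2)$, and the parenthetical justification (``any finite partition of random-graph vertices can be separated'') silently assumes the two cells of the partition are disjoint, which is exactly what needs to be arranged. The deeper source of the problem is that you index the IP family $\langle\vec b_\alpha[t]\rangle$ by $\alpha\in F_t$, the parameter \emph{index}, not by the value $a_\alpha[t]\in\cu K_{rg}$. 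Nothing forces $\vec b_\alpha[t]=\vec b_\beta[t]$ when $a_\alpha[t]=a_\beta[t]$, so the inconsistency is never detected on the $\cu M$-side.

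The paper avoids both issues by choosing, for each $t$, an IP family in $\cu M$ indexed by the finite set $A[t]=\{a[t]\colon a\in A,\ t\in h(a)\}$ of \emph{values}, and setting $\hat a[t]$ to be the family member labeled by $a[t]$. Then one merely checks that the resulting $\varphi$-type $\Gamma'$ is \emph{finitely} satisfiable in $\cu M^\cu D$ (this uses $\cu K_{rg}^\cu D\models b\ne c$ for $b\in B$, $c\in C$ plus \L o\'s), invokes $\lambda^+$-saturation of $\cu M^\cu D$ to realize $\Gamma'$ by a tuple $\vec x$, and then reads off $z[t]\in\cu K_{rg}$ directly from $\vec x[t]$: the value $\varphi^{\cu M}(\vec x[t],\hat a[t])$ cannot be both $\le r$ and $\ge 1-r$ (for $0<r<1/2$), and parameters with the same value at $t$ get the same $\hat a[t]$, so the assignment is automatically consistent. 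No distribution argument is needed on the $\cu K_{rg}$-side; the coordination is built into the choice of $\hat a$. To repair your proof, you should drop the witness $\vec a^*[t]$ (so that $\cu D\in\Sat(T)$ does real work), re-index the IP family by $A[t]$ rather than by $F_t$, and replace the distribution transfer with a direct construction of $z[t]$ from the realization.
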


\begin{proof}  Suppose $T$ is unstable.    If $T$ has SOP$_2$,
then $T$ is $\lek$-maximal by Theorem \ref{t-SOP2}, and hence $T_{rg}\lek T$.

Now suppose $T$ does not have  SOP$_2$.  By Theorem \ref{t-SOP2-IP}, $T$ has the independence property.  Then there is a continuous formula
$\varphi(\vec x,\vec y)$ that has the independence property for $T$.  Let $\cu D\in\Sat(T)$, and $\cu M$ be an $\aleph_1$-saturated model of $T$.
Then $\cu M^\cu D$ is $|I|^+$-saturated.

Let $\cu K_{rg}$ be a first order structure which is a model of $T_{rg}$. We must show that $\cu K_{rg}^\cu D$ is $|I|^+$-saturated.  Let $A\subseteq\cu K_{rg}^\cu D$
have cardinality $|A|\le |I|$, and let  $p(z)$ be a set of formulas with parameters in $A$ that is maximal consistent in $\cu K_{rg}^\cu D$.
Since $T_{rg}$ has elimination of quantifiers, and its vocabulary has only equality and the binary predicate symbol $R$ which is symmetric in $T_{rg}$,
two elements $z_1,z_2$ of $\cu K_{rg}^{\cu D}$ have the same type over $A$ if and only if
$$\{a\in A\colon \cu K_{rg}^{\cu D}\models R(z_1,a)\}=\{a\in A\colon \cu K_{rg}^{\cu D}\models R(z_2,a)\}.$$
Therefore there are sets $B,C$ such that $A=B\cup C$ and $p(z)$ is equivalent in $\cu K_{rg}^{\cu D}$ to the set of formulas
$$\Gamma=\Gamma(x,A)=\{ R(z,b)\colon b\in B\}\cup\{\neg R(z,c)\colon c\in C\}.$$
Then $\Gamma$ is finitely satisfiable in $\cu K_{rg}^\cu D$, and $\cu K_{rg}^\cu D\models b\ne c$ for all $b\in B$ and $c\in C$.
To complete the proof it suffices to show that $\Gamma$ is satisfiable in $\cu K_{rg}^\cu D$.

As in the proof of Theorem \ref{t-SOP2}, let $\cu X$ regularize $\cu D$, and pick an injective function $h\colon A\to\cu X$.
For each $t\in I$, the sets
$$A_t=\{a\in A\colon t\in h(a)\}\subseteq\cu K_{rg}^\cu D, \qquad A[t]=\{a[t]\colon a\in A, t\in h(a)\}\subseteq\cu K_{rg} $$
are finite.
Since $\varphi(\vec x,\vec y)$ has the independence property for $T$, for each $a\in A$ we may choose a $|\vec y|$-tuple $ \hat a$ in $\cu M^\cu D$
such that for each $t\in I$ and set $Z\subseteq A[t]$ there is a tuple $\vec x_Z$ in $\cu M$ for which
$$a[t]\in Z\Rightarrow\varphi^{\cu M}(\vec x_Z,\hat a[t])=0, \qquad a[t]\in A[t]\setminus Z\Rightarrow \varphi^{\cu M}(\vec x_Z,\hat a[t])=1.$$
Let $\Gamma'$ be the set of continuous formulas
$$\Gamma'=\{\varphi(\vec x,\hat b)=0,\varphi(\vec x,\hat c)=1\colon b\in B, c\in C\}.$$
Let $B_0, C_0$ be finite subsets of $B, C$.
By the \Los \ Theorem applied to  $\cu K_{rg}^{\cu D}$, the set
$$X=\{t\in I\colon(\forall b\in B_0)(\forall c\in C_0) b[t]\ne c[t]\}$$
belongs to $\cu D$.  For each $t\in X$ there is a tuple $\vec x$ in $\cu M$ such that
$$(\forall b\in B_0)\varphi^\cu M(\vec x,\hat b[t])=0, \quad (\forall c\in C_0)\varphi^\cu M(\vec x,\hat c[t])=1.$$
Hence by the \Los \ Theorem, $\Gamma'$ is finitely satisfiable in $\cu M^\cu D$.
Since $\cu M^\cu D$ is $|I|^+$-saturated, there is a tuple
$\vec x$ that satisfies $\Gamma'$ in $\cu M^\cu D$.

We now show that $\Gamma$ is satisfiable in $\cu K_{rg}^\cu D$ by finding an element $z\in\cu K_{rg}^\cu D$ that ``matches'' $\vec x$.  Fix a real number $0<r<1/2$.
Let $t\in I$.
Note that if $b,c\in A$, $\varphi^\cu M(\vec x[t],\hat b[t])\le r$, and $\varphi^\cu M(\vec x[t],\hat c[t])\ge 1-r$, then $b[t]\ne c[t]$.
So there is an element $z[t]\in\cu K_{rg}$ such that for each $a\in A_t$:
\begin{itemize}
\item $\varphi^\cu M(\vec x[t],\hat a[t])\le r \Rightarrow \cu K_{rg}\models R(z[t],a[t])$.
\item $\varphi^\cu M(\vec x[t],\hat a[t])\ge 1-r \Rightarrow \cu K_{rg}\models \neg R(z[t],a[t])$.
\end{itemize}
Since $\vec x$ satisfies $\Gamma'$ in $\cu M^\cu D$, for each $b\in B$ and $c\in C$, the set
$$Y=\{ t\in I\colon t\in h(b)\cap h(c), \varphi^\cu M(\vec x[t],\hat b[t])\le r, \varphi^\cu M(\vec x[t],\hat c[t])\ge 1-r\}$$
belongs to $\cu D$.  The element $z[t]$ was chosen so that the set
$$\{ t\in I\colon \cu K_{rg}\models R(z[t],b[t])\wedge \neg R(z[t],c[t])\}$$
contains $Y$, and thus also belongs to $\cu D$.  Therefore, by the \Los \ Theorem, $\cu K_{rg}^\cu D\models R(z,b)\wedge \neg R(z,c)$.
This shows that $z$ satisfies $\Gamma$ in $\cu K_{rg}^\cu D$, so $\cu D\in\Sat(T_{rg})$ and  $T_{rg}\lek T$.
\end{proof}

\section{$\lek$-minimal  TP$_2$ theories}

In Theorem  \ref{t-min-nonsimple} and Corollary \ref{c-dividingline} below, we will prove that there are metric theories $T$ that are
$\lek$-minimal among metric TP$_2$ theories and  strictly above $T_{rg}$.  These results are the continuous analogues of the first order Facts \ref{f-minimal-nonsimple}
 and \ref{f-dividingline}.
Also, in Theorem \ref{t-min-rgR}, we will give a natural example of such a theory $T$.
That example is  $T^R_{rg}$, the randomization of $T_{rg}$.
Our proofs  will use the notion of a distribution of a set of continuous formulas from Section \ref{s-general} above, as well as the earlier notion
of a distribution of a set of first order formulas introduced by Malliaris in [Ma12].

 Shelah [Sh80] defined simple theories in first order logic and proved that a first order theory is simple if and only if it has neither SOP$_2$
nor TP$_2$.
The theory $T^*_{feq}$ is the model completion of the first order theory of infinitely many parameterized equivalence relations
(see [DS04] for the precise definition).
$T^*_{feq}$ is TP$_2$ but not SOP$_2$, and thus is not simple.

\begin{fact} \label{f-minimal-nonsimple} (Corollary 6.10 in Malliaris  [Ma12])
$T^*_{feq}$  is $\lek$-minimal among  first order theories  with TP$_2$.
\end{fact}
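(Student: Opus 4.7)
The plan is to establish $\Sat(U)\subseteq\Sat(T^*_{feq})$ for every first order theory $U$ with TP$_2$, working in first order logic where the distribution machinery of Malliaris [Ma09] is directly available (the first-order analogues of Lemmas \ref{l-dist-exist} and \ref{l-dist-saturates} above, with $\Gamma$ used in place of $\Gamma^{ap}$ and existential witnesses in place of $\inf$). Fix a regular ultrafilter $\cu D$ over a set $I$ of cardinality $\lambda$ that saturates $U$, let $\cu M\models T^*_{feq}$, and let $p(x)$ be a complete type over some $A\subseteq M_\cu D$ with $|A|\le\lambda$. By the first-order analogue of Lemma \ref{l-dist-saturates}, it suffices to produce a multiplicative distribution of $p$ in $\cu M_\cu D$.

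First I would exploit the very transparent structure of types in $T^*_{feq}$: by quantifier elimination, a type in the point sort over $A$ is determined by a coherent choice, for each parameter $c$ in the equivalence-relation sort appearing in $A$, of an $E_c$-class --- either an existing class specified by a representative or a new class specified via finitely many negative formulas --- and any finite set of such constraints that is per-$c$ consistent is realized in $\cu M$ by the genericity axioms of the model completion. Thus by Lemma \ref{l-dist-exist} (first-order version) I may fix an accurate distribution $\delta$ of $p$ whose finite pieces $\Gamma(\delta,t)$ decompose across $t\in I$ along the parameters appearing in them, with each $\Gamma(\delta,t)$ realized independently per $E_c$ inside $\cu M$.

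Next I would exploit TP$_2$ of $U$: fix in some $\cu N\models U$ a formula $\psi(y,\vec z)$ and an array $\langle \vec e_{i,j}\rangle_{i,j<\omega}$ such that every row $\{\psi(y,\vec e_{i,j})\colon j<\omega\}$ is $k$-inconsistent while every path $\{\psi(y,\vec e_{i,f(i)})\colon i<\omega\}$ is consistent. The key step is to encode, for each $t\in I$, the equivalence-class demands of $\Gamma(\delta, t)$ as a path-selection demand on finitely many rows of this TP$_2$ array: distinct parameters $c$ are assigned to distinct rows, and the $E_c$-class picked out in $\Gamma(\delta, t)$ is matched to a column in the $c$-row. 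This produces a set of first order formulas with parameters in $\cu N_\cu D$ whose finite satisfiability is built in from the path-consistency of the array, and whose satisfiability in $\cu N_\cu D$ is guaranteed by $\cu D\in\Sat(U)$. The realizing tuple in $\cu N_\cu D$ then supplies, via Lemma \ref{l-dist-saturates} applied inside $\cu N$, a multiplicative refinement of the encoded distribution, which pulls back along the encoding to a multiplicative refinement of $\delta$ itself.

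The main obstacle, and the technical core of Malliaris's argument in [Ma12], is designing the encoding so that multiplicativity is preserved: the TP$_2$ array has independent rows, while a $T^*_{feq}$-type over $A$ exerts fully independent demands across distinct parameters $c$, so one must arrange the bijection between parameters and rows (and the matching of equivalence classes to columns within each row) so that the conjunction of two constraints in $p$ corresponds exactly to the intersection of their encoded $\cu D$-sets. It is precisely this clean matching of TP$_2$-style independence between rows with the independently parameterized equivalence relations of $T^*_{feq}$ that makes TP$_2$ sufficient to support realization of arbitrary $T^*_{feq}$-types, thereby placing $T^*_{feq}$ at the bottom of the TP$_2$ portion of $(\BF,\lek)$.
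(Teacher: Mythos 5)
The paper does not prove this fact; it cites it as Corollary~6.10 of Malliaris~[Ma12]. What the paper does prove is the continuous analogue, Theorem~\ref{t-min-nonsimple}, and there the argument is deliberately factored through an intermediate combinatorial property: Definition~\ref{d-solves} (``$\cu D$ solves $(\omega,\omega)$''), Lemma~\ref{l-TP2-solves} (if $\cu D$ saturates a TP$_2$ theory then $\cu D$ solves $(\omega,\omega)$), and Fact~\ref{f-solves} from [Ma12] (if $\cu D$ solves $(\omega,\omega)$ then $\cu D$ saturates $T^*_{feq}$). Your proposal collapses these two halves into a single direct construction: you unwind a $T^*_{feq}$-type via QE into independent per-parameter class demands, encode those demands at each index $t$ as a path-selection on finitely many rows of a TP$_2$ array inside $\cu N$, realize the encoded set in $\cu N_{\cu D}$, and try to pull a multiplicative refinement back through the encoding. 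That is the right picture, and it is morally the content of Malliaris's argument, but the factored route buys genuine modularity: ``solves $(\omega,\omega)$'' isolates exactly the array-combinatorics ($2$-inconsistent rows, all paths consistent) so that the passage from the TP$_2$ theory to the array property and the passage from the array property to $T^*_{feq}$-saturation can each be checked in isolation, and the first half transfers verbatim to the continuous setting (Lemma~\ref{l-TP2-solves}).

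There is also a real gap in what you have written: you explicitly leave open the multiplicativity-preservation step --- that the encoding aligns intersections of $\cu D$-sets with conjunctions of constraints --- and you correctly flag it as the technical core. Without carrying that out, the argument is a plan rather than a proof. A smaller point worth watching is the cardinality mismatch: the TP$_2$ array in $\cu N$ is only $\omega\times\omega$, while the type over $A$ has up to $\lambda$ parameters; what makes this work is that at each $t\in I$ only the finite piece $\Gamma(\delta,t)$ matters, so the row/column assignment is local to $t$ --- but this needs to be set up so that the assignments cohere across $t$ in a way that produces genuine elements of $\cu N^{\cu D}$. (Compare the proof of Theorem~\ref{t-min-rgR}, which instead extracts a $\kappa\times\kappa$ array directly inside $\cu M^{\cu D}$ using $\lambda^+$-saturation, avoiding this bookkeeping.) So: correct key idea and correct localization of the difficulty, but the decomposition differs from the paper's, and the hard step is acknowledged rather than executed.
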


\begin{fact}  \label{f-dividingline} (Malliaris and Shelah [MS13])  $T_{rg}\lk S$ for any  first order theory $S$ that has TP$_2$.
\end{fact}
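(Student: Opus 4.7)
The statement combines an upper bound $T_{rg}\lek S$ with a strict separation $S\not\lek T_{rg}$, and I would attack the two halves very differently. The upper bound is essentially immediate: any theory with TP$_2$ is unstable (since every stable theory is simple, and simplicity forbids TP$_2$), so Theorem \ref{t-rg-minimal} already yields $T_{rg}\lek S$ with no further work. All of the content is therefore in exhibiting a regular ultrafilter $\cu D$ that saturates $T_{rg}$ but fails to saturate some model of $S$.

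For the strict separation, the plan is to construct, on a sufficiently large index set $I$, a regular ultrafilter $\cu D$ with carefully tuned combinatorial properties. The saturation side exploits the extreme simplicity of types in $T_{rg}$: since $T_{rg}$ has quantifier elimination in a vocabulary with only one symmetric binary predicate, every complete type over a set $A$ is encoded by a single bipartition of $A$ (as already used in the proof of Theorem \ref{t-rg-minimal}), so by Lemma \ref{l-dist-saturates} it suffices to arrange that $\cu D$ admit multiplicative refinements for the particularly tame distributions arising from such bipartition-types. The failure side exploits a TP$_2$-array $\langle \vec b_{i,j}\rangle_{i,j\in\omega}$ in a model $\cu N\models S$: in the ultrapower $\cu N^{\cu D}$, realizing a path-type indexed by a function $f\colon\omega\to\omega$ requires selecting a coherent element across $k$-inconsistent rows, and one wants to build $\cu D$ so that no such diagonal realization exists.

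The main obstacle is the ultrafilter construction itself. Following the template of Malliaris and Shelah, I would proceed by a transfinite induction of length $2^{|I|}$ using Kunen's technique of independent families of functions (the same tool behind Fact \ref{f-good}), reserving one distinguished independent family $\{h_\alpha\}$ to be devoted entirely to obstructing the TP$_2$-pattern, while all other pending distributions are dispatched by granting multiplicative refinements from the remaining families. The delicate point is bookkeeping: at each stage one must classify the pending distribution as either ``local enough'' to be handled without spending the reserved independence, or ``tree-like'' in a way that would collide with the reserved family. The content of the dividing line is precisely that every distribution coming from a $T_{rg}$-type falls in the first class while the chosen TP$_2$ path-distribution falls in the second, and verifying this clean separation -- and carrying it through the inductive construction without accidentally saturating the TP$_2$-pattern -- is where I expect the real work to lie.
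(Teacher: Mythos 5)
This statement is a \emph{cited} result from [MS13]; the paper does not prove it, so there is no in-paper proof to compare against. I will evaluate your sketch on its own terms.

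The upper bound $T_{rg}\lek S$ is handled correctly. Since TP$_2$ implies instability for first order theories (stable $\Rightarrow$ simple $\Rightarrow$ no tree property), Theorem \ref{t-rg-minimal} --- or already, within first order logic, Fact \ref{f-rg-classical} --- yields $T_{rg}\lek S$ with no further work, exactly as you say.

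The strict separation $S\not\lek T_{rg}$ is where the substance lies, and here your proposal is only an outline of a strategy rather than an argument. You correctly identify that one must produce a regular $\cu D$ that saturates $T_{rg}$ while failing to realize some TP$_2$-diagonal type in an ultrapower of a model of $S$, and you correctly gesture at the Kunen-style independent-families machinery and at distributions. But the proposal stops at ``reserve one independent family to obstruct the TP$_2$ pattern and dispatch all $T_{rg}$-type distributions multiplicatively from the rest.'' That is a plan, not a proof: the hard combinatorial content --- precisely what makes [MS13] a substantial paper --- is to show that this reservation is coherent, i.e.\ that every distribution arising from a complete $T_{rg}$-type can be given a multiplicative refinement without consuming the reserved family, while the diagonal TP$_2$ distribution provably cannot survive the construction. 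None of that is carried out, and it is far from routine.

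One structural suggestion that would bring your plan closer to what [MS13] actually does: rather than building a bespoke $\cu D$ for each TP$_2$ theory $S$, invoke Fact \ref{f-minimal-nonsimple} ($T^*_{feq}$ is $\lek$-minimal among TP$_2$ first order theories) to reduce the whole statement to the single separation $T_{rg}\lk T^*_{feq}$. The existence of a regular $\cu D$ saturating $T_{rg}$ but not $T^*_{feq}$ is exactly Theorem 12.1 of [MS13], which this paper cites in the corollary following Corollary \ref{c-dividingline}. Your ``direct'' attack on an arbitrary TP$_2$ theory $S$ is more ambitious than necessary and reintroduces uniformity difficulties that the reduction to $T^*_{feq}$ is designed to avoid.
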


The following  definition is equivalent to the first order notion in the case that $T$ is a first order theory.

\begin{df}  (Ben Yaacov [BY13])  \label{d-TP2}
A metric theory $T$ has the \emph{tree property of the second kind} (briefly, $T$ is TP$_2$),  if in some $\aleph_1$-saturated model $\cu M$ of $T$
there is a continuous formula $\varphi(\vec x,\vec y)$ and an array $\langle \vec b_{n,m}\rangle_{n,m\in\BN}$ of $|\vec y|$-tuples such that:
\begin{itemize}
\item[(a)] The sequences $\cu I_n=\langle\vec b_{n,m}\rangle_{m\in\BN}$ are mutually indiscernible.
\item[(b)] The sequence of sequences $\langle \cu I_n\rangle_{n\in\BN}$ is indiscernible.
\item[(c)] For each $n\in\BN$, $\{\varphi(\vec x,\vec b_{n,m})\colon m\in\BN\}$ is not satisfiable in $\cu M$.
\item[(d)] For each function $f\colon \BN\to\BN$, $\{\varphi(\vec x,\vec b_{n,f(n)})\colon n\in\BN\}$ is satisfiable in $\cu M$.
\end{itemize}
\end{df}

 Ben Yaacov [BY03] defined simple theories in the setting of compact abstract theories.
[BY13] points out that the definition can be
translated into the setting of continuous logic, and that no simple metric theory is TP$_2$.
It was shown in [MS20] that there are continuum many  $\lek$-equivalence classes of simple first order  theories in $(\BF,\lek)$.
Every simple first order theory is simple as a metric theory, so there are continuum many $\lek$-equivalence
classes of simple metric theories in $(\BM,\lek)$.  In this section we will avoid the translation from compact abstract theories to metric theories,
and  work directly with the continuous properties SOP$_2$ and TP$_2$.

\begin{lemma}  \label{l-TP2-approx}  Let $0\le \varepsilon < 1$.  $T$ is TP$_2$ if and only if  Definition \ref{d-TP2}
holds with $\varphi$ replaced by $\varphi\dotminus \varepsilon$ in conditions (c) and (d).
\end{lemma}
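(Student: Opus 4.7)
My plan is as follows. The $(\Leftarrow)$ direction is immediate: if some $\varphi$ and an array $\langle\vec b_{n,m}\rangle_{n,m\in\BN}$ satisfy (a), (b) together with (c) and (d) in the modified form (with $\varphi\dotminus\varepsilon$ in place of $\varphi$), then the continuous formula $\varphi' := \varphi\dotminus\varepsilon$ together with the same array witnesses TP$_2$ in the original sense, because conditions (a) and (b) only involve the array and the modified (c), (d) are literally the unmodified (c), (d) for $\varphi'$.

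For the $(\Rightarrow)$ direction, I fix an $\aleph_1$-saturated model $\cu M$ of $T$, a formula $\varphi(\vec x,\vec y)$, and an array $\langle\vec b_{n,m}\rangle$ in $\cu M$ witnessing TP$_2$. I first extract a uniform positive lower bound implicit in condition (c). For each $k\in\BN$, set
\[
r(k)\ =\ (\inf_{\vec x})(\max_{m<k})\,\varphi^{\cu M}(\vec x,\vec b_{0,m}),
\]
which is well-defined because $\cu I_0$ is indiscernible by (a); by (b) the value is unchanged if $0$ is replaced by any $n$. Since $r(k)$ is nondecreasing and bounded by $1$, it converges to some $r^*\in[0,1]$. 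If $r^*$ were $0$, then for every $k$ and $i$ there would be $\vec a$ with $\varphi^{\cu M}(\vec a,\vec b_{0,m})<1/i$ for all $m<k$, so by $\aleph_1$-saturation the countable set $\{(\varphi\dotminus 1/i)(\vec x,\vec b_{0,m}) : m,i\in\BN\}$ would be satisfied, producing $\vec a$ with $\varphi^{\cu M}(\vec a,\vec b_{0,m})=0$ for every $m$, contradicting (c). Hence $r^*>0$. Since $r(k)$ is an infimum, every $\vec a$ satisfies $\max_{m<k}\varphi^{\cu M}(\vec a,\vec b_{n,m})\ge r(k)$, so for every $\vec a$ and every $n$,
\[
\sup_{m\in\BN}\varphi^{\cu M}(\vec a,\vec b_{n,m})\ \ge\ \sup_k r(k)\ =\ r^*.
\]

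I then replace $\varphi$ by a rescaling. Let $g\colon[0,1]\to[0,1]$ be the continuous function $g(t)=\min(1,2t/r^*)$, so $g(0)=0$ and $g(t)=1$ for every $t\ge r^*/2$, and set $\varphi'(\vec x,\vec y)=g(\varphi(\vec x,\vec y))$, a continuous $V$-formula. I claim $\varphi'$ together with the original array witnesses the modified version of Definition~\ref{d-TP2}. Conditions (a) and (b) are untouched. For the modified (c), given any $\vec a$ and $n$, the uniform bound furnishes some $m$ with $\varphi^{\cu M}(\vec a,\vec b_{n,m})>r^*/2$; then $\varphi'^{\cu M}(\vec a,\vec b_{n,m})=1>\varepsilon$, so no $\vec a$ can satisfy $\varphi'^{\cu M}(\vec a,\vec b_{n,m})\le\varepsilon$ for every $m$, which is exactly the failure of satisfiability of $\{(\varphi'\dotminus\varepsilon)(\vec x,\vec b_{n,m}):m\in\BN\}$. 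For the modified (d), given any $f\colon\BN\to\BN$, the original (d) supplies $\vec a$ with $\varphi^{\cu M}(\vec a,\vec b_{n,f(n)})=0$ for all $n$; then $\varphi'^{\cu M}(\vec a,\vec b_{n,f(n)})=g(0)=0\le\varepsilon$, so the required set is satisfiable.

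The only real work is extracting a positive \emph{uniform} lower bound $r^*$ from the mere non-satisfiability asserted in (c); this is where indiscernibility (to make $r(k)$ independent of $n$ and of the $k$-subset chosen) and $\aleph_1$-saturation (to rule out $r^*=0$) are both needed. Once $r^*>0$ is available, a single one-variable continuous connective $g$ fixing $0$ and saturating to $1$ on $[r^*/2,1]$ converts (c) into its $\varepsilon$-approximate form without disturbing (d), which is where I expect the cleanest version of the argument to land.
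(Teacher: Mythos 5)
Your proof is correct, and it takes essentially the same overall route as the paper: both arguments rescale $\varphi$ by a continuous connective $g$ that fixes $0$ (preserving condition (d)) while pushing every value above a certain positive threshold up past $\varepsilon$ (securing the modified condition (c)). The difference lies in how the positive threshold is extracted. The paper asserts, without elaboration, that there is a single value $\delta>0$ with $(\inf_{\vec x})\max[\varphi(\vec x,\vec b_{n,m}),\varphi(\vec x,\vec b_{n,m'})]=\delta$ for all pairs $m<m'$ in every row, i.e.\ it asserts a continuous analogue of $2$-inconsistency of the rows. But condition (c) of Definition~\ref{d-TP2} only says the full row type is not satisfiable; it does not obviously force the pairwise infimum $r(2)$ to be positive (one can easily have $r(2)=0$ while $r(k)>0$ for larger $k$). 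Your version avoids this by working with $r^*=\sup_k r(k)$, the limiting $k$-fold inconsistency value, and the $\aleph_1$-saturation argument you give for $r^*>0$ is exactly the right one: if $r^*=0$ then all $r(k)=0$, the approximate row type $\{(\varphi\dotminus 1/i)(\vec x,\vec b_{0,m})\colon m,i\in\BN\}$ is finitely satisfiable, and $\aleph_1$-saturation produces a realization of the full row, contradicting (c). Your single connective $g(t)=\min(1,2t/r^*)$ also handles every $\varepsilon\in[0,1)$ uniformly, whereas the paper splits into the cases $\varepsilon<\delta$ and $\varepsilon\ge\delta$. So your write-up is a clean and slightly more careful rendition of the paper's argument, making explicit the step (positivity of the uniform inconsistency bound) that the paper takes for granted.
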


\begin{proof}  Say that $\varphi(\vec x,\vec y)$ is an $\varepsilon$-TP$_2$ formula if Definition \ref{d-TP2}
holds with  $\varphi$ replaced by $\varphi\dotminus\varepsilon$ in conditions (c) and (d).  If $\varphi$ is an $\varepsilon$-TP$_2$ formula, then
$\varphi\dotminus\varepsilon$ is a $0$-TP$_2$ formula.  Suppose  $\varphi$ is a $0$-TP$_2$ formula.
By indiscernibility and Remark \ref{r-dotminus}, there exists $0<\delta\le 1$ such that whenever $n,m,m'\in\BN$ and $m<m'$,
$$\cu M\models(\inf_{\vec x})\max[\varphi(\vec x,\vec b_{n,m}),\varphi(\vec x,\vec b_{n,m'})]=\delta.$$
If $\varepsilon < \delta$, then $\varphi$ is an $\varepsilon$-TP$_2$ formula.  On the other hand, if $\varepsilon\ge\delta$,
then $g(\varphi)$ is an $\varepsilon$-TP$_2$ formula, where $g\colon[0,1]\to[0,1]$ is the unary connective whose graph is the line
from $(0,0)$ to $(\delta,(\varepsilon+1)/2)$ followed by the horizontal line from $(\delta,(\varepsilon+1)/2)$ to $(1,(\varepsilon+1)/2))$.
\end{proof}

\begin{thm} \label{t-min-nonsimple}
$T^*_{feq}$ is $\lek$-minimal among TP$_2$ metric theories.
\end{thm}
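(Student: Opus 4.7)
The plan is to prove $T^*_{feq}\lek T$ by a distribution-theoretic argument extending Malliaris's first-order proof [Ma12, Corollary 6.10] to the continuous setting, using the machinery of Section \ref{s-general}. I would fix a regular $\cu D\in\Sat(T)$, a model $\cu K\models T^*_{feq}$, and a set $\Gamma(x,A)$ of formulas with $A\subseteq\cu K^{\cu D}$, $|A|\le\lambda$, that is finitely satisfiable in $\cu K^{\cu D}$; by Lemma \ref{l-dist-saturates} it is enough to refine an accurate distribution $\delta$ of $\Gamma$ (which exists by Lemma \ref{l-dist-exist}) to a multiplicative one. Using quantifier elimination for $T^*_{feq}$ in its standard language of parametrized equivalence relations $E_n(x;y)$, I would first reduce $\Gamma$ to its combinatorial skeleton: for each level $n$ represented in $A$ the type designates one $E_n$-class, and $\Gamma$ is equivalent to the positive atoms ``$x$ lies in the designated class'' together with the negative atoms ``$x$ lies in no other $E_n$-class''; $\delta$ records, for each finite $\Psi\subseteq\Gamma^{ap}$, a set in $\cu D$ where this designation is locally coherent in $\cu K$.

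Next I would transport this skeleton into $T$. Let $\cu M\models T$, and let $\varphi(\vec x,\vec y)$ be a TP$_2$-formula with witnessing array $\langle \vec b_{n,m}\rangle_{n,m\in\BN}$ (replacing $\varphi$ by an $\varepsilon$-approximation via Lemma \ref{l-TP2-approx} if necessary). For each $a\in A$ at level $n(a)$, I would choose $\vec c_a\in\cu M^{\cu D}$ so that at every $t\in I$: parameters in the same designated $E_{n(a)}$-class of $\Gamma$ map to the same $\vec b_{n(a),m}$, while parameters in distinct $E_{n(a)}$-classes map to distinct $\vec b_{n(a),m},\vec b_{n(a),m'}$, and parameters at different levels simply sit in different rows. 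Under this encoding, clauses (c) and (d) of Definition \ref{d-TP2} translate the local coherence recorded by $\delta$ into finite satisfiability in $\cu M$ of the positive part
\[
q(\vec x)=\{\varphi(\vec x,\vec c_a)\dotminus\varepsilon : a \text{ in a designated class}\};
\]
the \Los{} theorem then gives finite satisfiability of $q$ in $\cu M^{\cu D}$, and since $\cu D\in\Sat(T)$, $q$ is realized by some $\vec x^*\in\cu M^{\cu D}$.

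Reading off $\vec x^*$ would produce the refinement: for each finite $\Psi\subseteq\Gamma^{ap}$ set
$$
\delta'(\Psi)=\delta(\Psi)\cap\left\{t\in I:\cu M\models\varphi(\vec x^*[t],\vec c_a[t])\le\varepsilon\ \text{for each}\ a\ \text{occurring in}\ \Psi\right\}.
$$
Each $\delta'(\Psi)$ lies in $\cu D$ by the \Los{} theorem, $\delta'$ refines $\delta$, and multiplicativity is immediate from the definition because the row/column structure of the TP$_2$ array was set up to mirror the partition structure of $\Gamma$. Lemma \ref{l-dist-saturates} then yields a realization of $\Gamma$ in $\cu K^{\cu D}$, so $\cu D\in\Sat(T^*_{feq})$ as required.

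The main obstacle will be the simultaneous encoding step: one must realize the assignment $a\mapsto\vec c_a$ in a way that is coherent across all $E_n$-levels represented in $A$, compatible with the regularizing family of $\cu D$, and faithful to the same-class-versus-different-class dichotomy for designated classes, while only using the single row-versus-path dichotomy of the TP$_2$ array. This is the combinatorial heart of Malliaris's first-order argument; the continuous ingredients --- approximate satisfaction through $\Gamma^{ap}$ and $\varepsilon$-TP$_2$ formulas via Lemma \ref{l-TP2-approx} --- are precisely what allow that argument to transfer cleanly to the metric setting.
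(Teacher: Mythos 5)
Your plan departs from the paper's actual route, and the point at which it departs is exactly where you flag ``the main obstacle.'' The paper does not encode $T^*_{feq}$-types directly into the TP$_2$ array. Instead it factors the argument through the abstract ultrafilter property ``$\cu D$ solves $(\omega,\omega)$'' (Definition \ref{d-solves}): it first proves the new Lemma \ref{l-TP2-solves} (if $\cu D$ saturates a metric TP$_2$ theory then $\cu D$ solves $(\omega,\omega)$), and then invokes Fact \ref{f-solves} (Malliaris [Ma12], Lemmas 6.7 and 6.8: if $\cu D$ solves $(\omega,\omega)$ then $\cu D$ saturates $T^*_{feq}$) as a purely first-order black box. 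Theorem \ref{t-min-nonsimple} is then a two-line corollary. The payoff of this factorization is that the delicate quantifier-elimination-plus-encoding analysis of $T^*_{feq}$ --- the passage from an arbitrary finitely satisfiable $\Gamma(x,A)$ over $\cu K_{feq}^{\cu D}$ to an $(\omega,\omega)$-array problem --- never has to be touched in the continuous setting at all, because $\cu K_{feq}^{\cu D}$ is a first-order ultrapower and Malliaris's result applies verbatim. All the genuinely continuous work goes into Lemma \ref{l-TP2-solves}, where $\Gamma^{ap}$, accurate distributions, and the sharpened form of the TP$_2$ array from Lemma \ref{l-TP2-approx} are used to move a multiplicative distribution from $\cu M^{\cu D}$ back to $\cu K^{\cu D}$.

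This is where your proposal has a genuine gap. Your ``simultaneous encoding step'' --- mapping each $a\in A$ at level $n(a)$ to a row of a TP$_2$ array in a way coherent across all levels and respecting the designated/non-designated class dichotomy --- is precisely the content of Fact \ref{f-solves}, and you never carry it out; you only note that it is ``the combinatorial heart of Malliaris's first-order argument.'' That observation is correct, but it means your proposal does not prove the theorem: it reduces it to an unproved step whose difficulty is real. Worse, your closing claim that the continuous tools (approximation via $\Gamma^{ap}$, $\varepsilon$-TP$_2$ via Lemma \ref{l-TP2-approx}) ``are precisely what allow that argument to transfer cleanly'' misattributes the difficulty. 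Those tools address the $\cu M^{\cu D}$ side, where continuous logic's lack of negation makes the incompatibility condition (c) of Definition \ref{d-TP2} too weak without the $\varepsilon$-sharpening. They do nothing for the $T^*_{feq}$-side encoding, which is a first-order combinatorial problem that the paper sidesteps entirely by isolating the $(\omega,\omega)$-solving property. If you want to proceed along your route, you must either re-derive the substance of [Ma12] Lemmas 6.7--6.8 inline, or else restructure along the paper's lines: define $(\omega,\omega)$-solving, prove the transfer from TP$_2$ metric saturation to $(\omega,\omega)$-solving (your step two, properly carried out with the sharpened array and the distribution translation), and cite Fact \ref{f-solves} for the rest.
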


Theorem \ref{t-min-nonsimple} is the continuous analogue of Fact \ref{f-minimal-nonsimple}.  To prove it we will use
Fact \ref{f-solves} below, which was proved in [Ma12] along the way to proving Fact \ref{f-minimal-nonsimple}.
Fact \ref{f-solves} will allow us to get around the difficulty that continuous logic does not have negation.

\begin{df} \label{d-solves}
Say that $\cu D$ \emph{solves} $(\omega,\omega)$ if for every first order structure $\cu K$,  first order formula $\varphi(\vec x,\vec y)$, and
array $C=\langle c_{n,m}\rangle_{n,m\in\BN}$ of elements of $\cu K$ such that:
\begin{itemize}
\item For all $n,m,m'$ with $m\ne m'$, $\cu K\models \neg[(\exists x)\varphi(\vec x,\vec c_{n,m})\wedge\varphi(\vec x,c_{n,m'})]$,
\item For each  $f\colon \BN\to\BN$, the type $\{\varphi(\vec x,\vec c_{n,f(n)})\colon n\in\BN\}$ is realized in $\cu K$,
\end{itemize}
and for every set $A\subseteq C^{\cu D}$ of cardinality $|A|\le\lambda$, if $\{\varphi(\vec x,\vec a)\colon \vec a\in A\}$
is finitely satisfiable in $\cu K^{\cu D}$ then it is satisfiable in $\cu K^{\cu D}$.
\end{df}

\begin{fact}  \label{f-solves}  (By Lemmas 6.7 and 6.8 in [Ma12]).
If $\cu D$ solves $(\omega,\omega)$ then $\cu D$ saturates $T^*_{feq}$.
\end{fact}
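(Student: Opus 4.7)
The plan is to reduce, via Fact~\ref{f-solves}, to the combinatorial claim that every $\cu D\in\Sat(T)$ solves $(\omega,\omega)$ whenever $T$ is a TP$_2$ metric theory. Fix $\cu D$ a regular ultrafilter over $I$ of cardinality $\lambda$ that saturates $T$, an $\aleph_1$-saturated $\cu M\models T$, a TP$_2$ formula $\psi(\vec x,\vec z)$ with witnessing array $\langle \vec b_{n,m}\rangle$ in $\cu M$, and the first order data $\cu K,\varphi(\vec x,\vec y),C=\langle c_{n,m}\rangle$ as in Definition~\ref{d-solves}. Let $A\subseteq C^\cu D$ with $|A|\le\lambda$, and suppose $\Phi=\{\varphi(\vec x,\vec a)\colon \vec a\in A\}$ is finitely satisfiable in $\cu K^\cu D$. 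Each $\vec a\in A$ is represented by a pair of index functions $n_a,m_a\colon I\to\BN$ with $\vec a=\langle c_{n_a[t],m_a[t]}\rangle_\cu D$; set $\hat{\vec a}=\langle \vec b_{n_a[t],m_a[t]}\rangle_\cu D\in\cu M^\cu D$ and $\hat\Phi=\{\psi(\vec x,\hat{\vec a})\colon \vec a\in A\}$.

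The technical heart of the argument is a \emph{compatibility dictionary}. By Lemma~\ref{l-TP2-approx} applied to $\psi$, together with mutual indiscernibility of the rows, there is a $\delta>0$ such that for any finite set $F\subseteq \BN\times\BN$ the following are equivalent: (i) $F$ is a partial function (no two elements of $F$ share a first coordinate with different second coordinates); (ii) $\{\varphi(\vec x,c_{n,m})\colon (n,m)\in F\}$ is satisfiable in $\cu K$; (iii) $(\inf_{\vec x})\max_{(n,m)\in F}\psi(\vec x,\vec b_{n,m})<\delta$ holds in $\cu M$; and in fact (ii) and (iii) each imply $(\inf_{\vec x})\max\psi=0$. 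The forward directions of (ii)$\Rightarrow$(i) and (iii)$\Rightarrow$(i) use the incompatibility clauses (clause before last in Definition~\ref{d-solves}, and clause~(c) of Definition~\ref{d-TP2}), while (i)$\Rightarrow$(ii) and (i)$\Rightarrow$(iii) come from the selection clauses (the last clause of Definition~\ref{d-solves} and clause~(d) of Definition~\ref{d-TP2}) together with $\aleph_1$-saturation of $\cu K$ and $\cu M$.

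Now I transport distributions across the dictionary, in the style of Section~\ref{s-general}. First, by the first order analogue of Lemma~\ref{l-dist-exist}, $\Phi$ has an accurate distribution $\delta_1\colon\cu P_{\aleph_0}(\Phi)\to\cu D$ in $\cu K^\cu D$. Define $\delta_2$ on finite subsets $\Psi$ of $\hat\Phi^{ap}$ by replacing each $\psi(\vec x,\hat{\vec a})\dotminus 2^{-n}$ by $\varphi(\vec x,\vec a)$ to obtain $\Psi^\star\in\cu P_{\aleph_0}(\Phi)$, and setting $\delta_2(\Psi)=\delta_1(\Psi^\star)$. The dictionary and the \Los\ theorem applied at each $t\in\delta_1(\Psi^\star)$ show that for such $t$ the pairs $(n_{a_i}[t],m_{a_i}[t])$ form a partial function, whence the required $\inf$-clause (b) of Definition~\ref{d-distribution} holds in $\cu M$; regularity (a) transfers since $\Psi\mapsto\Psi^\star$ is finite-to-one on each fixed approximation level. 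So $\delta_2$ is a distribution of $\hat\Phi$ in $\cu M^\cu D$, and Lemma~\ref{l-dist-exist} yields that $\hat\Phi$ is finitely satisfiable in $\cu M^\cu D$. Since $|\hat\Phi|\le\lambda$ and $\cu D$ saturates $T$, $\hat\Phi$ is satisfiable in $\cu M^\cu D$, so by Lemma~\ref{l-dist-saturates} it has a multiplicative distribution $\delta_3$.

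Finally I push $\delta_3$ back to $\Phi$. Choose $n_0$ with $2^{-n_0}<\delta/2$, and for $\Psi_0=\{\varphi(\vec x,\vec a_1),\dots,\varphi(\vec x,\vec a_k)\}\in\cu P_{\aleph_0}(\Phi)$ set $\Psi^\sharp=\{\psi(\vec x,\hat{\vec a}_i)\dotminus 2^{-n_0}\colon i\le k\}$ and $\delta_4(\Psi_0)=\delta_3(\Psi^\sharp)$. Multiplicativity of $\delta_4$ follows directly from that of $\delta_3$ because $(\Psi_0\cup\Psi_0')^\sharp=\Psi_0^\sharp\cup(\Psi_0')^\sharp$. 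For $t\in\delta_4(\Psi_0)$, multiplicativity of $\delta_3$ and Remark~\ref{r-distribution}(iii) give an $\vec x\in\cu M$ realising $\max_i\psi(\vec x,\vec b_{n_{a_i}[t],m_{a_i}[t]})<\delta$; the compatibility dictionary then forces the pairs $(n_{a_i}[t],m_{a_i}[t])$ to form a partial function, hence by clause (d) of Definition~\ref{d-solves} the set $\{\varphi(\vec x,c_{n_{a_i}[t],m_{a_i}[t]})\colon i\le k\}$ is satisfiable in $\cu K$. So $\delta_4$ is a multiplicative distribution of $\Phi$ in $\cu K^\cu D$, and the first order analogue of Lemma~\ref{l-dist-saturates} yields satisfiability of $\Phi$ in $\cu K^\cu D$. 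Thus $\cu D$ solves $(\omega,\omega)$, and $T^*_{feq}\lek T$ by Fact~\ref{f-solves}. The main obstacle is the fidelity of the dictionary under the $\dotminus 2^{-n}$ approximation: it works precisely because the TP$_2$ gap $\delta$ is a single positive number that controls all finite patterns uniformly, a feature one must extract from Lemma~\ref{l-TP2-approx} before any distribution bookkeeping begins.
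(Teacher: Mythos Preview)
You have not proved the stated fact. Fact~\ref{f-solves} asserts that if $\cu D$ solves $(\omega,\omega)$ then $\cu D$ saturates $T^*_{feq}$; this is a statement purely about first order model theory, quoted from [Ma12] and not proved in the present paper. A proof would need to take an arbitrary type over $\le\lambda$ parameters in a pre-ultrapower of a model of $T^*_{feq}$, use quantifier elimination for $T^*_{feq}$ to express it in terms of the parameterised equivalence relations, and then show that the $(\omega,\omega)$-solving hypothesis provides exactly the combinatorics needed to realise it. Your argument never touches $T^*_{feq}$ or its types at all.

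What you have actually written is a proof of Lemma~\ref{l-TP2-solves} (and hence of Theorem~\ref{t-min-nonsimple}): starting from a TP$_2$ metric theory $T$ and $\cu D\in\Sat(T)$, you show $\cu D$ solves $(\omega,\omega)$, and then you \emph{invoke} Fact~\ref{f-solves} in your final sentence rather than establishing it. Your proof of that lemma is essentially the paper's own argument---the ``compatibility dictionary'' is exactly the observation that satisfiability of a finite pattern on both the $\cu K$ side and the $\cu M$ side is governed by the same partial-function condition on indices, and the distribution transport is the same bookkeeping---but it is attached to the wrong statement.
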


\begin{lemma}   \label{l-TP2-solves}  If $T$ is a metric TP$_2$ theory and $\cu D$ saturates $T$, then $\cu D$ solves $(\omega,\omega)$.
\end{lemma}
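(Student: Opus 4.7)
The plan is to leverage the continuous TP$_2$ witness in a model of $T$ to encode the $(\omega,\omega)$-pattern coming from $\cu K$, reducing the first-order satisfiability question in $\cu K^\cu D$ to a continuous type-realization in $\cu M^\cu D$, which is guaranteed by $\cu D \in \Sat(T)$. Fix the data of Definition \ref{d-solves}: a first-order structure $\cu K$, a first-order formula $\psi(\vec x, \vec y)$, an array $C = \langle c_{n,m}\rangle$ with pairwise-inconsistent rows and consistent paths, and a subset $A \subseteq C^\cu D$ with $|A| \le \lambda$ such that $\Gamma := \{\psi(\vec x, \vec a) : \vec a \in A\}$ is finitely satisfiable in $\cu K^\cu D$. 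Fix also an $\aleph_1$-saturated $\cu M \models T$ and a continuous TP$_2$ formula $\varphi(\vec x, \vec y)$ with witnessing array $\langle \vec b_{n,m}\rangle$ in $\cu M$; by the proof of Lemma \ref{l-TP2-approx} there is an inconsistency gap $\delta > 0$ such that $\cu M \models (\inf_{\vec x})\max(\varphi(\vec x, \vec b_{n,m}), \varphi(\vec x, \vec b_{n,m'})) \ge \delta$ whenever $m \ne m'$.

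For each $\vec a \in A$ and $t \in I$, pick representatives $n_{\vec a}[t], m_{\vec a}[t] \in \BN$ with $\vec a[t] = c_{n_{\vec a}[t], m_{\vec a}[t]}$ and set $\vec b_{\vec a} \in \cu M^\cu D$ by $\vec b_{\vec a}[t] := \vec b_{n_{\vec a}[t], m_{\vec a}[t]}$. The first key step is to show that the continuous type $\Gamma^* := \{\varphi(\vec x, \vec b_{\vec a}) : \vec a \in A\}$ is finitely satisfiable in $\cu M^\cu D$. Given finite $F \subseteq A$, the assumed finite satisfiability of $\Gamma$ yields $\vec z \in \cu K^\cu D$ and $X \in \cu D$ with $\cu K \models \psi(\vec z[t], \vec a[t])$ for all $\vec a \in F$ and $t \in X$; pairwise row-inconsistency of $C$ then forces $F$ to be \emph{coherent} at each such $t$, meaning $n_{\vec a}[t] = n_{\vec a'}[t]$ implies $m_{\vec a}[t] = m_{\vec a'}[t]$ for $\vec a, \vec a' \in F$. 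Extending the partial map $n_{\vec a}[t] \mapsto m_{\vec a}[t]$ to a total function $f_t : \BN \to \BN$ and invoking condition (d) of Definition \ref{d-TP2} in $\cu M$, one obtains $\vec x[t]$ with $\varphi^\cu M(\vec x[t], \vec b_{\vec a}[t]) = 0$ for all $\vec a \in F$, and \L o\'s's theorem promotes this to a realization in $\cu M^\cu D$.

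Since $|\Gamma^*| \le \lambda$ and $\cu D$ saturates $T$, the ultrapower $\cu M^\cu D$ is $\lambda^+$-saturated, so some $\vec x^* \in \cu M^\cu D$ realizes $\Gamma^*$. By the (i)$\Rightarrow$(ii) direction of Lemma \ref{l-dist-saturates}, this realization yields a multiplicative distribution $\delta_3$ of $\Gamma^*$ in $\cu M^\cu D$ whose value on each approximant $\varphi(\vec x, \vec b_{\vec a}) \dotminus 2^{-n}$ is contained in $\{t : \varphi^\cu M(\vec x^*[t], \vec b_{\vec a}[t]) \le 2^{-n}\}$. Fix $n_0$ with $2^{-n_0} < \delta/2$ and define a candidate first-order distribution of $\Gamma$ in $\cu K^\cu D$ by
\[
\delta_4(\{\psi(\vec x, \vec a) : \vec a \in F\}) := \delta_3(\{\varphi(\vec x, \vec b_{\vec a}) \dotminus 2^{-n_0} : \vec a \in F\}),
\]
which is multiplicative because $\delta_3$ is, and maps into a regularizing set because $\delta_3$ does. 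For $t \in \delta_4(F)$, clause (b) of Definition \ref{d-distribution} supplies $\vec x'$ in $\cu M$ with $\varphi^\cu M(\vec x', \vec b_{\vec a}[t]) \le 2^{-n_0} < \delta/2$ for all $\vec a \in F$; the gap $\delta$ then forces $F$ to be coherent at $t$, and the path-consistency hypothesis on $C$ in $\cu K$ produces a realization of $\{\psi(\vec x, \vec a[t]) : \vec a \in F\}$ in $\cu K$. Hence $\delta_4$ is a multiplicative first-order distribution of $\Gamma$ in $\cu K^\cu D$, and the first-order analogue of Lemma \ref{l-dist-saturates} delivers the desired realization of $\Gamma$ in $\cu K^\cu D$, showing $\cu D$ solves $(\omega,\omega)$.

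The principal obstacle is the mismatch between continuous approximation and first-order exact satisfaction: continuous TP$_2$ provides only an approximate inconsistency between rows (gap $\delta$), so one cannot simply ask for $\varphi = 0$ at every index. The choice of cutoff $n_0$ below $\delta/2$ is precisely what is needed so that a weak continuous realization $\varphi \le 2^{-n_0}$ still forces the combinatorial coherence of $F$ at $t$, which then lets the first-order path-consistency of $C$ in $\cu K$ take over. The remaining manipulations are distribution bookkeeping of the same flavor as in the proof of Theorem \ref{t-lambda-triangle-ultraproduct}.
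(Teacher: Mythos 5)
Your proposal is correct and follows essentially the same route as the paper's proof: transfer the index pattern from $\cu K^\cu D$ into an array in $\cu M^\cu D$ via $\vec b_{\vec a}[t] := \vec b_{n_{\vec a}[t],m_{\vec a}[t]}$, use $\lambda^+$-saturation of $\cu M^\cu D$ to realize the transported type, extract a multiplicative distribution by Lemma~\ref{l-dist-saturates}, and push it back to a first-order multiplicative distribution of $\Gamma$ in $\cu K^\cu D$. The only cosmetic difference is that the paper normalizes the row-inconsistency gap to exactly $1$ and cuts off at a fixed $s<1/2$, whereas you keep the gap as $\delta>0$ and choose $2^{-n_0}<\delta/2$; these are interchangeable.
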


\begin{proof}  Let $\cu M$ be an $\aleph_1$-saturated model of $T$.  There is a continuous formula $\theta(\vec u,\vec v)$ and an array of
$|\vec u|$-tuples $\langle \vec b_{n,m}\rangle_{n,m\in\BN}$ in $\cu M$ such that:
\begin{itemize}
\item[(a)] For all $n,m,m'$ with $m\ne m'$, $\cu M\models (\inf_{\vec u})\max[\theta(\vec u,\vec b_{n,m}),\theta(\vec u,b_{n,m'})]=1$,
\item[(b)] For each  $f\colon \BN\to\BN$, the type $\{\theta(\vec u,\vec b_{n,f(n)})=0\colon n\in\BN\}$ is realized in $\cu M$.
\end{itemize}
Let $\cu K, \varphi(\vec x,\vec y)$, and $C=\langle c_{n,m}\rangle_{n,m\in\BN}$
be as in Definition \ref{d-solves}.  Suppose
$$A=\{\vec a_\beta\colon\beta<\lambda\}\subseteq C^{\cu D},$$
and
$$\Gamma(\vec x,A)= \{\varphi(\vec x,\vec a_\beta)\colon \beta<\lambda\}$$
is finitely satisfiable in $\cu K^{\cu D}$.  To show that $\cu D$ solves $(\omega,\omega)$, we must show that $\Gamma$ is satisfiable in $\cu K^{\cu D}$.
For each $\beta<\lambda$ and $t\in I$, we have
$\vec a_\beta[t]=\vec c_{n_\beta[t],m_\beta[t]}\in C$ for some $n_\beta[t],m_\beta[t]\in\BN$.
For each $\Phi\in\cu P_{\aleph_0}(\Gamma)$ and $t\in I$, let
$$\Phi[t]=\{\varphi(\vec x,\vec a_\beta[t])\colon\varphi(\vec x,\vec a_\beta)\in\Phi\}.$$
For each $\alpha<\beta<\lambda$, let
$$X_{\alpha,\beta}=\{t\in I\colon n_\alpha[t]=n_\beta[t]\Rightarrow m_\alpha[t]=m_\beta[t]\}.$$
By \ref{d-solves}, for each $\Phi\in\cu P_{\aleph_0}(\Gamma)$ and $t\in I$,
$\Phi[t]$ is satisfiable in $\cu K$ if and only if for all $\varphi(\vec x,\vec a_\alpha),\varphi(\vec x,\vec a_\beta)\in\Phi$ we have $t\in X_{\alpha,\beta}$,
Since $\Gamma$ is finitely satisfiable in $\cu K^{\cu D}$. By the \Los \ Theorem, for each $\alpha,\beta<\lambda$ we have $X_{\alpha,\beta}\in\cu D$.

Let $\vec b_\beta[t]=\vec b_{n_\beta[t],m_\beta[t]}$,
$\vec b_\beta=\langle \vec b_{n_\beta[t],m_\beta[t]}\rangle_{t\in I}\in\cu M^{\cu D}$, and $B=\{\vec B_\beta\colon\beta<\lambda\}$.
Let
$$\Sigma=\Sigma(\vec u,B)=\{\theta(\vec u,\vec b_\beta)\colon \beta<\lambda\}.$$
For $\Theta\in\cu P_{\aleph_0}(\Sigma)$, define $\Theta[t]$ similarly to $\Phi[t]$ above.
By (a) and (b), $\Theta[t]$ is satisfiable in $\cu M$ if and only if
if for all $\theta(\vec u,\vec b_\alpha),\theta(\vec u,\vec b_\beta)\in\Theta$ we have $t\in X_{\alpha,\beta}$.
Then by the \Los \ Theorem, $\Sigma(\vec u ,B)$
is finitely satisfiable in $\cu M^{\cu D}$.  Since $\cu D$ saturates $T$, $\Sigma(\vec u,B)$ is satisfied in $\cu M^{\cu D}$.
By Lemma \ref{l-dist-saturates}, $\Sigma$ has a multiplicative distribution $\delta_1\colon \cu P_{\aleph_0}(\Sigma^{ap})\to\cu D$ in $\cu M^{\cu D}$.
Let $0<s<1/2$.  Then for each formula $\theta\in \Sigma(\vec u,B)$, the formula $\theta\dotminus s$ belongs to $\Sigma^{ap}$.  For each $\Theta\in\cu P_{\aleph_0}(\Sigma)$, let
$$\delta_2(\Theta)=\delta_1(\{\theta\dotminus s\colon \theta\in\Theta\}).$$
$\delta_2$ is not a continuous distribution because it is defined on finite subsets of $\Sigma$ rather than $\Sigma^{ap}$.  However,
\begin{itemize}
\item $\delta_2\colon\cu P_{\aleph_0}(\Sigma)\to \cu X$ for some $\cu X$ that regularizes $\cu D$.
\item  For each $\Theta\in\cu P_{\aleph_0}(\Sigma)$ and $t\in \delta_2(\Theta)$,
$$\cu M\models(\inf_{\vec u})\max_{\theta\in\Theta}\theta(\vec u,B[t])\le s.$$
\item $\delta_2$ is multiplicative.
\end{itemize}
By (a) and (b), for each $\alpha<\beta<\lambda$ and $t\in\delta_2(\{\theta(\vec u,\vec b_\alpha),\theta(\vec u,\vec b_\alpha)\})$,
we still have $t\in X_{\alpha,\beta}$.

Finally, let $\delta_3\colon\cu P_{\aleph_0}(\Gamma)\to\cu D$ be the unique multiplicative mapping such that
$\delta_3(\{\varphi(\vec x,\vec a_\beta)\})=\delta_2(\{\theta(\vec u,\vec b_\beta)\})$ for each $\beta<\lambda$.
Then for each $\alpha<\beta<\lambda$ and $t\in\delta_3(\{\varphi(\vec x,\vec a_\alpha),\varphi(\vec x,\vec a_\alpha))$,
we have $t\in X_{\alpha,\beta}$. Therefore, for each $\Phi\in\cu P_{\aleph_0}(\Gamma)$, if $t\in\delta_3(\Phi)$ then
$\Phi[t]$ is satisfiable in $\cu K$.  This shows that $\delta_3$ is a first order distribution of $\Gamma$ in $\cu K^{\cu D}$,
so by Lemma \ref{l-dist-saturates}, $\Gamma$ is satisfiable in $\cu K^{\cu D}$.
\end{proof}

\begin{proof} [Proof of Theorem \ref{t-min-nonsimple}]
Let $T$ be a  metric TP$_2$ theory.
If $\cu D$ saturates $T$, then $\cu D$ solves $(\omega,\omega)$ by Lemma \ref{l-TP2-solves}, so $\cu D$
saturates $T^*_{feq}$ by Fact \ref{f-solves}.  Therefore  $T^*_{feq}\lek T$.
\end{proof}

As a corollary, we get the continuous analogue of Fact \ref{f-dividingline}.

\begin{cor}  \label{c-dividingline}
$T_{rg}\lk T$ for every TP$_2$ metric theory $T$.
\end{cor}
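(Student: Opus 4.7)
The plan is to obtain the corollary as a direct consequence of two prior results: Theorem \ref{t-min-nonsimple}, which gives $T^*_{feq}\lek T$ for any TP$_2$ metric theory $T$, and Fact \ref{f-dividingline}, which asserts the first order strict comparison $T_{rg}\lk T^*_{feq}$. The bridge between the two worlds is Corollary \ref{c-tau}, which says that $\tau\colon(\BF,\lek)\to(\BM,\lek)$ is an isomorphism onto its image and therefore transports strict inequalities from $(\BF,\lek)$ into $(\BM,\lek)$.

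First I would apply Theorem \ref{t-min-nonsimple} to the given TP$_2$ metric theory $T$, obtaining $T^*_{feq}\lek T$. Next, taking $S=T^*_{feq}$ in Fact \ref{f-dividingline} (noting that $T^*_{feq}$ is a first order TP$_2$ theory) gives $T_{rg}\lk T^*_{feq}$ in $(\BF,\lek)$; by Corollary \ref{c-tau} this strict inequality persists in $(\BM,\lek)$. Transitivity of $\lek$ then yields $T_{rg}\lek T^*_{feq}\lek T$, so in particular $T_{rg}\lek T$. For the strict direction, if $T\lek T_{rg}$ held, then combining with $T^*_{feq}\lek T$ and transitivity would give $T^*_{feq}\lek T_{rg}$, contradicting $T^*_{feq}\not\lek T_{rg}$ (which is half of $T_{rg}\lk T^*_{feq}$). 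Hence $T\not\lek T_{rg}$, and together with $T_{rg}\lek T$ this is exactly $T_{rg}\lk T$.

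There is no real obstacle once Theorem \ref{t-min-nonsimple} is available; all of the model-theoretic and ultrafilter-combinatorial effort has already been invested in that theorem and in the first order result of Malliaris--Shelah cited as Fact \ref{f-dividingline}. The only point requiring care is to invoke Corollary \ref{c-tau} to ensure that the strict first order inequality $T_{rg}\lk T^*_{feq}$ transfers to $(\BM,\lek)$, and then to chain the inequalities in the correct direction. Notably this route does not require an independent proof that every TP$_2$ metric theory is unstable, nor does it pass through Theorem \ref{t-rg-minimal}; the $\lek$-minimality of $T^*_{feq}$ among TP$_2$ metric theories already dominates $T_{rg}$ from above, by virtue of what is known in the first order world.
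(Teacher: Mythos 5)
Your proof is correct and takes essentially the same route as the paper: combine Fact \ref{f-dividingline} ($T_{rg}\lk T^*_{feq}$) with Theorem \ref{t-min-nonsimple} ($T^*_{feq}\lek T$) and conclude by transitivity. The only difference is that you explicitly invoke Corollary \ref{c-tau} to justify transferring the strict first order inequality into $(\BM,\lek)$, a step the paper leaves implicit.
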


\begin{proof}  $T_{rg}\lk T^*_{geq}$ by Fact \ref{f-dividingline}, and $T^*_{geq}\lek T$ by Theorem \ref{t-min-nonsimple}, so $T_{rg}\lk T$.
\end{proof}

\begin{cor}  There is a regular ultrafilter $\cu D$ that saturates $T_{rg}$, but saturates no TP$_2$ metric theory.
\end{cor}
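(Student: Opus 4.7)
The plan is to extract the desired $\cu D$ directly from Corollary \ref{c-dividingline} applied with $T = T^*_{feq}$, and then upgrade the failure of saturation at $T^*_{feq}$ to a failure at every TP$_2$ metric theory via the minimality statement of Theorem \ref{t-min-nonsimple}. Since $T^*_{feq}$ is itself TP$_2$, Corollary \ref{c-dividingline} yields $T_{rg}\lk T^*_{feq}$. Unpacking the strict inequality (and recalling the reversed-inclusion convention that $T\lek U$ means $\Sat(U)\subseteq\Sat(T)$), the clause $T^*_{feq}\not\lek T_{rg}$ says exactly that $\Sat(T_{rg})\not\subseteq\Sat(T^*_{feq})$. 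So I simply pick any $\cu D$ in $\Sat(T_{rg})\setminus\Sat(T^*_{feq})$; this will be the required witness.

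To verify that this $\cu D$ saturates no TP$_2$ metric theory, I invoke Theorem \ref{t-min-nonsimple}: for every TP$_2$ metric theory $T$ we have $T^*_{feq}\lek T$, which is precisely the inclusion $\Sat(T)\subseteq\Sat(T^*_{feq})$. Since $\cu D$ lies outside $\Sat(T^*_{feq})$, it lies outside $\Sat(T)$, i.e., $\cu D$ does not saturate $T$. There is no real obstacle in this argument; both ingredients are already established, and the only point requiring care is to keep track of the direction of inclusion in the definition of $\lek$, so that a theory lower in the pre-ordering is in fact saturated by a strictly larger class of regular ultrafilters than the ones strictly above it. The rest is bookkeeping.
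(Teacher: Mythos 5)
Your proof is correct and essentially the same as the paper's. The paper cites Theorem~12.1 of [MS13] directly to obtain a $\cu D\in\Sat(T_{rg})\setminus\Sat(T^*_{feq})$, whereas you extract the same $\cu D$ from the strict inequality in Corollary~\ref{c-dividingline} (which itself rests on Fact~\ref{f-dividingline}, i.e., the same [MS13] result); both then finish identically by applying the minimality of $T^*_{feq}$ from Theorem~\ref{t-min-nonsimple}.
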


\begin{proof}  By Theorem 12.1 of [MS13], there is a $\cu D$ that saturates $T_{rg}$ but does not saturate $T^*_{feq}$.
Then by Theorem \ref{t-min-nonsimple}, $\cu D$ saturates no TP$_2$ metric theory.
\end{proof}

\begin{df}  $\cu D$ is OK if every monotone function $g\colon\cu P_{\aleph_0}(I)\to\cu D$ such that $g(u)=g(v)$ whenever $|u|=|v|$
has a multiplicative refinement $f\colon\cu P_{\aleph_0}(I)\to\cu D$.
\end{df}

Clearly, every good ultrafilter is OK.  OK ultrafilters were mentioned without a name in [Ke67].  In [Ku78], Kunen introduced the name OK ultrafilter
and studied them from a topological viewpoint.

\begin{fact}  \label{f-OK-TP2} (By Lemma 8.8 of [Ma12] and Claim 6.1 in [MS15])
If $\cu D$ saturates some TP$_2$ first order theory, then $\cu D$ is OK.
\end{fact}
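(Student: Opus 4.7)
The plan is to mirror the standard Malliaris--Shelah argument for the first order case: encode an arbitrary monotone symmetric $g\colon\cu P_{\aleph_0}(I)\to\cu D$ as a partial type of cardinality $|I|$ over an ultrapower of a TP$_2$ theory, use $|I|^+$-saturation to realize it, and read off a multiplicative refinement from the ``support'' of the realization. The witness array for TP$_2$ provides exactly the combinatorial raw material: distinct rows are simultaneously satisfiable, while two formulas from the same row with different columns are not.

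Fix a TP$_2$ first order theory $T$ saturated by $\cu D$, a model $\cu M\models T$, and witnesses $\varphi(\vec x,\vec y)$, $\langle\vec b_{n,m}\rangle_{n,m\in\BN}$ of TP$_2$ in $\cu M$. Given monotone symmetric $g$, write $g(u)=X_{|u|}$ with $X_0\supseteq X_1\supseteq\cdots$ in $\cu D$; after replacing the $X_n$ by smaller sets of $\cu D$ we may assume $\bigcap_n X_n=\emptyset$, so each $t\in I$ has a finite ``level'' $\ell(t)=\max\{n\colon t\in X_n\}$. Fix also a regularizing family $\{E_i\colon i\in I\}\subseteq\cu D$, so that $I(t)=\{i\colon t\in E_i\}$ is finite for each $t$.

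The heart of the argument is an encoding step: for each $i\in I$ and $t\in I$, choose a row $n_i[t]\in\BN$ and a column $m_i[t]\in\BN$ so that, for every finite $u\subseteq I$,
\begin{itemize}
\item[(a)] on $g(u)\cap\bigcap_{i\in u}E_i\in\cu D$ the rows $\{n_i[t]\colon i\in u\}$ are pairwise distinct;
\item[(b)] the columns always distinguish elements of $I(t)$, and whenever the rows $\{n_i[t]\colon i\in u\}$ are pairwise distinct, one has $t\in X_{|u|}$.
\end{itemize}
The construction takes $m_i[t]$ to be the rank of $i$ in $I(t)$ (giving (b), columns part), and assigns $n_i[t]$ by a rank-into-level bookkeeping so that at each $t$ at most $\ell(t)$ distinct row labels are issued among members of $I(t)$, matched to the stratification $X_n\setminus X_{n+1}$. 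With the encoding in hand, set $\vec y_i=\langle\vec b_{n_i[t],m_i[t]}\rangle_{t\in I}/\cu D\in\cu M^\cu D$ and consider $\Gamma(\vec x)=\{\varphi(\vec x,\vec y_i)\colon i\in I\}$. Clause~(d) of Definition \ref{d-TP2} together with~(a) shows that on $g(u)\in\cu D$ the finite set $\{\varphi(\vec x,\vec b_{n_i[t],m_i[t]})\colon i\in u\}$ is satisfied in $\cu M$, so \Los\ gives finite satisfiability of $\Gamma$ in $\cu M^\cu D$; $|I|^+$-saturation produces $\vec x^*\in\cu M^\cu D$ realizing $\Gamma$. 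Define $f(\{i\})=\{t\colon\cu M\models\varphi(\vec x^*[t],\vec b_{n_i[t],m_i[t]})\}\in\cu D$ and $f(u)=\bigcap_{i\in u}f(\{i\})$; multiplicativity is then immediate. If $t\in f(u)$, $\vec x^*[t]$ realizes every $\varphi(\vec x,\vec b_{n_i[t],m_i[t]})$ for $i\in u$, so clause~(c) of Definition \ref{d-TP2} forbids two members of $u$ sharing a row with different columns; by the column part of (b) the rows must be pairwise distinct, and by the remaining part of (b) we conclude $t\in X_{|u|}=g(u)$. Hence $f$ refines $g$ multiplicatively, and $\cu D$ is OK.

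The main obstacle is the encoding step: one must design $(n_i[t],m_i[t])$ so that pairwise distinctness of row labels on $u$ is an \emph{exact} combinatorial proxy for membership in $X_{|u|}$, and this is where the interplay between the regularizing family $\{E_i\}$ and the layered structure of $\{X_n\}$ is delicate. This is precisely the combinatorial core of Lemma~8.8 of [Ma12] and Claim~6.1 of [MS15]; once it is in place, the TP$_2$ witness array together with saturation closes the argument essentially automatically.
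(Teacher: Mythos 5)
The paper states this as a Fact with an explicit citation to Lemma~8.8 of [Ma12] and Claim~6.1 of [MS15] and supplies no proof of its own, so there is no internal argument to compare yours against. Your sketch correctly identifies the skeleton one would expect---encode the monotone symmetric $g$ as a $\lambda$-size type built from a TP$_2$ array, realize it by $\lambda^+$-saturation of $\cu M^{\cu D}$, and read off a multiplicative refinement from the realization---and it correctly isolates the encoding as the crux. But the encoding requirements (a), (b) as you have stated them are not a hard combinatorial lemma to be imported from the references; they are jointly circular. Fix $t$ with $\ell(t)\ge 2$. For any $i\ne j$ in $I(t)$, the set $g(\{i,j\})\cap E_i\cap E_j$ contains $t$, so (a) forces $n_i[t]\ne n_j[t]$; hence the rows $n_i[t]$, $i\in I(t)$, are pairwise distinct. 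Your construction also commits to issuing at most $\ell(t)$ distinct row labels on $I(t)$ (this is what makes (b) work by pigeonhole). Together these force $|I(t)|\le\ell(t)$, which already says that the arbitrary regularizing family $\{E_i\}$ witnesses the OK conclusion for $g$. So no encoding with properties (a) and (b) can exist for a generic $\{E_i\}$ until the theorem itself is proved; deferring that step to the cited sources does not leave a proof behind.

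What the actual argument does is weaker than your (a). Roughly: choose $\lambda$-many row functions $n_i\colon I\to\BN$ that are merely \emph{pairwise} $\cu D$-distinct and $\cu D$-a.a.\ below $\ell$ (possible because $\BN^I/\cu D$ has $\ge 2^\lambda$ elements below any nonstandard element, here $\ell/\cu D$), together with columns $m_i[t]$ ranking $i$ within $I(t)$. For each finite $u$ there is then \emph{some} set in $\cu D$ (in general not $g(u)\cap\bigcap_{i\in u} E_i$) on which the rows $\{n_i[t]\colon i\in u\}$ are distinct, giving finite satisfiability via clause~(d); and after realizing by $\vec x^*$, for any $t$ and any $i\ne j$ with $t\in Y_i\cap Y_j\subseteq E_i\cap E_j$, the columns differ, so equal rows would be $2$-inconsistent, so the rows restricted to $\{i\colon t\in Y_i\}$ are pairwise distinct and all $<\ell(t)$; hence $|\{i\colon t\in Y_i\}|\le\ell(t)$. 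The bound at a given $t$ comes from the \emph{range} constraint $n_i[t]<\ell(t)$, not from distinctness across all of $I(t)$. Two smaller slips: your $f(\{i\})$ must be intersected with $E_i$ (and, to cover $|u|=1$, with $X_1$), otherwise $t\in f(u)$ does not put $u$ inside $I(t)$; and your appeal to clause~(c) of Definition~\ref{d-TP2} tacitly assumes $2$-inconsistent rows, whereas that clause only says each infinite row type is unsatisfiable---for a first order theory this is a standard normalization, but as cited it is the continuous definition, which would require an extra step via $\aleph_1$-saturation and row-indiscernibility to extract $k$-inconsistency.
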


\begin{cor}
If  $\cu D$ saturates some TP$_2$ metric theory, then $\cu D$ is OK.
\end{cor}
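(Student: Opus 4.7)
The plan is to chain together three results already established (or cited) in the paper to reduce the metric case to the first order case of Fact \ref{f-OK-TP2}. Specifically, if $\cu D$ saturates some TP$_2$ metric theory $T$, then I want to show $\cu D$ saturates the first order TP$_2$ theory $T^*_{feq}$, at which point Fact \ref{f-OK-TP2} immediately yields that $\cu D$ is OK.

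First I would invoke Lemma \ref{l-TP2-solves}: since $T$ is a metric TP$_2$ theory and $\cu D$ saturates $T$, this lemma directly gives that $\cu D$ solves $(\omega,\omega)$ in the sense of Definition \ref{d-solves}. Next I would apply Fact \ref{f-solves}, which asserts that any $\cu D$ solving $(\omega,\omega)$ saturates $T^*_{feq}$. Finally, $T^*_{feq}$ is a first order theory with TP$_2$ (this is recorded just before Fact \ref{f-minimal-nonsimple}), so Fact \ref{f-OK-TP2} applies and we conclude that $\cu D$ is OK.

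There is essentially no obstacle here since all of the heavy lifting was done in the proof of Theorem \ref{t-min-nonsimple}: the translation between the continuous TP$_2$ witness and a first order $(\omega,\omega)$-array, together with the multiplicative-distribution argument passing from the continuous approximation $\Sigma^{ap}$ back to a genuine first order distribution, is already carried out inside Lemma \ref{l-TP2-solves}. The only thing worth flagging is that one should not try to prove the corollary directly by manipulating OK functions $g\colon\cu P_{\aleph_0}(I)\to\cu D$ in a continuous setting, which would force one to re-do the distribution argument; routing through $T^*_{feq}$ and Fact \ref{f-OK-TP2} is strictly cleaner. The entire proof is thus a two-line citation: apply Lemma \ref{l-TP2-solves}, then Fact \ref{f-solves}, then Fact \ref{f-OK-TP2}.
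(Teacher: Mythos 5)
Your proposal is correct and follows essentially the same route as the paper: the paper cites Theorem \ref{t-min-nonsimple} directly to get that $\cu D$ saturates $T^*_{feq}$, whereas you unfold that theorem into its two ingredients (Lemma \ref{l-TP2-solves} and Fact \ref{f-solves}), but the underlying argument and the final appeal to Fact \ref{f-OK-TP2} are identical.
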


\begin{proof}  If $\cu D$ saturates $T$, then $\cu D$ saturates $T^*_{feq}$ by Theorem \ref{t-min-nonsimple},
so $\cu D$ is OK by Fact \ref{f-OK-TP2}.
\end{proof}

We now review the \emph{randomization} $S^R$ of a first order theory $S$.  $S^R$ is the complete metric theory  defined as follows (see [Ke99] and [BK09]).

Given a countable first order structure $\cu K$, ${\cu K}^{[0,1]}$ is the pre-metric structure with two sorts, the \emph{random element} sort $\BK$ whose universe is the
set of all Borel functions from $[0,1]$ into $\cu K$, and the \emph{event} sort $\BE$ whose universe is the set of all Borel subsets of $[0,1]$.
$\cu K^{[0,1]}$ has the Boolean operations in sort $\BE$, a unary predicate $\mu$ of sort $\BE$ for measure, and for each
first order formula $\varphi(\vec v)$, a function $\l \varphi(\vec v)\rr$ of sort $\BK^{|\vec v|}\to\BE$. $\l\varphi(\vec a)\rr$ is the
set $\{t\in[0,1]\colon \cu K\models\varphi(\vec a(t))\}$, and for each Borel set $X$, $\mu(X)$ is the Lebesgue measure of $X$.  If $\cu K_1\equiv\cu K_2$, then
$\cu K_1^{[0,1]}\equiv \cu K_2^{[0,1]}$, so there is a unique complete metric theory $S^R$, the \emph{randomization} of $S$,
such that for each countable model $\cu K$ of $S$, $\cu K^{[0,1]}$ is a model of $S^R$.  $S^R$ is a metric theory with the  distance predicate
$d(x,y)=\mu(\l x\ne y\rr)$.

\begin{fact} \label{f-randomization-indep}  Let $S$ be a first order theory.

(i)  $S^R$ is stable if and only if $S$ is stable.  (Ben Yaacov, Theorem 5.14 of [BK09]).

(ii)  $S^R$ has the independence property if and only if $S$ has the independence property.  (Theorem 4.10 of [BY13]).

(iii) If $S$ has the independence property, then $S^R$ is TP$_2$.  (Theorem 4.13 of [BY13]).
Thus if $S$ is simple but unstable then $S^R$ is  not simple.
\end{fact}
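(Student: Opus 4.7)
The plan is to prove all four claims by exploiting the canonical embedding of the first-order theory $S$ into its randomization $S^R$ via \emph{constant random elements}, namely random elements of $\cu K^{[0,1]}$ whose underlying Borel function $[0,1]\to K$ is constant, together with the flexibility to build nonconstant random elements using Borel partitions of $[0,1]$. The general template for each ``property of $S$ transfers to $S^R$'' direction is to promote a first-order witness configuration in $S$ to a configuration of constant random elements in $\cu K^{[0,1]}$ and replace the first-order formula $\varphi(\vec x,\vec y)$ by the continuous formula $\mu(\l\varphi(\vec x,\vec y)\rr)$. The reverse directions in (i) and (ii) use that every strict continuous formula in the vocabulary of $S^R$ is built by measure and lattice operations from terms $\l\psi\rr$ for first-order $\psi$, so an instability or IP witness in $S^R$ can be decomposed, by a Fubini-type fiber argument, into an analogous witness in $S$.

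For (i), if $\varphi(\vec x,\vec y)$ is unstable in $S$ with bounds $(0,1)$ via $(\vec a_h,\vec b_k)$, then viewing the $\vec a_h$ and $\vec b_k$ as constants in $\cu K^{[0,1]}$ makes $\mu(\l\varphi(\vec x,\vec y)\rr)$ unstable with the same bounds in $S^R$; the reverse is a standard decomposition. For (ii), an IP configuration in $S$ transports to constants to give an IP configuration for $\mu(\l\varphi\rr)$ in $S^R$, and the converse argues that along some fiber of an IP configuration in $\cu K^{[0,1]}$ one recovers IP in $\cu K$.

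The substance of the proposal lies in (iii). Assume $\varphi(\vec x,\vec y)$ witnesses IP in an $\aleph_1$-saturated $\cu K\models S$, with a sequence $(\vec b_n)_{n\in\BN}$ such that for every $Z\subseteq\BN$ the type $\{\varphi(\vec x,\vec b_n)=0:n\in Z\}\cup\{\varphi(\vec x,\vec b_n)=1:n\notin Z\}$ is realized in $\cu K$. I would build an array $(\vec c_{n,m})_{n,m\in\BN}$ in $\cu K^{[0,1]}$ as follows: for each row index $n$, fix a Borel partition $[0,1]=\bigsqcup_{m\in\BN}E_{n,m}$ with $\mu(E_{n,m})>0$, arranged so that the partitions across distinct rows generate mutually independent sub-$\sigma$-algebras; on each piece $E_{n,m'}$, let $\vec c_{n,m}$ take a value from the $\vec b_n$-pattern designed so that within a single row the columns encode incompatible IP-patterns on sets of positive measure, while along any path $m=f(n)$ the independence across rows combined with IP in $\cu K$ produces a global realizer. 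By Lemma \ref{l-TP2-approx} it suffices to find an $\varepsilon$-TP$_2$ formula, and $\mu(\l\varphi(\vec x,\vec y)\rr)$ will do: on each row the value is bounded below by the minimum partition mass, while along paths it can be driven arbitrarily close to $0$. Mutual indiscernibility of the rows (Definition \ref{d-TP2}(a),(b)) is forced by the measure-preserving symmetry of the construction. The main obstacle will be the measure-theoretic bookkeeping needed to simultaneously secure full combinatorial symmetry of the rows, a quantitative lower bound on row-inconsistency, and path consistency, in a single coordinated family of partitions and $\vec b_n$-patterns. Finally, the closing ``Thus'' follows from the paper's recalled dichotomy that every unstable first-order theory has SOP$_2$ or IP, together with the fact that SOP$_2$ precludes simplicity; a simple but unstable $S$ must therefore have IP, and (iii) yields that $S^R$ is TP$_2$ and hence not simple.
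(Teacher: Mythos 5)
The paper does not prove this statement at all: it is labeled as a \emph{Fact} with explicit citations to Ben Yaacov's work ([BK09] Theorem 5.14 for (i), [BY13] Theorems 4.10 and 4.13 for (ii) and (iii)), and in this paper the \emph{Fact} environment signals a result quoted from the literature. The only piece that belongs to the present paper is the closing ``Thus'' sentence, which you handle correctly: by the dichotomy recalled just before this Fact (a first order theory is unstable iff it has SOP$_2$ or IP), a simple unstable $S$ must have IP (since SOP$_2$ would preclude simplicity), so (iii) gives that $S^R$ is TP$_2$, hence not simple.

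As an attempt to actually \emph{prove} the cited results, your sketch has the right flavor (embedding $\cu K$ by constant random elements, replacing $\varphi$ by $\mu(\l\varphi\rr)$, using independent events/partitions to build a TP$_2$ array) but leaves genuine gaps. In (i) and (ii) the hard implications --- $S$ stable implies $S^R$ stable, and $S^R$ has IP implies $S$ has IP --- are dismissed as ``a standard decomposition'' and ``a Fubini-type fiber argument,'' which is precisely where the content of [BK09] and [BY13] lies; these are not routine. In (iii) you explicitly name but do not resolve the central obstacle. There is also a concrete worry in your proposed array: a countable Borel partition $[0,1]=\bigsqcup_m E_{n,m}$ must have $\mu(E_{n,m})\to 0$, so the ``incompatibility on sets of positive measure'' within a row degrades as $m$ grows, and it is not clear you can extract a uniform $\varepsilon>0$ to invoke Lemma \ref{l-TP2-approx}, nor that the rows can be made genuinely indiscernible (not merely exchangeable) when the piece sizes vary. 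Given that the paper treats all three parts as imported black boxes, the appropriate move here is to cite, not to reprove; as a proof, the proposal is an outline with its hardest steps open.
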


Since the randomization $S^R$ of a first order theory $S$ shares many properties with $S$, one might expect that $S^R$ is $\lek$-equivalent to $S$.
However, this is not the case when $S=T_{rg}$.

\begin{cor} \label{c-strict-rg} $T_{rg}\lk T_{rg}^R$.
\end{cor}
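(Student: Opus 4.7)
The plan is to derive $T_{rg}\lk T_{rg}^R$ as an almost immediate consequence of the dividing-line corollary \ref{c-dividingline} together with part (iii) of Fact \ref{f-randomization-indep}. So the entire task reduces to verifying that $T_{rg}^R$ is TP$_2$, since once this is known, Corollary \ref{c-dividingline} gives $T_{rg}\lk T_{rg}^R$ for free.

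To see that $T_{rg}^R$ is TP$_2$, by Fact \ref{f-randomization-indep}(iii) it suffices to check that $T_{rg}$ itself has the independence property. First I would note that $T_{rg}$ is unstable (this is classical, and also a quick direct check: the formula $R(x,y)$ linearly orders an $\aleph_0$-sequence the wrong way, via the extension axioms of the random graph). Then I would invoke Theorem \ref{t-SOP2-IP}, which says every unstable metric theory has either SOP$_2$ or the independence property. Since $T_{rg}$ is simple (again classical, via the extension axioms giving a natural stable independence up to elimination of quantifiers)---or equivalently, since the binary tree of finite branches of instances of $R$ cannot be made pairwise incompatible in the random graph, so no formula can be an SOP$_2$-formula for $T_{rg}$---we conclude $T_{rg}$ has IP. Alternatively one can just exhibit IP explicitly: in a model of $T_{rg}$, for any finite disjoint $X,Y$ the extension axioms provide an element $z$ joined to everything in $X$ and to nothing in $Y$, which shows directly that $R(x,y)$ has the independence property in $T_{rg}$.

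Putting this together: $T_{rg}$ has IP, so by Fact \ref{f-randomization-indep}(iii) the randomization $T_{rg}^R$ is TP$_2$. Corollary \ref{c-dividingline} then yields $T_{rg}\lk T$ for every TP$_2$ metric theory $T$, and in particular $T_{rg}\lk T_{rg}^R$, which is the desired conclusion.

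There is essentially no hard step in this proof; the heavy lifting was already done in Theorem \ref{t-min-nonsimple} (the continuous analogue of Malliaris's minimality theorem for $T^*_{feq}$ among TP$_2$ theories), which is what makes Corollary \ref{c-dividingline} available. The only mild issue is justifying that $T_{rg}$ has IP, which I would handle by the explicit extension-axiom argument rather than routing through the simplicity/SOP$_2$ dichotomy, since the explicit argument is shorter and more self-contained.
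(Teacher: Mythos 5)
Your proof is correct and is essentially the paper's own argument: apply Fact \ref{f-randomization-indep}(iii) to conclude $T_{rg}^R$ is TP$_2$, then invoke Corollary \ref{c-dividingline}. The only difference is that you explicitly verify (via the extension axioms, the cleaner route) that $T_{rg}$ has the independence property, a fact the paper leaves implicit as standard.
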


\begin{proof} $T_{rg}^R$ is TP$_2$
by Fact \ref{f-randomization-indep} (iii). So
$T_{rg}\lk T_{rg}^R$ by Corollary \ref{c-dividingline}.
\end{proof}

\begin{thm}  \label{t-min-rgR}
 $T^R_{rg}$ is $\lek$-minimal among TP$_2$ metric theories.
\end{thm}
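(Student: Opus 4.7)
The plan is to establish $T^R_{rg} \eqk T^*_{feq}$, whence $\lek$-minimality among TP$_2$ metric theories follows directly from Theorem \ref{t-min-nonsimple}.

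One direction is immediate. Since $T_{rg}$ has the independence property, Fact \ref{f-randomization-indep}(iii) gives that $T^R_{rg}$ is TP$_2$, so Theorem \ref{t-min-nonsimple} yields $T^*_{feq} \lek T^R_{rg}$.

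The substantive direction is $T^R_{rg} \lek T^*_{feq}$. Combining Fact \ref{f-solves} with Lemma \ref{l-TP2-solves} applied to $T^*_{feq}$ itself (which is TP$_2$), an ultrafilter $\cu D$ saturates $T^*_{feq}$ if and only if it solves $(\omega,\omega)$. Hence it suffices to show: if $\cu D$ solves $(\omega,\omega)$, then $\cu D$ saturates $T^R_{rg}$. For this I would fix a countable model $\cu K$ of $T_{rg}$, let $A$ be a set of at most $\lambda$ parameters in $(\cu K^{[0,1]})^{\cu D}$, and consider a set $\Gamma(x,A)$ of continuous formulas in the randomization vocabulary that is finitely satisfiable in $(\cu K^{[0,1]})^{\cu D}$. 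Using the strict-formula approximation Fact \ref{f-strict-approx} together with the two-sorted (random-element/event) structure of $\cu K^{[0,1]}$, and the fact that $T_{rg}$ has quantifier elimination with only equality and the symmetric binary $R$ as atomic symbols, the problem reduces to realizing a type whose content is a prescription, for each $a_\alpha \in A$, of the Borel event $\l R(x,a_\alpha)\rr$ subject to finitary measure-compatibility constraints read off from $\Gamma$.

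At each coordinate $t \in I$, the parameter $a_\alpha[t]$ is a Borel function $[0,1]\to\cu K$, and the question of which finite subfamilies of the prescribed events are coordinatewise realizable reduces via the measure structure to a first order $(\omega,\omega)$-satisfiability question of exactly the form featured in Definition \ref{d-solves} (the role of the equivalence classes $c_{n,m}$ being played by the possible ``colors'' $R(x,a_\alpha[t])$ versus $\neg R(x,a_\alpha[t])$ indexed by the finitely many parameters active at $t$). I would then convert $\Gamma$ into a continuous distribution in the sense of Definition \ref{d-distribution}, translate it to a first order distribution of an associated $(\omega,\omega)$-type over $\cu K^{\cu D}$, extract a multiplicative refinement using the hypothesis that $\cu D$ solves $(\omega,\omega)$, pull this refinement back through the randomization to obtain a multiplicative distribution of $\Gamma$ in $(\cu K^{[0,1]})^{\cu D}$, and apply Lemma \ref{l-dist-saturates} to produce a realization. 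The main obstacle will be the bookkeeping in this back-and-forth: ensuring that prescribed Boolean-algebra and measure constraints on events in the ultrapower of the randomization can genuinely be realized by Borel functions once the coordinatewise first order $(\omega,\omega)$-constraints are met, and controlling the approximation so that $\aleph_1$-saturation (Remark \ref{r-regular-aleph-1}) together with Corollary \ref{c-varepsilon-sat}(iii) promote approximate realizations to exact ones.
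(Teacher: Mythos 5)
Your overall plan — prove $T^R_{rg} \eqk T^*_{feq}$ and then cite Theorem \ref{t-min-nonsimple} — is a legitimate alternative decomposition, and your easy direction ($T^*_{feq}\lek T^R_{rg}$) is correct and cleanly argued. Your use of Fact \ref{f-solves} together with Lemma \ref{l-TP2-solves} (applied to $T^*_{feq}$ itself) to characterize $\Sat(T^*_{feq})$ as the class of $(\omega,\omega)$-solving ultrafilters is also sound. However, the route differs from the paper's, which does not pass through $T^*_{feq}$ or the solving property at all: it takes an arbitrary TP$_2$ metric theory $T$ with an $\aleph_1$-saturated model $\cu M$, uses quantifier elimination to rewrite a complete $T^R_{rg}$-type $\Gamma$ over $A\subseteq\cu N^{\cu D}$ as a directed family $\{\gamma_F : F\in\cu P_{\aleph_0}(A)\}$, builds a ``path'' type $\Sigma=\{\varphi(\vec x,\vec b_{\alpha,f(\alpha)})\}$ through a $\kappa\times\kappa$ TP$_2$ array in $\cu M^{\cu D}$, and transfers an accurate distribution of $\Gamma$ in $\cu N^{\cu D}$ across a bijection $g:\Gamma^{ap}\to\Sigma^{ap}$ to an accurate distribution of $\Sigma$ in $\cu M^{\cu D}$; $\lambda^+$-saturation of $\cu M^{\cu D}$ then produces a multiplicative refinement, which is pulled back through $g$ and fed to Lemma \ref{l-dist-saturates}. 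Corollary \ref{c-rgR} ($T^R_{rg}\eqk T^*_{feq}$), which you are trying to establish first, is then derived \emph{from} the theorem.

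The genuine gap in your proposal is the hard direction, at exactly the point you flag as ``the main obstacle''. You assert that, after unwinding the measure-theoretic structure of $\cu K^{[0,1]}$ coordinatewise, realizing $\Gamma(x,A)$ ``reduces via the measure structure to a first order $(\omega,\omega)$-satisfiability question of exactly the form featured in Definition \ref{d-solves}''. This is not established, and it is the crux of the entire argument. A concrete difficulty: the coordinatewise structure $\cu N_t\cong\cu K^{[0,1]}$ is itself a model of $T^R_{rg}$, not of $T_{rg}$, so the coordinatewise obstruction to realizing $\Gamma_F(u,A[t])$ is still a measure-prescription problem rather than a discrete choice of ``colors'' as you suggest; and Definition \ref{d-solves} requires a very specific combinatorial shape — a first order formula $\varphi$, an explicit array $\langle c_{n,m}\rangle$ with pairwise row-incompatibility and column-consistency, and a parameter set $A\subseteq C^{\cu D}$ — none of which is exhibited. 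To make this approach work you would need to produce the first order structure $\cu K$ (presumably something built from the measure algebra or from the pattern of coincidences among the parameters $a_\alpha[t]$), define the array, and prove that finite satisfiability and realization of $\Gamma$ match up with the corresponding conditions for the $(\omega,\omega)$-type, carefully tracking the $\dotminus 2^{-n}$ approximations in $\Gamma^{ap}$ via Corollary \ref{c-varepsilon-sat}. As written, that entire bridge is missing, so the proposal cannot be considered a proof.
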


\begin{proof}
Let $\cu N$ be a model of $T^R_{rg}$.  Let $T$ be a TP$_2$ metric theory,
$\cu M$ be an $\aleph_1$-saturated model of $T$, and $\cu D$ be a regular ultrafilter over a set $I$ of power $\lambda$ such that $\cu M^\cu D$ is $\lambda^+$-saturated.
We must show that $\cu N^{\cu D}$ is $\lambda^+$-saturated.

Let  $A$  be an infinite subset of  $\cu N^{\cu D}$ of cardinality $|A|\le\lambda$, and let $\Gamma=\Gamma(u,A)$ be a  set of continuous formulas with
one variable $u$ and parameters from $A$ that is complete in $T^R_{rg}$.  It suffices to show that $\Gamma(u,A)$ is satisfiable in $\cu N^{\cu D}$.

For each finite subset $F\subseteq A$, the restriction of $\Gamma$ to formulas with parameters from $F$ is a complete type $\Gamma_F$ in $u$ over $F$.
By Theorem 2.9 of [BK09], $T^R_{rg}$ has quantifier elimination.
Since $T_{rg}$ also has quantifier elimination and its vocabulary has only the single symmetric binary predicate $R$, $\Gamma_F$
is equivalent to the following  continuous (but not strict) formula $\gamma_{F}(u)$ with parameters in $F$:  Informally, for some probability measure $\nu_F$ over $F$, the
formula $\gamma_{F}(u)$ says that for each $Z\subseteq F$, $\nu_F(Z)$ is the $\mu$-measure of the set $Z=\{a\in F\colon R(u,a)\}$. Formally,
$$\gamma_F(u):= (\forall Z\subseteq F)\ \nu_{F}(Z)=\mu(\l\bigwedge_{a\in Z}R(u,a)\wedge\bigwedge_{a\notin Z} \neg R(u,a)\rr).$$
Therefore we may take $\Gamma(u,A)$ to be the set of formulas
$$\Gamma(u,A)=\{\gamma_{F}(u)\colon F\in\cu P_{\aleph_0}(A)\}.$$
Since $\Gamma(u,A)$ is complete in $T^R_{rg}$, it is finitely satisfiable in $\cu N^{\cu D}$.
By Lemma \ref{l-dist-saturates}, to show that
$\Gamma(u,A)$ is satisfiable in $\cu N^{\cu D}$, it suffices to show that $\Gamma$ has a multiplicative distribution in $\cu N^{\cu D}$.

Let $\kappa=|A|$.  Then $\kappa\le\lambda$.
Since $\cu M^{\cu D}$ is $\lambda^+$-saturated and $T$ is TP$_2$, in $\cu M^{\cu D}$ there is a formula $\varphi(\vec x,\vec y)$ and family
$\langle \vec b_{\alpha,\beta}\rangle_{\alpha,\beta\in\kappa\times\kappa}$ of $|\vec y|$-tuples in $\cu M^{\cu D}$
such that:
\begin{itemize}
\item For each $\alpha<\kappa$ and $\beta<\beta'<\kappa$
$$\cu M^{\cu D}\models \max(\varphi(\vec x,\vec b_{\alpha,\beta}), \varphi(\vec x,\vec b_{\alpha,\beta'}))=1.$$
\item For each function $f\colon \kappa\to \kappa$ and $G\in\cu P_{\aleph_0}(\kappa)$,
$$\cu M^{\cu D}\models (\inf_{\vec x})\max_{\alpha\in G}\varphi(\vec x,\vec b_{\alpha,f(\alpha)})=0.$$
\end{itemize}

Fix a function $f\colon \kappa\to \kappa$.  Let $\vec b_\alpha=\vec b_{\alpha,f(\alpha)}$ for each $\alpha<\kappa$, and $B=\{\vec b_\alpha\colon \alpha<\kappa\}$.
Let $\Sigma=\Sigma(\vec x,B)=\{\varphi(\vec x,\vec b_\alpha)\colon \alpha<\kappa\}$. Then $\Sigma$ is finitely satisfiable in $\cu M^{\cu D}$.
$\Gamma^{ap}$ and $\Sigma^{ap}$ both have cardinality $\kappa$, so there is a bijection $g$ from $\Gamma^{ap}$ onto $\Sigma^{ap}$.
  Since $\cu M^{\cu D}$ is $\lambda^+$-saturated, $\Sigma$ is satisfiable  in $\cu M^{\cu D}$.
By Lemma \ref{l-dist-exist}, $\Gamma(u,A)$ has an accurate distribution $\delta_1\colon\cu P_{\aleph_0}(\Gamma^{ap})\to\cu D$ in $\cu N^{\cu D}$.
By the proof of Lemma \ref{l-dist-exist}, $\Sigma$ has a unique accurate distribution $\delta_2$ in $\cu M^{\cu D}$ such that
for each $\Psi\in\cu P_{\aleph_0}(\Gamma^{ap})$, $\delta_2(g(\Psi))=\delta_1(\Psi)$.

By Lemma \ref{l-dist-saturates},  $\Sigma$ has a multiplicative distribution $\delta_3$ in $\cu M^{\cu D}$ that is a refinement of $\delta_2$.
Then
\begin{itemize}
\item $\delta_3\colon \cu P_{\aleph_0}(\Sigma^{ap})\to\cu X$ where $\cu X$ regularizes $\cu D$.
\item For each $\Psi\in\cu P_{\aleph_0}(\Sigma^{ap})$ and $t\in\delta_3(\Psi)$,
$$\cu M\models(\inf_{\vec x})\max\{\psi(\vec x,B[t])\colon \psi\in\Psi\}=0.$$
\item $\delta_3(\Psi\cup\Theta)=\delta_3(\Psi)\cap\delta_3(\Theta)$ for all $\Psi,\Theta\in\cu P_{\aleph_0}(\Sigma^{ap})$.
\end{itemize}
It follows that the mapping $\delta_4$ such that $\delta_4(\Psi)=\delta_3(g(\Psi))$ is a  multiplicative refinement of $\delta_1$,
and hence is a multiplicative distribution of $\Gamma$ in $\cu N^{\cu D}$.
\end{proof}

\begin{cor} \label{c-rgR} $T^R_{rg}\eqk T^*_{feq}$.
\end{cor}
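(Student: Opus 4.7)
The plan is to derive this corollary directly from the two $\lek$-minimality results already established in this section, namely Theorem \ref{t-min-nonsimple} (which asserts that $T^*_{feq}$ is $\lek$-minimal among TP$_2$ metric theories) and Theorem \ref{t-min-rgR} (which asserts the same for $T^R_{rg}$). The argument amounts to checking that each of the two theories is TP$_2$ and then applying the minimality of the other.

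First I would verify that $T^R_{rg}$ is TP$_2$. Since $T_{rg}$ is unstable, it has the independence property (indeed, the edge relation $R(x,y)$ witnesses IP in any model of $T_{rg}$). By Fact \ref{f-randomization-indep} (iii), the randomization of a first order theory with the independence property is TP$_2$, so $T^R_{rg}$ is TP$_2$. Next, $T^*_{feq}$ is TP$_2$ (this was recalled in the discussion preceding Fact \ref{f-minimal-nonsimple}; the parameterized equivalence relations in the standard axiomatization of $T^*_{feq}$ furnish the TP$_2$ array directly).

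With both of these TP$_2$ assertions in hand, the two inequalities follow immediately. Applying Theorem \ref{t-min-rgR} to the TP$_2$ theory $T^*_{feq}$ gives $T^R_{rg}\lek T^*_{feq}$, while applying Theorem \ref{t-min-nonsimple} to the TP$_2$ theory $T^R_{rg}$ gives $T^*_{feq}\lek T^R_{rg}$. Combining these two yields $T^R_{rg}\eqk T^*_{feq}$, as desired.

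There is essentially no obstacle here — all the real content has been absorbed into Theorems \ref{t-min-nonsimple} and \ref{t-min-rgR} and into Fact \ref{f-randomization-indep}. The only subtle point worth flagging is the usual one: $T^*_{feq}$ is a first order theory, but $\lek$ is being interpreted in the metric sense, so one must know (via Lemma \ref{l-classical} and the remarks following Corollary \ref{c-tau}) that its $\lek$-class in $\BM$ is the image under $\tau$ of its first order $\lek$-class; this justifies freely invoking the TP$_2$ property of $T^*_{feq}$ in the metric setting.
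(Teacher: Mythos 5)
Your proposal is correct and follows exactly the paper's own proof, which simply cites Theorems \ref{t-min-nonsimple} and \ref{t-min-rgR}; you have just filled in the implicit verification that both theories are TP$_2$ (with $T^R_{rg}$ being TP$_2$ already noted in the proof of Corollary \ref{c-strict-rg}).
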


\begin{proof}  By Theorems \ref{t-min-nonsimple} and \ref{t-min-rgR}.
\end{proof}

We conclude with an open question.
In  [MS16b], Theorem 11.4, Malliaris and Shelah proved that if there exists an uncountable compact cardinal then for all first order theories $S,U$,
if $S$ is simple and $U\lek S$ then $U$ is simple. Recall that a first order theory is simple if and only if it has neither SOP$_2$ nor TP$_2$.

\begin{question}
Suppose there exists an uncountable compact cardinal.  If $T,U$ are metric theories,  $U\lek T$, and $T$ has neither SOP$_2$ nor TP$_2$,
must $U$ have neither SOP$_2$ nor TP$_2$?
\end{question}

A potential path to an affirmative answer would be to generalize the proof in [MS16b] to continuous logic.

\end{document}